\documentclass[twoside,11pt]{article}
\usepackage{amssymb,amsmath,amsthm,amsfonts, epsfig, esint}           
\usepackage{mathtools} 
\usepackage{hyperref}
\usepackage{eucal}
\usepackage{a4wide}
\usepackage[svgnames]{xcolor}
\usepackage{mathptmx} 
\usepackage{graphicx}
\usepackage{tikz-cd}
 \usepackage{fancyhdr}
 \usepackage{MnSymbol}
 \usepackage{subcaption}


\title{Determination of stable branches of relative equilibria of the $N$-vortex problem on the sphere}

\author{Constantineau,~K.\footnote{Department of Mathematics and Statistics, McGill University, 805 Sherbrooke West, Montreal, QC H3A 0B9, Canada. kevin.constantineau@mail.mcgill.ca},\, Garc\'{\i}a-Azpeitia,~C.\footnote{Departamento de Matem\'aticas y Mec\'anica, 
IIMAS-UNAM.
Apdo. Postal 20-126, Col. San \'Angel,
Mexico City, 01000,  Mexico. cga@mym.iimas.unam.mx. ORCID: 0000-0002-6327-1444.}, \, Garc\'{\i}a-Naranjo,~L.~C\footnote{Dipartimento di Matematica ``Tullio Levi-Civita", Universit\`a di Padova, Via Trieste 63, 35121 Padova, 
Italy. luis.garcianaranjo@math.unipd.it. ORCID: 0000-0002-3589-6068.} \, and  J.-P.~Lessard\footnote{Department of Mathematics and Statistics, McGill University, 805 Sherbrooke West, Montreal, QC H3A 0B9, Canada. jp.lessard@mcgill.ca}}


\numberwithin{equation}{section}
\numberwithin{table}{section}
\numberwithin{figure}{section}

\hypersetup{colorlinks=true,linkcolor=violet,citecolor=purple}

\newtheorem{theorem}{Theorem}[section]
\newtheorem{lemma}[theorem]{Lemma}
\newtheorem{proposition}[theorem]{Proposition}
\newtheorem{corollary}[theorem]{Corollary}

\theoremstyle{definition}
\newtheorem{definition}[theorem]{Definition}
\newtheorem{remark}[theorem]{Remark}
\newtheorem*{remarks*}{Remarks}

\newcommand{\bydef}{:=}

\providecommand{\customgenericname}{}
\newcommand{\newcustomtheorem}[2]{%
  \newenvironment{#1}[1]
  {%
   \renewcommand\customgenericname{#2}%
   \renewcommand\theinnercustomgeneric{##1}%
   \innercustomgeneric
  }
  {\endinnercustomgeneric}
}

\newcustomtheorem{customhyp}{}

\newcommand{\defn}[1]{{\bfseries\itshape{#1}}}

\pagestyle{fancy} 
\def\headcolour{\color{Grey}}
\setlength{\headheight}{14pt}
 \lhead[\headcolour\textsc{ }]{\headcolour\textsc{ }}
 \rhead{\headcolour\thepage}
 \cfoot{}
 \rfoot{\it\color{Grey} \today}


\def\restr#1{\,\vrule height1.2ex width.4pt
  depth0.8ex\lower0.4ex\hbox{\scriptsize $\,#1$}}
\newcommand{\C}{\mathbb{C}}
\newcommand{\R}{\mathbb{R}}   
\newcommand{\Z}{\mathbb{Z}}   

\newcommand{\I}{\mathbb{I}}

\newcommand{\so}{\mathfrak{so}}

\newcommand{\D}{\mathcal{D}}

\begin{document}

\maketitle

\begin{abstract}
We consider the $N$-vortex problem on the sphere assuming that all  vorticities
have equal strength. We investigate 
relative equilibria (RE) consisting of $n$ latitudinal  rings which are  uniformly rotating 
about the vertical axis with angular velocity $\omega$. Each such ring contains $m$ vortices placed  at the vertices of a concentric 
regular polygon and  we allow the presence of additional  vortices  at the poles. We develop a framework to prove existence
and orbital stability of branches of RE of this type parametrised by $\omega$. Such framework is  implemented 
to  rigorously determine and prove stability of segments of  branches using computer-assisted proofs.
This approach circumvents the analytical complexities that arise when the number of rings  $n\geq 2$ and allows us to give several new rigorous results. 
We  exemplify our method providing new contributions consisting in the determination of enclosures  
and proofs of stability of several equilibria and RE
 for $5\leq N\leq 12$.
\end{abstract}

\vspace{0.5cm}
\noindent
{\em Keywords:} $N$-vortex problem, relative equilibrium, stability, interval arithmetic, computer-assisted proofs. \\
{\em 2020 MSC}:  70K42,  76M60, 65G30, 65G20, 47H10, 37C25.

\section{Introduction}
In the last  decades  the 
 $N$-vortex problem on the sphere  has received considerable attention.
 A non-exhaustive list of references contains \cite{Borisov1,Borisov2,Kidambi1,Marsden-Pekarski,Borisov4, SouliereTokieda, Borisov3,Meleshko-Newton,Vankerschaver, Renato, Carlos, Ohsawa} and other papers that are mentioned below. The equations of motion  go back  to Gromeka \cite{Gromeka} and Bogomolov \cite{Bogomolov}. 
 The importance of the system is usually associated with geophysical fluid dynamics since it provides a simple model for the dynamics
 of cyclones and hurricanes in planetary atmospheres. Moreover, 
 recent numerical work  \cite{ModinViviani2} suggests  that the $N$-vortex  problem on the sphere plays a crucial role in 
 the long term behaviour of 
the incompressible two-dimensional Euler equations on the same domain. We also mention that the $N$-vortex problem on the 
sphere equipped with a general Riemannian metric was recently studied in \cite{Wang} and, as noted in \cite{KoillerCastilhoRodrigues,Rodrigues-Koiller},
  this is a convenient approach to treat the $N$-vortex problem
on more general closed surfaces of genus $0$. We refer the reader to the 
book \cite{NewtonBook}  and the  papers \cite{Aref1,Aref2} for  an overview  and extensive bibliography on  vortex dynamics.

The equations of motion  of the problem define a Hamiltonian system on the $2N$-dimensional phase
space $M$ which is obtained as the cartesian product of $N$ copies of the 2-sphere $S^2$ minus the
collision set. The  Hamiltonian function  $H:M\to \R$ accounts for the pairwise interaction between
the vortices, and the  symplectic form $\Omega$ on $M$ is a weighted sum by the vortex
strengths of  the area form on each copy of $S^2$.  A fundamental aspect of the problem is that both $H$ and
$\Omega$ are  invariant under the   action of $SO(3)$ on $M$ that simultaneously rotates all vortices. 
As a consequence, the equations of motion are  $SO(3)$-equivariant and, in accordance with
Noether's Theorem,  there exists a momentum map $\Phi : M\to \R^3$, whose
components are first integrals. 
It is well-known (see e.g. \cite{ModinViviani1}) that the problem is integrable if $N\leq 3$, 
and on the zero level set of $\Phi$ if $N= 4$, and 
appears to be non-integrable otherwise.

 The simplest solutions to the problem are the equilibrium 
configurations which correspond to the critical points of $H$.
 If all the vortex strengths  have the same sign, then $H$ is bounded from below and it assumes a global minimum on $M$. The 
minimising configurations are called \defn{ ground states} and 
form a very  important kind of stable equilibria. 
Unfortunately, the  rigorous determination of the ground states  is an extremely   complicated
problem for which very little is known. If all vortices  have equal strengths, which is the case that we treat in most  of this work,
after a  suitable normalisation, the Hamiltonian $H:M\to \R$ becomes  
\begin{equation*}
H(v)=-\sum_{i<j} \ln \left ( \left \| v_i -v_j  \right \|^2 \right ), \qquad v=(v_1,\dots, v_N)\in M,
\end{equation*}
where, for all $i=1,\dots, N$,  $v_i$ is a unit vector on $\R^3$ which specifies  the position of the $i^{th}$ vortex  on the unit sphere $S^2$ (note that  $v_i\neq v_j$ for all $i$, $j$, since collisions have been removed).
Using the above expression for $H$, it is easy to see that the ground state
configurations are geometrically characterised as those which maximise the
product of the pairwise euclidean distances between the vortices. These configurations
are sometimes called   \defn{Fekete points}, and their determination corresponds to Smale's problem $\#7$ which has only been
solved for a few values of $N$ (2, 3, 4, 5, 6 and 12). We refer the reader to  \cite{Armentano11},  the nice review
 \cite{Beltran20}  and the references therein for more information on Smale's problem $\#7$.
In Table \ref{ground-states} below we list the known Fekete points
together with a conjecture\footnote{%
In section \ref{ss:CAP-minimisers} we show that these are at least local
non-degenerate minima of $H$ for the values of $N=8,9,10,11$. We also show in
section \ref{ss:CAP-RE-groundstates} that the configuration for $N=7$, which
is widely conjectured to be the minimiser \cite{Beltran20}, is degenerate in the sense of Bott \cite{Bott54}.} of their position for the values $N=7,\dots,11$ (the 
detailed
description of the configurations in the table is given in section \ref{ss:CAP-minimisers}). The configurations of the table 
are illustrated in Figures \ref{Fig:GS-Known} and \ref{Fig:GS-Conjecture}. One can analytically
show that these  are critical points of $H$ except for $N=11$ whose treatment,
as explained in section \ref{ss:CAP-minimisers}, 
 required the use of a computer-assisted proof.

\begin{table}[h]
\begin{center}
\begin{tabular}{|l|l|l|}
\hline
$N$ & Polyhedron & $\mathbb{Z}_{m}$ -symmetries \\ \hline
2 & antipodal points & $\mathbb{Z}_{2}$ \\ \hline
3 & equilateral triangle & $\mathbb{Z}_{3}$ \\ \hline
4 & tetrahedron & $\mathbb{Z}_{2}$ and $\mathbb{Z}_{3}$ \\ \hline
5 & triangular bipyramid & $\mathbb{Z}_{2}$ and $\mathbb{Z}_{3}$ \\ \hline
6 & octahedron & $\mathbb{Z}_{2}$, $\mathbb{Z}_{3}$ and $\mathbb{Z}_{4}$ \\ 
\hline
7 & pentagonal bipyramid & $\mathbb{Z}_{2}$ and $\mathbb{Z}_{5}$ \\ \hline
8 & square antiprism & $\mathbb{Z}_{2}$ and $\mathbb{Z}_{4}$ \\ \hline
9 & triaugmented triangular prism & $\mathbb{Z}_{2}$ and $\mathbb{Z}_{3}$ \\ 
\hline
10 & gyroelongated square bipyramid & $\mathbb{Z}_{2}$ and $\mathbb{Z}_{4}$
\\ \hline
11 &-- $\quad$ (see section  \ref{ss:CAP-minimisers} for details) & $\mathbb{Z}_{2}$ \\ \hline
12 & icosahedron & $\mathbb{Z}_{2}$, $\mathbb{Z}_{3}$ and $\mathbb{Z}_{5}$
\\ \hline
\end{tabular}
\end{center}
\caption{Known ($N=2,3,4,5, 6, 12$) and conjectured ($N=7,8,9,10,11$) ground states of the $N$ vortex
problem on the sphere assuming identical vortex strengths. See section  \ref{ss:CAP-minimisers} for details.}
\label{ground-states}
\end{table}

Another type of fundamental solutions, whose study is the main topic of this work,
 is given by  relative equilibria (RE). For our specific problem, these are
periodic solutions
in which all vortices  rotate uniformly about a fixed axis at constant angular speed $\omega\in \R$. 
These solutions correspond to critical points of $H$ restricted to the level sets of $\Phi$.
Without loss of generality, the rotation axis is chosen as   the vertical $z$-axis in $\R^3$ throughout this work. 


 An important class of RE is that in which the vortices
are arranged  in $n$ latitudinal rings, each of which  consists  of a regular polygon of $m$ identical vortices. We allow for the possibility of having   
 $p$ vortices at the poles ($p=0,1$ or $2$) so the total
number of vortices  $N=mn+p$. Such RE possess a discrete $\Z_m$-symmetry and
 will be thus called \defn{ $\Z_m$-symmetric RE}.  Several  previous works
have focused on the study of this type of RE \cite{PolvDritschel,Borisov4,MR1811389,LaurentPolz1,CabralMeyerSchmidt,BoattoCabral,Mo11}. In particular, the seminal work of Lim, Montaldi and Roberts
\cite{MR1811389} 
 gives several existence results based on symmetry considerations. 
However, explicit
analytic expressions and stability results for these RE are mainly known 
for the case of $n=1$ ring (see \cite{Mo11} for a comprehensive list of results in this case). When the number of rings $n\geq 2$, the computational
complexity enormously  increases and  one has to settle with numerical investigations as in \cite{Mo11}.  This is somewhat unsettling, 
considering that the $\Z_m$-symmetric RE consisting   of only one ring  lose stability as the number of vortices   
$m$ in each  ring grows (see e.g. \cite{Mo11} and our discussion in section \ref{ss:RE-totalcollision} for precise statements). 

In this work we circumvent the difficulty
to obtain rigorous existence and stability results for $\Z_m$-symmetric RE having  $n\geq 2$ rings  by relying on computer-assisted proofs (CAPs). 
This approach  falls in the category of CAPs in dynamics, which is by now a
well-developed field, with some famous early pioneering works being the
proof of the universality of the Feigenbaum constant \cite{MR648529} and the
proof of existence of the strange attractor in the Lorenz system \cite%
{MR1870856}. We refer the interested reader to the survey papers \cite%
{MR1420838, MR1849323,MR2652784,MR3444942,MR3990999,MR4283203}, as well as
the books \cite{MR2807595,MR3822720,MR3971222}.

\subsection{Contributions}
Our work focuses on the case of equal vortex strengths and develops a theoretical framework for
existence and stability of branches of  $\Z_m$-symmetric\footnote{The 
value of $m$ can be chosen as $1$ corresponding to general asymmetric RE.}   RE parametrised by the angular speed 
$\omega$. This framework
is  implemented in an   INTLab code (available in \cite{KevinCode}) which establishes existence
and provides rigorous bounds for (segments of) the branch using   CAPs. Moreover, the code
 also performs a (nonlinear) stability test which may be validated with a CAP. 
 
 The input for the code
 is a numerical approximation of a non-degenerate RE (configuration and angular speed) with  a
 prescribed $\Z_m$-symmetry. Provided that the given RE approximation has sufficient
 precision and  is far  from bifurcations, 
 the code returns an enclosure of a segment of the branch containing the approximated RE
 and possessing  the prescribed  symmetry. The
 stability procedure involves validation of the positivity of the spectrum of
 a certain matrix and  limitations  
 arise in the presence of eigenvalues which are either too close to zero or which cluster.
 
The fundamental aspects of our theoretical framework and its CAP-implementation are summarised below. 
We divide the presentation of our results on existence, stability and CAPs.

\subsubsection*{Existence.}

%

The first step in our construction is to perform a discrete reduction of the system by $\Z_m$ leading to a  reduced Hamiltonian
system with an $SO(2)$-symmetry. This is the content 
of  Theorem  \ref{th-main-symmetry} whose formulation  is inspired by the previous work \cite{Luis} of the second and third
author. This discrete reduction relies on the permutation symmetry of the vortices and 
corresponds to the restriction of the dynamics to the invariant  submanifold of $M$ formed by $\Z_m$-symmetric configurations\footnote{These are 
generic configurations (not necessarily RE) in which 
 the vortices are organised  in regular $m$-gons on  $n$ latitudinal rings
and possibly in the  presence of $p$ vortices at the poles ($p=0,1,2$) so $N=mn+p$. The $\Z_m$-symmetries of each of the equilibria 
 in Table \ref{ground-states} is indicated in the last column and is 
  illustrated in  Figures \ref{Fig:GS-Known} and \ref{Fig:GS-Conjecture}. In each case,  it is easy to determine the corresponding values of $n$ and $p$ from the figure.}. Theorem  \ref{th-main-symmetry} shows that the dynamics in this invariant manifold is conjugate to the
dynamics of a   symplectic Hamiltonian system
 on a  $2n$-dimensional {\em reduced phase space}, that we denote $M_n$, and whose {\em reduced Hamiltonian function} $h:M_n\to \R$ is given explicitly by \eqref{eq:h(u)}. 
 Moreover, the theorem also shows that this reduced Hamiltonian system possesses an $SO(2)$-symmetry
 corresponding to the simultaneous rotation of the rings. 

The  discrete reduction allows us to cast 
 the problem  of determination of  branches of $\Z_m$-symmetric RE  parametrised by $\omega$ in terms of the 
 determination of branches of critical points of the {\em augmented Hamiltonian} of the reduced system, which is
the function $h_\omega: M_n\to \R$ depending parametrically on $\omega$ given by $h_\omega=h-\omega \phi$, where $\phi:M_n\to \R$ 
is the momentum map of the $SO(2)$-action on $M_n$. After introducing Lagrange multipliers to 
embed $M_n$ in $\R^{3n}$, and introducing an  {\em unfolding parameter} as  in  \cite{MunozFreireGalanetal,Ize} to
deal with  an $SO(2)$-degeneracy coming from the symmetries, we  reduce  the problem of determining critical points of $h_\omega$ 
 to finding zeros of a suitable map 
 \begin{equation}
 \label{eq:F-intro}
F: \R^{d+1}\to \R^{d}, \qquad (x,\omega)\mapsto F(x,\omega).
\end{equation}
The map $F$ involves the derivatives of $h_\omega$ and its explicit form, and the dimension $d$, may be read from \eqref{eq:augmap}. 
 Under the hypothesis that  a non-degenerate zero of $F$ is known (corresponding to a  $\Z_m$-symmetric RE of our problem which is non-degenerate in a sense that
 we make precise in the text) the existence 
 of a local branch of $\Z_m$-symmetric RE parametrised
 by $\omega$ is then guaranteed by the implicit function theorem.  The existence results  
 that we have just outlined  are formalised in Theorem \ref{thm:continuation} and Corollary \ref{cor:existenceREsymmetric}. In particular, the corollary implies the existence of
 local branches of $\Z_m$-symmetric RE emanating from each $\Z_m$-symmetry of the 
 equilibria illustrated in  Figures \ref{Fig:GS-Known} and \ref{Fig:GS-Conjecture} having  $N\neq 7$.\footnote{Due to
 the degeneracy  of the $N=7$ configuration (mentioned in a previous footnote) the existence of branches of RE cannot be concluded from the corollary if $N=7$. Despite this, we 
 analytically determine a $\Z_5$-symmetric branch of RE emanating from the symmetry depicted in Figure \ref{Fig:GSN7Z5} (containing only 
 $1$ ring) and determined a region of  nonlinear stability for it in section \ref{ss:CAP-RE-groundstates}.}  These RE are obtained via  a suitable vertical displacement of the rings, which results in their  uniform rotation with small angular frequency 
 about the vertical axis. 
 
 Finally we  indicate that   this paper also contains an original 
  general  existence result which is not used in our framework but provides a 
  groundwork for our investigation. It is valid for arbitrary vortex strengths and shows existence of two local branches of 
  RE parametrised by $\omega$ emanating
from any non-degenerate equilibrium (Theorem \ref{th:existenceRE}). 
\begin{figure}[ht]
\centering
\begin{subfigure}{.22\textwidth}
  \centering
  \includegraphics[width=.9\linewidth]{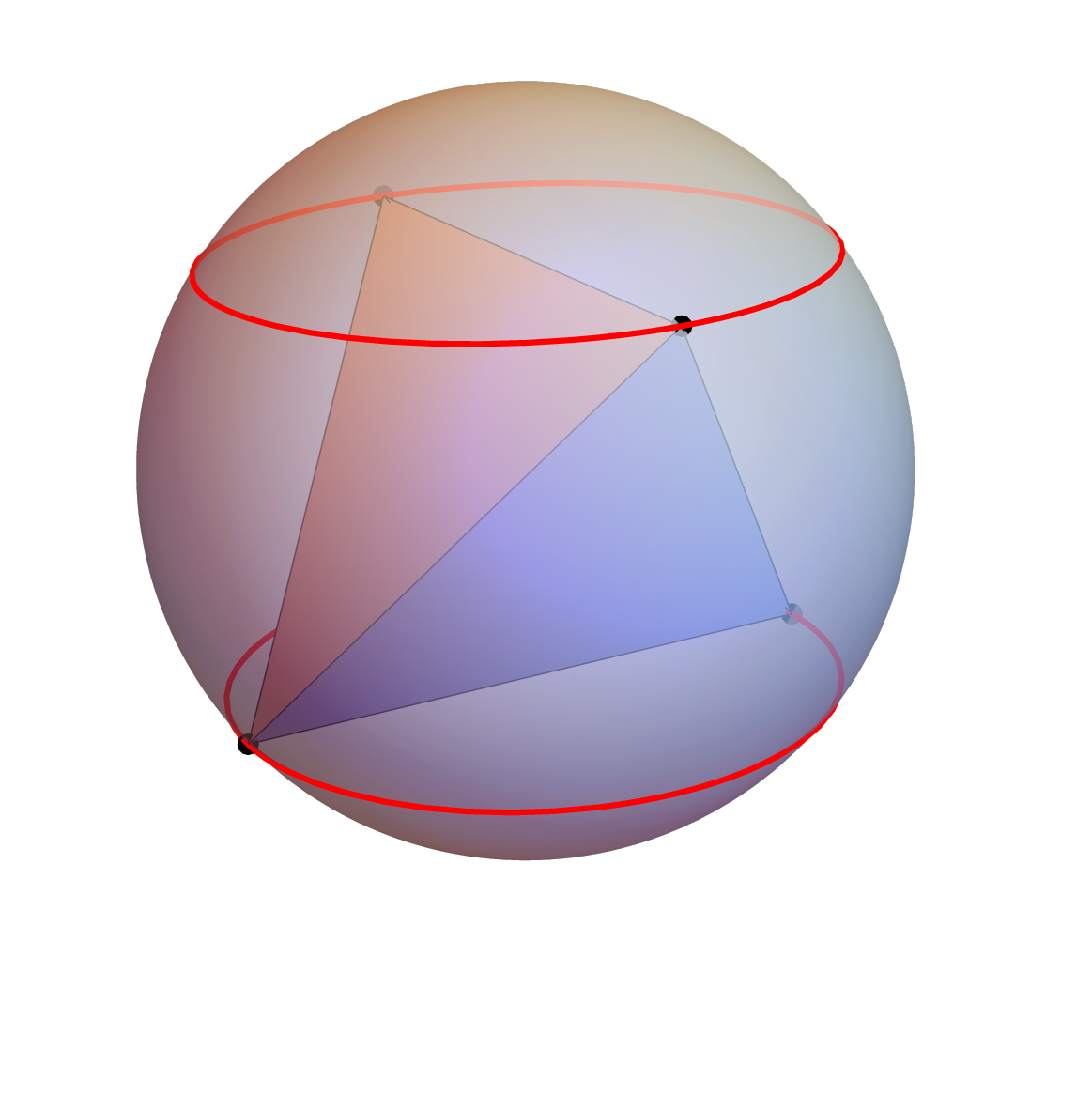}  
  \caption{$N=4$, $\mathbb{Z}_2$}
  \label{Fig:GSN4Z2}
\end{subfigure} 
\begin{subfigure}{.22\textwidth}
  \centering
  \includegraphics[width=.9\linewidth]{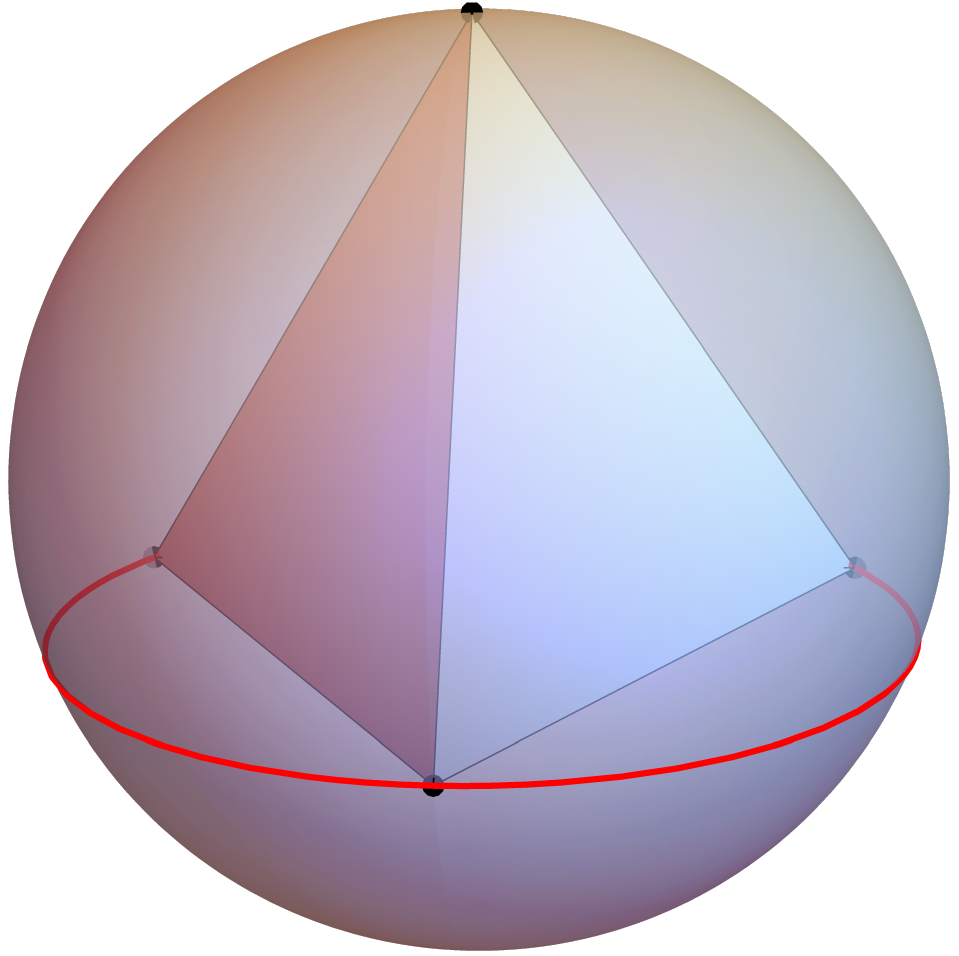}
  \caption{$N=4$, $\mathbb{Z}_3$}
  \label{Fig:GSN4Z3}
\end{subfigure} \qquad
\begin{subfigure}{.22\textwidth}
  \centering
  \includegraphics[width=.9\linewidth]{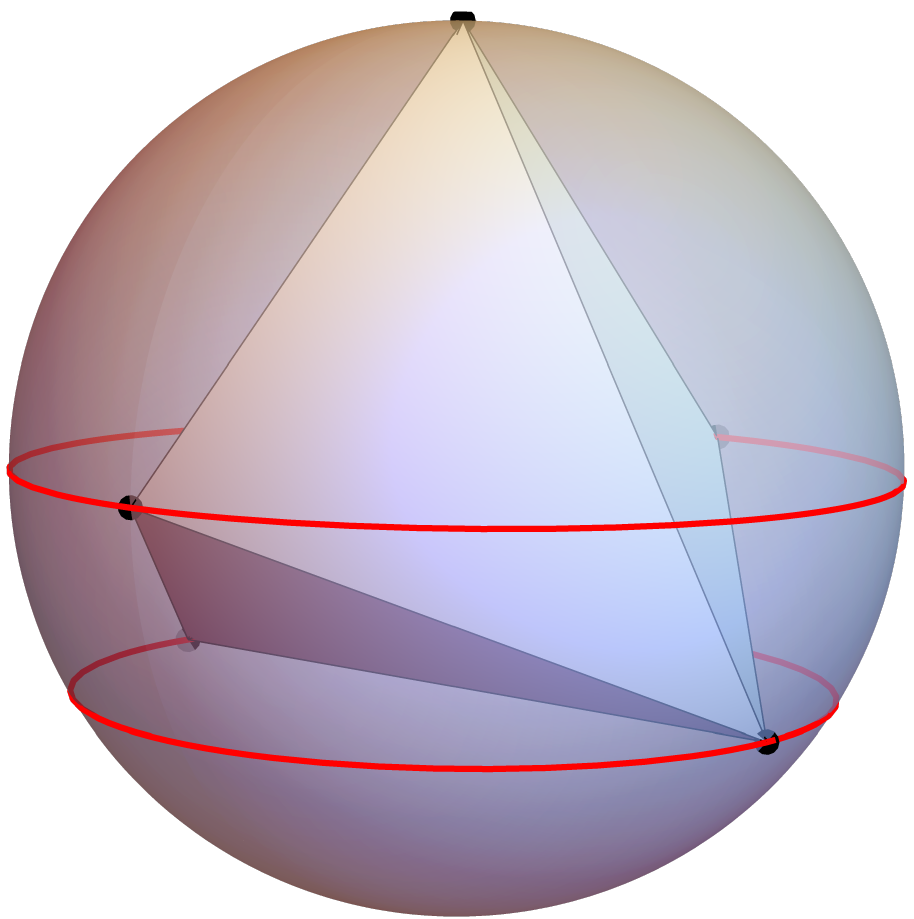}  
  \caption{$N=5$, $\mathbb{Z}_2$}
  \label{Fig:GSN5Z2}
\end{subfigure} 
\begin{subfigure}{.22\textwidth}
  \centering
  \includegraphics[width=.9\linewidth]{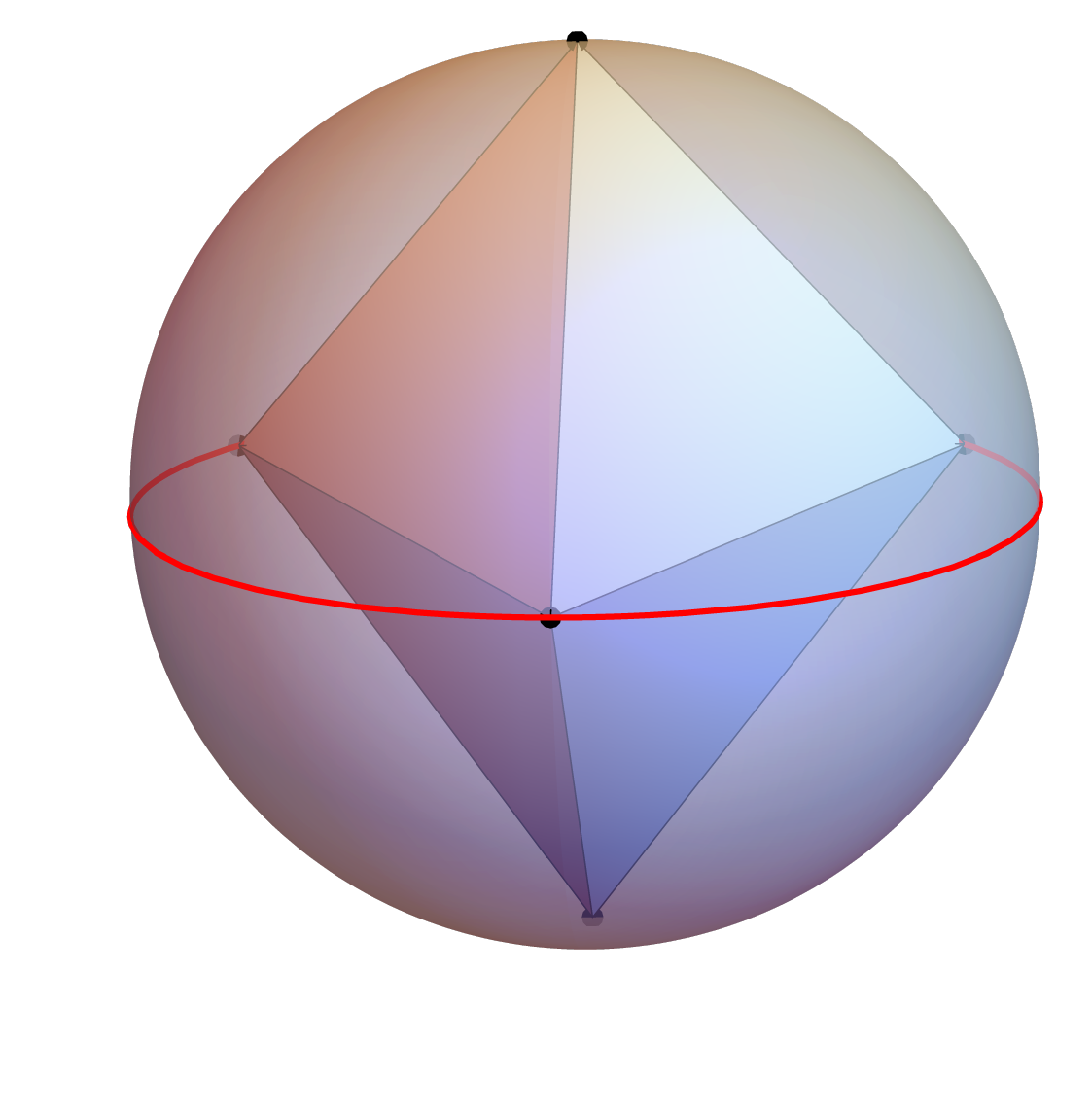}
  \caption{$N=5$, $\mathbb{Z}_3$}
  \label{Fig:GSN5Z3}
\end{subfigure}  \\
\begin{subfigure}{.22\textwidth}
  \centering
  \includegraphics[width=.9\linewidth]{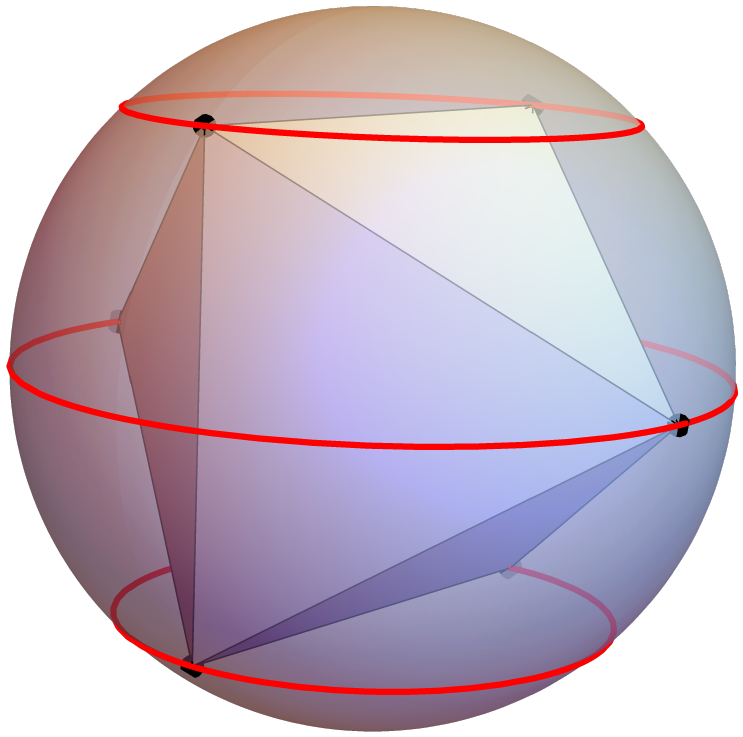}  
  \caption{$N=6$, $\mathbb{Z}_2$}
  \label{Fig:GSN6Z2}
\end{subfigure}
\quad
\begin{subfigure}{.22\textwidth}
  \centering
  \includegraphics[width=.9\linewidth]{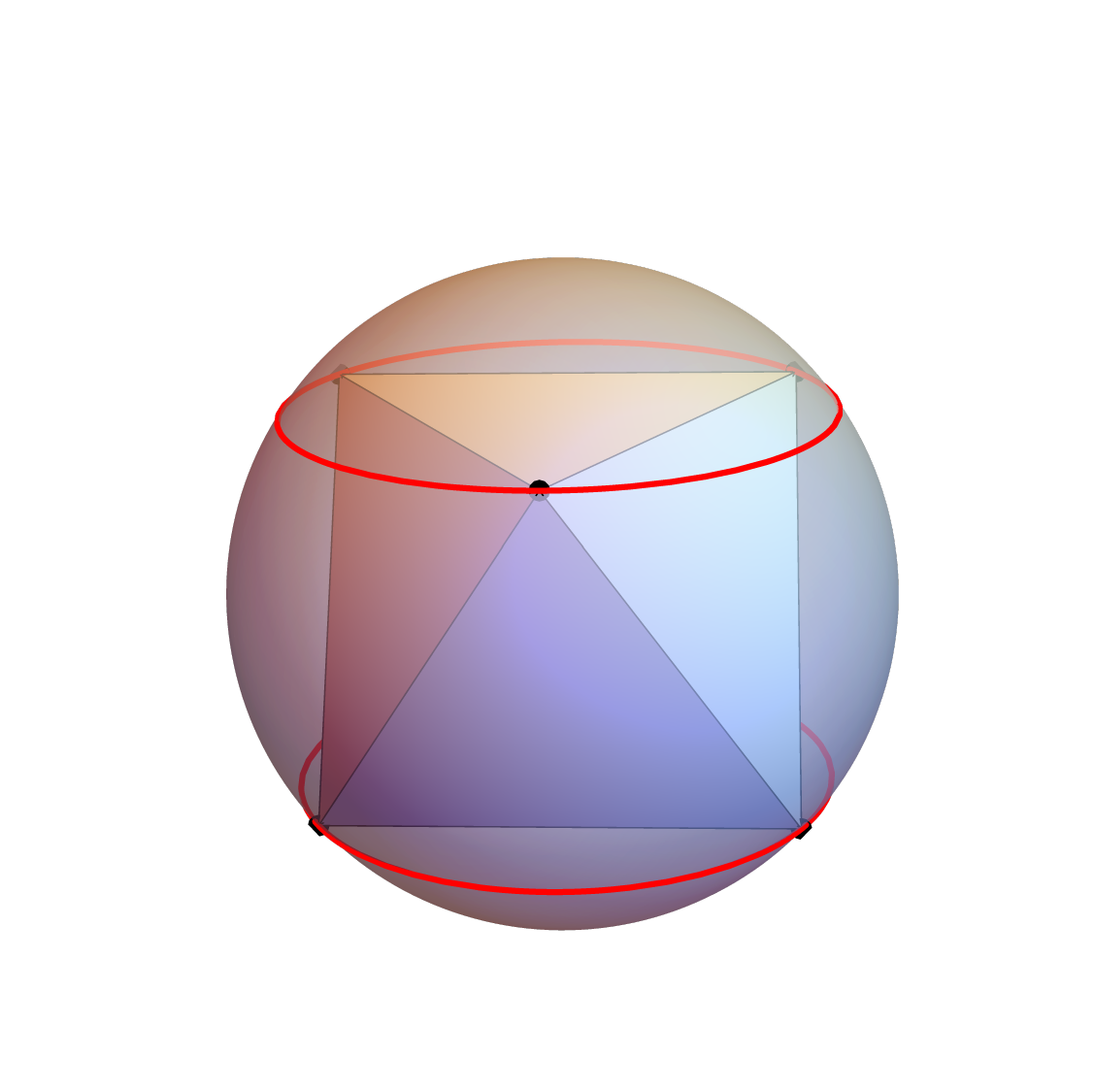}  
  \caption{$N=6$, $\mathbb{Z}_3$}
  \label{Fig:GSN6Z3}
\end{subfigure}
\quad
\begin{subfigure}{.22\textwidth}
  \centering
  \includegraphics[width=.9\linewidth]{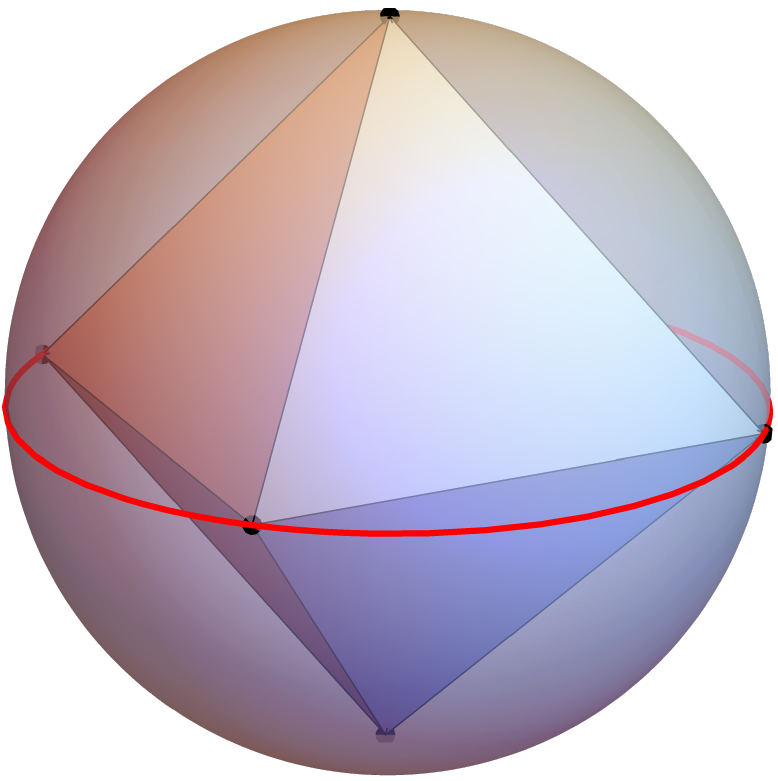}  
  \caption{$N=6$, $\mathbb{Z}_4$}
  \label{Fig:GSN6Z4}
\end{subfigure} \\
\begin{subfigure}{.22\textwidth}
  \centering
  \includegraphics[width=.9\linewidth]{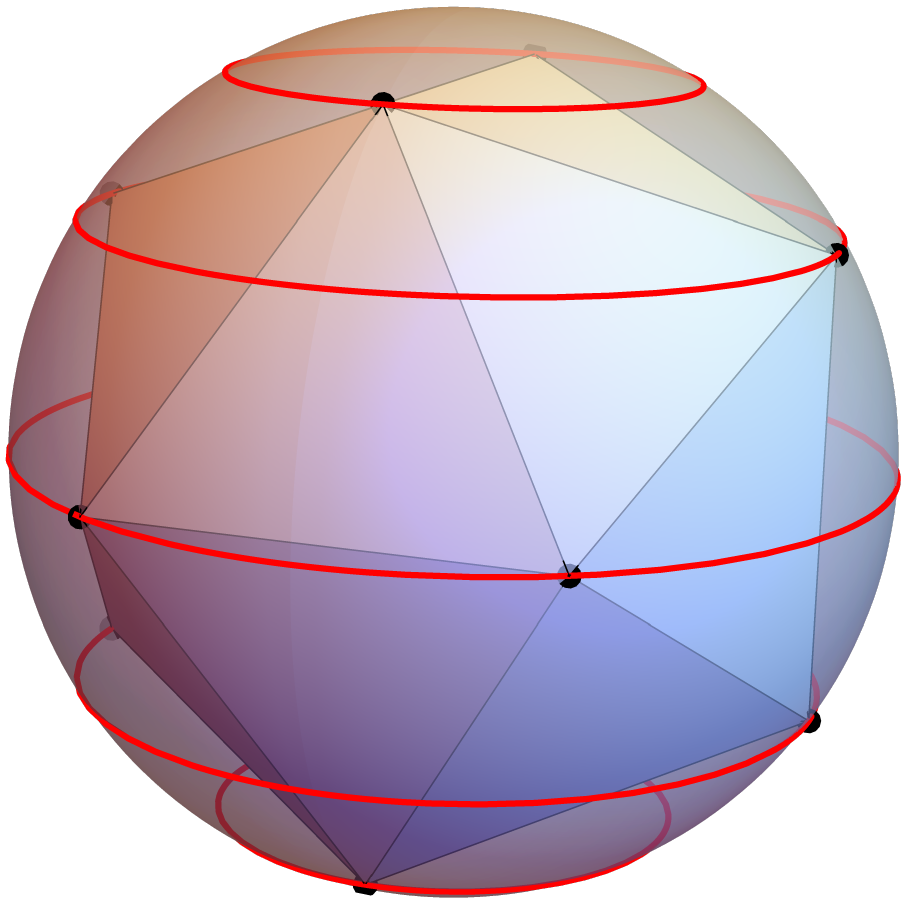}  
  \caption{$N=12$, $\mathbb{Z}_2$}
  \label{Fig:GSN12Z2}
\end{subfigure}
\quad
\begin{subfigure}{.22\textwidth}
  \centering
  \includegraphics[width=.9\linewidth]{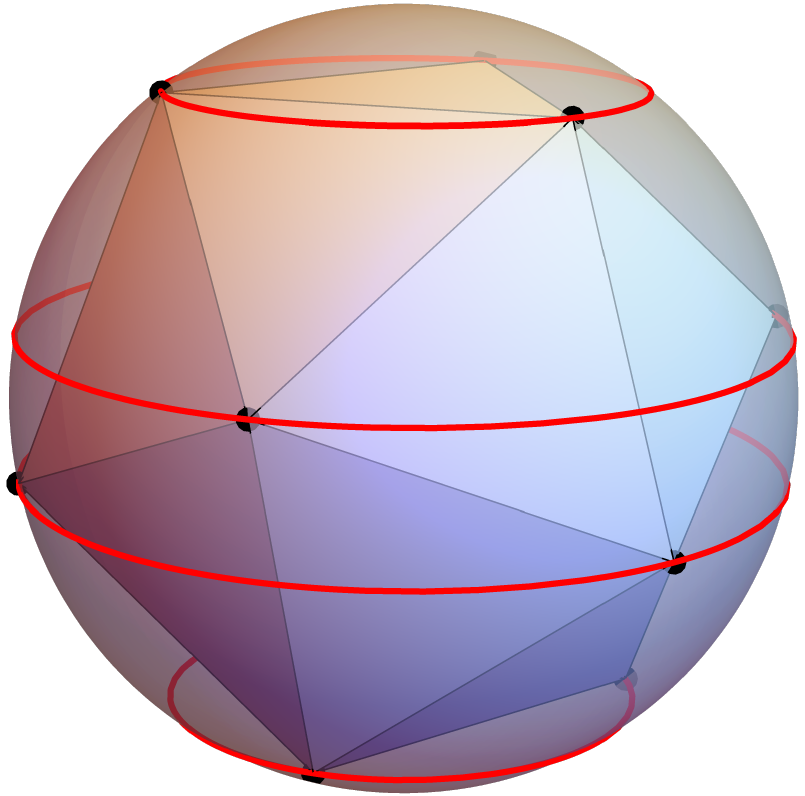}  
  \caption{$N=12$, $\mathbb{Z}_3$}
  \label{Fig:GSN12Z3}
\end{subfigure}
\quad
\begin{subfigure}{.22\textwidth}
  \centering
  \includegraphics[width=.9\linewidth]{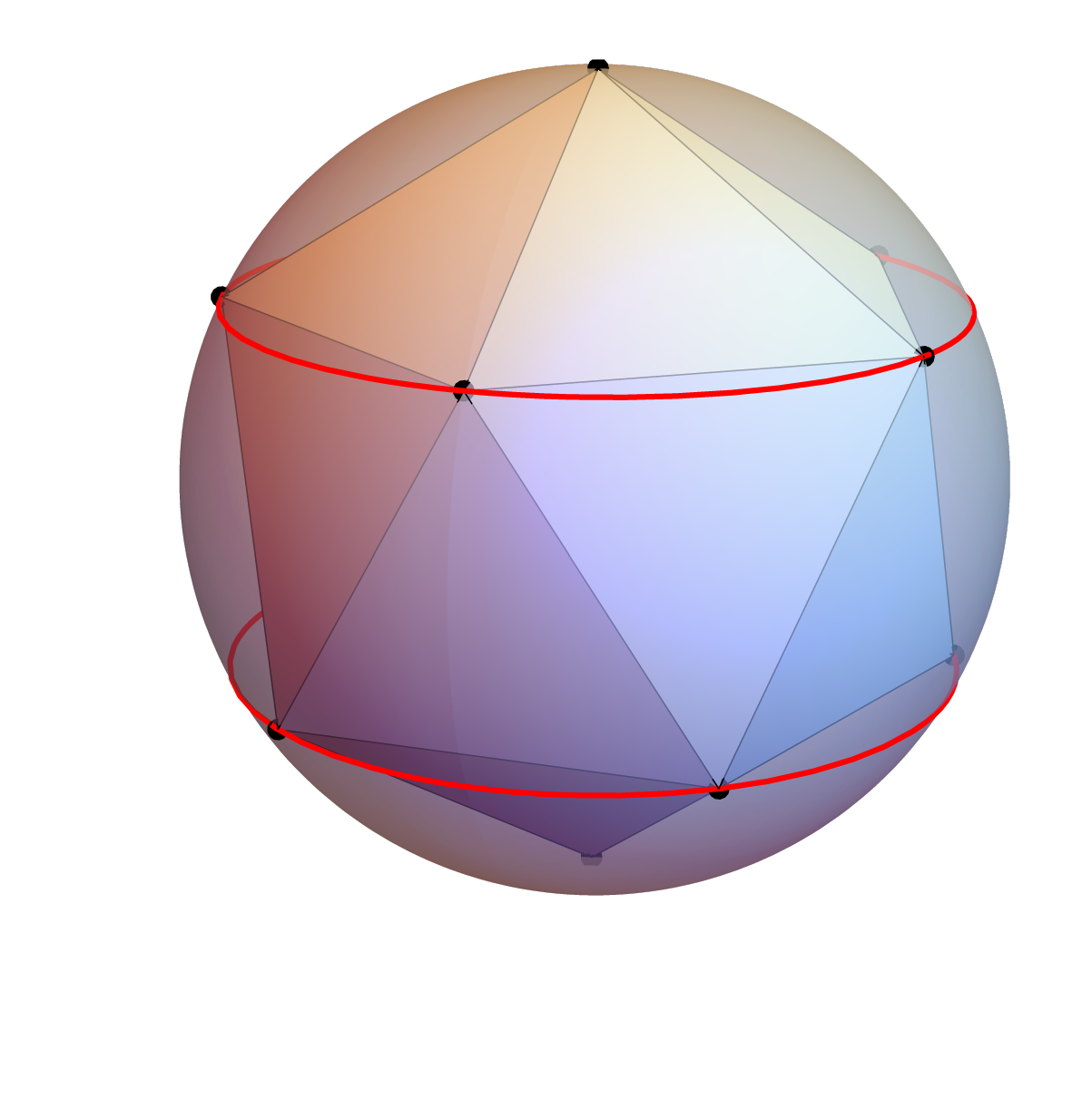}  
  \caption{$N=12$, $\mathbb{Z}_4$}
  \label{Fig:GSN12Z4}
\end{subfigure} 
\caption{Ground states for $N=4,5,6, 12$ and their $\mathbb{Z}_m$-symmetries.}
\label{Fig:GS-Known}
\end{figure}

\begin{figure}[ht]
\centering
\begin{subfigure}{.22\textwidth}
  \centering
  \includegraphics[width=.9\linewidth]{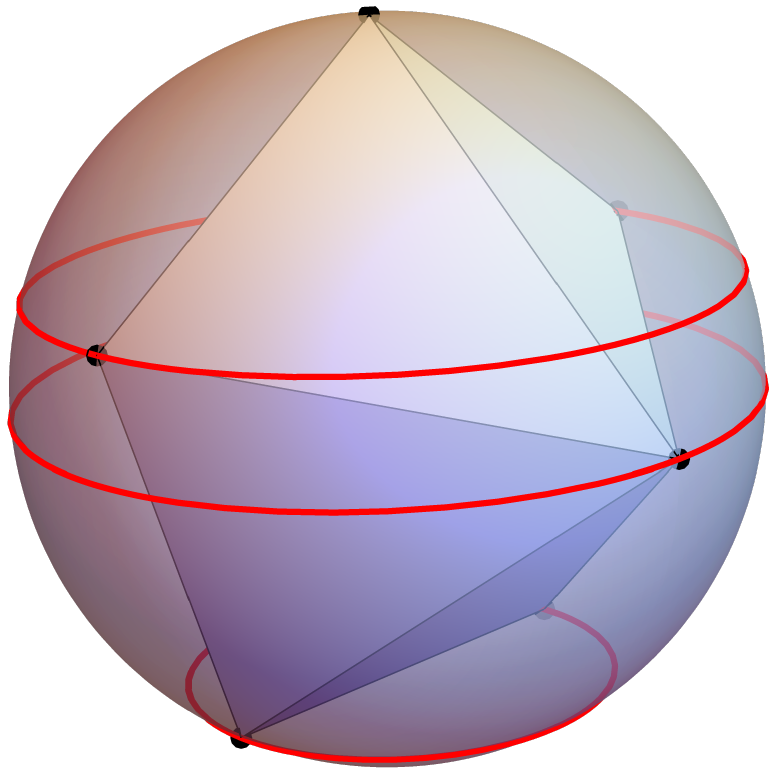}  
  \caption{$N=7$, $\mathbb{Z}_2$}
  \label{Fig:GSN7Z2}
\end{subfigure} 
\begin{subfigure}{.22\textwidth}
  \centering
  \includegraphics[width=.9\linewidth]{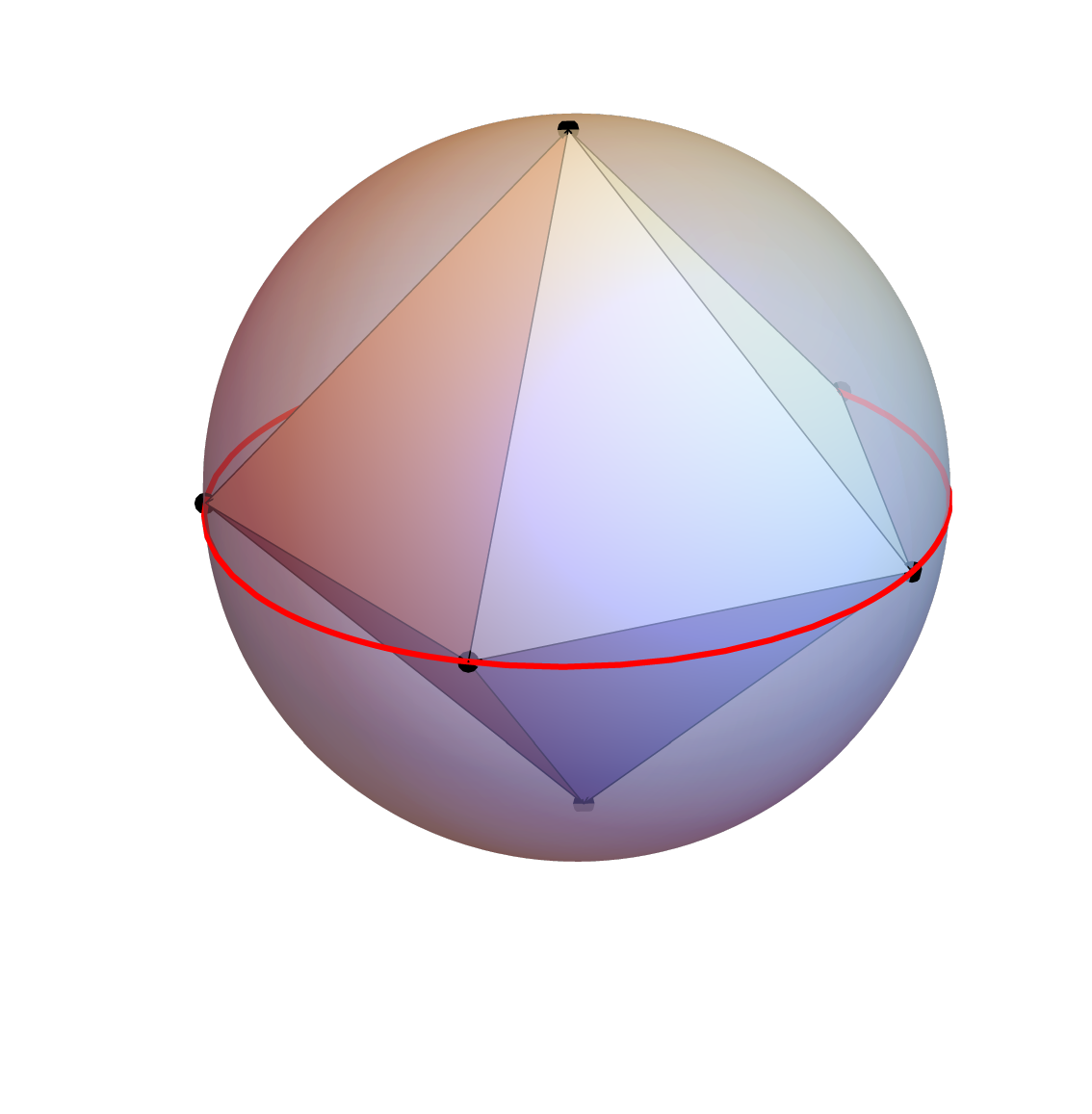}
  \caption{$N=7$, $\mathbb{Z}_5$}
  \label{Fig:GSN7Z5}
\end{subfigure} \qquad
\begin{subfigure}{.22\textwidth}
  \centering
  \includegraphics[width=.9\linewidth]{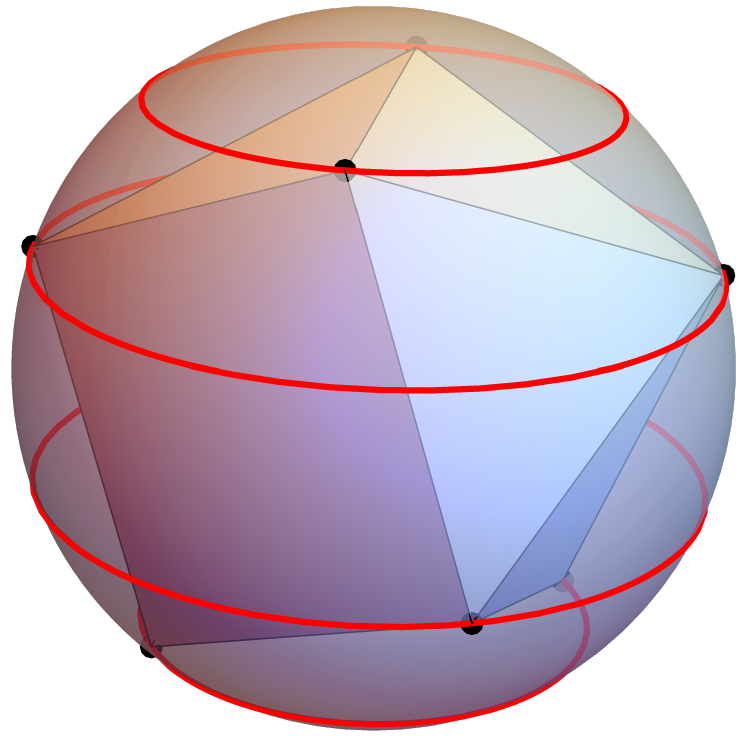}  
  \caption{$N=8$, $\mathbb{Z}_2$}
  \label{Fig:GSN8Z2}
\end{subfigure} 
\begin{subfigure}{.22\textwidth}
  \centering
  \includegraphics[width=.9\linewidth]{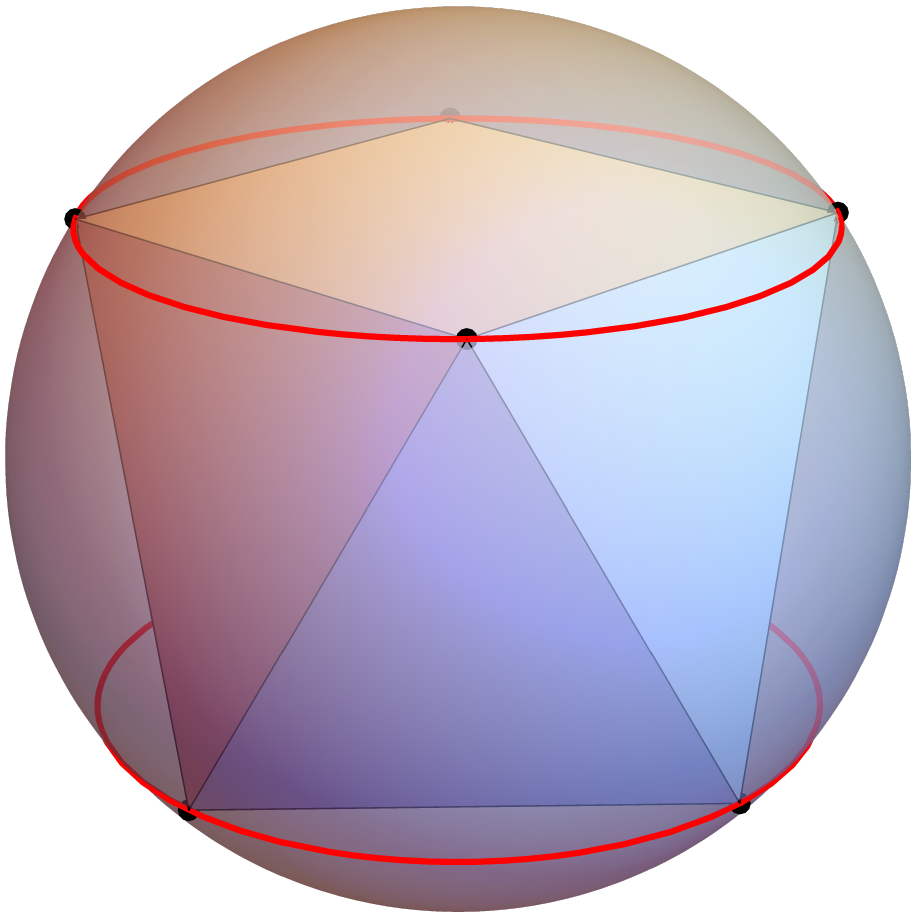}
  \caption{$N=8$, $\mathbb{Z}_4$}
  \label{Fig:GSN8Z4}
\end{subfigure}  \\
\begin{subfigure}{.22\textwidth}
  \centering
  \includegraphics[width=.9\linewidth]{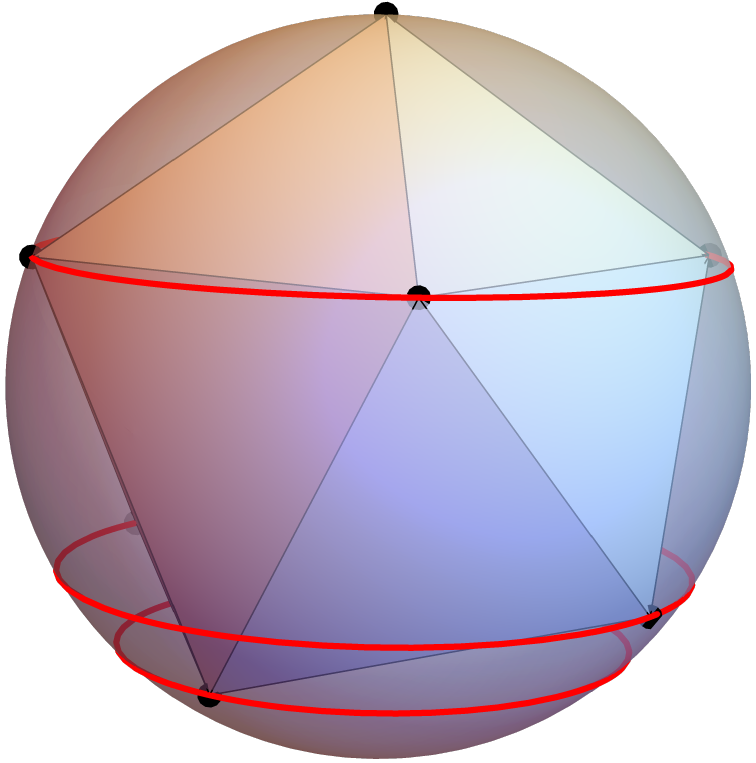}  
  \caption{$N=9$, $\mathbb{Z}_2$}
  \label{Fig:GSN9Z2}
\end{subfigure} 
\begin{subfigure}{.22\textwidth}
  \centering
  \includegraphics[width=.9\linewidth]{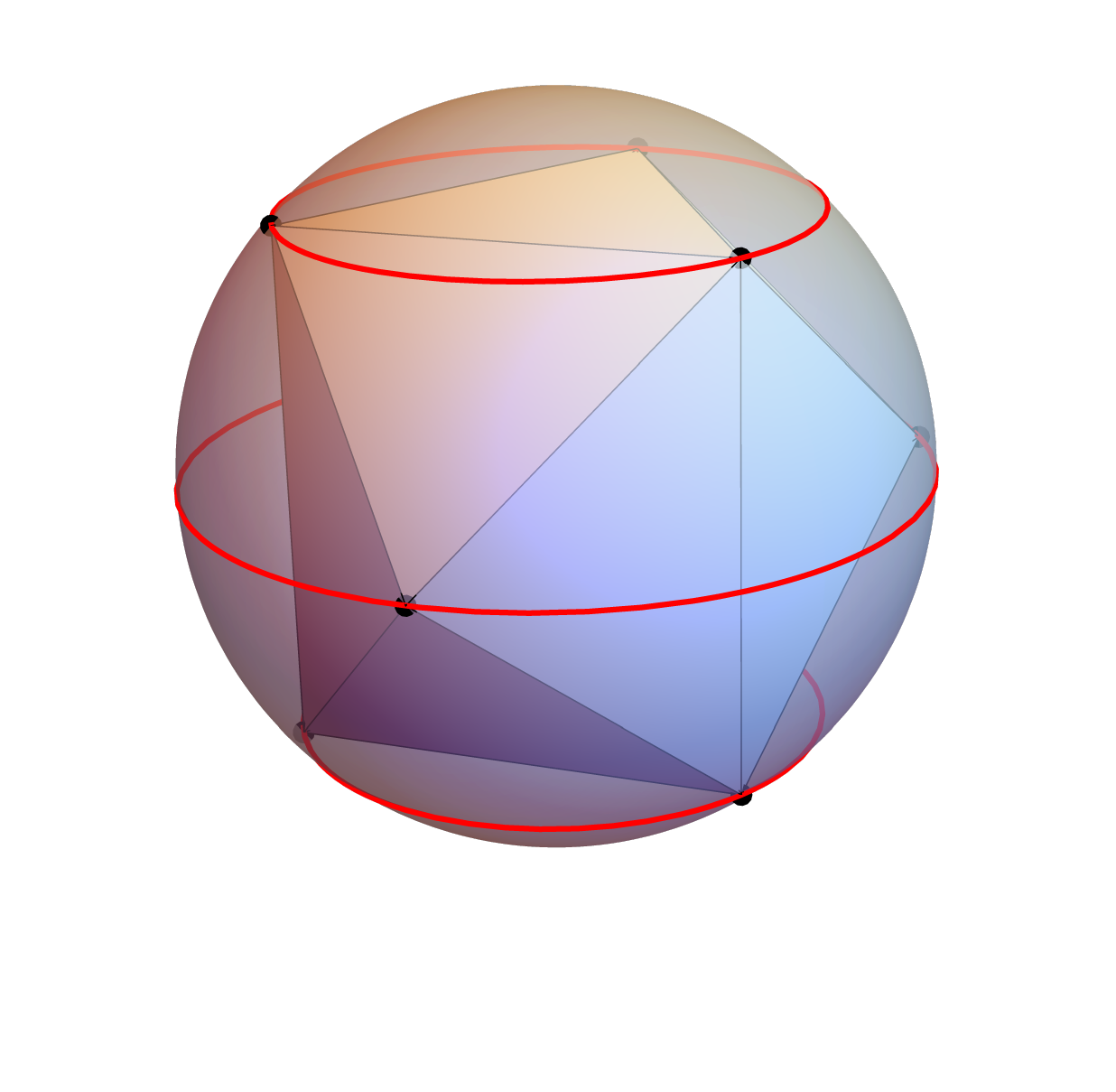}
  \caption{$N=9$, $\mathbb{Z}_3$}
  \label{Fig:GSN9Z3}
\end{subfigure} \qquad
\begin{subfigure}{.22\textwidth}
  \centering
  \includegraphics[width=.9\linewidth]{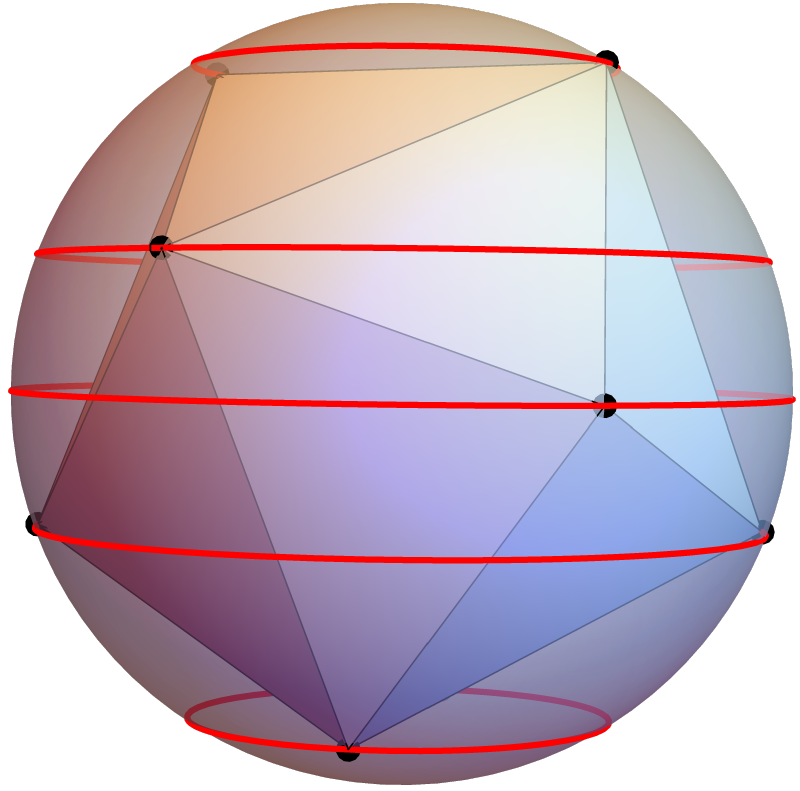}  
  \caption{$N=10$, $\mathbb{Z}_2$}
  \label{Fig:GSN10Z2}
\end{subfigure} 
\begin{subfigure}{.22\textwidth}
  \centering
  \includegraphics[width=.9\linewidth]{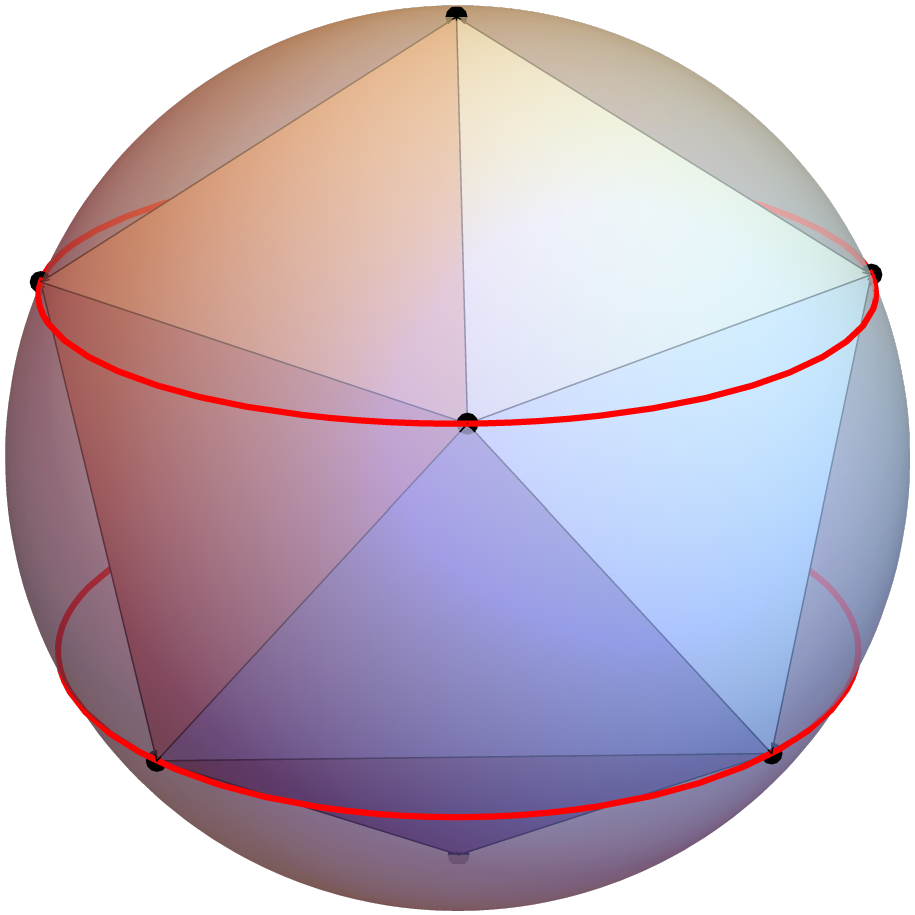}
  \caption{$N=10$, $\mathbb{Z}_4$}
  \label{Fig:GSN10Z4}
\end{subfigure} \\
\begin{subfigure}{.22\textwidth}
  \centering
  \includegraphics[width=.9\linewidth]{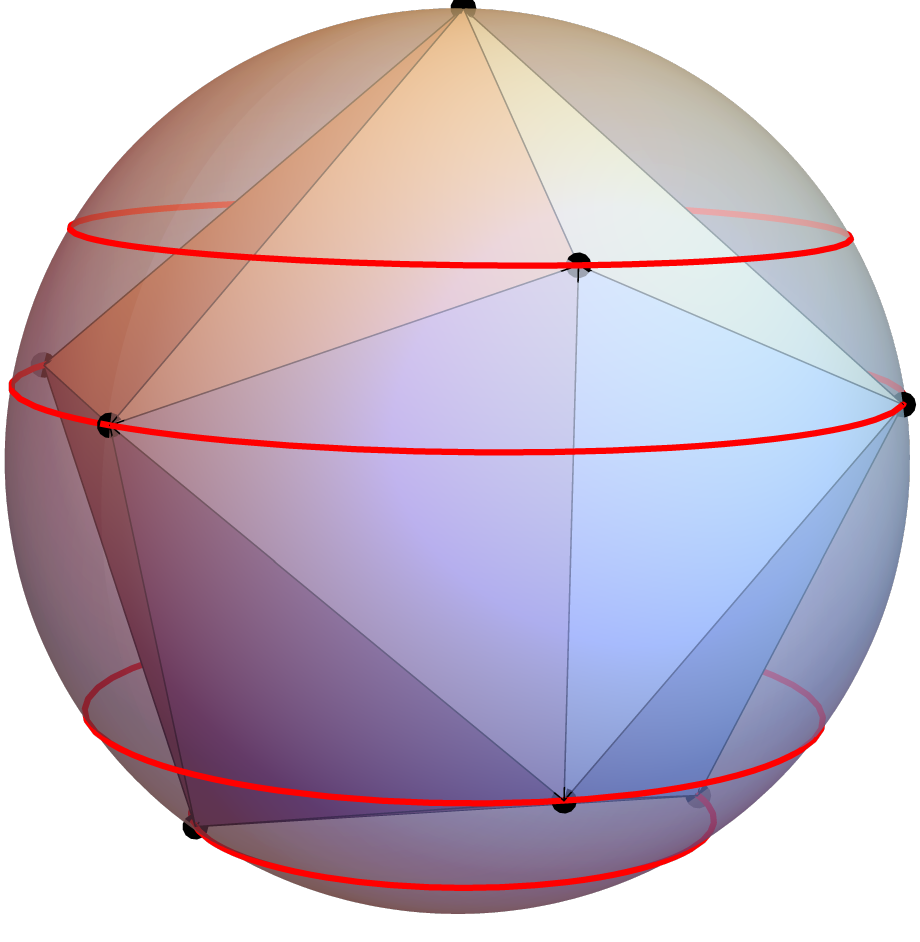}  
  \caption{$N=11$, $\mathbb{Z}_2$}
  \label{Fig:GSN11Z2}
\end{subfigure} 
\caption{Conjectured ground states for $N=7, 8, 9, 10, 11$ and their $\mathbb{Z}_m$-symmetries.}
\label{Fig:GS-Conjecture}
\end{figure}

\subsubsection*{Stability.}

Our nonlinear stability analysis relies on  the \emph{energy-momentum
method} of Patrick \cite{Patrick92}. Such method concludes Lyapunov stability modulo a subgroup, which 
in our case generically translates into orbital stability of a periodic orbit (see Proposition \ref{prop:stability}). 
The method examines positivity of a certain Hessian matrix which  we  
block-diagonalise exploiting  the $\Z_m$-symmetries following closely the construction of Laurent-Polz, Montaldi and Roberts \cite{Mo11}.
 Our approach differs from    \cite{Mo11}  in two ways. On the one hand, instead of using spherical coordinates, 
  we work with the extrinsic geometry induced by our embedding of  $M$ on $\R^{3N}$. This allows us to interpret
  tangent vectors to $M$ as vectors in the ambient space $\R^{3N}$, which is convenient for the implementation of CAPs.
   Secondly, we refine the block diagonalisation of   \cite{Mo11}  by exploiting  the complex 
  structure of some blocks as determined by Theorem \ref{th:complexblocks}.
   The great technicality  involved in this block diagonalisation and its refinement is compensated 
  by  obtaining larger 
ranges of application of CAPs,  since the method prevents  clustering of eigenvalues of large matrices.

A summary of the stability test which indicates the matrix blocks which need to be computed, and their dimension,
according to the values of $m$, $n$ and $p$ is given in subsection \ref{ss:Summary}.

\subsubsection*{Computer-assisted proofs (CAPs).}

%

The  CAPs in this paper are obtained via a finite dimensional
Newton-Kantorovich like theorem (see \cite{MR0231218} for the original
version), which is similar to the well-known interval Newton's method \cite%
{MR694437,MR0231516} and Krawczyk's operator approach \cite%
{MR255046,MR1057685}. This method allows us to find zeros of the mapping 
$F$ in \eqref{eq:F-intro} to find enclosures of the branches of $\Z_m$-symmetric RE and also, via a suitable formulation, to validate eigenvalues 
 necessary for the stability test.

As mentioned above, the CAPs are implemented in an
 INTLab code available in \cite{KevinCode}, whose input is a numerical approximation of a $\Z_m$-symmetric RE (configuration and angular speed).  If the approximation has sufficient precision and is not too close to a bifurcation,  the code 
 proves existence of a branch of
RE with this symmetry, determines an  enclosure and validates the stability test.

The range of applications of our framework and our code is  exemplified by 
the results presented in section \ref{s:CAPs-application}. 
In particular,  Theorem \ref{th:gs} proves  stability of the equilibrium configurations 
in Table \ref{ground-states} for $N=8,9,10,11$ (the case $N=7$ is degenerate and our test is inconclusive, and the stability
for other values of $N$ in the table was known).
We then prove existence and   
find  enclosures of  segments of stable branches of RE emanating from  the equilibria in Table \ref{ground-states}. These results 
are presented in 
 section \ref{ss:CAP-RE-groundstates}, and allow us
to conjecture which  branches of RE minimise $H$ 
on the level sets   $\Phi^{-1}(\mu)\subset M$ for  $\mu$ close to zero for the different values of $N$.
Finally, in section \ref{ss:RE-totalcollision}, we  investigated RE near total collision. The results of \cite{Mo11} imply that for   
 $N\geq 10$ the $\Z_m$-symmetric RE  consisting of only 1-ring are unstable. Our results  in section \ref{ss:RE-totalcollision} provide an educated first
 guess of the shape of the  RE which minimises $H$ on the level sets of $\Phi$ near total collision  for $N=10,11,12$.
 
\subsection{Structure of the paper} We start by reviewing  some known preliminary material  in section \ref{s:prelim}.
This serves to introduce  concepts and   notation  used throughout the paper. Moreover, 
it is useful  to recall known details and results on RE, 
the energy-momentum method and the CAPs that we use. We then focus on results on existence 
of branches of RE in section \ref{s:existence}. Our main original results in this section are Theorem \ref{th:existenceRE}
on the existence of branches of RE near equilibrium for arbitrary vortex strengths, 
Theorem   \ref{th-main-symmetry} on the discrete $\Z_m$-reduction in the case of equal vorticities, and 
Theorem \ref{thm:continuation} and Corollary \ref{cor:existenceREsymmetric}
on the existence of local branches of $\Z_m$-symmetric RE. Section \ref{sec:Stability} 
focuses on the application of the energy-momentum method for nonlinear  stability of RE. Most of this section 
is a development of the block-diagonalisation of \cite{Mo11} which, as mentioned above, is better suited for the implementation of CAPs.  
Our  refinement of this construction is given in Theorem \ref{th:complexblocks} which exploits the complex structure 
of some blocks.  A useful summary of the blocks that need to be constructed and whose eigenvalues
need to be calculated according to the
values of $m$, $n$ and $p$ is given in subsection \ref{ss:Summary}. In Section \ref{sec:CAPs} we 
explain how CAPs are implemented using the setting of sections  \ref{s:existence} and 
\ref{sec:Stability} to establish existence and stability of branches of $\Z_m$-symmetric RE.
Finally, we give examples of the range of applications of our framework and our code in section \ref{s:CAPs-application}.
The paper contains a series of appendices which complement the main body of the text.

\section{Preliminaries}
\label{s:prelim}

In this section we recall the equations of motion and basic known properties of the equations of motion
of the $N$-vortex on the sphere. This allows us to introduce the notation and concepts used ahead. In particular
 our working definition of RE is given in Definition \ref{def:RE}. We then
recall some properties of RE which follow from the general theory of RE for Hamiltonian systems with symmetry
which are used in our construction.
We also recall the energy momentum method for nonlinear stability  in subsection \ref{ss:stability-prelim}
and the Newton-Kantorovich like theorem for our CAPs in subsection \ref{ss:CAP-prelim}.

\subsection{Equations of motion} 

The equations of motion of the $N$-vortex problem on the sphere are a 
Hamiltonian system on the phase space $M$ obtained as the cartesian product of $N$ copies of the unit sphere in $\R^3$ minus the {\em collision set}
$\Delta$. That is, $M:=(S^2\times \dots \times S^2)\setminus\Delta$ where 
\begin{equation*}
S^2:=\{w \in \R^3 \, :\, \|w\|=1\} \qquad \mbox{and} \qquad \Delta:=\{(v_1,\dots, v_N)\in M \, :\, v_i=v_j \,\, \mbox{for some $i\neq j$} \}.
\end{equation*}
The Hamiltonian 
function $H:M\to \R$ and symplectic form $\Omega$ on $M$ are given by
\begin{equation*}
H(v):=-\frac{\Gamma_i \Gamma_j}{4\pi}\sum_{i<j} \ln \left ( \left \| v_i -v_j  \right \|^2 \right ), \qquad \Omega:=\sum_{i=1}^N\Gamma_i \pi^*_i \omega_{S^2}. 
\end{equation*}
Here $v=(v_1,\dots, v_N)\in M$ specifies the position of each of the vortices whose respective (constant) intensities are denoted by the scalars $\Gamma_i\neq 0$. 
The map 
$\pi_i:M\to S^2$ denotes the  projection onto the $i^{th}$ factor of the cartesian product $S^2\times \dots \times S^2$ and $\omega_{S^2}$ is the standard area form on 
$S^2$. The resulting equations of motion are
\begin{equation}
\label{eq:motion-diff-vorticities}
\dot v_j=-\frac{1}{\Gamma_j} v_j \times \nabla_{v_j} H = \frac{1}{4\pi}\sum_{i\neq j}  \Gamma_i \frac{v_i \times v_j} {\|v_i-v_j\|^2}, \qquad i=1,\dots, N,
\end{equation}
where $\times$ denotes the cross product in $\R^3$. Owing to the Hamiltonian structure,  the Hamiltonian function $H$ is a first integral.

\subsection{Rotational symmetries, center of vorticity, equilibria  and relative equilibria}

\paragraph{Rotational symmetries.}
The position  of the vortices in the sphere is specified by $v=(v_1,\dots, v_N)\in M$. This tacitly assumes a choice of an inertial frame 
whose origin lies at the center of the sphere
 which defines cartesian coordinates for the $v_j$'s.  There is freedom in the  choice of orientation of this inertial frame and this is reflected by 
the invariance of the problem under the $SO(3)$ action that
simultaneously rotates all vortices. Specifically we consider the $SO(3)$ action on $M$ defined by
\begin{equation*}
(g,v)\mapsto g.v:=(gv_1,\dots, gv_N)\in M, \qquad g\in SO(3), \; v=(v_1,\dots, v_N)\in M.
\end{equation*}
For $N\geq 3$ this action is free since  the collisions do not belong to $M$. It is also a proper action since $SO(3)$ is compact.

It is easily checked that both the Hamiltonian and the symplectic form are invariant under this action and  as a consequence, the equations of motion \eqref{eq:motion-diff-vorticities}
are $SO(3)$-equivariant.
 
 \paragraph{Center of vorticity.}
 One may easily show  that  the components of the map  
 \begin{equation}
 \label{eq:Phi-general-defn}
\Phi:M\to \R^3, \qquad \Phi(v)=\Gamma_1v_1+\dots + \Gamma_Nv_N,
\end{equation}
are  first integrals of the equations of motion  \eqref{eq:motion-diff-vorticities}. It is common to refer to $\Phi(v)$ as the {\em center of vorticity} 
of the vortex configuration $v=(v_1,\dots , v_N)$. The existence of these first integrals is in fact an instance of Noether's theorem. 
Upon the standard identification of the dual Lie algebra $\so(3)^*$ with $\R^3$, the function $\Phi$ can be geometrically interpreted as the {\em momentum map} associated to the
symplectic action of $SO(3)$ on the symplectic manifold $(M,\Omega)$. 

For $N\geq 3$, the freeness of the action   guarantees that
$\Phi$ is a submersion onto its image. As a consequence, if $\mu\in \R^3$ is such that $\Phi(v)=\mu$ for some $v\in M$, then
$\Phi^{-1}(\mu)$ is a smooth submanifold of $M$ of codimension 3 and $T_v\Phi^{-1}(\mu)=\ker d\Phi (v)$.

The  momentum map $\Phi$ plays a  fundamental role in the study of relative equilibria. 
An important property of $\Phi$ is its  equivariance with respect to the coadjoint representation which, in our interpretation, is simply the 
standard linear action of $SO(3)$ on $\R^3$, namely, one has
\begin{equation}
 \label{eq:Mom-map-equiv}
\Phi(g.v)=g\Phi(v), \quad \forall v\in M, \; g\in SO(3).
\end{equation}

\paragraph{Equilibrium points and ground states.} The simplest solutions of the equations of motion are the equilibria which are in one-to-one correspondence with the critical points of $H:M\to \R$.
Due to the $SO(3)$ invariance of the problem, equilibrium points are never isolated: if $v_0\in M$ is an equilibrium of \eqref{eq:motion-diff-vorticities}
so is $g.v_0$ for any $g\in SO(3)$. In other words, the  orbit $SO(3).v_0 \subset M$ is comprised of equilibrium points. 

If all the vortex strengths $\Gamma_i$ have the same sign, then $H$ is bounded from below and it assumes a global minimum on $M$. The 
minimising configurations are called \defn{ground states} and 
form a very  important kind of equilibria from the physical point of view.  Again, the $SO(3)$-invariance of $H$ implies that if $v_0\in M$ is a ground state then  the orbit $SO(3).v_0 \subset M$ consists of ground states.
The ground states of the system are stable in a sense that will be made precise below.
As mentioned in the introduction, proving that a given equilibrium is the ground state of the system  is a very difficult problem for which very little is known.

\paragraph{Relative equilibria.} The next  type of fundamental solutions are  the so-called {\em relative equilibria} (RE) which are the main subject of this work. This kind of solutions
may exist for any system of differential equations which is equivariant with respect to the action of a continuous  symmetry group. There are several equivalent definitions
of relative equilibria. For our purposes, it suffices to define them  as 
solutions of the equations  that at the same time are orbits of a one-parameter subgroup of the symmetry group. Thus, in our case, relative equilibria have the form
$v(t)=e^{\xi t}.a$ for a fixed $a \in M$ and a skew-symmetric matrix $\xi\in \so(3)$. This is a periodic solution in which
 the vortices steadily rotate along the axis passing through the center of the sphere in the direction of the vector $\xi_e \in \R^3$ satisfying $\langle \xi_e\rangle =\ker \xi$
and whose angular velocity $\omega_e$ satisfies $2\omega_e^2=\mbox{trace}(\xi \xi^T)$. 

Just like equilibrium points, relative equilibria are never isolated:
if $v(t)=e^{\xi t}.a$ is a relative equilibrium and $g\in SO(3)$ then, by equivariance of \eqref{eq:motion-diff-vorticities}, 
$\tilde v(t):=g.v(t)$ is also a solution. However, we may
write
\begin{equation*}
\tilde v(t)=g.v(t)=(ge^{\xi t}g^{-1}).(g.a)=e^{(g \xi g^{-1}) t}.(g.a) =e^{\tilde \xi t} .\tilde a,
\end{equation*}
where $\tilde \xi := g \xi g^{-1}\in \so(3)$ and $\tilde a:=g.a\in M$, which shows that $\tilde v(t)$ is also a relative equilibrium
with axis of rotation $\tilde \xi_e=g\xi_e$ and the same angular frequency $\tilde \omega_e=\omega_e$. Physically, this means that the orientation 
of the axis of rotation of a relative equilibrium is unessential and may be arbitrarily chosen via a suitable  rotation of the inertial frame. On the other hand, relative
equilibria having distinct angular frequencies are not related by a rotation. Therefore, for the rest of the paper, we will
restrict our attention to relative equilibria for which  the axis of rotation is the $z$-axis and we will give a prominent role to the angular velocity $\omega$. 
In accordance with this,  we search for solutions of  \eqref{eq:motion-diff-vorticities} of the form
\begin{equation}
\label{eq:RE-ansatz}
v(t)=e^{\omega J_3 t}.a =(e^{\omega J_3 t}a_1, \dots, e^{\omega J_3 t}a_N), \qquad J_3:=\begin{pmatrix} 0 & -1 &0  \\ 1 & 0 &0 \\ 0 & 0 & 0\end{pmatrix},
\end{equation}
for a certain frequency $\omega \in \R$ and a configuration $a=(a_1,\dots, a_N)\in M$. Our discussion leads to the following definition.

\begin{definition} 
\label{def:RE} A \defn{relative equilibrium}  (RE) of the equations of motion  \eqref{eq:motion-diff-vorticities} is a pair $(a,\omega)\in M\times \R$ such
that $v(t)$ given by \eqref{eq:RE-ansatz} is a solution of 
 \eqref{eq:motion-diff-vorticities}. We say that  $\omega$ is  the \defn{angular velocity}  and $a\in M$ is  the   \defn{configuration}
of the relative equilibrium.
\end{definition}

\begin{remark}
\label{Rmk:REdef}
\begin{enumerate}
\item Equilibrium points  may be interpreted as RE
with zero angular velocity.
\item If $(a,\omega)$ is a RE with $\omega \neq 0$, then so is  $(e^{J_3 \theta}.a,\omega)$ for any $\theta \in [0,2\pi)$. This is a consequence
of the autonomous nature of the equations of motion  \eqref{eq:motion-diff-vorticities}: if $v(t)$ is a solution, then so is $v(t+t_0)$ for any $t_0\in \R$. In other words, 
$a$  is determined up to a time-shift along the solution with initial condition $a$. 
Considering that the solution is periodic, this time-shift corresponds to 
a certain $SO(2)$ action. Such $SO(2)$ action is precisely  the one-parameter subgroup of $SO(3)$ of rotations about the $z$ axis. Namely, if we 
denote  $e_3:=(0,0,1)\in \R^3$, and define
\begin{equation}
\label{eq:def-SO(2)}
SO(2):=\{ g\in SO(3) \, : \, ge_3=e_3\} = \left \{ e^{J_3\theta}= \begin{pmatrix} \cos \theta & -\sin \theta & 0 \\ \sin \theta & \cos \theta & 0 \\ 0 & 0 & 1 \end{pmatrix} \, : \, \theta \in [0,2\pi)\right \},
\end{equation}
then the orbit $SO(2).a$ coincides with the dynamical orbit of the solution $v(t)=e^{\omega J_3 t}.a$, and is comprised of relative equilibrium configurations with the same angular velocity $\omega$.
\item Some authors  instead define the whole orbit $SO(3).a\subset M$ of the relative equilibrium configuration $a$ as {\em  the} relative equilibrium since it projects to
a single equilibrium point of the reduced system on the quotient space $M/SO(3)$.
As explained in item (ii), our approach selects instead
an $SO(2)$-orbit of representatives. This is essential for the continuation of relative equilibria as a function of the angular velocity $\omega$
considered ahead.
\end{enumerate}
\end{remark}

For any $\omega\in \R$ define the \defn{augmented Hamiltonian}
\begin{equation}
\label{eq:aug-Ham}
H_\omega:M\to \R, \qquad H_\omega(v):=H(v) - \omega \Phi_3(v),
\end{equation}
where $\Phi_3:M\to \R$ denotes the third component of the momentum map $\Phi$ given by \eqref{eq:Phi-general-defn}. Namely, 
$\Phi_3(v)=e_3\cdot \Phi(v)$ where $e_3=(0,0,1)\in \R^3$, and $\cdot$ denotes the standard euclidean product in $\R^3$.
The following proposition is a particular instance of  general results on relative equilibria on symplectic manifolds (see e.g. \cite{lom}).
We present an elementary proof for completeness. 

\begin{proposition}
\label{prop:RE-as-crit-pts}
The following statements hold.
\begin{enumerate}
\item $(a,\omega)\in M\times \R$ is a relative equilibrium if and only if $a\in M$ is a critical point of $H_\omega$.
\item If $(a,\omega)\in M\times \R$ is a relative equilibrium  with $\omega\neq 0$ then $e_3\times \Phi(a)=0$.
\end{enumerate}
\end{proposition}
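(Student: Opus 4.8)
The plan is to verify the relative-equilibrium ansatz \eqref{eq:RE-ansatz} directly in the equations of motion \eqref{eq:motion-diff-vorticities} for part (i), and to exploit the conservation and the coadjoint equivariance of the momentum map for part (ii).

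For part (i), I would substitute $v(t)=e^{\omega J_3 t}.a$ into \eqref{eq:motion-diff-vorticities}. Since $J_3$ commutes with $e^{\omega J_3 t}$, the left-hand side becomes $\dot v_j(t)=e^{\omega J_3 t}(\omega J_3 a_j)$. For the right-hand side, the $SO(3)$-invariance of $H$ yields (using that elements of $SO(3)$ are orthogonal) the equivariance $\nabla_{v_j}H(g.a)=g\,\nabla_{v_j}H(a)$ for $g\in SO(3)$; combined with the identity $(Ru)\times(Rv)=R(u\times v)$ valid for $R\in SO(3)$, the right-hand side becomes $-\tfrac{1}{\Gamma_j}e^{\omega J_3 t}\bigl(a_j\times\nabla_{v_j}H(a)\bigr)$. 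Cancelling the invertible factor $e^{\omega J_3 t}$, the ansatz solves \eqref{eq:motion-diff-vorticities} if and only if $\omega\Gamma_j\,J_3 a_j+a_j\times\nabla_{v_j}H(a)=0$ for all $j$. Using $J_3 a_j=e_3\times a_j$ and $\nabla_{v_j}\Phi_3(a)=\Gamma_j e_3$, this is exactly $a_j\times\nabla_{v_j}H_\omega(a)=0$ for $j=1,\dots,N$. Finally, since $T_{a_j}S^2=a_j^\perp$ and $a_j\neq 0$, the condition $a_j\times\nabla_{v_j}H_\omega(a)=0$ says precisely that the ambient gradient $\nabla_{v_j}H_\omega(a)$ is normal to $T_{a_j}S^2$; requiring this for every $j$ is equivalent to $a$ being a critical point of the restriction $H_\omega|_M$, which proves (i).

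For part (ii), since $\Phi$ is a first integral and $v(t)=e^{\omega J_3 t}.a$ is a solution, $\Phi(v(t))$ is constant in $t$; on the other hand, the equivariance \eqref{eq:Mom-map-equiv} gives $\Phi(v(t))=e^{\omega J_3 t}\Phi(a)$. Hence $e^{\omega J_3 t}\Phi(a)=\Phi(a)$ for all $t$, and differentiating at $t=0$ gives $\omega\, J_3\Phi(a)=0$; since $J_3\Phi(a)=e_3\times\Phi(a)$ and $\omega\neq 0$, we conclude $e_3\times\Phi(a)=0$. (Alternatively, one can sum the critical-point equations of part (i): the $SO(3)$-invariance of $H$ forces $\sum_j a_j\times\nabla_{v_j}H(a)=0$, so summing $a_j\times\nabla_{v_j}H(a)=-\omega\Gamma_j\,(e_3\times a_j)$ over $j$ yields $\omega\,(e_3\times\Phi(a))=0$.) There is no genuine obstacle here; the only points demanding care are distinguishing the extrinsic gradient of $H$ on $(\R^3)^N$ from the differential of $H|_M$, deriving its $SO(3)$-equivariance from the orthogonality of rotation matrices, and the bookkeeping identities $J_3 x=e_3\times x$ and $\nabla_{v_j}\Phi_3=\Gamma_j e_3$ that translate the dynamical condition into a statement about $H_\omega$.
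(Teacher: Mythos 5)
Your proof is correct and follows essentially the same route as the paper: part (i) rests on the same rotating-frame computation (you substitute the ansatz directly where the paper passes to the variables $y_j=e^{-\omega J_3 t}v_j$, but both reduce to the condition $a_j\times\nabla_{v_j}H_\omega(a)=0$ for all $j$), and part (ii) is the identical conservation-plus-equivariance argument. No gaps.
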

\begin{proof}
(i) Let $\omega\in \R$ and introduce the time dependent change of  variables, 
\begin{equation*}
y_{j}(t)=e^{-\omega J_{3}t}v_{j}(t)\text{,} \qquad j=1,\dots, N.
\end{equation*}%
According to Definition \ref{def:RE}, it is clear that  $(a,\omega)\in M\times \R$ is a relative equilibrium if and only if $a\in M$ is
an equilibrium solution for the equations satisfied by $y(t)=(y_1(t), \dots, y_N(t))$. A direct calculation shows that 
\begin{equation*}
\dot{y}_{j}=\frac{1}{4\pi} \sum_{i \neq j)}^{N}\Gamma_i \frac{y_{i}\times y_{j}}{\left \|
y_{j}-y_{i}\right \|^{2}}-\omega e_{3}\times y_{j} = -y_{j}\times \left( \nabla _{y_{j}}H_\omega(y)\right). 
\end{equation*}
This shows that the equations for $y(t)$ are Hamiltonian on the symplectic manifold $(M,\Omega)$ with respect to the Hamiltonian 
function $H_\omega$. As a consequence, the equilibrium points of $y(t)$ are in one-to-one correspondence with the critical points of $H_\omega$.

(ii) By conservation of $\Phi$ we have $\Phi(a)= \Phi(v(t))$ where $v(t)$ is given by \eqref{eq:RE-ansatz}.
Therefore,
\begin{equation*}
\begin{split}
\Gamma_1 a_1+\dots +\Gamma_Na_N &=\Gamma_1e^{\omega J_3t}a_1+\dots+\Gamma_N e^{\omega J_3t}a_N \\ & =e^{\omega J_3t}(\Gamma_1 a_1+\dots+ \Gamma_Na_N)
=e^{\omega J_3t} \Phi(a).
\end{split}
\end{equation*}
Differentiating with respect to $t$ and evaluating at $t=0$ gives $\omega J_3 \Phi(a)=0$, which
for $\omega\neq 0$ is equivalent to $e_3\times \Phi(a)=0$.

%
\end{proof}

A fundamental observation which follows directly from \eqref{eq:Mom-map-equiv} and the definition of the augmented Hamiltonian $H_\omega$ is that, for $\omega\neq 0$, 
 $H_\omega$ is no longer $SO(3)$-invariant but only
$SO(2)$-invariant with  $SO(2)$ given by \eqref{eq:def-SO(2)}. As a consequence of this $SO(2)$-symmetry, if $a\in M$ is a critical point of $H_\omega$ so is $e^{J_3\theta}.a$
for $\theta\in [0,2\pi)$. This 
 observation (together with  Proposition \ref{prop:RE-as-crit-pts}) is consistent with item 
(ii) of Remark \ref{Rmk:REdef}.

Another useful interpretation of RE which follows  from the Proposition \ref{prop:RE-as-crit-pts} and the 
Lagrange multiplier theorem  is stated next. We refer the reader to \cite[Chapter 4]{lom} for details.

\begin{proposition}
\label{prop:RE-as-crit-pts-of-restricted-Ham}
$(a,\omega)\in M\times \R$ is a relative equilibrium if and only if $a\in M$ is a critical point of the restriction of
$H$ to the level set $\Phi^{-1}(\mu)\subset M$ where $\mu=(0,0,\Phi_3(a))\in \R^3$.
\end{proposition}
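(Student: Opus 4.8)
The plan is to derive the statement from Proposition \ref{prop:RE-as-crit-pts} together with the Lagrange multiplier theorem, treating the two implications separately. First I would set $\mu := (0,0,\Phi_3(a)) \in \R^3$ and recall from the discussion after \eqref{eq:Phi-general-defn} that, since $N\geq 3$, the action is free and hence $\Phi$ is a submersion onto its image; consequently $\Phi^{-1}(\mu)$ is a smooth embedded submanifold of $M$ of codimension $3$ with $T_a\Phi^{-1}(\mu) = \ker \d\Phi(a)$. This is the regularity needed to legitimately apply the Lagrange multiplier theorem at the point $a$.

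For the forward implication, suppose $(a,\omega)$ is a relative equilibrium. By Proposition \ref{prop:RE-as-crit-pts}(i), $a$ is a critical point of $H_\omega = H - \omega \Phi_3$, i.e. $\d H(a) = \omega\, \d\Phi_3(a)$ as elements of $T_a^*M$. In particular $\d H(a)$ annihilates $\ker \d\Phi_3(a) \supseteq \ker \d\Phi(a) = T_a\Phi^{-1}(\mu)$, so $\d H(a)$ restricted to $T_a\Phi^{-1}(\mu)$ vanishes, which says precisely that $a$ is a critical point of $H|_{\Phi^{-1}(\mu)}$. Note this direction does not even require the full Lagrange multiplier machinery, only that $\Phi_3$ is one of the constraint components.

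For the reverse implication, suppose $a$ is a critical point of $H|_{\Phi^{-1}(\mu)}$, where $\mu = (0,0,\Phi_3(a))$. By the Lagrange multiplier theorem applied at the regular point $a$, there exist multipliers $\lambda_1,\lambda_2,\lambda_3 \in \R$ with $\d H(a) = \lambda_1 \d\Phi_1(a) + \lambda_2 \d\Phi_2(a) + \lambda_3 \d\Phi_3(a)$. To conclude I must show $\lambda_1 = \lambda_2 = 0$, so that $\d H(a) = \lambda_3 \d\Phi_3(a)$, i.e. $a$ is a critical point of $H_{\lambda_3}$, and then Proposition \ref{prop:RE-as-crit-pts}(i) gives that $(a,\lambda_3)$ is a relative equilibrium (taking $\omega := \lambda_3$). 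Here I would invoke the $SO(3)$-invariance of $H$: for any $\xi \in \so(3)$ the infinitesimal generator $\xi_M(a) \in T_aM$ satisfies $\d H(a)\cdot \xi_M(a) = 0$ (differentiating $H(e^{\xi t}.a) = H(a)$), while by Noether's theorem / equivariance of $\Phi$ one has $\d\Phi_i(a)\cdot \xi_M(a) = \langle e_i, \frac{d}{dt}\big|_0 \Phi(e^{\xi t}.a)\rangle = \langle e_i, \xi \Phi(a)\rangle$. Plugging $\xi = J_1$ and $\xi = J_2$ (generators of rotations about the $x$- and $y$-axes) into the multiplier relation and using $\Phi(a) = \mu = \Phi_3(a) e_3$, the right-hand sides become linear combinations of the components of $J_i e_3$, and a short computation shows these force $\lambda_1 \Phi_3(a)$ and $\lambda_2 \Phi_3(a)$ to vanish.

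The main obstacle is the degenerate case $\Phi_3(a) = 0$, where the argument above only yields $0 = 0$ and does not pin down $\lambda_1, \lambda_2$. When $\Phi(a) = 0$ the orbit $SO(3).a$ has the same tangent space issue and one must argue differently: either observe that $\mu = 0$ is still a regular value (for $N\geq 3$) but now $e_3 \times \Phi(a) = 0$ holds automatically, and use that the isotropy is trivial so the constraint gradients $\d\Phi_1(a), \d\Phi_2(a), \d\Phi_3(a)$ are independent and each annihilated by... — more carefully, one reduces to showing that the relation $\lambda_1 \d\Phi_1(a) + \lambda_2\d\Phi_2(a) = \d H(a) - \lambda_3\d\Phi_3(a)$ together with $SO(3)$-invariance of $H$ forces the left side to lie in the span of $\d\Phi_3(a)$, hence $\lambda_1 = \lambda_2 = 0$ by independence. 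I would handle this by noting that $\d H(a)$ vanishes on all of $\{\xi_M(a) : \xi \in \so(3)\}$, which when $\Phi(a)=0$ is a $3$-dimensional subspace on which $\d\Phi_1, \d\Phi_2$ need not vanish, and a direct pairing argument closes the gap. Alternatively, and more cleanly, I would simply cite \cite[Chapter 4]{lom} for the general statement that critical points of $H$ on $\Phi^{-1}(\mu)$ correspond to relative equilibria with velocity in the Lie algebra, and specialise the velocity to be a multiple of $J_3$ using item (ii) of Proposition \ref{prop:RE-as-crit-pts} and the fact that $\mu$ is chosen of the form $(0,0,\Phi_3(a))$.
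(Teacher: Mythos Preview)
Your approach is precisely the paper's: it offers no proof at all, simply remarking that the statement ``follows directly from Proposition~\ref{prop:RE-as-crit-pts} and the Lagrange multiplier theorem'' and referring to \cite[Chapter 4]{lom} for details. Your detailed argument in the nondegenerate case $\Phi_3(a)\neq 0$ is correct and is exactly the computation one would fill in.

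There is, however, a genuine gap in your direct treatment of the case $\Phi(a)=0$. You assert that $\d\Phi_1,\d\Phi_2$ ``need not vanish'' on the three-dimensional subspace $\{\xi_M(a):\xi\in\so(3)\}$, and that a pairing argument then closes the gap. But your own formula $\d\Phi_i(a)\cdot\xi_M(a)=\langle e_i,\xi\,\Phi(a)\rangle$ shows that when $\Phi(a)=0$ \emph{every} $\d\Phi_i$ annihilates this subspace, so pairing the multiplier relation with infinitesimal generators yields only $0=0$ and gives no information about $\lambda_1,\lambda_2$. In fact the Lagrange multiplier at a zero-momentum relative equilibrium genuinely need not be a multiple of $J_3$; the general result in \cite{lom} only produces a velocity $\xi\in\so(3)$, and the specialisation to $\xi=\omega J_3$ relies on $\so(3)_\mu$ being one-dimensional, which fails at $\mu=0$. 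Your fallback of citing \cite[Chapter 4]{lom} is therefore not merely ``cleaner'' but necessary here, and it matches the paper exactly.
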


Recall from  our previous discussion on the ground states, that if all the vortex strengths $\Gamma_i$ have equal sign, then the Hamiltonian $H$ is bounded from below.
Therefore, given $\mu \in \R^3$ belonging to the image of $\Phi$, there exists a minimal energy relative equilibrium $(a,\omega)\in M\times \R$ satisfying that  $H(a)$ is the minimum value of the restriction
of $H$ to $\Phi^{-1}(\mu)\subset M$. These minimising RE are stable (in a sense that we make precise below). The
results in section \ref{s:CAPs-application} allow us to  conjecture which are
the positions of these RE in the case of identical vortex strengths for specific values of $N$,  and to prove that they are at least local minima of the restriction of  $H$ to $\Phi^{-1}(\mu)\subset M$ for certain values of $\mu$ using computer-assisted proofs.

\subsection{The case of identical vortex strengths.} 
\label{ss:equal-strengths}

Most of the results of this paper assume that the 
vortex strengths $\Gamma_i=\Gamma_j$ for all $i,j=1,\dots, N$. With an appropriate scaling of units, we can assume that 
 the Hamiltonian $H$ and the symplectic form $\Omega$ are given by
 \begin{equation}
 \label{eq:Ham-Om-equal}
H(v)=-\sum_{i<j} \ln \left ( \left \| v_i -v_j  \right \|^2 \right ), \qquad \Omega=\sum_{i=1}^N\ \pi^*_i \omega_{S^2},
\end{equation}
and the equations of motion are 
\begin{equation}
\label{eq:motion}
\dot v_j=- v_j \times \nabla_{v_j} H =\sum_{i\neq j}   \frac{v_i \times v_j} {\|v_i-v_j\|^2}, \qquad i=1,\dots, N.
\end{equation}
Finally, the centre of vorticity, or momentum map $\Phi$, whose general form is given by \eqref{eq:Phi-general-defn} simplifies to 
 \begin{equation}
 \label{eq:Phi}
\Phi:M\to \R^3, \qquad \Phi(v)=v_1+\dots + v_N.
\end{equation}

The assumption that the vortices have equal strengths introduces a discrete symmetry  by permutation
of the vortices.  Specifically, if $S_N$ is the symmetric group of order $N$ and  $\hat G:=S_N\times SO(3)$ then $\hat G$ defines a (left)
action  on  $M$ by
\begin{equation}
\label{eq:Full-symmetry}
(\sigma,g).(v_1,\dots, v_N):=(gv_{\sigma^{-1}(1)}, \dots, gv_{\sigma^{-1}(N)}), \quad  (\sigma,g)\in S_N\times SO(3)= \hat G, \quad (v_1,\dots, v_N)\in M.
\end{equation}
The above action is  symplectic, preserves the Hamiltonian $H$, and the equations \eqref{eq:motion} are 
equivariant. However, in contrast with the $SO(3)$-action on $M$, the $\hat G$-action  is not free and this will allow us to extract valuable information.

\paragraph{Equilibrium points and ground states.} We first
recall the following known result about equilibria in the case of identical vorticities (see Theorem 10.1 in \cite{Beltran20} and the references therein).
\begin{proposition}
\label{prop:zeroMomentum}
Let $a\in M$ be an equilibrium point of the equations of motion \eqref{eq:motion} for identical vortices,  then $\Phi(a)=0$.
\end{proposition}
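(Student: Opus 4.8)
The plan is to exploit the permutation symmetry of the equal‑strength problem together with the equivariance \eqref{eq:Mom-map-equiv} of the momentum map. First I would recall that an equilibrium $a\in M$ is a critical point of $H$, which (by the Lagrange multiplier theorem, or directly from the equations of motion \eqref{eq:motion} with $\omega=0$) forces $a_j\times\nabla_{a_j}H=0$ for each $j$; equivalently the constraint forces $\nabla_{a_j}H(a)=\lambda_j a_j$ for scalars $\lambda_j$. Summing the vortex equations \eqref{eq:motion} over $j$ shows $\sum_j \dot v_j$ need not vanish trivially, so this alone is not enough: the key extra input is the $\hat G$‑symmetry.

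The main idea I would use is the following averaging argument. Since $a$ is an equilibrium and $H$ is invariant under the $\hat G$‑action \eqref{eq:Full-symmetry}, the whole $\hat G$‑orbit of $a$ consists of equilibria. In particular, the subgroup $\mathrm{Stab}(a)\subset \hat G$ acts on the tangent space and fixes $a$; but more usefully, one can consider the linear map on $\R^3$ obtained by averaging $\Phi$ over a suitable group. Concretely, I would argue that $\Phi(a)$ must be fixed by the linearisation of the isotropy, and then invoke the stronger fact: for an equilibrium, Proposition \ref{prop:RE-as-crit-pts}(ii) does not apply (it needs $\omega\neq 0$), so instead I would show directly that $\Phi(a)=0$ by a virial‑type identity. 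Taking the equilibrium condition $\nabla_{a_j}H(a)=\lambda_j a_j$, dotting with $a_j$, and using the explicit form $H(v)=-\sum_{i<k}\ln\|v_i-v_k\|^2$ gives $\lambda_j = a_j\cdot\nabla_{a_j}H(a) = -\sum_{i\neq j}\frac{2\,a_j\cdot(a_j-a_i)}{\|a_i-a_j\|^2} = -\sum_{i\neq j}1 = -(N-1)$ (using $\|a_i-a_j\|^2 = 2-2a_i\cdot a_j$ so $2a_j\cdot(a_j-a_i)=\|a_i-a_j\|^2$). Hence all $\lambda_j$ are equal to $-(N-1)$, so $\nabla_{a_j}H(a) = -(N-1)a_j$ for every $j$.

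Now I would sum over $j$: the total gradient satisfies $\sum_j \nabla_{a_j}H(a) = -(N-1)\sum_j a_j = -(N-1)\Phi(a)$. On the other hand, $\sum_j \nabla_{a_j}H(v) = -\sum_j\sum_{i\neq j}\frac{2(v_j-v_i)}{\|v_i-v_j\|^2}$, and this double sum is antisymmetric under interchange of $i$ and $j$, hence vanishes identically for all $v\in M$. Therefore $-(N-1)\Phi(a)=0$, and since $N\geq 2$ we conclude $\Phi(a)=0$, which is the claim. I expect the only subtle point to be the bookkeeping in the virial computation — in particular verifying that $a_j\cdot(a_j-a_i) = \tfrac12\|a_i-a_j\|^2$ on the unit sphere and that the cross terms cancel — but this is a short and routine calculation. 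An alternative, more conceptual route (which I would mention as a remark) is: $a$ equilibrium $\Rightarrow$ $a$ is a RE with $\omega=0$ for every rotation axis simultaneously; applying the argument of Proposition \ref{prop:RE-as-crit-pts}(ii) with axes $e_1,e_2,e_3$ would give $e_i\times\Phi(a)=0$ for all $i$ if that proposition's proof did not require $\omega\neq 0$ — but since it does, the virial identity above is the clean way to finish.
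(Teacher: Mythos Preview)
Your virial-identity argument is correct and complete. The paper does not actually prove Proposition~\ref{prop:zeroMomentum}; it merely cites it as a known result (Theorem~10.1 in \cite{Beltran20}), so there is no ``paper's proof'' to compare against. Your computation that $\lambda_j = a_j\cdot\nabla_{a_j}H(a) = -(N-1)$ is correct (the identity $2a_j\cdot(a_j-a_i)=\|a_i-a_j\|^2$ on the unit sphere is exactly what makes each term in the sum equal to $1$), and the antisymmetry of $\sum_j\nabla_{v_j}H(v)$ under $i\leftrightarrow j$ is immediate, so the conclusion $-(N-1)\Phi(a)=0$ follows.

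A minor comment on presentation: the first paragraph and the opening of the second wander through two approaches (permutation symmetry, isotropy averaging) that you then abandon. The proof would read more cleanly if you went straight to the Lagrange-multiplier characterisation $\nabla_{a_j}H(a)=\lambda_j a_j$, computed $\lambda_j=-(N-1)$, and summed. Your final remark about the alternative route via Proposition~\ref{prop:RE-as-crit-pts}(ii) is also slightly off: that proposition's proof genuinely needs $\omega\neq 0$ to extract information from $e^{\omega J_3 t}\Phi(a)=\Phi(a)$, so it cannot be salvaged for $\omega=0$ even by varying the axis; the virial identity is indeed the right tool here.
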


We now recall from the introduction, that determining the ground states in the case of equal vortex strengths corresponds to Smale's problem $\# 7$ which 
has only been solved for a few values of $N$ (2, 3, 4, 5, 6 and 12).  Table \ref{ground-states}  lists
 the known ground states together with our conjecture  of their position for the values $N=7, 8, 9, 10, 11$. The table  also indicates their $\mathbb{Z}_m$-symmetries since they will be essential in our
 study of relative equilibria emanating from these configurations at a later stage of the paper. The contents of the table for $N\geq 4$ are illustrated in  Figures \ref{Fig:GS-Known} and \ref{Fig:GS-Conjecture}.
  Note that the center of vorticity of all these configurations  vanishes by virtue of Proposition \ref{prop:zeroMomentum}.

\subsection{Stability of relative equilibria}
\label{ss:stability-prelim}

Our discussion  of stability of  relative equilibria relies on the following definition.
\begin{definition} (Patrick \cite{Patrick92}).
\label{def:Gmu-stable}
Let $K< SO(3)$ be a subgroup. A relative equilibrium $(a,\omega)\in M\times \R$ is  \defn{$K$-stable} 
or \defn{ stable modulo $K$} if for any  neighbourhood $V$ of $K.a$ there exists a neighbourhood $U\subset V$ of $a$ which is invariant under the flow
of the equations of motion \eqref{eq:motion-diff-vorticities} for $t\geq 0$.
\end{definition}

The idea of the definition is that if $(a,\omega)\in M\times \R$ is  a   $K$-stable  relative equilibrium, then the solutions with
 initial conditions near $a$ 
will stay close to 
the $K$-orbit through $a$ for all $t\geq 0$.

Let $(a,\omega)$ be a relative equilibrium and suppose that $\Phi(a)=\mu\in \R^3$. As shown by
Patrick  \cite{Patrick92},   if $(a,\omega)$ projects to a
Lyapunov stable equilibrium point of the symmetry reduced symplectic system, then the RE is $SO(3)_\mu$-stable where $SO(3)_\mu$ is the isotropy subgroup of $\mu$ under the coadjoint action 
(represented as $(g,\mu)\mapsto g\mu$). In other words, for the unreduced system \eqref{eq:motion-diff-vorticities}, initial 
conditions near $a$  may only drift along the orbits of $SO(3)_\mu$.  Therefore, the appropriate notion of stability of the RE $(a,\omega)$  is that of  $SO(3)_\mu$-stability.

There are  two possibilities for $SO(3)_\mu$:
\begin{enumerate}
\item If $\mu=0$ then  $SO(3)_\mu=SO(3)_0=SO(3)$.

\item If  $\mu\neq 0$ then  $SO(3)_\mu$ is the set of rotation matrices around $\mu$ which is isomorphic to $SO(2)$. (If $\omega\neq 0$, then 
 $SO(3)_\mu$ exactly equals the representation of $SO(2)$ in \eqref{eq:def-SO(2)} since $\mu=\Phi(a)$ is parallel to $e_3$ by  
Proposition \ref{prop:RE-as-crit-pts}(ii).)
\end{enumerate}

According to Proposition \ref{prop:zeroMomentum}, the first case ($\mu=0$) is always encountered at  equilibria when all  vortices have equal strengths. 
For instance, all equilibrium  configurations in Table \ref{ground-states} have $\mu=0$ and  are $SO(3)_0=SO(3)$-stable.\footnote{this is only a conjecture if $N=7$. For other values of $N$ see the discussion in Section \ref{ss:CAP-minimisers} for proofs and earlier references of this fact.}
This means that a solution whose
 initial condition is
close to any of these equilibria will in general not stay close to it, but will be constrained to evolve in such way that the vortices are at every time near  the vertices of a rotated version of such equilibrium. 

On the other hand, the second case ($\mu\neq 0$) is the generic one. For $\omega\neq 0$, considering that $SO(2).a$ coincides with the 
dynamical orbit 
 of the solution $v(t)=e^{\omega J_3 t}.a$ (item (ii) of Remark \ref{Rmk:REdef}), we come to the following conclusion that
 we state as a proposition due to its relevance.
 
 \begin{proposition}
 \label{prop:stability}
 Let $(a,\omega)\in M\times \R$ be a RE with $\omega\neq 0$.
 If $\mu=\Phi(a)\neq 0$
 then  $SO(3)_\mu$-stability of  the RE $(a,\omega)$ is equivalent to orbital stability of the periodic orbit $v(t)=e^{\omega J_3 t}.a$.
\end{proposition}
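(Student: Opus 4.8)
The plan is to unwind the two notions of stability directly from their definitions and show they coincide. Recall that by Proposition~\ref{prop:RE-as-crit-pts}(ii), since $\omega\neq 0$ we have $e_3\times\Phi(a)=0$, so $\mu=\Phi(a)$ is parallel to $e_3$; combined with the hypothesis $\mu\neq 0$, this means $SO(3)_\mu$ is exactly the one-parameter group $SO(2)$ of rotations about the $z$-axis given in \eqref{eq:def-SO(2)}, as already noted in case (ii) of the list preceding the proposition. On the other hand, by item (ii) of Remark~\ref{Rmk:REdef}, the orbit $SO(2).a$ coincides with the dynamical orbit $\gamma:=\{v(t)=e^{\omega J_3 t}.a : t\in\R\}$ of the periodic solution. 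Hence $SO(3)_\mu.a = SO(2).a = \gamma$, and so for any subset $V\subset M$, $V$ is a neighbourhood of $SO(3)_\mu.a$ if and only if it is a neighbourhood of the orbit $\gamma$.

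With this identification in hand, I would simply compare the two definitions. By Definition~\ref{def:Gmu-stable} with $K=SO(3)_\mu$, the RE $(a,\omega)$ is $SO(3)_\mu$-stable if and only if for every neighbourhood $V$ of $SO(3)_\mu.a=\gamma$ there is a forward-flow-invariant neighbourhood $U\subset V$ of the point $a$. Orbital stability of the periodic orbit $\gamma$ means: for every neighbourhood $V$ of $\gamma$ there is a neighbourhood $U\subset V$ of $\gamma$ which is forward invariant (equivalently, in the usual $\varepsilon$-$\delta$ formulation, every solution starting $\delta$-close to $\gamma$ stays $\varepsilon$-close to $\gamma$ for all $t\ge 0$). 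The only apparent discrepancy is that the first asks for $U$ to be a neighbourhood of the single point $a$ while the second asks for $U$ to be a neighbourhood of the whole orbit $\gamma$.

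The resolution of that discrepancy is the one substantive point, and it is where I would be careful. For the implication ``$SO(3)_\mu$-stable $\Rightarrow$ orbitally stable'': given a neighbourhood $V$ of $\gamma$, the $SO(3)_\mu$-stability gives a forward-invariant neighbourhood $U_a\subset V$ of $a$; then $U:=\bigcup_{\theta\in[0,2\pi)} e^{J_3\theta}.U_a$ is a neighbourhood of $\gamma=SO(2).a$ contained in $V$, and it is forward invariant because the flow commutes with the $SO(2)$-action (the equations \eqref{eq:motion-diff-vorticities} are $SO(3)$-equivariant, hence in particular $SO(2)$-equivariant), so each $e^{J_3\theta}.U_a$ is forward invariant and a union of forward-invariant sets is forward invariant. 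This yields orbital stability. The converse implication is immediate: an orbitally stable $\gamma$ gives, for each $V$, a forward-invariant neighbourhood $U$ of the whole orbit $\gamma$, which in particular is a neighbourhood of the point $a\in\gamma$, so the condition of Definition~\ref{def:Gmu-stable} holds verbatim.

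The main (and really the only) obstacle is bookkeeping rather than depth: one must make sure the two ``neighbourhood'' quantifiers genuinely match up, which hinges on the equality $SO(3)_\mu.a=\gamma$ and on the $SO(2)$-equivariance of the flow used to symmetrise $U_a$. I would also remark that the hypothesis $\mu\neq 0$ is what forces $SO(3)_\mu\cong SO(2)$ rather than all of $SO(3)$, so that the stability is genuinely orbital stability of the periodic orbit and not merely stability modulo the larger group $SO(3)$ — this is the conceptual content distinguishing Proposition~\ref{prop:stability} from the $\mu=0$ case discussed just before it. No estimates on $H$, $\Phi$, or the Hessian are needed here; this proposition is a purely formal consequence of the preceding structural results, and the analytic work enters only later when verifying the hypotheses of Patrick's theorem.
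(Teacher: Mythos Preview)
Your proof is correct and follows exactly the approach the paper takes. In fact, the paper does not provide a separate proof at all: the proposition is stated as a direct ``conclusion'' from the observations immediately preceding it, namely that $SO(3)_\mu=SO(2)$ when $\mu\neq 0$ is parallel to $e_3$, and that $SO(2).a$ coincides with the dynamical orbit (Remark~\ref{Rmk:REdef}(ii)). Your write-up simply makes explicit the bookkeeping step---symmetrising the neighbourhood of $a$ via the $SO(2)$-equivariance of the flow to obtain a neighbourhood of the full orbit---that the paper leaves implicit.
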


\subsubsection{Energy momentum method}
\label{ss:energy-momentum-method}

The stability analysis of RE that we employ in this work relies on the {\em energy-momentum method} of Patrick \cite{Patrick92} 
and will allow us to determine 
sufficient conditions for $SO(3)_\mu$-stability. 

Let $(a,\omega)\in M\times \R$ be a relative equilibrium and let $\mu=\Phi(a)\in \R^3$.  
From our discussion above,
we know  that  $a$ is a critical point of the augmented Hamiltonian $H_\omega:M\to \R$. The energy-momentum method
 examines the signature of the restriction
of the Hessian
$d^2H_\omega(a):T_aM\times T_aM\to \R$ to  a subspace $\mathcal{N}_a\subset T_aM$ satisfying the property 
\begin{equation}
\label{eq:symp-slice-condition}
\mathcal{N}_a\oplus (\so(3)_\mu \cdot a)=  \ker d\Phi(a).
\end{equation}
Here $\so(3)_\mu$ denotes the Lie algebra of $SO(3)_\mu$  and $\so(3)_\mu \cdot a\subset T_aM$ is the tangent space to
the $SO(3)_\mu$-orbit through $a$ at $a$. The subspace $\mathcal{N}_a$ is called a \defn{symplectic slice} and 
is a symplectic subspace of $T_aM$. The above property states that
it is tangent to the level set  $\Phi^{-1}(\mu)$ at $a$ and is also   a direct complement of the tangent space to 
the  $SO(3)_\mu$-orbit through $a$. 

If  the restriction of $d^2H_\omega(a)$ to the symplectic slice $\mathcal{N}_a$ is definite then \cite[Theorem 5.1.1]{lom} implies that $(a,\omega)$ is $SO(3)_\mu$-stable in $\Phi^{-1}(\mu)$ and $SO(3)$-stable in $M$. The results of Patrick \cite{Patrick92} then imply
that $(a,\omega)$  is $SO(3)_\mu$-stable in $M$. In particular we conclude that:

 {\em  if
the restriction of $d^2H_\omega(a)$ to $\mathcal{N}_a$ is  positive definite, then the RE $(a,\omega)$ is $SO(3)_\mu$-stable}.

%
%

%

\subsection{computer-assisted proofs}
\label{ss:CAP-prelim}

As mentioned in the introduction, the CAPs in this paper are obtained via a finite dimensional Newton-Kantorovich 
like theorem (see \cite{MR0231218} for the original version), which is similar to the well-known interval Newton's method \cite{MR694437,MR0231516} and Krawczyk's operator approach \cite{MR255046,MR1057685}. The central idea is to compute an approximate solution to the problem and to show that a Newton-like operator is a contraction on a ball centered at the numerical approximation. Additionally, we combine  predictor-corrector continuation methods (e.g. see \cite{MR910499,MR2359327}) together with the uniform contraction principle (e.g. see \cite{MR660633}) to obtain one-dimensional branches of solutions. Let us give some details. 

Consider a smooth map $F: \R^{d}\times \R \rightarrow \R^d$, $(x,\omega)\mapsto F(x,\omega)$,
  for some $d\in \mathbb{N}$. Now consider 
 two numerical approximations $(\bar{x}_0,\omega_0), (\bar{x}_1,\omega_1)$ 
 such that $F(\bar{x}_i, \omega_i) \approx 0$ (for $i = 0,1$). 
 Given $s\in [0,1]$, let $\bar{x}_{s}  \bydef s\bar{x}_1 + (1-s)\bar{x}_{0}$ and $\omega_{s} \bydef  s\omega_1 + (1-s)\omega_{0}$.

Choose a norm $\| \cdot\|$ on $\R^{d}$, and given a point $x \in \R^{d}$, define the closed ball of radius $r>0$ by $\overline{B_{r}(x)} = \{ \xi \in \R^{d} : \| \xi - x \| \le r \}$.

\begin{theorem}[Uniform Newton-Kantorovich-like theorem] \label{thm:Uniform Bounds}
Let $F$ be as above and $A \in M_{d}(\R)$ be such that $A\approx D_{x} F(\bar{x}_{0}, \omega_0)^{-1}$. Consider the non-negative bounds $Y$, $\hat{Y}$, $Z$ and the positive number $r_*$ satisfying
\begin{align*}
        \|A F(\bar{x}_{0}, \omega_0)\| &\leq Y, \\
        \|A[F(\bar{x}_{s}, \omega_s) - F(\bar{x}_{0}, \omega_0)]\| &\leq \hat{Y}, \quad \textit{for s} \in [0,1] \\
         \|A[I - D_{x}F(c, \omega_s)]\| &\leq Z, \quad \textit{for all c} \in \overline{B_{r_*}(\bar{x}_{s})} \textit{and s} \in [0,1].
\end{align*}
Define the radii polynomial
\begin{equation} \label{eq:radii_polynomial}
    p(r) \bydef  (Z -1)r + Y + \hat{Y}.
\end{equation}
If there exists $r_0$ such that $0 < r_0 \leq r_*$ satisfies $p(r_0) < 0$, then there exists a function
\[
\tilde{x} : \{ \omega_s : s\in [0,1]\} \rightarrow \bigcup\limits_{s\in[0,1]} \overline{B_{r_*}(\bar{x}_{s})}
\]
such that $F(\tilde{x}(\omega_s), \omega_s) = 0$ for all $s \in [0,1]$.
\end{theorem}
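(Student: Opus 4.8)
The plan is to combine the classical Newton--Kantorovich fixed-point argument at each fixed parameter value with a compactness/continuation argument to glue the local solutions into a single branch. First I would fix $s\in[0,1]$ and consider the Newton-like operator
\[
T_s(\xi) \bydef \xi - A F(\xi,\omega_s), \qquad \xi \in \overline{B_{r_0}(\bar x_s)},
\]
whose fixed points are precisely the zeros of $F(\cdot,\omega_s)$ in the ball (here one uses that $A$ is invertible, which follows from $\|A[I-D_xF(\bar x_0,\omega_0)]\|\le Z<1$, since $Z-1<0$ is forced by $p(r_0)<0$ with $r_0>0$). The two things to check are that $T_s$ maps $\overline{B_{r_0}(\bar x_s)}$ into itself and that it is a contraction there. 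For the contraction, given $\xi,\eta\in\overline{B_{r_0}(\bar x_s)}$, write $T_s(\xi)-T_s(\eta) = \int_0^1 A\bigl[I-D_xF(\eta+t(\xi-\eta),\omega_s)\bigr]\,dt\,(\xi-\eta)$, and since the segment $[\eta,\xi]$ lies in the convex set $\overline{B_{r_0}(\bar x_s)}\subset\overline{B_{r_*}(\bar x_s)}$, the $Z$-bound gives $\|T_s(\xi)-T_s(\eta)\|\le Z\|\xi-\eta\|$ with $Z<1$. For the self-map property, estimate
\[
\|T_s(\xi)-\bar x_s\| \le \|T_s(\xi)-T_s(\bar x_s)\| + \|T_s(\bar x_s)-\bar x_s\| \le Z\,\|\xi-\bar x_s\| + \|A F(\bar x_s,\omega_s)\|,
\]
and bound the last term by $\|A F(\bar x_0,\omega_0)\| + \|A[F(\bar x_s,\omega_s)-F(\bar x_0,\omega_0)]\| \le Y+\hat Y$; hence $\|T_s(\xi)-\bar x_s\|\le Z r_0 + Y + \hat Y$, which is $\le r_0$ exactly because $p(r_0)=(Z-1)r_0+Y+\hat Y<0$. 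By the Banach fixed-point theorem there is a unique $\tilde x(\omega_s)\in\overline{B_{r_0}(\bar x_s)}\subset\overline{B_{r_*}(\bar x_s)}$ with $F(\tilde x(\omega_s),\omega_s)=0$.

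Next I would address that this produces a family of solutions indexed by $s\in[0,1]$; defining $\tilde x$ on $\{\omega_s:s\in[0,1]\}$ requires that the assignment $s\mapsto\tilde x(\omega_s)$ be well defined, i.e. that distinct $s$ giving the same $\omega_s$ yield the same point (automatic when $s\mapsto\omega_s$ is injective, e.g. $\omega_0\neq\omega_1$; in the degenerate case one simply reparametrises by $s$, which is what the statement really needs). The map has the right codomain by construction since $\tilde x(\omega_s)\in\overline{B_{r_0}(\bar x_s)}\subset\bigcup_{s\in[0,1]}\overline{B_{r_*}(\bar x_s)}$. I would also remark that continuity of $s\mapsto\tilde x(\omega_s)$, while not explicitly asserted, follows from the uniform contraction principle: $T_s(\xi)$ depends continuously jointly on $(s,\xi)$ (by smoothness of $F$), the contraction constant $Z$ is uniform in $s$, and the balls vary continuously, so the fixed point varies continuously --- this is the role of the predictor--corrector/uniform-contraction remark preceding the theorem.

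The main obstacle, and the step deserving the most care, is the self-mapping and contraction estimate on the \emph{correct} ball: one must be scrupulous that the mean-value/integral form of $F$ only ever evaluates $D_xF$ at points on segments contained in $\overline{B_{r_*}(\bar x_s)}$ (so that the hypothesis on $Z$ applies verbatim), and that the interpolating center $\bar x_s$ rather than a fixed $\bar x_0$ is used throughout --- this is precisely why the extra bound $\hat Y$ on $\|A[F(\bar x_s,\omega_s)-F(\bar x_0,\omega_0)]\|$ is introduced, absorbing the discrepancy between the numerically validated center $\bar x_0$ and the running center $\bar x_s$. Everything else (invertibility of $A$, existence of $r_0$) is packaged into the single sign condition $p(r_0)<0$, so once the two estimates above are in place the conclusion is immediate from Banach's theorem applied uniformly in $s$.
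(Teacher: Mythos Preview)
The paper does not actually supply a proof of this theorem: it is stated in Section~\ref{ss:CAP-prelim} as a standard tool (a finite-dimensional Newton--Kantorovich/radii-polynomial result, with references to \cite{MR0231218,MR694437,MR0231516,MR255046,MR1057685,MR660633}) and then used without further justification. Your approach---apply the Banach contraction theorem to the Newton-like operator $T_s(\xi)=\xi-AF(\xi,\omega_s)$ on $\overline{B_{r_0}(\bar x_s)}$, uniformly in $s$---is precisely the standard argument behind such statements, and your decomposition of the self-map estimate via $Y+\hat Y$ and your identification of why $\hat Y$ is needed are exactly right.

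There is, however, a genuine slip in the contraction step. You write
\[
T_s(\xi)-T_s(\eta)=\int_0^1 A\bigl[I-D_xF(\eta+t(\xi-\eta),\omega_s)\bigr]\,dt\,(\xi-\eta),
\]
but the correct identity is
\[
T_s(\xi)-T_s(\eta)=\int_0^1 \bigl[I-A\,D_xF(\eta+t(\xi-\eta),\omega_s)\bigr]\,dt\,(\xi-\eta),
\]
since $DT_s=I-A\,D_xF$, not $A(I-D_xF)$. These agree only when $A=I$. The bound that makes the contraction work is therefore $\|I-A\,D_xF(c,\omega_s)\|\le Z$, which is also what is needed for your invertibility-of-$A$ argument (from $\|I-A\,D_xF\|<1$ one gets $A\,D_xF$ invertible, hence $A$ invertible). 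The expression $\|A[I-D_xF(c,\omega_s)]\|$ in the theorem statement appears to be a typographical slip for $\|I-A\,D_xF(c,\omega_s)\|$ (the latter is the standard hypothesis in the radii-polynomial literature authored by one of the paper's coauthors), and you have reproduced it rather than corrected it. With the hypothesis read as $\|I-A\,D_xF(c,\omega_s)\|\le Z$, your argument is complete and correct; as literally written, the contraction estimate does not follow.
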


The above Newton-Kantorovich-like theorem will be used to show existence of branches of relative equilibria (see Section~\ref{ss:CAP-existence}) and to prove stability via a rigorous control of the eigenvalues of a specific matrix (see Section~\ref{ss:CAP-stability}).

\section{Existence of branches of relative equilibria}
\label{s:existence}

This section is concerned with  the  existence of branches of RE. We first present a general non-constructive existence result
valid for arbitrary vortex strengths in subsection \ref{ss:existence-different-vortex-strengths} and then focus on the case of identical vortex strengths. 
We develop a constructive framework to prove existence of symmetric RE in subsection \ref{ss:existenceRE-equal-strengths} which will be   implemented using CAPs in subsection \ref{ss:CAP-existence}.   

\subsection{Existence of relative equilibria for arbitrary vortex strengths}
\label{ss:existence-different-vortex-strengths}

We start by recalling the following classical definition that is needed in the formulation of our results.
\begin{definition}[Bott (1954) \cite{Bott54}]
\label{def:Bott}
Let $P \subset M$ be a submanifold and suppose each point in $P$ is a critical point of
$f:M\to \R$. We say that $P$ is a \defn{nondegenerate critical submanifold} if, in addition,
for each $x\in P$ one has\footnote{By  $\ker \left ( d^2f(x) \right )$ we mean 
the null-space of the bilinear form $d^2f(x):T_xM\times T_xM\to \R$, namely, $\ker d^2f(x)=\left \{ {\bf w} \in T_xM \,  : \, d^2f(x)( {\bf w}, \tilde{{\bf w}})=0 \; \forall  \tilde{{\bf w}}\in
T_xM \, \right \}$.} $\ker \left ( d^2f(x) \right ) =T_xP$.
\end{definition}
Note that it is obvious that
$T_xP\subset \ker \left ( d^2f(x) \right )$ and the definition is really saying that $d^2f(x)$ is non-degenerate in the 
directions transversal to $T_xP$ for all $x\in P$.

\begin{theorem}
\label{th:existenceRE}
Let $N\geq 3$ and suppose that $a_0\in M$ is an equilibrium of  \eqref{eq:motion-diff-vorticities} with the property that
 $SO(3).a_0$ is a nondegenerate critical manifold  of 
$H_{0}:M\rightarrow \mathbb{R}$. Then there exists $\omega_0>0$ such that for any 
$\omega
\in (-\omega _{0},\omega _{0})$ there exist at least two distinct relative equilibria with angular velocity $\omega$. 
These relative equilibria approach the orbit $SO(3).a_0$ as $\omega \to 0$.
\end{theorem}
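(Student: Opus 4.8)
The plan is to recast relative equilibria of angular velocity $\omega$ as critical points of the augmented Hamiltonian $H_\omega=H-\omega\Phi_3$ (note $H_0=H$, since $\Phi_3$ enters with coefficient $\omega$), to localise the search near the critical manifold $\mathcal O:=SO(3).a_0$, and to perform a Lyapunov--Schmidt reduction that collapses the problem to counting critical points of a smooth function on $S^2\cong SO(3)/SO(2)$.

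First I would set up an equivariant normal form. Since $N\geq 3$ the $SO(3)$-action on $M$ is free and proper, so $\mathcal O$ is a compact embedded submanifold of dimension $3$, and the equivariant tubular neighbourhood theorem gives an $SO(3)$-equivariant diffeomorphism from $SO(3)\times D$ onto a neighbourhood $U$ of $\mathcal O$ of the form $(g,v)\mapsto g.\psi(v)$, where $D\subset\R^{2N-3}$ is a ball, $\psi(0)=a_0$, and $\psi(D)$ is a section transverse to $\mathcal O$ at $a_0$. Using the equivariance $\Phi(g.x)=g\Phi(x)$ of \eqref{eq:Mom-map-equiv}, for $x=g.\psi(v)$ one has
\[
H_\omega(x)=H(\psi(v))-\omega\,(g^{T}e_3)\cdot\Phi(\psi(v))=:\widetilde{\mathcal H}(n,v,\omega),\qquad n:=g^{T}e_3\in S^2.
\]
Because $\widetilde{\mathcal H}$ depends on $g$ only through $n=g^Te_3$, and $g\mapsto g^Te_3$ is a submersion $SO(3)\to S^2$ whose fibres are the orbits of the left action $g\mapsto hg$ by the subgroup $SO(2)$ in \eqref{eq:def-SO(2)}, the critical points of $H_\omega$ in $U$ are precisely the $SO(2)$-orbits lying over the critical points of $\widetilde{\mathcal H}(\cdot,\cdot,\omega)$ on $S^2\times D$; these $SO(2)$-orbits are the dynamical orbits of the relative equilibria, which is the analytic counterpart of $H_\omega$ being only $SO(2)$-invariant and of Proposition \ref{prop:RE-as-crit-pts}(ii).

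Next comes the reduction. The Bott hypothesis $\ker d^2H(a_0)=T_{a_0}\mathcal O$ together with the transversality of $\psi(D)$ forces $v=0$ to be a nondegenerate critical point of $H\circ\psi$, so $\partial_v^2\widetilde{\mathcal H}(n,0,0)=d^2(H\circ\psi)(0)$ is invertible for every $n$. Shrinking $D$ and applying the implicit function theorem uniformly over the compact parameter space $n\in S^2$ yields $\omega_0>0$ and a smooth map $v=v(n,\omega)$, defined for $n\in S^2$ and $|\omega|<\omega_0$, with $v(n,0)=0$, which is the unique small solution of $\partial_v\widetilde{\mathcal H}(n,v,\omega)=0$. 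The standard Lyapunov--Schmidt argument then shows that, for each fixed $\omega$ with $|\omega|<\omega_0$, the critical points of $\widetilde{\mathcal H}(\cdot,\cdot,\omega)$ with $v$ small are exactly the points $(n,v(n,\omega))$ for which $n$ is a critical point of the reduced function $\rho(n,\omega):=\widetilde{\mathcal H}(n,v(n,\omega),\omega)$ on $S^2$.

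Finally, for each fixed $\omega\in(-\omega_0,\omega_0)$ the function $\rho(\cdot,\omega)\colon S^2\to\R$ is smooth on a compact manifold, hence attains its maximum and its minimum; either these are attained at distinct points, or $\rho(\cdot,\omega)$ is constant and every point of $S^2$ is critical. In either case $\rho(\cdot,\omega)$ has at least two critical points, and unwinding the diffeomorphism $(g,v)\mapsto g.\psi(v)$ and invoking Proposition \ref{prop:RE-as-crit-pts}(i) produces at least two distinct relative equilibria of angular velocity $\omega$; since $v(n,\omega)\to 0$ uniformly in $n$ as $\omega\to 0$, their configurations converge to $\mathcal O=SO(3).a_0$. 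The part I expect to require the most care is the equivariant bookkeeping in the second step: checking that passing to the quotient variable $n=g^{T}e_3$ correctly encodes the loss of full $SO(3)$-invariance of $H_\omega$, that distinct critical points of $\rho$ genuinely give distinct relative equilibria (which follows from injectivity of the tubular-neighbourhood diffeomorphism), and that the implicit function theorem and the Lyapunov--Schmidt step are carried out uniformly over the compact sphere.
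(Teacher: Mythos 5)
Your proposal is correct and follows essentially the same route as the paper: an equivariant tubular neighbourhood of $SO(3).a_0$, a uniform implicit-function-theorem/Lyapunov--Schmidt step in the transverse variable made possible by the Bott nondegeneracy hypothesis, and extremization of the resulting reduced function over a compact manifold. The only difference is cosmetic: you additionally quotient the group variable by the residual $SO(2)$-invariance of $H_\omega$ and extremize over $S^2\cong SO(3)/SO(2)$, whereas the paper extremizes $\psi_\omega(g)=H_\omega(g,\tilde x(g,\omega))$ directly over $SO(3)$; in both versions the maximum and the minimum supply the two relative equilibria.
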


The proof that we give below is not constructive and will not be used to determine RE at a later stage of
the paper.
 It relies on topological argument and 
techniques for the persistence of relative equilibria under symmetry breaking (e.g. \cite{Fo}) which are well suited to our problem since the symmetries of the augmented Hamiltonian
$H_\omega$ break from  $SO(3)$ to only $SO(2)<SO(3)$ as $\omega$ passes through zero. 

\begin{proof}
Let $\mathcal{N}\subset T_{a_0}M$ be a vector subspace which is a direct complement of $T_{a_0}(SO(3).a_0)$. The Palais slice theorem guarantees the existence of
an  $SO(3)$ invariant tubular neighbourhood $U$ of $SO(3).a_0$ which is diffeomorphic to $SO(3)\times \mathcal{N}_0$, where $\mathcal{N}_0\subset \mathcal{N}$ is
a neighbourhood of $0$. We may thus represent a point  $v\in U$ as a pair $v=(g,x)\in SO(3)\times \mathcal{N}_0$. In particular, the  point 
$a_0=(e,0)$ (where $e$ denotes the identity element in $SO(3)$) and elements of the orbit $SO(3).a_0$ have the form $(g,0)$. Our hypothesis that the orbit of $a_0$ is a nondegenerate critical manifold
of $H_0$ implies that for all $g\in SO(3)$ we have
\begin{equation*}
dH_0(g,0)=0 \qquad  \mbox{and} \qquad \left . d^2H_0(g,0) \right |_\mathcal{N} \quad \mbox{is non-degenerate}.
\end{equation*}
In particular, for fixed $g_0\in SO(3)$ and $\omega\in \R$, the map $f_{ g_0,\omega}:\mathcal{N}_0  \to \mathcal{N}^*$ given by $f_{g_0,\omega}(x)=d_xH_\omega(g_0,x)$ 
satisfies 
\begin{equation*}
f_{g_0,0}(0)=0 \qquad \mbox{and} \qquad D_xf_{g_0,0}(0) \quad \mbox{is invertible}.
\end{equation*}
Therefore, by the implicit function theorem we have 
\begin{equation*}
d_xH_\omega(g,\tilde x_{g_0}(g,\omega))=0,
\end{equation*}
for a unique function $\tilde x_{g_0}:U_{g_0} \times I_{g_0} \to \mathcal{N}_0$, where $U_{g_0}$ is a neighbourhood  of $g_0\in SO(3)$,  $I_{g_0}\subset \R$ is 
a small interval around zero, and $\tilde x_{g_0}$ satisfies $\tilde x_{g_0}(g_0,0)=0$.

The above argument may be repeated  varying the group element $g_0\in SO(3)$. Using  compactness of $SO(3)$, one may obtain (see \cite{Fo} for details)
 a uniform version of the implicit
function theorem, namely, the existence of $\omega_0>0$ and a unique function $\tilde x:SO(3)\times (-\omega_0, \omega_0)\to \mathcal{N}_0$ satisfying 
$\tilde x(g,0)=0$ such that 
\begin{equation*}
d_xH_\omega(g,\tilde x(g,\omega))=0, \qquad \mbox{for all $g\in SO(3)$, $\omega\in (-\omega_0,\omega_0)$}.
\end{equation*}

For any $\omega\in (-\omega_0,\omega_0)$ define $\psi_\omega :SO(3)\to \R$ by 
$\psi_\omega (g) = H_\omega (g, \tilde x(g,\omega))$.
By compactness of $SO(3)$ the function $\psi_\omega$ attains its maximum and its minimum at certain $g_1, g_2\in SO(3)$ and hence
$H_\omega$ has an extremum at $a_1:=(g_1, \psi_\omega(g_1))$ and  $a_2:=(g_2, \psi_\omega(g_2))$. Therefore,  $(a_1,\omega)$ and $(a_2,\omega)$ are the desired relative equilibria.
%
%
%
\end{proof}

\begin{remark}
\begin{enumerate}
\item The proof given above follows  the ideas  from  \cite[Theorem 2.2]{Fo} (see also references therein), although  our case is simpler 
because the $SO(3)$ action on $M$ is free
when the total number of vortices  $N\geq 3$. 
\item There are several existence results of RE in  Lim, Montaldi and Roberts \cite[Section 3]{MR1811389}.  Their  approach relies on   the discrete symmetries arising from 
permutations of identical vortices, and the momentum value plays a prominent role.
 In particular, section 3.3 in their paper is devoted to bifurcations from zero momentum.
Our  Theorem \ref{th:existenceRE}   is complementary to their results since we do not assume any equality between the vortex intensities and
the emphasis is given to the angular velocity $\omega$ instead of the   momentum. It is important
to notice that equilibrium points (for which  $\omega=0$) may  have
non-vanishing momentum  if the intensities of the vortices are distinct as is seen by the example of 
two antipodal vortices with different strengths.

\item It is unclear to us how general is the non-degeneracy assumption in the theorem. It would be reasonable to expect that it
is generically satisfied, but in the course of our investigations (see Remark \ref{rmk:N7deg}) 
we discovered that it does not hold for the pentagonal bipyramid corresponding to $N=7$ in Table \ref{ground-states}  where all vortex intensities are assumed to coincide.

%
%
%
%
%
%
%
%

\end{enumerate}

\end{remark}

%
%

\subsection{Existence of symmetric relative equilibria in the case of identical vortex strengths}
\label{ss:existenceRE-equal-strengths}

From now on we suppose that all vortices have equal strengths, so $H$, $\Omega$, $\Phi$ and the equations of motion 
are given by \eqref{eq:Ham-Om-equal},
\eqref{eq:motion} and 
 \eqref{eq:Phi}. We also recall from section \ref{ss:equal-strengths}
 that the total symmetry group is $\hat G=S_N\times SO(3)$ and acts on $M$ symplectically by \eqref{eq:Full-symmetry},
 and, with respect to this action, $H$ is invariant and  the equations of motion \eqref{eq:motion} are equivariant.
 
 \subsubsection{$\Z_m$-symmetric  configurations and discrete reduction} 
 \label{SS:discrete-reduction}
 
%

Consider a positive integer $m\geq 1$ and denote by $g_m\in SO(3)$ the matrix
\begin{equation}
\label{eq:def-gm}
g_m:=\exp \left (\frac{2\pi}{m} J_3 \right ) = \begin{pmatrix} \cos \frac{2\pi}{m}  & - \sin  \frac{2\pi}{m} & 0 \\ \sin  \frac{2\pi}{m} & \cos \frac{2\pi}{m}  & 0 \\ 0 & 0 & 1 \end{pmatrix}.  
\end{equation}
Assume that  $N=mn+p$ with $p=0,1$ or $2$, and let  $\tau_m\in S_N$ be the following permutation written as the product of
 $n$  disjoint cycles 
\begin{equation}
\label{eq:def-tau}
  \tau_m :=(1,\dots, m)(m+1,\dots, 2m)(2m+1,\dots, 3m)\cdots ((n-1)m+1, \dots, nm).
\end{equation}

\begin{definition}
\label{def:Z_m-symmetric}
Let $v=(v_1,\dots, v_N)\in M$. We say that $v$ is \defn{$\Z_m$-symmetric} if $(\tau_m,g_m).v=v$.
\end{definition}

The definition is tailored to identify the configurations $v\in M$  comprised of
$n$ latitudinal rings, each consisting of a regular $m$-gon and in the presence of $p$ vortices at the poles ($p=0,1$ or $2$). Note that if $m=1$ there is really no  ring
and the configuration may have no symmetries. In this case we follow the convention that $n=N$ and $p=0$.

Let $K_m$ be the subgroup of $\hat G$ generated by $(\tau_m,g_m)\in \hat G$. It is clear that $K_m$ is isomorphic to the 
cyclic group  $\Z_m$. The set of all $\Z_m$-symmetric configurations is then  
 $$\mbox{Fix}(K_m):=\{ v\in M \, : \, (\sigma,g).v=v \; \forall (\sigma,g)\in K_m\}\subset M.$$ 
Considering that $K_m$ is a subgroup of  $\hat G$ and the equations \eqref{eq:motion} are $\hat G$-equivariant, it
follows that $\mbox{Fix}(K_m)$ is invariant under the flow of \eqref{eq:motion}.
 Therefore, we may
 speak of $\Z_m$-symmetric solutions,  $\Z_m$-symmetric equilibria and  $\Z_m$-symmetric relative equilibria. 

We wish  to understand the structure of the set $\mbox{Fix}(K_m)$ and the restriction of \eqref{eq:motion} to it.
Suppose first that $m\geq 2$ and
consider the set $M_n$ which is  the cartesian product of $n$ copies of the unit sphere in $\R^3$ minus the collisions and poles. Namely,
\begin{equation*}
M_n:=\underbrace{ S^2\times \cdots \times S^2}_n \setminus (\Delta_n \cup P_n),
\end{equation*}
 where 
\begin{equation} 
\label{eq:def-Deltan-Pn}
\Delta_n:=\{u\in (S^2)^n \, : \, u_i=u_j \,
\mbox{for some $i\neq j$}  \}, \quad \mbox{and} \quad P_n:=\{u\in (S^2)^n \, : \, u_i=(0,0,\pm1) \,
\mbox{for some $i$}  \} .
\end{equation}
Elements of $M_n$ should be interpreted as {\em generators} of the $n$  latitudinal rings
that make up a $\Z_m$-symmetric configuration. Note that the removal of the set $P_n$ in the definition of $M_n$ above is necessary since the presence of a ring generator at the pole leads to a collision if $m\geq 2$. For $m=1$ we will
instead assume that $M_n=M$ (we do not need to remove $P_n$ since the presence of a ``generator" at the pole
does not lead to a collision).

 For our purposes it is convenient to  equip $M_n$ with the symplectic form $\omega_{M_n}:=m\sum_{j=1}^{n}\pi _{j}^{\ast }\omega _{S^{2}}$
and consider the  Hamiltonian function $h:M_n\to \R$ given by
\begin{equation}
\label{eq:h(u)}
h(u):=\sum_{j=1}^{n}\left( -\frac{m}{4}\sum_{i=1}^{m-1}\ln \left\|
g_m^{i}u_{j}-u_{j}\right\| ^{2}-\frac{m}{2}\sum_{j<j^{\prime }\leq
n}\sum_{i=1}^{m}\ln \left\| g_m^{i}u_{j}-u_{j^{\prime }}\right\| ^{2}-%
\frac{m}{2}\sum_{f\in F_p}\ln \left\| u_{j}-f\right\| ^{2}\right) ,
\end{equation}
where $u=(u_1,\dots, u_n)\in M_n$ and $F_p$ is the ordered set defined according to the value of $p=N-mn\in \{0,1,2\}$ as 
\begin{equation}
\label{eq:defFp}
F_0:=\emptyset, \qquad F_1:=\{(0,0,1)\},  \qquad F_2:=\{(0,0,1),(0,0,-1)\}.
\end{equation}
The Hamiltonian vector field on $M_n$ corresponding to  $h$ and the symplectic form $\omega_{M_n}$
 is given by the \defn{reduced system}\footnote{note that our use of the terminology ``reduced system" does not mean that we are passing to the orbit space of a group action. As follows from Theorem \ref{th-main-symmetry}, it is more appropriate to think of a ``discrete reduction" in which one restricts
 the system to a connected component of the invariant set $\mbox{Fix}(K_m)$.}
\begin{equation}
\label{eq:reduced-system}
\dot u_j = -\frac{1}{m} u_j \times \nabla_{u_j} h(u), \qquad j=1,\dots, n.
\end{equation}

Finally, consider  the map $\rho:M_n \to M$  defined 
according to the value of $p=N-mn\in \{0,1,2\}$,  by
\begin{equation}
\label{eq:def-rho}
\rho (u_{1},\dots ,u_{n}) =(g_m^{1}u_{1},\dots
,g_m^{m}u_{1},g_m^{1}u_{2},\dots
,g_m^{m}u_{2},\dots,g_m^{1}u_{n},\dots ,g_m^{m}u_{n} ,F_p),
\end{equation}
where $F_p$ is given by \eqref{eq:defFp}. This mapping produces a $\Z_m$-symmetric configuration
on $M$ out of the ring generators $(u_1,\dots, u_n)$ in the natural way.

 The following theorem is inspired by  \cite[Theorem 3.5]{Luis}.
Among other things, it states that if an initial condition is made up of $n$ latitudinal rings, each of which is 
made up of a regular
 $m$-gon 
(and possibly $p$ poles), then its evolution by \eqref{eq:motion} will preserve such structure and 
the reduced system \eqref{eq:reduced-system} describes the evolution 
 of the ring generators $u_1, \dots, u_n$. 
 
Although  the discussion above and the theorem are primarily intended to be applied when $m\geq 2$, they also hold 
trivially for $m=1$  (see Remark \ref{rmk:m1.1}). It is convenient  to allow  $m=1$ 
in our discussion to study  RE which possess no symmetries at a later stage of the paper. 
%
%
%
%
%

\begin{theorem}
\label{th-main-symmetry} Let $N\geq 3$. The following statements hold.

\begin{enumerate}
\item The set $\mbox{\em Fix}(K_m)$ is an embedded submanifold of $M$ whose connected components 
are  diffeomorphic to $M_n$ and are invariant under the flow of  \eqref{eq:motion}.

\item The mapping $\rho$ defined by \eqref{eq:def-rho}  is a diffeomorphism from $M_n$ 
to  a connected component of $\mbox{\em Fix}(K_m)$  which satisfies $h=H\circ \rho$ and conjugates the flows of \eqref{eq:reduced-system}  and \eqref{eq:motion}.
That is, $t\rightarrow u(t)$ is a solution of \eqref{eq:reduced-system} if
and only if $t\mapsto \rho (u(t))$ is a solution of \eqref{eq:motion}.

%
%

\item Let $SO(2)$ be given by \eqref{eq:def-SO(2)}. The diagonal action of $SO(2)$ on $M_n$ is free, symplectic and has momentum
map $\phi:M_n\to \R$ given by 
\begin{equation}
\label{eq:phi}
\phi(u)= m \left ( \sum_{i=1}^nu_i \right ) \cdot e_3,
\end{equation}
and, up to perhaps a constant, $\phi = \Phi_3\circ \rho$.
Moreover, the reduced Hamiltonian $h$ given by \eqref{eq:h(u)} is $SO(2)$-invariant and hence,  as a consequence, \eqref{eq:reduced-system} is $SO(2)$-equivariant
and $\phi$ is a first integral.

\item Assume that $m\geq 2$. The centre of vorticity of elements of $\mbox{\em Fix}(K_m)$ is parallel to $e_3$ i.e. $\Phi(v)\times e_3=0, \, \forall v\in \mbox{\em Fix}(K_m)$.
%
%
\end{enumerate}
\end{theorem}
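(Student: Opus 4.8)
The plan is to verify each of the four items in turn, relying on the structure we have set up. For item (i), the key observation is that $\mbox{Fix}(K_m)$ is the fixed-point set of a finite group $K_m\cong \Z_m$ acting smoothly (in fact isometrically, if we put an invariant metric on $M$) on the manifold $M$; by the standard fact that fixed-point sets of compact group actions are closed embedded submanifolds, $\mbox{Fix}(K_m)$ is an embedded submanifold of $M$. Invariance under the flow of \eqref{eq:motion} is immediate because $K_m\leq \hat G$ and the equations are $\hat G$-equivariant: if $v(t)$ solves \eqref{eq:motion} and $v(0)\in\mbox{Fix}(K_m)$, then $(\tau_m,g_m).v(t)$ is also a solution with the same initial condition, so by uniqueness $v(t)\in\mbox{Fix}(K_m)$ for all $t$. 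The diffeomorphism-with-$M_n$ claim is really the content of item (ii): the map $\rho$ in \eqref{eq:def-rho} is a smooth injective immersion whose image is precisely one connected component of $\mbox{Fix}(K_m)$ (the one in which the rings do not degenerate to collisions), and one checks it is a diffeomorphism onto its image by exhibiting the inverse (project onto the first vortex of each ring).

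For item (ii) the remaining work is the identity $h=H\circ\rho$ and the conjugacy of flows. The energy identity is a direct computation: substituting $\rho(u)$ into $H(v)=-\sum_{i<j}\ln\|v_i-v_j\|^2$, one groups the pairwise terms according to whether the two vortices lie on the same ring, on different rings, or one is a pole, and uses the rotational invariance $\|g_m^k u - g_m^k u'\|=\|u-u'\|$ together with a counting argument (each of the $m$ rotated copies of a ring contributes the same, explaining the factors $m/4$, $m/2$, $m/2$ in \eqref{eq:h(u)}; the $1/4$ versus $1/2$ reflects that the intra-ring sum double-counts pairs). The flow conjugacy then follows because $\rho$ is a symplectic map from $(M_n,\omega_{M_n})$ onto $\rho(M_n)\subset(M,\Omega)$ — here one checks $\rho^*\Omega=\omega_{M_n}=m\sum_j\pi_j^*\omega_{S^2}$, again a factor-$m$ bookkeeping — and a symplectomorphism intertwining Hamiltonians intertwines the Hamiltonian vector fields, i.e. \eqref{eq:reduced-system} and the restriction of \eqref{eq:motion} to $\rho(M_n)$.

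For item (iii), the diagonal $SO(2)$-action on $M_n$ is free because a nontrivial rotation about the $z$-axis fixes a point of $S^2$ only at the poles, which have been removed; it is symplectic since each $\pi_j^*\omega_{S^2}$ is $SO(2)$-invariant. The momentum map formula \eqref{eq:phi} is the standard momentum map for rotation about the $z$-axis on each $S^2$ factor (whose value is the height $u_i\cdot e_3$), summed and weighted by $m$; the relation $\phi=\Phi_3\circ\rho$ up to a constant then follows from $\Phi_3(v)=(\sum_i v_i)\cdot e_3$ and the fact that for each ring, $\sum_{k=1}^m g_m^k u_j$ is a vertical vector of length $m(u_j\cdot e_3)$ (the horizontal components of a regular $m$-gon cancel). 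Finally, $SO(2)$-invariance of $h$ is inherited from $SO(3)$-invariance of $H$ via $h=H\circ\rho$ and $SO(2)$-equivariance of $\rho$, and Noether's theorem gives that $\phi$ is conserved along \eqref{eq:reduced-system}.

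Item (iv) is the quickest: for $m\geq 2$ and $v\in\mbox{Fix}(K_m)$ we have $(\tau_m,g_m).v=v$, hence by the equivariance \eqref{eq:Mom-map-equiv} of $\Phi$ under $SO(3)$ (the permutation $\tau_m$ acts trivially on $\Phi(v)=\sum_i v_i$ since it just reorders the summands), $g_m\Phi(v)=\Phi((\tau_m,g_m).v)=\Phi(v)$, so $\Phi(v)$ is fixed by $g_m$. Since $g_m$ is a nontrivial rotation about the $z$-axis when $m\geq 2$, its fixed subspace in $\R^3$ is exactly $\langle e_3\rangle$, whence $\Phi(v)\parallel e_3$, i.e. $\Phi(v)\times e_3=0$. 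I expect the main obstacle to be the careful bookkeeping in item (ii) — verifying $\rho^*\Omega=\omega_{M_n}$ and $H\circ\rho=h$ with the precise combinatorial factors — rather than anything conceptually deep; everything else is a direct application of equivariance and the elementary geometry of regular polygons.
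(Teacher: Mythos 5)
Your proposal is correct and follows essentially the same route as the paper: explicit parametrisation of $\mbox{Fix}(K_m)$ by $\rho$, the identity $h=H\circ\rho$ plus $\rho^*\Omega=\omega_{M_n}$ for the flow conjugacy, and the regular-$m$-gon cancellation for the momentum map relation (your item (iv) via equivariance of $\Phi$ under $g_m$ is a slightly slicker variant of the paper's computation with $\sum_{i=1}^m g_m^i=me_3e_3^T$, but it is the same idea). The only small caveat is that your freeness argument in item (iii) ("the poles have been removed") only applies when $m\geq 2$; for $m=1$ one has $M_n=M$ with the poles present, and freeness instead follows because $N\geq 3$ distinct points cannot all lie on the rotation axis.
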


\begin{proof}
(i)-(ii) The mapping $\rho$ defined by \eqref{eq:def-rho} is injective and takes values on $\mbox{Fix}(K_m)$. First note that if $p=0$ it is easy to show that  
\begin{equation*}
\mbox{Fix}(K_m)=\{(g_m^{1}u_{1},\dots
,g_m^{m}u_{1},g_m^{1}u_{2},\dots
,g_m^{m}u_{2},\dots,g_m^{1}u_{n},\dots ,g_m^{m}u_{n}) \, : \, u=(u_1,\dots, u_n) \in M_n\},
\end{equation*}
 which shows that $\rho$ is onto and hence bijective. It is clear that $\rho^{-1}$ is smooth so in this case $\rho$ is a diffeomorphism. Now, if $p=1$ or $p=2$, then  $\mbox{Fix}(K_m)$
 is the disjoint union of two diffeomorphic connected components  $\mbox{Fix}(K_m)^\pm$ which are given by
 \begin{equation*}
\mbox{Fix}(K_m)^\pm:=\{(g_m^{1}u_{1},\dots
,g_m^{m}u_{1},g_m^{1}u_{2},\dots
,g_m^{m}u_{2},\dots,g_m^{1}u_{n},\dots ,g_m^{m}u_{n},(0,0,\pm 1)) \, : \, u=(u_1,\dots, u_n) \in M_n\},
\end{equation*}
if $p=1$, and instead by
 \begin{equation*}
 \begin{split}
\mbox{Fix}(K_m)^+&:=\{(g_m^{1}u_{1},\dots
,g_m^{m}u_{1},\dots,g_m^{1}u_{n},\dots ,g_m^{m}u_{n},(0,0, 1), (0,0,-1)) \, : \, u=(u_1,\dots, u_n) \in M_n\} ,\\
\mbox{Fix}(K_m)^-&=\{(g_m^{1}u_{1},\dots
,g_m^{m}u_{1},\dots,g_m^{1}u_{n},\dots ,g_m^{m}u_{n},(0,0, -1), (0,0,1)) \, : \, u=(u_1,\dots, u_n) \in M_n\},
\end{split}
\end{equation*}
if $p=2$. In any case, arguing as in the case $p=0$, shows that $\rho$ is a diffeomorphism from $M_n$ onto  $\mbox{Fix}(K_m)^+$.  The invariance of
(the connected components of) $\mbox{Fix}(K_m)$ follows from the $\hat G$-equivariance of \eqref{eq:motion} since $K_m$ is a subgroup of $\hat G$.

Now denote by $\mbox{Fix}(K_m)^+$ the image of $M_n$ by $\rho$ for any $p=0,1,2$ (in other words,  $\mbox{Fix}(K_m)^+$ is defined as above for $p=1,2$ and instead equals
$\mbox{Fix}(K_m)$ if $p=0$).

In order to prove that $\rho$  conjugates the flows of \eqref{eq:reduced-system}  and \eqref{eq:motion}, we proceed as in the proof of  \cite[Theorem 3.5]{Luis} and argue that
$\rho$ is in fact a symplectomorphism from $M_n$ equipped with $\omega_{M_n}$ onto 
$\mbox{Fix}(K_m)^+$ equipped with the restriction of $\Omega$ (given by \eqref{eq:Ham-Om-equal}). Below we show that $h=H\circ \rho$  (with $H$ given by \eqref{eq:Ham-Om-equal}). Therefore
(see e.g. \cite[Proposition 5.4.4]{Marsden-Ratiu}), $\rho$ pulls-back  the restriction of the
Hamiltonian vector field of  $H$ with respect to $\Omega$ to $\mbox{Fix}(K_m)^+$ onto the Hamiltonian vector field of $h$ with respect to $\omega_{M_n}$. Hence, the solution curves of these vector fields are mapped
onto each other by $\rho$. In other words, $\rho$ maps solutions of 
 \eqref{eq:reduced-system}  into solutions of \eqref{eq:motion} as required.
 
The proof that $\Omega$ pulls-back to $\omega_{M_n}$ by $\rho$ is a simple generalisation of the proof of \cite[Lemma 3.10]{Luis} that we omit. We now show that indeed $h=H\circ \rho$ which completes the
proof. Starting from \eqref{eq:Ham-Om-equal}
and \eqref{eq:def-rho},  we compute for $u=(u_1,\dots, u_n)\in M_n$: 
\begin{equation}
H\circ \rho(u)=-\frac{1}{4}\sum_{(i,j)\neq (i^{\prime },j^{\prime })}\ln \left\|
g_m^{i}u_{j}-g_m^{i^{\prime }}u_{j^{\prime }}\right\| ^{2}-\frac{1}{2}%
\sum_{f\in F_p}\sum_{i=1}^{m}\sum_{j=1}^{n}\ln \left\|
g_m^{i}u_{j}-f\right\| ^{2}.  \label{eq:aux1-proposition-simp-red-Ham}
\end{equation}
Using   $g_m^{i}f=f$ for $f\in F_p$, gives%
\begin{equation}
\frac{1}{2}\sum_{f\in F_p}\sum_{i=1}^{m}\sum_{j=1}^{n}\ln \left\|
g_m^{i}u_{j}-f\right\| ^{2}=\frac{1}{2}\sum_{f\in F_p}\sum_{i=1}^{m}\sum_{j=1}^{n}\ln \left\|
g_m^{i}(u_{j}-f)\right\| ^{2}=\frac{m}{2}\sum_{j=1}^{n}\sum_{f\in F_p}\ln
\left\| u_{j}-f\right\| ^{2}\text{.}
 \label{eq:aux2-proposition-simp-red-Ham}
\end{equation}%
On the other hand,%
%
%
\begin{equation}
 \label{eq:aux3-proposition-simp-red-Ham}
\begin{split}
\frac{1}{4}\sum_{(i,j)\neq (i^{\prime },j^{\prime })}\ln \left\|
g_m^{i}u_{j}-g_m^{i^{\prime }}u_{j^{\prime }}\right\| ^{2}& =\frac{1}{4}%
\sum_{(i,j)\neq (i^{\prime },j^{\prime })}\ln \left\| g_m^{i-i^{\prime
}}u_{j}-u_{j^{\prime }}\right\| ^{2} \\
& =\frac{1}{4}\sum_{j=1}^{n}\sum_{\substack{ i,i^{\prime }=1  \\ (i\neq
i^{\prime })}}^{m}\ln \left\| g_m^{i-i^{\prime }}u_{j}-u_{j}\right\|
^{2}+\frac{1}{4}\sum_{j\neq j^{\prime }}\sum_{i,i^{\prime }=1}^{m}\ln
\left\| g_m^{i-i^{\prime }}u_{j}-u_{j^{\prime }}\right\| ^{2} \\
& =\sum_{j=1}^{n}\left( \frac{m}{4}\sum_{i=1}^{m-1}\ln \left\|
g_m^{i}u_{j}-u_{j}\right\| ^{2}\right) +\sum_{j\neq j^{\prime }}\left( 
\frac{m}{4}\sum_{i=1}^{m}\ln \left\| g_m^{i}u_{j}-u_{j^{\prime
}}\right\| ^{2}\right) \\
& =\sum_{j=1}^{n} \left (  \frac{m}{4}\sum_{i=1}^{m-1}\ln \left\|
g_m^{i}u_{j}-u_{j}\right\| ^{2} + \frac{m}{2} \sum_{j< j^{\prime }\leq n}
\sum_{i=1}^{m}\ln \left\| g_m^{i}u_{j}-u_{j^{\prime
}}\right\| ^{2} \right ).
 \end{split}
\end{equation}%
Substitution of \eqref{eq:aux2-proposition-simp-red-Ham} and \eqref{eq:aux3-proposition-simp-red-Ham} into \eqref{eq:aux1-proposition-simp-red-Ham} and comparing with  \eqref{eq:h(u)} shows that
$H\circ \rho=h$ as required.

(iii) The action is clearly symplectic since the area form on the sphere is invariant under rotations. The action is free since the action of $SO(2)$ on $S^2$ fixes only the North and South poles
and these are removed from $M_n$ (see the definition of  $P_n$ in \eqref{eq:def-Deltan-Pn}) if $m\geq 2$. If $m=1$, 
then $M_n=M$ and the freeness follows since $n=N\geq 3$.

 The Hamiltonian vector field of $\phi$ defined by \eqref{eq:phi} with respect to the symplectic
form $\omega_{M_n}$ defines the equations
\begin{equation*}
\dot u_j = -\frac{1}{m} u_j \times \nabla_{u_j} \phi(u)=e_3 \times u_j =J_3 u_j, \qquad j=1,\dots, n.
\end{equation*}
These equations coincide with the corresponding ones for the infinitesimal generator of the action corresponding to the matrix $J_3$ in the Lie algebra of the group $SO(2)$, which proves  that $\phi$ is indeed the momentum map.
It is immediate to check that $h$ is $SO(2)$-invariant directly from \eqref{eq:h(u)} using that $g_m\in SO(2)$ and $SO(2)$ is abelian.  The preservation of $\phi$ along the flow of \eqref{eq:reduced-system} 
and the $SO(2)$-equivariance of these equations follow from general results for Hamiltonian systems with symmetry (see e.g. \cite{Marsden-Ratiu}).

Next, using the expression of $\Phi$ given in \eqref{eq:Phi}, and the description of the set  $\mbox{Fix}(K_m)$ in terms of its connected components given in the proof of (i) above,
it is easy to see that if $v\in \mbox{Fix}(K_m)$ then there exists $u=(u_1,\dots, u_n)\in M_n$ such that
\begin{equation}
\label{eq:aux_id_proofThmSymmetry2}
\Phi(v)= \sum_{j=1}^n\sum_{i=1}^m g_m^iu_j + \sigma_pe_3,
\end{equation}
where $\sigma_p=0$ if $p=0,2$ and $\sigma_1=\pm 1$ (the sign depending on whether $v\in \mbox{Fix}(K_m)^+$ or $v\in \mbox{Fix}(K_m)^-$). 
Hence, using the  identity
\begin{equation}
\label{eq:aux_id_proofThmSymmetry}
\sum_{i=1}^m g_m^i = \begin{cases}   \mbox{Id}_3 \quad &\mbox{if} \quad m=1, \\ me_3 e_3^T\quad &\mbox{if} \quad m\geq 2,  \end{cases}
\end{equation}
 we obtain $\phi = \Phi_3\circ \rho$ if $p=0,2$ and  $\phi = \Phi_3\circ \rho -1$ if $p=1$.
 
 (iv) For $m\geq 2$, using \eqref{eq:aux_id_proofThmSymmetry2} and \eqref{eq:aux_id_proofThmSymmetry} we obtain
\begin{equation*}
\Phi(v)=\left ( m \sum_{j=1}^n u_j \cdot e_3 + \sigma_p \right )e_3,
\end{equation*}
which is parallel to $e_3$ as claimed. 
%
%
\end{proof}

\begin{remark}
\label{rmk:m1.1}
Looking ahead at the continuation of RE without symmetries, it is useful to note how the above discussion 
specialises for $m=1$.  In this case $g_m$ is the $3\times 3$ identity matrix and $\tau_m$ is the identity permutation
so $K_m$ is the trivial subgroup of $\hat G$. It follows that $\mbox{Fix}(K_m)=M$, $\rho$ is the identity map on $M$,  $h=H$, and the systems \eqref{eq:motion}   and  \eqref{eq:reduced-system}
coincide.
\end{remark}

\subsubsection{Symmetric relative equilibria: definition and main properties}
In view of item  (iii) of Theorem \ref{th-main-symmetry},  the reduced system \eqref{eq:reduced-system} is equivariant with respect to the action of $SO(2)$. Therefore, we may consider existence
of relative equilibria of \eqref{eq:reduced-system} with respect to this action. In analogy with Definition \ref{def:RE}, we say that the pair $(b,\omega)\in M_n\times \R$ is 
a RE of  \eqref{eq:reduced-system}  if 
\begin{equation*}
u(t)= (e^{\omega J_3 t}b_1,\dots, e^{\omega J_3 t}b_n),
\end{equation*}
is a solution  of  \eqref{eq:reduced-system} where $b=(b_1,\dots, b_n)$.
In analogy with item (i) of Proposition \ref{prop:RE-as-crit-pts}, we have:
\begin{proposition}
\label{prop:RE-as-crit-pts-red}
 $(b,\omega)\in M_n\times \R$ is a relative equilibrium of  \eqref{eq:reduced-system}
 if and only if $b\in M_n$ is a critical point of the augmented Hamiltonian $h_\omega:= h-\omega \phi :
 M_n\to \R$, where $\phi$ is defined in item (iii) of Theorem \ref{th-main-symmetry}.
\end{proposition}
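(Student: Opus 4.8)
The plan is to mimic the elementary proof of Proposition~\ref{prop:RE-as-crit-pts}(i), with the augmented Hamiltonian $H_\omega$ replaced by the reduced augmented Hamiltonian $h_\omega = h - \omega\phi$, and with the role of the $SO(3)$-invariance of $H$ now played by the $SO(2)$-invariance of $h$ recorded in item~(iii) of Theorem~\ref{th-main-symmetry}. First I would write down the Hamiltonian vector field of $h_\omega$ on $(M_n,\omega_{M_n})$: from the explicit formula \eqref{eq:phi} one reads $\nabla_{u_j}\phi(u)=m\,e_3$, and since $e_3\times u_j = J_3 u_j$ the corresponding equations are
\[
\dot u_j=-\frac1m\,u_j\times\nabla_{u_j}h_\omega(u)=-\frac1m\,u_j\times\nabla_{u_j}h(u)-\omega J_3 u_j,\qquad j=1,\dots,n.
\]

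Next, fix $\omega\in\R$ and introduce the rotating variables $w_j(t):=e^{-\omega J_3 t}u_j(t)$. By the definition of a relative equilibrium of the reduced system given just above the statement, $(b,\omega)$ is a RE of \eqref{eq:reduced-system} if and only if $u(t)=(e^{\omega J_3 t}b_1,\dots,e^{\omega J_3 t}b_n)$ solves \eqref{eq:reduced-system}, which occurs precisely when the associated curve $w(t)$ is the constant $w(t)\equiv b$. Differentiating and substituting \eqref{eq:reduced-system} gives
\[
\dot w_j=-\omega J_3 w_j+e^{-\omega J_3 t}\dot u_j=-\omega J_3 w_j-\frac1m\,e^{-\omega J_3 t}\bigl(u_j\times\nabla_{u_j}h(u)\bigr).
\]
The key step is to simplify the last term using equivariance. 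By the $SO(2)$-invariance of $h$ (Theorem~\ref{th-main-symmetry}(iii)) we have $h(e^{\omega J_3 t}w_1,\dots,e^{\omega J_3 t}w_n)=h(w_1,\dots,w_n)$; differentiating in $w_j$ and using $(e^{\omega J_3 t})^{-1}=e^{-\omega J_3 t}$ yields $\nabla_{u_j}h(u)=e^{\omega J_3 t}\nabla_{w_j}h(w)$ when $u_k=e^{\omega J_3 t}w_k$. Combining this with the elementary identity $R(a\times c)=(Ra)\times(Rc)$ valid for every $R\in SO(3)$, one gets $e^{-\omega J_3 t}\bigl(u_j\times\nabla_{u_j}h(u)\bigr)=w_j\times\nabla_{w_j}h(w)$ along $u_k=e^{\omega J_3 t}w_k$, hence
\[
\dot w_j=-\omega J_3 w_j-\frac1m\,w_j\times\nabla_{w_j}h(w)=-\frac1m\,w_j\times\nabla_{w_j}h_\omega(w),
\]
so $w(t)$ evolves under the Hamiltonian flow of $h_\omega$ on $(M_n,\omega_{M_n})$. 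Since $\omega_{M_n}$ is a (non-degenerate) symplectic form, the Hamiltonian vector field of $h_\omega$ vanishes at $b$ if and only if $dh_\omega(b)=0$; thus $w(t)\equiv b$ is a solution exactly when $b$ is a critical point of $h_\omega$, which together with the equivalences above completes the proof.

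I do not anticipate any genuine obstacle here: the statement is the reduced-system analogue of Proposition~\ref{prop:RE-as-crit-pts}(i), and the only point requiring care is the bookkeeping that converts the $SO(2)$-invariance of $h$ into the equivariance of $\nabla h$ and invokes the compatibility of rotations with the cross product, exactly as in the proof of Proposition~\ref{prop:RE-as-crit-pts}(i). One could instead deduce the result from general theory for relative equilibria of Hamiltonian systems with $SO(2)$-symmetry (as cited for Proposition~\ref{prop:RE-as-crit-pts}), but the self-contained computation above is preferable for consistency with the rest of the paper.
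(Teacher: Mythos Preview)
Your proof is correct and follows exactly the approach the paper indicates: the paper does not spell out a detailed argument, but simply remarks that one proceeds in analogy with the proof of Proposition~\ref{prop:RE-as-crit-pts}(i) (or alternatively invokes general theory for $SO(2)$-Hamiltonian systems). Your rotating-frame computation with $w_j=e^{-\omega J_3 t}u_j$, the use of the $SO(2)$-invariance of $h$ from Theorem~\ref{th-main-symmetry}(iii) to obtain equivariance of $\nabla h$, and the identification of equilibria of the $w$-system with critical points of $h_\omega$ are precisely the reduced-system analogue of that proof, carried out carefully and correctly.
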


It is straightforward to prove this result proceeding in  analogy to the proof that we presented of   
Proposition \ref{prop:RE-as-crit-pts}(i). It also follows by general considerations since  
the $SO(2)$ action on $M_n$  is symplectic and has momentum map  $\phi$ as stated in Theorem \ref{th-main-symmetry}.

In view of Theorem \ref{th-main-symmetry},  we have $h_\omega=H_\omega\circ \rho$ (up to perhaps a constant). Considering that 
$\rho$ is a diffeomorphism from $M_n$ onto a connected component of  $\mbox{Fix}(K_m)$ that we denote $\mbox{Fix}(K_m)^+$, it follows that the critical points of $H_\omega$ 
in $\mbox{Fix}(K_m)^+$ are in one-to-one correspondence with the critical points of $h_\omega$ in $M_n$. Therefore, if $(b,\omega)\in M_n\times \R$ is 
a RE of  \eqref{eq:reduced-system}  then $(\rho(b),\omega)\in M\times \R$ is
a $\Z_m$-symmetric RE of  \eqref{eq:motion}. Conversely, if $(a,\omega)\in M\times \R$ is a  $\Z_m$-symmetric RE of  \eqref{eq:motion} 
belonging to   $\mbox{Fix}(K_m)^+$, there exists a RE  $(b,\omega)\in M_n\times \R$ of \eqref{eq:reduced-system} such that $a=\rho(b)$.
Therefore, the search for $\Z_m$-symmetric RE of  \eqref{eq:motion} reduces to finding critical
points of $h_\omega$. If $m\geq 2$, this is a considerable simplification with respect to finding critical points of 
$H_\omega$ since the  $SO(3)$-symmetry of $H_0$ is broken by our construction and, as a consequence, it
will be possible to  apply the implicit function theorem to   $dh_\omega$ close to $\omega=0$.

Looking ahead at the implementation, we look for critical points of $h_\omega:M_n\to \R$ 
using  the method of Lagrange multipliers.
We write $\lambda=(\lambda_1,\dots, \lambda_n)\in \R^n$ and define the function
\begin{equation}
\label{eq:defhomstar}
h_{\omega }^*:\R^{3n}\times \R^n\to \R, \qquad h_{\omega }^*(u,\lambda ):=h_{\omega }(u)+m\sum_{j=1}^{n}\lambda
_{j}R_j(u),
\end{equation}%
where $R_j(u)=\frac{1}{2}\left ( \| u_{j}\| ^{2}-1\right )$ and with $h_\omega$ in \eqref{eq:defhomstar} interpreted as a function
of $u\in \R^{3n}$ in the obvious way. The method of Lagrange multipliers states that  $\hat u\in M_n$ is a critical point
of $h_\omega:M_n\to \R$ if and only if there exists $\hat \lambda\in \R^n$ such that $(\hat u,\hat \lambda)$ is
a critical point of $h_{\omega }^*$.

\begin{remark}
\label{rmk:defhstar}
The domain
of $h_{\omega }^*$ should actually be restricted  to avoid  points $u\in \R^{3n}$  at which the 
formula for $h_{\omega}$ is not defined. These include collisions but also elements  $u\in \R^{3n}$ for which a component
$u_j\in \R^3$ is parallel to $e_3$. We have decided to overlook this detail in the definition of $h_\omega^*$ to 
facilitate the presentation.
\end{remark}

\subsubsection{Symmetric relative equilibria: existence as zeros of a map and continuation}

Suppose that $(b_0,\omega_0)\in  M_n \times \R$ is a RE of \eqref{eq:def-SO(2)}. We do not assume that 
$\omega_0\neq 0$ so $b_0$ could be an equilibrium point, and in fact, this particular case will be very relevant for us. We want
to determine a branch of RE emanating from  $(b_0,\omega_0)$, namely, we want to find RE  $(b(\omega),\omega)\in M_n
\times \R$  parametrised by 
$\omega$ close to $\omega_0$ and satisfying $b(\omega_0)=b_0$.
In view of the discussion above, such RE correspond to critical points of $h^*_\omega$ given by \eqref{eq:defhomstar} so the
problem can be reformulated as the continuation of critical points of $h^*_\omega$ as a function of $\omega$.
The complication that arises  is that, for any value $\omega$, the function  $h_{\omega }^{\ast }$ is invariant under the   action of  $ SO(2)$ 
 on $\R^{3n}\times \R^n$ given by $g.(u,\lambda)=(gu_1,\dots, gu_n, \lambda)$ where $g\in SO(2)$ (and $SO(2)$ is given by \eqref{eq:def-SO(2)} as 
 usual).  Therefore, for a fixed value of $\omega$,  a critical point of $h^*_\omega$ in fact gives rise to an   $SO(2)$-orbit of critical points.  Our strategy to 
 deal with this complication is to  
  isolate an element of the critical orbit $SO(2).b_0$  in order to  apply the implicit function theorem.
   For this
  we follow the approach of   \cite{MunozFreireGalanetal,Ize} and  consider  the zeros of the 
 augmented map:\footnote{a remark similar to Remark \ref{rmk:defhstar} applies to the domain of definition of $F$.}
 \begin{equation}
 \label{eq:augmap}
 \begin{split}
 F: \; & \mathbb{R}^{3n+n+1}\times \mathbb{R}\rightarrow \mathbb{R}^{3n+n+1}\\
 &(u,\lambda ,\alpha ;\omega ) \mapsto (\nabla _{u}h_{\omega }^{\ast }(u,\lambda
)+\alpha \mathcal{J}_{3}u,R_{1}(u),...,R_{n}(u),\mathcal{J}_{3}b_0\cdot (u-b_0)),
\end{split}
\end{equation}
where $  \mathcal{J}_{3}:=\mbox{diag}(J_3,\dots, J_3)$ is the $3n\times 3n$ diagonal block matrix with $n$ entries equal to $J_3$. 
We employ the terminology of  \cite{MunozFreireGalanetal,Ize} and refer to  $\alpha\in \R$ as the {\em unfolding parameter}. We also
recall from these references that  the condition $\mathcal{J}_{3}b_0\cdot (u-b_0)=0$ requires $ u\in \R^{3n}$ to belong
to a Poincar\'e section
of $SO(2).b_0$  through $b_0$.  In particular, such condition cannot be satisfied for $u\in SO(2). b_0$ unless $u=b_0$
so $b_0$ is the unique representative of the critical orbit $SO(2).b_0$ that produces a zero of $F$.
 Finally, note that the conditions $R_j(u)=0$ guarantee
that $u\in M_n$.

The following proposition explains how zeros of $F$ give rise to $\Z_m$-symmetric RE of our problem. The proposition is valid for all $m\geq 1$.

 \begin{proposition}
 \label{prop:zerosFareRE}
Let $N\geq 3$. The zeros of the augmented map $F$ defined by \eqref{eq:augmap}  give rise to relative equilibria of the $N$-vortex problem
consisting of  $n$-rings of regular $m$-gons and $p$-poles. More precisely, if 
$F(\hat u,\hat \lambda,\hat \alpha ;\hat \omega)=0$, then
 $\hat u \in M_n$ and $(\hat u, \hat \omega)$ is a RE of the reduced system \eqref{eq:reduced-system}. Moreover,  if we denote $\hat v:=\rho(\hat u)$ then $\hat v\in M$ is $\Z_m$-symmetric and 
  $(\hat v,\hat \omega)$ is a $\Z_m$-symmetric RE of the full system \eqref{eq:motion}.
\end{proposition}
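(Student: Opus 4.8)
The plan is to unwind the definition of the augmented map $F$ component by component and identify what vanishing of each component means. Suppose $F(\hat u,\hat\lambda,\hat\alpha;\hat\omega)=0$. First I would read off the components $R_j(\hat u)=0$ for $j=1,\dots,n$; by definition $R_j(u)=\tfrac12(\|u_j\|^2-1)$, so these say precisely that each $\hat u_j$ is a unit vector. To conclude $\hat u\in M_n$ one also needs that $\hat u$ avoids the collision set $\Delta_n$ and the pole set $P_n$; as noted in Remark \ref{rmk:defhstar} (and the analogous footnote to \eqref{eq:augmap}), the map $F$ is only defined away from these bad sets, so this is automatic from the hypothesis that $F$ is evaluated and equals zero at $(\hat u,\hat\lambda,\hat\alpha;\hat\omega)$. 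Hence $\hat u\in M_n$.

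Next I would treat the first block of equations, $\nabla_u h^*_\omega(\hat u,\hat\lambda)+\hat\alpha\,\mathcal J_3\hat u=0$, together with the last scalar equation $\mathcal J_3 b_0\cdot(\hat u-b_0)=0$. The key step is to show the unfolding parameter $\hat\alpha$ must vanish. The standard argument (following \cite{MunozFreireGalanetal,Ize}) is to pair the first block with the infinitesimal generator $\mathcal J_3\hat u$ of the $SO(2)$-action: since $h^*_\omega$ is $SO(2)$-invariant, differentiating the invariance identity gives $\nabla_u h^*_\omega(\hat u,\hat\lambda)\cdot \mathcal J_3\hat u=0$, so the first block dotted with $\mathcal J_3\hat u$ yields $\hat\alpha\,\|\mathcal J_3\hat u\|^2=0$. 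Because $\hat u\in M_n$ has no component parallel to $e_3$, we have $\mathcal J_3\hat u\neq 0$, hence $\hat\alpha=0$. Then $\nabla_u h^*_\omega(\hat u,\hat\lambda)=0$, which together with $R_j(\hat u)=0$ is exactly the Lagrange multiplier characterisation: $(\hat u,\hat\lambda)$ is a critical point of $h^*_\omega$, so $\hat u$ is a critical point of $h_\omega:M_n\to\R$. By Proposition \ref{prop:RE-as-crit-pts-red}, $(\hat u,\hat\omega)$ is a RE of the reduced system \eqref{eq:reduced-system}.

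Finally I would transfer this to the full system via Theorem \ref{th-main-symmetry}. Setting $\hat v:=\rho(\hat u)$, item (ii) of that theorem says $\rho$ is a diffeomorphism from $M_n$ onto the connected component $\mbox{Fix}(K_m)^+$ of $\mbox{Fix}(K_m)$, so in particular $\hat v\in\mbox{Fix}(K_m)$, i.e. $\hat v$ is $\Z_m$-symmetric. Moreover $\rho$ conjugates the flows of \eqref{eq:reduced-system} and \eqref{eq:motion}; applying this conjugacy to the RE solution $u(t)=(e^{\hat\omega J_3 t}\hat u_1,\dots,e^{\hat\omega J_3 t}\hat u_n)$ and using that $g_m\in SO(2)$ commutes with $e^{\hat\omega J_3 t}$, one checks $\rho(u(t))=e^{\hat\omega J_3 t}.\rho(\hat u)=e^{\hat\omega J_3 t}.\hat v$, which is therefore a solution of \eqref{eq:motion}. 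Hence $(\hat v,\hat\omega)$ is a $\Z_m$-symmetric RE of the full system. (Alternatively one can argue via $h_\omega=H_\omega\circ\rho$ up to a constant and the correspondence of critical points, as noted in the text preceding the proposition.) I expect the only genuinely delicate point to be the vanishing of the unfolding parameter $\hat\alpha$ and the bookkeeping that $\mathcal J_3\hat u\neq 0$ — everything else is routine translation through already-established results; the $m=1$ case needs no separate treatment since there $g_m=\mathrm{Id}$, $\rho=\mathrm{id}$, and the statement degenerates appropriately, as recorded in Remark \ref{rmk:m1.1}.
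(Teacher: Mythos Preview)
Your proposal is correct and follows essentially the same approach as the paper's proof: verify $\hat u\in M_n$ from $R_j(\hat u)=0$ and the domain restriction, kill the unfolding parameter $\hat\alpha$ by pairing the first block with $\mathcal J_3\hat u$ and using $SO(2)$-invariance of $h^*_\omega$, then invoke Lagrange multipliers, Proposition~\ref{prop:RE-as-crit-pts-red}, and Theorem~\ref{th-main-symmetry}. The only minor imprecision is your justification that $\mathcal J_3\hat u\neq 0$ because ``$\hat u$ has no component parallel to $e_3$'': this holds for $m\geq 2$ by the removal of $P_n$, but for $m=1$ poles are allowed in $M_n=M$; the paper covers both cases uniformly by noting that $\mathcal J_3\hat u$ is the infinitesimal generator of a \emph{free} $SO(2)$-action (see \eqref{eq:J3uneq0} and item (iii) of Theorem~\ref{th-main-symmetry}).
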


Before presenting the proof, we note that
\begin{equation}
\label{eq:J3uneq0}
\mathcal{J}_{3}u\neq 0,\qquad  \mbox{for all $u\in M_n$.}
\end{equation}
The reason is that  $\mathcal{J}_{3}u$ is the value of the infinitesimal
generator at $u$ of the $SO(2)$-action corresponding to the Lie algebra element $J_3$. Such infinitesimal generator cannot vanish since
the action is   free (see item (iii) in Theorem \ref{th-main-symmetry}). Condition \eqref{eq:J3uneq0} will be used in the proof below and also ahead in the proof of Theorem  \ref{thm:continuation}.

\begin{proof}
Suppose that $(\hat u,\hat \lambda,\hat \alpha ;\hat \omega)\in  \mathbb{R}^{3n+n+1}\times \mathbb{R}$ belongs to the 
domain of $F$ and satisfies  $F(\hat u,\hat \lambda,\hat \alpha ;\hat \omega)=0$. The conditions $R_i(\hat u)=0$ imply that
$\| \hat u_i \|^2=1$ for all $i=1,\dots, n$. Moreover, since $F$ is well-defined at $ (\hat u,\hat \lambda,\hat \alpha ;\hat \omega)$ then 
$\hat u_i\neq \hat u_j$ for all $i,j=1,\dots, n$. For $m\geq 2$ the well-definiteness of $F$ also implies that $\hat u_i\neq (0,0,\pm 1)$ for all $i=1,\dots, n$. These conditions
imply that $\hat u \in M_n$.

Now note that the vanishing of the first component of $F(\hat u,\hat \lambda,\hat \alpha ;\hat \omega)$ implies 
\begin{equation}
\label{eq:auxZeroFProp}
\nabla _{u}h_{\hat \omega }^{\ast }(\hat u,\hat \lambda
)+\hat \alpha \mathcal{J}_{3}\hat u=0.
\end{equation}
 We claim that this condition can only hold if $\hat \alpha=0$. To see this  note that the $SO(2)$-invariance of $h^*_{\hat \omega}$ implies
\begin{equation*}
h^*_{\hat \omega}(\exp(sJ_3)\hat u_1, \dots, \exp(sJ_3)\hat u_n, \hat \lambda)=h^*_{\hat \omega}(\hat u_1, \dots, \hat u_n, \hat \lambda)
\end{equation*}
for all $s\in \R$. Differentiating with respect to $s$ and evaluating at $s=0$ gives  $\nabla
_{u}h_{\hat \omega }^{\ast }(\hat u, \hat \lambda )\cdot \mathcal{J}_{3}\hat u=0$. Therefore, taking the scalar
product on both sides of \eqref{eq:auxZeroFProp} with $ \mathcal{J}_{3}\hat u$ gives $\hat \alpha \| \mathcal{J}%
_{3}\hat u\|^{2}=0$. But this implies that $\hat \alpha=0$ because  of \eqref{eq:J3uneq0} and since $\hat u\in M_n$. 

Considering that $\hat \alpha=0$, the condition \eqref{eq:auxZeroFProp} simplifies to 
$\nabla _{u}h_{\hat \omega }^{\ast }(\hat u,\hat \lambda)=0$ which together with the relations $R_i(\hat u)=0$ imply that $(\hat u,\hat \lambda)$ is a critical point of $h_{\hat \omega }^{\ast }$. By the Lagrange multiplier theorem we conclude that
$\hat u\in M_n$ is a critical point of $h_{\hat \omega}:M_n\to \R$ and hence, by Proposition 
\ref{prop:RE-as-crit-pts-red}, $(\hat u, \hat \omega)$ is a RE of the reduced system  \eqref{eq:reduced-system}.
The conclusions about $\hat v=\rho (\hat u)$ follow directly from Theorem \ref{th-main-symmetry} (see the discussion 
after Proposition \ref{prop:RE-as-crit-pts-red}).
%
%
\end{proof}

The following theorem establishes the existence of a unique branch $(b(\omega),\omega)$ of RE emanating from
$(b_0,\omega_0)$ as discussed above, and provides a method to determine it as zeros of the augmented map 
$F$ defined  above \eqref{eq:augmap}.  The theorem requires a non-degeneracy assumption on the critical orbit $SO(2).b_0$ 
of $h_{\omega_0}$ (see Definition \ref{def:Bott}), but $\omega_0$ may be zero.   

\begin{theorem}
\label{thm:continuation} Let $m\geq 1$, $N\geq 3$, and
let $(b_0,\omega_0)\in M_n\times \R$ be a RE of the reduced system  \eqref{eq:reduced-system} and
suppose that $SO(2).b_0$ is a non-degenerate critical manifold of $h_{\omega_0}: M_n\to \R$.
There exists $\varepsilon >0$ such that the following statements hold 
for all $\omega\in (\omega_0-\varepsilon,\omega_0+\varepsilon)$:
\begin{enumerate}
\item there exists a unique $b=b(\omega)\in M_n$ with $b(\omega_0)=b_0$ and depending smoothly on $\omega$ such that  $(b(\omega),\omega)$ is a 
 a RE  of \eqref{eq:reduced-system}.
Furthermore, if we denote $a(\omega):=\rho(b(\omega))$, then $a(\omega)\in M$ is $\Z_m$-symmetric and 
$(a(\omega),\omega)$ is a  $\Z_m$-symmetric RE of the full system \eqref{eq:motion}.

\item apart from the existence of $b(\omega)$ with the properties stated above, there exist unique smooth functions  
 $\lambda=\lambda(\omega)\in \R^n$ and $\alpha = \alpha(\omega)\in \R$ 
such that  
$F(b(\omega),\lambda(\omega),\alpha(\omega);\omega)=0$.\footnote{in fact one has $\alpha(\omega)=0$ as can be 
concluded from the proof of Proposition  \ref{prop:zerosFareRE}.}
\end{enumerate}

%
\end{theorem}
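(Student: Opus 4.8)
The plan is to apply the implicit function theorem to the augmented map $F$ of \eqref{eq:augmap} at an appropriate zero determined by $(b_0,\omega_0)$, and then read off all the conclusions from Proposition~\ref{prop:zerosFareRE}. First I would locate that zero: since $(b_0,\omega_0)$ is a RE of the reduced system, Proposition~\ref{prop:RE-as-crit-pts-red} gives that $b_0$ is a critical point of $h_{\omega_0}:M_n\to\R$, and the Lagrange multiplier theorem then produces a vector $\lambda_0\in\R^n$ with $(b_0,\lambda_0)$ a critical point of $h^*_{\omega_0}$; this $\lambda_0$ is unique because the constraint gradients $\nabla_u R_j(b_0)=(0,\dots,b_{0,j},\dots,0)$ are supported on distinct blocks and nonzero. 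Setting $\alpha=0$ and using the trivial identity $\mathcal{J}_3 b_0\cdot(b_0-b_0)=0$, we obtain $F(b_0,\lambda_0,0;\omega_0)=0$.

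The crux is to show that the partial Jacobian $D_{(u,\lambda,\alpha)}F(b_0,\lambda_0,0;\omega_0)$ is invertible; as it is a square matrix it suffices to check that its kernel is trivial. Writing $L:=\nabla^2_{uu}h^*_{\omega_0}(b_0,\lambda_0)$, this Jacobian sends $(\delta u,\delta\lambda,\delta\alpha)$ to $\bigl(L\,\delta u+m\textstyle\sum_j\delta\lambda_j\nabla_u R_j(b_0)+\delta\alpha\,\mathcal{J}_3 b_0,\ (b_{0,j}\cdot\delta u_j)_{j=1}^n,\ \mathcal{J}_3 b_0\cdot\delta u\bigr)$. So assume this vanishes. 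The middle equations say $\delta u\in T_{b_0}M_n$. Pairing the first equation with $\mathcal{J}_3 b_0$: differentiating the $SO(2)$-invariance identity $\nabla_u h^*_{\omega_0}(u,\lambda_0)\cdot\mathcal{J}_3 u\equiv 0$ at $u=b_0$ (where $\nabla_u h^*_{\omega_0}(b_0,\lambda_0)=0$) yields $\mathcal{J}_3 b_0\cdot L\,\delta u=0$, while $\mathcal{J}_3 b_0\cdot\nabla_u R_j(b_0)=(J_3 b_{0,j})\cdot b_{0,j}=0$ by skew-symmetry; hence $\delta\alpha\,\|\mathcal{J}_3 b_0\|^2=0$, and \eqref{eq:J3uneq0} forces $\delta\alpha=0$. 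The first equation then reads $L\,\delta u=-m\sum_j\delta\lambda_j\nabla_u R_j(b_0)$, so $L\,\delta u$ is orthogonal to $T_{b_0}M_n$; since the restriction of $\langle L\cdot,\cdot\rangle$ to $T_{b_0}M_n$ equals the intrinsic Hessian $d^2 h_{\omega_0}(b_0)$ (standard, because $\lambda_0$ is the correct multiplier), the non-degeneracy hypothesis on $SO(2).b_0$ (Definition~\ref{def:Bott}) forces $\delta u\in\ker\bigl(d^2 h_{\omega_0}(b_0)\bigr)=T_{b_0}(SO(2).b_0)=\R\,\mathcal{J}_3 b_0$. The Poincar\'e-section equation $\mathcal{J}_3 b_0\cdot\delta u=0$ then gives $\delta u=0$, whence $\sum_j\delta\lambda_j\nabla_u R_j(b_0)=0$ gives $\delta\lambda=0$.

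With invertibility in hand, the implicit function theorem supplies $\varepsilon>0$ and unique smooth functions $b(\omega),\lambda(\omega),\alpha(\omega)$ on $(\omega_0-\varepsilon,\omega_0+\varepsilon)$ with $(b(\omega_0),\lambda(\omega_0),\alpha(\omega_0))=(b_0,\lambda_0,0)$ and $F(b(\omega),\lambda(\omega),\alpha(\omega);\omega)=0$. Proposition~\ref{prop:zerosFareRE} then yields immediately that $b(\omega)\in M_n$, that $\alpha(\omega)=0$ and $(b(\omega),\omega)$ is a RE of \eqref{eq:reduced-system}, and that $a(\omega):=\rho(b(\omega))$ is $\Z_m$-symmetric with $(a(\omega),\omega)$ a $\Z_m$-symmetric RE of \eqref{eq:motion}; this establishes (ii) and the existence half of (i). For uniqueness in (i) I would observe that, for each $\omega$ near $\omega_0$, persistence of the non-degenerate critical manifold implies the critical set of $h_\omega|_{M_n}$ near $b_0$ is exactly the single orbit $SO(2).b(\omega)$; hence any smooth branch of RE through $b_0$ equals $e^{J_3\theta(\omega)}b(\omega)$ with $\theta(\omega_0)=0$, and imposing membership in the Poincar\'e section through $b_0$ (equivalently, being a zero of $F$) singles out $b(\omega)$.

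The main obstacle will be the invertibility of the partial Jacobian, more precisely the bookkeeping showing that the unfolding parameter $\alpha$ and the Poincar\'e-section equation together absorb exactly the one-dimensional $SO(2)$-degeneracy of $h^*_{\omega_0}$ at $b_0$; the two ingredients doing the real work there are the differentiated $SO(2)$-invariance identity (which eliminates $\delta\alpha$) and the identification of $\ker d^2 h_{\omega_0}(b_0)$ with $\R\,\mathcal{J}_3 b_0$ coming from the non-degeneracy hypothesis.
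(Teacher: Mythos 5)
Your proposal is correct and follows essentially the same route as the paper: locate the zero $F(b_0,\lambda_0,0;\omega_0)=0$, show the partial Jacobian has trivial kernel by eliminating $\delta\alpha$ via the differentiated $SO(2)$-invariance identity and identifying $\ker d^2h_{\omega_0}(b_0)$ with $\R\,\mathcal{J}_3 b_0$ through the Bott non-degeneracy hypothesis, then apply the implicit function theorem and Proposition~\ref{prop:zerosFareRE}. Your closing remark on uniqueness in (i) — that the Poincar\'e-section condition is what singles out $b(\omega)$ from the $SO(2)$-orbit of nearby critical points — is a small but welcome clarification of a point the paper's proof leaves implicit.
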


%
%

\begin{proof}
Since $(b_0,\omega_0)\in M_n\times \R$ is a RE then $b_0$ is a critical point of $h_{\omega_0}$ and hence
there exists $\lambda_0\in \R^n$ such that $(b_0,\lambda_0)$ is a critical point of $h_{\omega_0}^*$ and, therefore,
$F(b_0,\lambda_0,0;\omega_0)=0$. We will apply the implicit function theorem to prove (ii). The proof of (i) then
follows from Proposition \ref{prop:zerosFareRE}.

The application of the implicit function theorem  requires
 $D_{(u,\lambda ,\alpha )}F(b_0,\lambda_0,0;\omega_0)$ to be invertible.
   To prove that this is indeed the case, 
suppose that the vector
$( \delta u, \delta \lambda, \delta \alpha)\in \R^{3n}\times \R^n \times \R$  belongs to the null-space of $D_{(u,\lambda ,\alpha )}F(b_0,\lambda_0,0;\omega_0)$. Below we show that  necessarily 
 $(\delta u, \delta \lambda, \delta \alpha)=0$.

Denote by $b_j^0$ the components of $b_0$ so that $b_0=(b_1^0,\dots ,b_n^0)\in M_n$. Similarly, suppose
that $\lambda_0=(\lambda_1^0,\dots, \lambda_n^0)\in \R^n$, that $\delta u=(\delta u_1,\dots, \delta u_n)\in \R^{3n}$ and
$\delta \lambda = (\delta \lambda_1,\dots, \delta \lambda_n)\in \R^n$. One computes
\begin{equation*}
\begin{split}
D_{(u,\lambda ,\alpha )}&F(b_0,\lambda_0,0;\omega_0)(\delta u, \delta \lambda, \delta \alpha) = \\ & \left (\nabla
_{u}^{2}h_{\omega_0}^{\ast }(b_0,\lambda_0)\delta u+(b_{1}^0\, \delta \lambda _{1},...,b_{n}^0\, \delta\lambda _{n})+
\mathcal{J}_{3}b_0\delta \alpha \, , \, b_{1}^0\cdot \delta u_{1} \, , \, \dots \, , \, b^0_{n}\cdot  \delta u_{n} \, ,\, \mathcal{J}_{3}b_0\cdot
\delta u \right ).
\end{split}
\end{equation*}%
Hence, the assumption that  $(\delta u, \delta \lambda, \delta \alpha)$ is a null-vector of $D_{(u,\lambda ,\alpha )}F(b_0,\lambda_0,0;\omega_0)$
 yields
\begin{equation}
\label{eq:aux-proof-cont-thm}
\begin{split}
&\nabla_{u}^{2}h_{\omega_0}^{\ast }(b_0,\lambda_0)\delta u+(b_{1}^0\, \delta \lambda _{1},...,b_{n}^0\, \delta\lambda _{n})+
\mathcal{J}_{3}b_0\delta \alpha=0,\\
&b^0_{j}\cdot \delta u_{j}=0, \qquad j=1,\dots, n, \\
&\mathcal{J}_{3}b_0\cdot
\delta u=0.
\end{split}
\end{equation}

First note that the embedding $M_n\hookrightarrow (\R^3)^n$ leads to the identification  
\begin{equation*}
T_{b_0}M_n=  \{{\bf w}= (w_{1}, \dots, w_{n})\in(\mathbb{R}^{3})^{n} \, :\, 
b^0_{j}\cdot w_{j} =0, \; j=1,\dots, n \, \},
\end{equation*}
so the conditions   $b^0_{j}\cdot \delta u_{j}=0$ for all $j=1,\dots, n$ imply  $\delta u\in T_{b_0}M_n$.

Next note that the   $SO(2)$-invariance of $h_{\omega_0}^{\ast }$ implies 
$\nabla _{u}^{2}h_{\omega_0}^{\ast }(b_0,\lambda_0) \mathcal{J}_{3}b_0=0$, or, equivalently, \\
 $(\mathcal{J}_{3}b_0)^T\nabla _{u}^{2}h_{\omega_0}^{\ast }(b_0,\lambda_0)=0$. Therefore, multiplying both sides
 of the first equation of \eqref{eq:aux-proof-cont-thm}
on the left by $(\mathcal{J}_{3}b_0)^T$ gives 
\begin{equation*}
-\sum_{j=1}^N(b_{j}^0)^TJ_3b_j^0\delta \lambda _{j}+ \|\mathcal{J}_{3}b_0\|^2 \delta \alpha = \|\mathcal{J}_{3}b_0\|^2 \delta \alpha=0,
\end{equation*}
which implies that $\delta \alpha=0$ in view of \eqref{eq:J3uneq0} since $b_0\in M_n$.
Therefore, the first equation of \eqref{eq:aux-proof-cont-thm} becomes
\begin{equation}
 \label{eq:aux-proof-cont-thm2}
\nabla_{u}^{2}h_{\omega_0}^{\ast }(b_0,\lambda_0)\delta u+(b_{1}^0\, \delta \lambda _{1},...,b_{n}^0\, \delta\lambda _{n})=0.
\end{equation}
Let ${\bf w}= (w_{1}, \dots, w_{n})\in T_{b_0} M_n$ and multiply both sides of \eqref{eq:aux-proof-cont-thm2}
on the left by ${\bf w}^T$. Given that  \\ ${\bf w}^T(b_{1}^0\, \delta \lambda _{1},...,b_{n}^0\, \delta\lambda _{n})
=\sum_{j=1}^n(w_j\cdot b_j^0)\delta\lambda_j=\sum_{j=1}^n 0\delta\lambda_j=0$, we obtain  
\begin{equation*}
{\bf w}^T\nabla_{u}^{2}h_{\omega_0}^{\ast }(b_0,\lambda_0)\delta u=0.
\end{equation*}
Considering that ${\bf w}^T\nabla_{u}^{2}h_{\omega_0}^{\ast }(b_0,\lambda_0)\delta u=d^2h_{\omega_0}(b_0)({\bf w},\delta u)$, and
that  the
above equation holds for arbitrary ${\bf w}\in T_{b_0} M_n$, we conclude that $\delta u \in \ker d^2h_{\omega_0}(b_0)$.
The assumption that $SO(2).b_0$ is a non-degenerate critical manifold of $h_{\omega_0}$
 then implies that $\delta u \in T_{b_0}(SO(2).b_0)$. But
$T_{b_0}(SO(2).b_0)$ is one-dimensional and is generated by the vector $\mathcal{J}_{3}b_0$, so we may write 
$\delta u = c\mathcal{J}_{3}b_0$ for some $c\in \R$.
But then, using the last equation in \eqref{eq:aux-proof-cont-thm}, we obtain $c \| \mathcal{J}_{3}b_0 \|^2=0$ which implies
$c=0$. Therefore, $\delta u=0$ and in view of  \eqref{eq:aux-proof-cont-thm2} we obtain
\begin{equation*}
b_{j}^0\, \delta \lambda _{j}=0 \qquad j=1,\dots, n.
\end{equation*}
Considering that $b_{j}^0$ is a unit vector for all $j$, 
the above conditions imply $\delta \lambda=(\delta \lambda _1,\dots, \delta \lambda _n)=0$.
\end{proof}

\begin{remark}
\label{rmk:continuation}
Recall that when $m=1$ one has   $M_n=M$ and  $h_\omega=H_\omega$ (up to perhaps a constant). Also, the systems \eqref{eq:motion} and
\eqref{eq:reduced-system} coincide and $\rho$ is the identity map on $M$. In view of these 
observations, we conclude that, for $m=1$, Theorem \ref{thm:continuation} specialises  as  a continuation result
for general (possibly non-symmetric) RE of the problem. It is  however important to notice that the 
non-degeneracy hypothesis will never be satisfied if $\omega_0=0$ since $H_0=H$ is $SO(3)$-invariant. 
It could however be satisfied for $\omega_0 \neq 0$. \end{remark}

The following corollary establishes  the existence of  $\mathbb{Z}_m$-symmetric RE arising from $\mathbb{Z}_m$-symmetric  equilibria.
In contrast with Theorem \ref{th:existenceRE}, this result is constructive and will be used to find RE emerging from the 
equilibria in Table \ref{ground-states} as zeros of the map $F$ according to their $\mathbb{Z}_m$-symmetries. Similarly to 
Remark \ref{rmk:continuation}, we note that the non-degeneracy 
hypothesis of the $SO(2)$-orbit in its statement cannot be satisfied if $m=1$ since in such case $M_n=M$ and $h=H$ which is $SO(3)$-invariant. On the other hand, such condition may be satisfied for $m\geq 2$, since 
in this case $h_0=h$ is only $SO(2)$-invariant and this is the situation met for the equilibria in Table \ref{ground-states}
(see Remark \ref{rmk:symmetric-branches} below).

\begin{corollary}
\label{cor:existenceREsymmetric}
Let  $a_0\in M$  be a $\mathbb{Z}_m$-symmetric  equilibrium of  \eqref{eq:motion} 
satisfying  $a_0=\rho(b_0)$ for a certain $b_0\in M_n$. By Theorem \ref{th-main-symmetry}, it follows
that $b_0$ is an equilibrium of \eqref{eq:reduced-system} and hence a critical point of $h$. 
Suppose that $SO(2).b_0$ is a non-degenerate critical manifold of $h:M_n\to \R$. Then there exists $\omega_0>0$ such that for any 
$\omega
\in (-\omega _{0},\omega _{0})$ there exists a unique $\mathbb{Z}_m$-symmetric RE $(a(\omega),\omega)$ depending smoothly on $\omega$ and such that  $a(\omega)\to a_0$ as $\omega \to 0$. Moreover, $a(\omega)$ is 
determined by the condition $a(\omega)=\rho(b(\omega))$ where $b(\omega)$ satisfies $F(b(\omega),\lambda(\omega),0;\omega)=0$ for a unique $\lambda=\lambda(\omega)\in \R^n$ which depends smoothly on $\omega$.
\end{corollary}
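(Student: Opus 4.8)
The plan is to deduce the corollary directly from Theorem \ref{thm:continuation} applied at $\omega_0 = 0$. First I would observe that the hypotheses of the corollary provide exactly the input needed for the theorem: we are given a $\Z_m$-symmetric equilibrium $a_0 \in M$ with $a_0 = \rho(b_0)$, and by Theorem \ref{th-main-symmetry}(ii) the diffeomorphism $\rho$ conjugates the flows of \eqref{eq:reduced-system} and \eqref{eq:motion}, so $b_0$ is an equilibrium of the reduced system, i.e. a RE of \eqref{eq:reduced-system} with angular velocity $\omega_0 = 0$. Since $h_0 = h - 0\cdot\phi = h$, the non-degeneracy assumption that $SO(2).b_0$ is a non-degenerate critical manifold of $h: M_n \to \R$ (in the sense of Definition \ref{def:Bott}) is precisely the non-degeneracy hypothesis on $SO(2).b_0$ as a critical manifold of $h_{\omega_0}$ required by Theorem \ref{thm:continuation}.

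Next I would invoke Theorem \ref{thm:continuation} with $(b_0, \omega_0) = (b_0, 0)$. It yields $\varepsilon > 0$ and, for every $\omega \in (-\varepsilon, \varepsilon)$, a unique $b(\omega) \in M_n$ depending smoothly on $\omega$ with $b(0) = b_0$ such that $(b(\omega), \omega)$ is a RE of \eqref{eq:reduced-system}; moreover, by part (ii) of that theorem there are unique smooth functions $\lambda(\omega) \in \R^n$ and $\alpha(\omega) \in \R$ with $F(b(\omega), \lambda(\omega), \alpha(\omega); \omega) = 0$, and by the footnote to Theorem \ref{thm:continuation} (or directly from the argument in the proof of Proposition \ref{prop:zerosFareRE}) one has $\alpha(\omega) \equiv 0$, so in fact $F(b(\omega), \lambda(\omega), 0; \omega) = 0$. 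Setting $\omega_0 := \varepsilon$ and $a(\omega) := \rho(b(\omega))$, part (i) of Theorem \ref{thm:continuation} gives that $a(\omega)$ is $\Z_m$-symmetric and $(a(\omega), \omega)$ is a $\Z_m$-symmetric RE of \eqref{eq:motion}.

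Finally I would address the continuity claim $a(\omega) \to a_0$ as $\omega \to 0$: this is immediate since $b(\omega) \to b(0) = b_0$ by smoothness (hence continuity) of $\omega \mapsto b(\omega)$, and $\rho$ is continuous, so $a(\omega) = \rho(b(\omega)) \to \rho(b_0) = a_0$. Uniqueness of the $\Z_m$-symmetric branch follows from the uniqueness clause in Theorem \ref{thm:continuation}(i) together with the fact that $\rho$ is a bijection from $M_n$ onto the connected component $\mbox{Fix}(K_m)^+$ containing $a_0$, so any $\Z_m$-symmetric RE near $a_0$ in that component is of the form $\rho(b)$ for a unique $b \in M_n$, which must coincide with $b(\omega)$. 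I do not anticipate a genuine obstacle here: the corollary is essentially a specialization of Theorem \ref{thm:continuation} to $\omega_0 = 0$, and the only point requiring a word of care is confirming that the non-degeneracy hypothesis is stated for $h$ (which equals $h_0$), together with noting — as the paragraph preceding the corollary already does — that this hypothesis genuinely can hold when $m \geq 2$ precisely because the $SO(3)$-symmetry has been broken by the discrete reduction, whereas it necessarily fails for $m = 1$.
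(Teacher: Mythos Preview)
Your proposal is correct and follows exactly the approach of the paper, which simply states that the proof follows by applying Theorem~\ref{thm:continuation} to the RE $(b_0,\omega_0)$ with $\omega_0=0$. Your write-up merely spells out the details of this specialization (that $h_0=h$, that $\alpha(\omega)\equiv 0$, and the continuity of $a(\omega)$ via $\rho$), all of which are straightforward.
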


The proof follows by applying Theorem \ref{thm:continuation} to the RE $(b_0,\omega_0)$ with $\omega_0=0$.
We remark that the hypothesis that  $a_0=\rho(b_0)$ for a certain $b_0\in M_n$ is always satisfied,  perhaps after a 
reflection about the equatorial plane that places any vortices at  the poles according to the ordering  of the
sets $F_1$, $F_2$ in \eqref{eq:defFp}.

\begin{remark}
\label{rmk:symmetric-branches}
The hypothesis that $SO(2).b_0$  is a non-degenerate critical manifold of $h:M_n\to \R$ is automatically satisfied if  $SO(3).a_0$ is a non-degenerate critical manifold of $H:M\to \R$. 
Except for $N=7$, all configurations in Table \ref{ground-states} are  non-degenerate minima of $H:M\to \R$ (see Section \ref{ss:CAP-minimisers}). Therefore, Corollary \ref{cor:existenceREsymmetric} implies the existence of a branch of  $\mathbb{Z}_m$-symmetric RE
emanating from these equilibria for each symmetry indicated in Table \ref{ground-states} and illustrated in Figures \ref{Fig:GS-Known} and \ref{Fig:GS-Conjecture}.
\end{remark}

\section{Nonlinear stability analysis of relative equilibria}
\label{sec:Stability}
We now focus on the nonlinear stability of relative equilibria using the energy-momentum
method described in subsection \ref{ss:energy-momentum-method}.

Consider a relative equilibrium $(a,\omega)\in M\times \R$ 
and recall from Proposition \ref{prop:RE-as-crit-pts}  that $a$ is a critical point of the augmented Hamiltonian $H_\omega$ (defined by \eqref{eq:aug-Ham}). As explained in subsection \ref{ss:energy-momentum-method}, the energy momentum 
examines the positiveness of the restriction of $d^2H_\omega(a)$ to a symplectic slice $\mathcal{N}_a$ which is an appropriate subspace of $T_aM$.
Throughout this section we identify\footnote{Appendix \ref{s:geometry} reviews several standard constructions that may be useful to follow 
the geometric calculations of this section. }
\begin{equation}
\label{eq:tangent-space} 
T_aM=V_a:= \{{\bf w}= (w_{1}, \dots, w_{N})\in(\mathbb{R}^{3})^{N} \, :\, 
a_{j}\cdot w_{j} =0, \; j=1,\dots, N \, \},
\end{equation}
where $a=(a_1,\dots, a_N)\in M$. 
 The 
space $V_a$ is equipped with the inner product $\langle \cdot , \cdot \rangle$ which coincides with 
the restriction of the standard euclidean product on $(\R^3)^N$. Such inner product is induced by the
Riemannian metric on $S^2$ that underlies  the vortex dynamics.\footnote{We recall that vortex
dynamics on a surface require the choice of a metric, which in our case is the standard metric on  $S^2$. We refer the reader to \cite{BoKo15} 
for  a clear explanation of the role of the  geometry of the surface in the equations of vortex motion.
}

A general symplectic slice $\mathcal{N}_a$ valid for any RE $(a,\omega)$ with $\mu=\Phi(a)\neq 0$ is given in Appendix \ref{ss:NonSymmSympSlice}.
Below we suppose that $(a,\omega)$ is $\mathbb{Z}_m$-symmetric  with $m\geq 2$ and construct a specific one that takes this symmetry into account
to give a block diagonalisation of $\left . d^2H_\omega \right |_{\mathcal{N}_a}$. Our construction is greatly influenced by previous work of Laurent-Polz, Montaldi and Roberts
\cite{Mo11}. We first give a reformulation of their construction of a symmetry-adapted basis that is more suitable to the implementation of CAPs in sections \ref{ss:Isotypic},
\ref{ss:ConstructionSymmetricSympSlice} and  \ref{ss:matricesPl}, and then present
a further simplification based on the complex structure of some blocks in section \ref{ss:complex-structure}.
Finally, we present a summary of the block matrices and the stability test in section \ref{ss:Summary}.
 Despite their technicality, the constructions in this section are very useful to obtain larger 
ranges of application of CAPs since they prevent the clustering of eigenvalues of large matrices. The implementation of CAPs of stability of branches of RE based on this  approach  is discussed in section \ref{ss:CAP-stability} and 
exemplified in section \ref{s:CAPs-application}.

\subsection{Isotypic decomposition of $V_a=T_aM$ and block diagonalisation of $d^2H_\omega(a)$}
\label{ss:Isotypic}

Assume that the RE $(a,\omega)$ is $\mathbb{Z}_m$-symmetric with $m\geq 2$ (Definition \ref{def:Z_m-symmetric}). In this section we 
consider the associated  linear action of the group $K_m\cong \mathbb{Z}_m$ (defined in section \ref{SS:discrete-reduction} above)
 on $V_a=T_aM$,  
give the isotypic decomposition of $V_a$ and explain how
it yields a block diagonalisation of $d^2H_\omega(a)$.

\subsubsection{A linear action of $K_m\cong \mathbb{Z}_m$ on $V_a=T_aM$}
 Following  the notation of subsection \ref{SS:discrete-reduction},  assume that $a$ is comprised of  $n\geq 1$ rings, each with 
$m \geq 2$ vortices and $p$ poles ($p$ equal to $0$, $1$ or $2$) so that  $N=mn+p$.  Because of the permutation  symmetry of the vortices, we may arrange them 
 in a convenient fashion. We assume that  if the North or South
pole are present, then they appear in the last entry (or entries) of $a$. Moreover, in accordance with \eqref{eq:def-rho}, we assume
throughout this section that
 \begin{equation}
 \label{eq:REfull}
a=(a_{1,1},\dots, a_{1,m}, a_{2,1},\dots, a_{2,m}, \dots, \dots, \dots,  a_{n,1},\dots, a_{n,m}; p_1,p_2).
\end{equation}
In our notation, for $j\in \{1,\dots, n\}$ the entries  $(a_{j,1},\dots, a_{j,m})$ denote  the location of the $m$ vortices of the $j^{th}$ ring ordered in an easterly 
direction so that
\begin{equation}
\label{eq:ring-convention}
a_{j,k}=g_m^ku_{j},\qquad  j=1,\dots, n, \quad k=1,\dots, m,
\end{equation}
where we recall from \eqref{eq:def-gm} that $g_m=\exp \left ( \frac{2\pi}{m}J_3 \right )$ and 
\begin{equation}
\label{eq:ring-generators}
u_j:=(x_j,y_j,z_j)= a_{j,m} \in S^2,
\end{equation}
 is a  {\em generator}  of the $j^{th}$ ring. 
The entries $p_1$ and $p_2$ in \eqref{eq:REfull} are the poles (if present). 

Under the assumption that $a$ is given as in \eqref{eq:REfull}, it follows that $a\in \mbox{Fix}(K_m)$, where we recall from subsection \ref{SS:discrete-reduction}
that $K_m$ denotes  the subgroup of $\hat G$, isomorphic to $\mathbb{Z}_m$, which is
 generated by $(\tau_m,g_m)\in \hat G$ with $\tau_m$ given by \eqref{eq:def-tau}. 
 We simplify the notation and denote
 $$\kappa:=(\tau_m,g_m)\in \hat G$$
  in what follows.
 
 The linearisation of the $K_m$ action on
$M$ at $a$ defines a linear  $K_m$-action on $V_a$ which we denote in terms of the representation
$\rho:K_m\to GL(V_a)$ determined by
\begin{equation}
\label{eq:rep-fmla}
\rho(\kappa){\bf w}= (g_m w_{\tau_m^{-1}(1)}, \dots, g_m w_{\tau_m^{-1}(N)}), \qquad \forall 
{\bf w}=(w_1, \dots, w_N)\in V_a.
\end{equation}
 Such representation is orthogonal with respect to the inner
product $\langle \cdot , \cdot \rangle$ on $V_a$, namely
\begin{equation*}
 \langle {\bf w}_1, {\bf w}_2 \rangle = \langle  \rho( \kappa){\bf w}_1,  \rho( \kappa){\bf w}_2 \rangle \quad \forall 
 {\bf w}_1,  {\bf w}_2 \in V_a.
\end{equation*}
Moreover, considering that $H_\omega$ is also $K_m$-invariant, it follows that $d^2H_\omega(a)$ is $K_m$-invariant, i.e.,
\begin{equation}
\label{eq:inv-d2H}
d^2H_\omega(a)(\rho ( \kappa) {\bf w}_1, \rho( \kappa) {\bf w}_2)=
d^2H_\omega(a)({\bf w}_1,  {\bf w}_2) \quad \forall {\bf w}_1,  {\bf w}_2\in V_a.
\end{equation}

\subsubsection{Isotypic decomposition of $V_a$ and block-diagonalisation of $d^2H_\omega(a)$}

 For $j\in \{1,...,n\}$ and $k\in \{1,...,m\}$ define the
vectors $b_{j,k},\,c_{j,k}\in \R^{3}$ by 
\begin{equation}
\label{eq:def-b-c}
b_{j,k}:= J_{3}a_{j,k},\qquad c_{j,k}:= b_{j,k}\times a_{j,k}.
\end{equation}%
Now define the following vectors in $\R^{3N}$ (given below in terms of block
vectors in $\R^{3}$): 
\begin{equation} \label{eq:BjkCjk_delta}
\begin{split}
B_{j,k}& =(0,\dots ,0,b_{j,k},0,\dots ,0;0,0), \quad  j=1,\dots, n, \; k=1,\dots, m, \\
C_{j,k} &=(0,\dots ,0,c_{j,k},0,\dots ,0;0,0), \quad  j=1,\dots, n, \; k=1,\dots, m, \\
\delta x_{1}& =(0,\dots ,0;e_{1},0), \\  \delta y_{1}& =(0,\dots ,0;e_{2},0), \\
\delta x_{2}& =(0,\dots ,0;0,e_{1}), \\  \delta y_{2} &=(0,\dots ,0;0,e_{2}), 
\end{split}%
\end{equation}%
where $e_{1}=(1,0,0)$ and $e_{2}=(0,1,0)$. (The nonzero entry of $B_{j,k}$
and $C_{j,k}$ occurs at the slot where $a_{j,k}$ appears in the expression %
\eqref{eq:REfull} for $a$.)

\begin{lemma}
\label{Lemma:basis1}
The vectors $B_{j,k}$, $C_{j,k}$, $\delta x_{s}$, $\delta y_{s}$ with $j\in \{1,\dots
,n\} $, $k\in \{1,\dots ,m\}$, $s\in \{1,\dots ,p\}$ form an orthogonal basis of $V_{a}$.
\end{lemma}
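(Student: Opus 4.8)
The plan is to exploit a dimension count: the listed set has $mn$ vectors $B_{j,k}$, $mn$ vectors $C_{j,k}$, and $2p$ vectors $\delta x_s,\delta y_s$, for a total of $2(mn+p)=2N=\dim V_a$. Hence it suffices to check three things — that each listed vector belongs to $V_a$, that the vectors are pairwise orthogonal for $\langle\cdot,\cdot\rangle$, and that none of them is zero — since an orthogonal family of nonzero vectors of full cardinality is automatically an orthogonal basis.

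For membership in $V_a=\{{\bf w}\in(\R^3)^N : a_j\cdot w_j=0,\ j=1,\dots,N\}$ I would argue slot by slot. The vector $B_{j,k}$ has its only nonzero block $b_{j,k}=J_3a_{j,k}$ in the slot occupied by $a_{j,k}$ in \eqref{eq:REfull}, and $a_{j,k}\cdot J_3a_{j,k}=0$ since $J_3$ is skew-symmetric; the vector $C_{j,k}$ has its only nonzero block $c_{j,k}=b_{j,k}\times a_{j,k}$ in the same slot, which is orthogonal to $a_{j,k}$ by the scalar triple product identity; finally $\delta x_s$ and $\delta y_s$ have their nonzero block $e_1$ or $e_2$ in a pole slot, whose configuration vector is $\pm e_3$, and $e_3\cdot e_1=e_3\cdot e_2=0$. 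Orthogonality is then immediate: each $B_{j,k}$ and $C_{j,k}$ is supported on a single ring-vortex slot and each $\delta x_s,\delta y_s$ on a single pole slot, so any two of these vectors with disjoint supports are orthogonal; the only pairs sharing a slot are $(B_{j,k},C_{j,k})$, for which $b_{j,k}\cdot(b_{j,k}\times a_{j,k})=0$, and $(\delta x_s,\delta y_s)$, for which $e_1\cdot e_2=0$.

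It remains to check that no vector vanishes. Clearly $\delta x_s,\delta y_s\neq 0$. For the ring vectors, the point $a_{j,k}$ lies on a ring of $m\geq 2$ vortices, hence is not a pole — otherwise $g_m^k u_j=u_j$ for all $k$ and the $m$ vortices of that ring would coincide, contradicting $a\in M$. Therefore $b_{j,k}=e_3\times a_{j,k}\neq 0$, and since $a_{j,k}$ and $b_{j,k}$ are orthogonal and nonzero their cross product $c_{j,k}$ is also nonzero; in fact $\{a_{j,k},b_{j,k},c_{j,k}\}$ is an orthogonal frame of $\R^3$, a fact that will be reused later. I do not expect a genuine obstacle in this argument; the only point requiring some care is the bookkeeping of the slot labels coming from the ordering \eqref{eq:REfull}, namely checking that the ring-vortex slots and the at most two pole slots are pairwise distinct, so that ``disjoint supports'' legitimately yields orthogonality.
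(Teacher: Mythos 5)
Your proof is correct and follows essentially the same route as the paper's: verify that each vector is nonzero and lies in $V_a$, check mutual orthogonality (hence linear independence), and conclude by counting $2(mn+p)=2N=\dim V_a$ vectors. You are somewhat more careful than the paper in justifying $b_{j,k}\neq 0$ (via the observation that a ring vortex with $m\geq 2$ cannot sit at a pole), and your count is stated correctly, whereas the paper's proof contains a small slip writing the cardinality as $mn+p$ rather than $2(mn+p)$.
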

\begin{proof}
First note that the vectors $B_{j,k}, C_{j,k}$ are non-zero and  belong to $V_a$ 
since $b_{j,k}$ and $c_{j,k}$ are non-zero and perpendicular
to $a_{j,k}$. Similarly, the vectors $\delta x_{s}, \delta y_{s}\in V_a$ because $e_1$ and  $e_2$
are perpendicular to $p_s=\pm e_3$. It is also easy to check  that all of these vectors are mutually  
orthogonal. In particular,  they are linearly independent and
since there are $mn+p=\dim M=\dim V_a$ of them, they form a basis. 
\end{proof}
%
%

For the rest of the section we denote 
$$\zeta:=\frac{2\pi}{m}.$$ 
For $j\in \{1,\dots ,n\} $, $l\in \{0,1,\dots ,m-1\}$ we   define the complex  
vectors $\hat B_{j,l}, \hat C_{j,l} \in V_a\oplus iV_a$ by
\begin{equation} \label{eq:hatBhatC}
\hat{B}_{j,l} :=\sum_{k=1}^{m}e^{ilk\zeta }B_{j,k}, \qquad
\hat{C}_{j,l} :=\sum_{k=1}^{m}e^{ilk\zeta }C_{j,k}. 
\end{equation}
It is clear that  $\mbox{Re}(\hat{B}_{j,l}), \mbox{Im}(\hat{B}_{j,l}),
\mbox{Re}(\hat{C}_{j,l}), \mbox{Im}(\hat{C}_{j,l})\in V_a$
for any value of the indices $j,l$.  It is also useful to notice that $\hat{B}_{j,l}$ and $\hat{C}_{j,l}$
are real vectors (their imaginary part is zero) for $l=0$ and also when $l=\frac{m}{2}$ ($m$  even).

For $l=0,1, \dots ,\left[ \frac{m}{2%
}\right] $, we define the following \emph{real} subspaces of $V_a$:
\begin{equation*}
\begin{split}
V_{1}&=\bigoplus_{j=1}^{n}\left( \mbox{Re}\,\hat{B}_{j,1}\oplus \mbox{Im}\,%
\hat{B}_{j,1}\oplus \mbox{Re}\,\hat{C}_{j,1}\oplus \mbox{Im}\,\hat{C}%
_{j,1}\right) \bigoplus_{s=1}^{p}\left( \delta x_{s}\oplus \delta
y_{s}\right), \\
V_{l}&=\bigoplus_{j=1}^{n}\left( \mbox{Re}\,\hat{B}_{j,l}\oplus \mbox{Im}\,%
\hat{B}_{j,l}\oplus \mbox{Re}\,\hat{C}_{j,l}\oplus \mbox{Im}\,\hat{C}%
_{j,l}\right), \qquad l=0,2,3 \dots ,\left[ \frac{m}{2%
}\right] .
\end{split} 
\end{equation*}%

\begin{lemma}
\label{lemma:isotypic-dec}
\begin{enumerate}
\item The space $V_a$  decomposes as a direct sum   $V_a=\bigoplus_{l=0}^{\left[ \frac{m}{2}\right]}V_l$ 
of mutually orthogonal subspaces and   $d^2H_\omega(a)$ block diagonalises with respect to this decomposition. Namely, if ${\bf w}_1\in V_{l_1}$ and  ${\bf w}_2\in V_{l_2}$ with $l_1\neq l_2$ then
 \begin{equation*}
d^2H_\omega(a)({\bf w}_1, {\bf w}_2)=0.
\end{equation*}

 \item The sets $\tilde{\mathcal B}_l$ below are orthogonal bases of $V_l$.
\begin{equation*}
\begin{split}
&\tilde{\mathcal B}_0= \left \{ \, \hat{B}_{j,0} \, , \,\hat{C}_{j,0} \,  \right  \}_{j=1}^n, \\
&\tilde{\mathcal B}_1=\begin{cases}  \left \{ \, \hat{B}_{j,1} \, , \,\hat{C}_{j,1} \,   \right  \}_{j=1}^n \cup 
  \left \{ \delta x_s, \delta y_s   \right \}_{s=1}^p , \qquad &\mbox{if $m=2$}, \\
  \left  \{ \, \mbox{\em Re } (\hat{B}_{j,1}) \, , \, \mbox{\em Im} (\hat{B}_{j,1})  ,  \mbox{\em Re } (\hat{C}_{j,1}), \mbox{\em Im} (\hat{C}_{j,1})  \,  \right  \}_{j=1}^n \cup 
 \left  \{ \delta x_s, \delta y_s   \right \}_{s=1}^p \qquad &\mbox{if $m>2$},
\end{cases} \\
&\tilde{\mathcal B}_l=  \left  \{ \, \mbox{\em Re } (\hat{B}_{j,l}) \, , \, \mbox{\em Im} (\hat{B}_{j,l})  ,  \mbox{\em Re } (\hat{C}_{j,l}), \mbox{\em Im} (\hat{C}_{j,l})  \,
 \right  \}_{j=1}^n, \qquad  2\leq l < \left [ \frac{m}{2} \right ], \\
&\tilde{\mathcal B}_{\left [ \frac{m}{2} \right ]}= \begin{cases}
\left \{ \hat{B}_{j, \left [ \frac{m}{2} \right ]}, \hat{C}_{j,  \left[ \frac{m}{2} \right ]} \, \right   \}_{j=1}^n, \qquad  &\mbox{if $m\geq 4$ is even},\\
\left \{ \, \mbox{\em Re } (\hat{B}_{j,  \left [ \frac{m}{2} \right ]}) \, , \, \mbox{\em Im} (\hat{B}_{j,   \left [ \frac{m}{2} \right ]})  ,  
 \mbox{\em Re } (\hat{C}_{j,  \left [ \frac{m}{2} \right ]}), \mbox{\em Im} (\hat{C}_{j,   \left [ \frac{m}{2} \right ]})  \,  \right  \}_{j=1}^n,\qquad  &\mbox{if $m\geq 5$ is odd.} 
\end{cases}
\end{split}
\end{equation*}
In particular,  the dimension of the subspaces $V_l$ is as indicated in Table \ref{eq:dimVl}.
\end{enumerate}
\end{lemma}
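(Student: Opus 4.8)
The plan is to recognise $V_a=\bigoplus_{l=0}^{[m/2]}V_l$ as the real isotypic decomposition of the orthogonal representation $\rho$ of $K_m\cong\Z_m$ on $V_a$, and then to obtain the block structure of $d^2H_\omega(a)$ from its $K_m$-invariance \eqref{eq:inv-d2H}. The first step is to compute how the generator $\kappa$ acts on the orthogonal basis of Lemma~\ref{Lemma:basis1}. Using that $g_m$ is a rotation about the $z$-axis, so $g_mJ_3=J_3g_m$ and $g_ma_{j,k}=a_{j,k+1}$ with indices cyclic modulo $m$, together with $g_m(v\times w)=g_mv\times g_mw$, one finds $\rho(\kappa)B_{j,k}=B_{j,k+1}$, $\rho(\kappa)C_{j,k}=C_{j,k+1}$, while on the pole slots $\rho(\kappa)\delta x_s=\cos\zeta\,\delta x_s+\sin\zeta\,\delta y_s$ and $\rho(\kappa)\delta y_s=-\sin\zeta\,\delta x_s+\cos\zeta\,\delta y_s$. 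Thus, for each ring $j$, the subspaces $\mathrm{span}\{B_{j,k}\}_k$ and $\mathrm{span}\{C_{j,k}\}_k$ each carry the $m$-dimensional cyclic-shift representation of $\Z_m$, and for each pole $s$ the subspace $\mathrm{span}\{\delta x_s,\delta y_s\}$ carries the planar rotation-by-$\zeta$ representation; by Lemma~\ref{Lemma:basis1} all these subspaces are mutually orthogonal and sum to $V_a$.

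Next I would diagonalise over $\C$. The vectors $\hat B_{j,l},\hat C_{j,l}$ are eigenvectors of $\rho(\kappa)$ with eigenvalue $e^{-il\zeta}$, and $\delta x_s\mp i\delta y_s$ are eigenvectors with eigenvalues $e^{\pm i\zeta}$; since $\overline{\hat B_{j,l}}=\hat B_{j,m-l}$, the vectors $\hat B_{j,l},\hat C_{j,l}$ are real exactly when $l=0$ or ($m$ even and) $l=m/2$. Using $\langle B_{j,k},B_{j',k'}\rangle=\beta_j^2\,\delta_{jj'}\delta_{kk'}$, where $\beta_j:=\|J_3u_j\|$ is independent of $k$ because $g_m$ is orthogonal, the analogous identity for the $C$'s, $\langle B_{j,k},C_{j',k'}\rangle=0$, and $\sum_{k=1}^m e^{i(l-l')k\zeta}=m\,\delta_{ll'}$ (indices mod $m$), one gets that $\{\hat B_{j,l},\hat C_{j,l}\}_{j,l}$ together with the pole eigenvectors form a Hermitian-orthogonal family. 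Since complex conjugation permutes this family ($l\leftrightarrow m-l$, $\delta x_s\pm i\delta y_s$ swapped), taking real and imaginary parts yields a real-orthogonal family; in particular $\|\mathrm{Re}\,\hat B_{j,l}\|=\|\mathrm{Im}\,\hat B_{j,l}\|$ and $\mathrm{Re}\,\hat B_{j,l}\perp\mathrm{Im}\,\hat B_{j,l}$ when $l\ne m-l$. This shows each $\tilde{\mathcal B}_l$ is an orthogonal subset of $V_l$.

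To finish Part (ii) I would argue that $\tilde{\mathcal B}_l$ actually spans $V_l$ and that $\bigoplus_l V_l=V_a$: the inverse discrete Fourier transform recovers each $B_{j,k}$ from $\{\hat B_{j,l}\}_l$ (and similarly for the $C_{j,k}$ and the pole vectors), so slot by slot the union of the $\tilde{\mathcal B}_l$ spans $V_a$, and being orthogonal it is a basis; orthogonality between distinct $V_l$'s is then automatic. Counting members of $\tilde{\mathcal B}_l$ — keeping track that $\hat B_{j,0},\hat C_{j,0}$ and (for even $m$) $\hat B_{j,m/2},\hat C_{j,m/2}$ are already real so contribute one vector each, that the pole block lands in $V_1$ and contributes the two real vectors $\delta x_s,\delta y_s$ per pole (both when $m=2$, where rotation by $\zeta=\pi$ is $-\mathrm{Id}$, and when $m\ge3$), and the generic $4n$ for $2\le l<[m/2]$ — gives the entries of Table~\ref{eq:dimVl}, with the consistency check $\sum_l\dim V_l=2mn+2p=2N=\dim V_a$.

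For the block-diagonalisation in Part (i), I would extend $d^2H_\omega(a)$ $\C$-bilinearly to a symmetric form $Q$ on $V_a\oplus iV_a$; identity \eqref{eq:inv-d2H} and $\C$-bilinearity give $Q(\rho(\kappa){\bf w}_1,\rho(\kappa){\bf w}_2)=Q({\bf w}_1,{\bf w}_2)$ for all ${\bf w}_1,{\bf w}_2$, so if ${\bf w}_1,{\bf w}_2$ are $\rho(\kappa)$-eigenvectors with eigenvalues $\mu_1,\mu_2$ then $Q({\bf w}_1,{\bf w}_2)=\mu_1\mu_2\,Q({\bf w}_1,{\bf w}_2)$, forcing $Q({\bf w}_1,{\bf w}_2)=0$ whenever $\mu_1\mu_2\ne1$. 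Now $V_l\oplus iV_l$ is spanned by $\rho(\kappa)$-eigenvectors with eigenvalues in $\{e^{-il\zeta},e^{il\zeta}\}$, and for $0\le l_1,l_2\le[m/2]$ a product of one eigenvalue from the $l_1$-set and one from the $l_2$-set can equal $1$ only when $l_1=l_2$; hence $d^2H_\omega(a)({\bf w}_1,{\bf w}_2)=0$ for ${\bf w}_1\in V_{l_1}$, ${\bf w}_2\in V_{l_2}$ with $l_1\ne l_2$. I expect the main obstacle to be bookkeeping rather than conceptual: getting the edge cases exactly right (reality of $\hat B_{j,0}$ and $\hat B_{j,m/2}$, the placement of the pole block in $V_1$, and the split $m=2$ versus $m\ge3$) so that the bases $\tilde{\mathcal B}_l$ and Table~\ref{eq:dimVl} come out precisely; once the complexification is set up, the block structure of $d^2H_\omega(a)$ is a soft Schur-type consequence of $K_m$-invariance.
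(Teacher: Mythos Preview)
Your proposal is correct and follows essentially the same route as the paper: both compute $\rho(\kappa)B_{j,k}=B_{j,k+1}$, $\rho(\kappa)C_{j,k}=C_{j,k+1}$ and the rotation on the pole slots, use the discrete Fourier transform to produce the complex eigenvectors $\hat B_{j,l},\hat C_{j,l},\delta x_s\pm i\delta y_s$, pass to real and imaginary parts via the conjugation symmetry $\overline{\hat B_{j,l}}=\hat B_{j,m-l}$, and then count. The only noticeable difference is in the block-diagonalisation step: the paper simply identifies $\bigoplus_l V_l$ as the $K_m$-isotypic decomposition and invokes Schur's lemma, whereas you give the explicit $\C$-bilinear extension argument ($Q({\bf w}_1,{\bf w}_2)=\mu_1\mu_2\,Q({\bf w}_1,{\bf w}_2)$ together with the check that $e^{\pm il_1\zeta}e^{\pm il_2\zeta}=1$ forces $l_1=l_2$ on the range $0\le l\le[m/2]$); this is just an elementary unwinding of Schur's lemma for a cyclic group acting on a symmetric bilinear form, so the two arguments are equivalent in content.
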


\begin{table}[h]
\begin{center}
\begin{tabular}[c]{|c|c|c|c|c|c|}%
\hline
  & $\dim V_0$ & $\dim V_1$ & $\dim V_l, \;  2\leq l < \left [ \frac{m}{2} \right ]$ & $\dim V_{\left [ \frac{m}{2} \right ]}  $ \\ \hline
$m=2$ & $2n$ & $2n+2p$ & - & -  \\ \hline
$m=3$ & $2n$ & $ 4n+2p$ & - & -  \\ \hline
$m\geq 4 \quad \mbox{ even}$ & $2n$  &  $4n+2p$ &  $4n$& $2n$ \\ \hline
$m\geq 5 \quad \mbox{ odd}$ & $2n$ & $4n+2p$  &  $4n$& $4n$  \\ \hline
\end{tabular}
\end{center}
\caption{Dimension of the subspaces of the isotypic decomposition $V_a=\bigoplus_{l=0}^{\left[ \frac{m}{2}\right]}V_l$ according
to the values of $m$, $n$ and $p$.}
\label{eq:dimVl}
\end{table}

%
%

\begin{proof}
The mutual orthogonality of  the spaces $V_l$ is easily established using the definitions \eqref{eq:BjkCjk_delta} of $B_{j,k}$, $C_{j,k}$, 
$\delta x_s$, $\delta y_s$,  and \eqref{eq:hatBhatC} of $\hat B_{j,l}$, $\hat C_{j,l}$.
Moreover, the  vectors $ B_{j,k}$, $C_{j,k}$, $\delta x_s$, $\delta y_s$ also 
 form an orthogonal  basis of the {\em complex} vector space $V_a+iV_a$ 
  equipped with the restriction of the standard Hermitian inner product on $\C^{3N}$.
By standard results of the discrete Fourier transform, it follows that the
vectors $\hat B_{j,l}$, $\hat C_{j,l}$, $j=1,\dots, n$, $l=0,\dots, m-1$, together with 
$\delta x_s$,  $\delta y_s$, $s=1, 2$, are also an orthogonal  basis of $V_a+iV_a$. Now let ${\bf w}\in V_a$
(a real vector).
There exist complex scalars $\beta_{j,l}, \gamma_{j,l}, \lambda_s, \mu_s \in \C$ such that
\begin{equation*}
{\bf w}=\sum_{l=0}^{m-1} \left (\sum_{j=1}^{n} \beta_{j,l}\hat B_{j,l} +  \gamma_{j,l}\hat C_{j,l} \right ) +
 \sum_{s=1}^p\lambda_s \delta x_s +\mu_s \delta y_s.
\end{equation*}
Denote by $\overline{z}$ the complex conjugation of $z$ (which may be a scalar or a vector).
Using $\overline{{\bf w}}= {\bf w}$ together with the relations $\overline{\hat{B}_{j, l}}=\hat{B}_{j,m-l}$, 
 $\overline{\hat{C}_{j, l}}=\hat{C}_{j,m-l}$, $\overline{\delta x_s}=\delta x_s$, 
 $\overline{\delta y_s}=\delta y_s$, gives
\begin{equation*}
{\bf w}=\sum_{l=0}^{m-1} \left (\sum_{j=1}^{n} \overline{\beta_{j,l}}\hat{B}_{j,m-l}
 + \overline{ \gamma_{j,l}}\hat{C}_{j,m-l} \right ) +
 \sum_{s=1}^p\overline{\lambda_s} \delta x_s +\overline{\mu_s} \delta y_s.
\end{equation*}
Therefore, $\overline{\beta_{j,l}}=\beta_{j,m-l}$, $\overline{\gamma_{j,l}}=\gamma_{j,m-l}$, and
$\overline{\lambda_s}=\lambda_s$,  $\overline{\mu_s}=\mu_s$,
and we can write
\begin{equation*}
{\bf w}=\sum_{l=0}^{\left[ \frac{m}{2}\right] } \left (\sum_{j=1}^{n} \beta_{j,l}\hat B_{j,l} +
\overline{\beta_{j,l}} \overline{\hat B_{j,l} } +\gamma_{j,l}\hat C_{j,l}+\overline{\gamma_{j,l}}
\overline{\hat C_{j,l}} \right ) +
 \sum_{s=1}^p\lambda_s \delta x_s +\mu_s \delta y_s.
\end{equation*}
This shows that the real and  imaginary parts of the vectors $\hat{B}_{j,l}$ and $\hat{C}%
_{j,l}$ for $l=0,1,\dots \left[ \frac{m}{2}\right] $ and $j=1,\dots, n$, together with 
$\delta x_s$,  $\delta y_s$, $s=1, 2$,  form a basis of the real space $V_{a}$. In particular this proves that 
$V_a=\bigoplus_{l=0}^{\left[ \frac{m}{2}\right]}V_l$ as required. Moreover, recalling that $\hat{B}_{j,l}$ and $\hat{C}%
_{j,l}$ are real vectors for $l=0$ and $l=\frac{m}{2}$, $m$ even, this also proves item (ii).

On the other hand, we claim that  
\begin{equation}
\label{eq:Bjk-Cjk-as-eigenvectors}
\begin{split}
&\rho(\kappa) \hat{B}_{j,l}= e^{-il\zeta }\hat{B}_{j, l} \qquad \rho(\kappa) \hat{C}_{j,l}= e^{-il\zeta }\hat{C}_{j, l}, \qquad
j=1,\dots, n, \; l=0,\dots, \left[ \frac{m}{2}\right],
\\
&\rho(\kappa)\left( \delta x_s+ i\delta y_{s}\right) =e^{-i\zeta }\left( \delta x_{s}+i\delta y_{s}\right), \qquad s=1,\dots, p,
\end{split}
\end{equation}
where the identities are interpreted as equalities between   real and imaginary parts of both sides of the equations.

To prove \eqref{eq:Bjk-Cjk-as-eigenvectors} start by noticing that, in view of \eqref{eq:ring-convention} and
\eqref{eq:def-b-c}, we have  $e^{J_{3}\zeta }b_{j,k}=b_{j,k+1}$ and $e^{J_{3}\zeta
}c_{j,k}=c_{j,k+1}$ (with the index $k$ taken modulo $m$). These identities together with 
\eqref{eq:rep-fmla} imply
\begin{equation*}
\rho(\kappa)B_{j,k} =B_{j,k+1}, \qquad \rho(\kappa)C_{j,k} =C_{j,k+1},
\end{equation*}%
where, again, the index $k$ is taken modulo $m$. As a consequence,
\[
\rho(\kappa)\hat{B}_{j,l}=\sum_{k=1}^{m}e^{ilk\zeta }B_{j,k+1}=e^{-il\zeta
}\sum_{k=1}^{m}e^{il(k+1)\zeta }B_{j,k+1}=e^{-il\zeta }\hat{B}_{j,l}~,
\]%
and similarly $\rho(\kappa)\hat{C}_{j,l}=e^{-il\zeta }\hat{C}_{j,l}$, as stated. The other identities in 
\eqref{eq:Bjk-Cjk-as-eigenvectors} follow from 
\begin{equation*}
e^{J_{3}\zeta } \,  e_1=\cos \zeta  \, e_1+ \sin \zeta \,  e_2, \qquad 
e^{J_{3}\zeta }\,  e_2=-\sin \zeta \,  e_1+ \cos \zeta \, e_2,
\end{equation*}
and the fact that the permutation $\tau_m$ fixes the last entries where the poles appear according 
to our convention (see 
the definition of $\tau_m$ in  \eqref{eq:def-tau}).

Equations  \eqref{eq:Bjk-Cjk-as-eigenvectors} show that the subspaces $V_l$ are subrepresentations 
of $K_m$. In fact,  $V_a=\bigoplus_{l=0}^{\left[ \frac{m}{2}\right]}V_l$ is the $K_m$-isotypic decomposition 
of $V_a$.  The block diagonalization of $d^2H_\omega$ with respect
to this decomposition is then a consequence of the well-known Schur's lemma.
\end{proof}

\subsection{Construction of a  symmetry-adapted basis of a symplectic slice $\mathcal{N}_a$}
\label{ss:ConstructionSymmetricSympSlice}

We will  now  construct a symplectic slice $\mathcal{N}_a$  (i.e. a vector subspace of $V_a$ satisfying \eqref{eq:symp-slice-condition}) by specifying its components with respect to the
isotypic decomposition of $V_a$ given in 
Lemma \ref{lemma:isotypic-dec}. For this matter we first investigate the position of the subspaces $\ker d\Phi(a)$ and $\so(3)_\mu\cdot a$ with respect
to the isotypic decomposition of $V_a$. This is respectively done in subsections  \ref{ss:kerdPhi-isotypic} and \ref{ss:group-orbit-isotypic}. Using this information, we 
proceed to define the sought symplectic slice $\mathcal{N}_a$ in subsection \ref{ss:def-symplectic-slice}.

\subsubsection{Position of  $\ker (d\Phi(a))$ relative to the isotypic decomposition of $V_a$ of Lemma  \ref{lemma:isotypic-dec} }
\label{ss:kerdPhi-isotypic}
%

%
%

Considering that  $\Phi:M\to \R^3$ is given by $\Phi(v)=v_1+\dots + v_N$, its derivative
at $a$ is given by
\begin{equation}
\label{eq:dphi(a)}
d\Phi(a): V_a \to \R^3, \qquad d\Phi(a)( {\bf w})= w_1+\dots + w_N,
\end{equation}
where $ {\bf w}=(w_1,\dots, w_N) \in V_a$. Therefore,
\begin{equation}
\label{eq:kerdPhi}
\ker d\Phi (a) =\left  \{  {\bf w}=(w_1,\dots, w_N) \in V_a \, :\, w_1+\dots + w_N=0 \right \}.
\end{equation}
If $N\geq 3$ then   $ d\Phi (a)$ is onto and hence $\ker d\Phi (a) $ is a subspace of $V_a$ of codimension 3. 
The next proposition indicates the  position of $\ker d\Phi (a) $ relative to the isotypic decomposition of $V_a$. In its statement, and in what 
follows, we find it convenient to  denote
\begin{equation}
\label{eq:def-etaj}
\eta_j:=x_j-i y_j \in \C, \qquad j=1,\dots, n,
\end{equation}
where we recall that $u_j=(x_j,y_j,z_j)$ is the generator  of the $j^{th}$ ring
(Eq \eqref{eq:ring-generators}).

\begin{proposition}
\label{prop:kerdPhi-isotypic} Consider the decomposition  $V_a=\bigoplus_{l=0}^{\left[ \frac{m}{2}\right]}V_l$  established in Lemma \ref{lemma:isotypic-dec}. The following statements hold.
\begin{enumerate}
\item[(i)]  $V_l\subset \ker d\Phi(a)$ for all $l\geq 2$.
\item[(ii)] $V_0 \cap \ker d\Phi(a)$ is a codimension 1 subspace of $V_0$ (i.e. $\dim (V_0 \cap \ker d\Phi(a))= \dim V_0 -1$) and a basis for $V_0 \cap \ker d\Phi(a)$ is given by
\begin{equation}
\label{eq:0basis-aux}
\tilde {\tilde{ \mathcal{B}_0}}:=
\left \{ \hat B_{j,0} \right \}_{j=1}^n \, \bigcup  \,\left \{  | \eta_1 |^2 \hat C_{j,0} -  | \eta_j |^2 \hat C_{1,0}  \right \}_{j=2}^n.
\end{equation}
\item[(iii)]$V_1 \cap \ker d\Phi(a)$ is a codimension 2 subspace of $V_1$ (i.e. $ \dim (V_1 \cap \ker d\Phi(a))= \dim V_1 -2$) and a basis of  $V_1 \cap \ker d\Phi(a)$ is given by
$\mathcal{B}_1:=\mathcal{B}^{(1)}_1\cup \mathcal{B}^{(2)}_1\cup \mathcal{B}^{(3)}_1\cup \mathcal{B}^{(4)}_1$ where
  \begin{enumerate}
\item[$\bullet$] If $m\geq 3$ we define  
\begin{equation}
\label{eq:basisU1}
\begin{split}
&\mathcal{B}^{(1)}_1:= \left \{ \, \mbox{\em Re}(z_1  \hat B_{1,1} + i \, \hat C_{1,1}) , \mbox{\em Im}( z_1  \hat B_{1,1} + i \, \hat C_{1,1})\, \right \} , \\
&\mathcal{B}^{(2)}_1:= \left \{ \, \mbox{\em Re} ( \eta_j \hat B_{1,1} -\eta_1 \hat B_{j,1})\, , \, \mbox{\em Im} ( \eta_j \hat B_{1,1} -\eta_1 \hat B_{j,1}) \,  \right \}_{j=2}^n,\\
 &\mathcal{B}^{(3)}_1:= \left \{ \,   \mbox{\em Re} (z_j\eta_j  \hat B_{1,1} + i \eta_1 \hat C_{j,1})  \, , \, \mbox{\em Im} (z_j\eta_j  \hat B_{1,1} + i \eta_1 \hat C_{j,1})  \, \right \}_{j=2}^n,\\
&\mathcal{B}^{(4)}_1:=\left \{ \mbox{\em Re} \left  (2 \hat B_{1,1} + im\eta_1(\delta x_s+i \delta y_s) \right ) \, , \, \mbox{\em Im} \left  (2 \hat B_{1,1} + im\eta_1(\delta x_s+i \delta y_s) \right )\,
\right \}_{s=1}^p.
\end{split}
\end{equation}
\item[$\bullet$] If $m=2$ we take instead 
\begin{equation}
\label{eq:basisU1meq2}
\begin{split}
&\mathcal{B}^{(1)}_1:= \left \{ z_1 \mbox{\em Re} (\eta_1 \bar{\eta_j})\hat B_{1,1} -z_1 |\eta_1|^2\hat B_{j,1}- \mbox{\em Im} (\eta_1 \bar{\eta_j})\hat C_{1,1}   \right \}_{j=2}^n,  \\
&\mathcal{B}^{(2)}_1:=   \left \{ z_1z_j \mbox{\em Im} (\eta_1 \bar{\eta_j})\hat B_{1,1} -z_1 |\eta_1|^2\hat C_{j,1}+z_j \mbox{\em Re} (\eta_1 \bar{\eta_j})\hat C_{1,1}   \right \}_{j=2}^n, \\
&\mathcal{B}^{(3)}_1:=\left \{z_1 \mbox{\em Im} (\eta_1)\hat B_{1,1} + \mbox{\em Re} (\eta_1)\hat C_{1,1}-2z_1 |\eta_1|^2 \delta x_s   \,\right \}_{s=1}^p,\\
&\mathcal{B}^{(4)}_1:=\left \{ \,  z_1 \mbox{\em Re} (\eta_1)\hat B_{1,1} - \mbox{\em Im} (\eta_1)\hat C_{1,1}-2z_1 |\eta_1|^2 \delta y_s  \, \right \}_{s=1}^p.
\end{split}
\end{equation}
\end{enumerate}
\end{enumerate}
%
 \end{proposition}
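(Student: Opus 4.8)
\emph{Overall strategy.} Since $d\Phi(a)$ acts on a block vector of $(\R^3)^N$ simply by adding up all its $\R^3$-blocks, the whole proposition reduces to evaluating $d\Phi(a)$ on the building blocks $\hat B_{j,l}$, $\hat C_{j,l}$, $\delta x_s\pm i\delta y_s$ of the isotypic decomposition. The first step is to record, using \eqref{eq:ring-convention}, that $J_3$ commutes with $g_m=e^{\zeta J_3}$ and $g_m\in SO(3)$ preserves cross products, so the vectors of \eqref{eq:def-b-c} and \eqref{eq:BjkCjk_delta} satisfy $b_{j,k}=g_m^kb_j$ and $c_{j,k}=g_m^kc_j$ with $b_j:=J_3u_j=(-y_j,x_j,0)$ and $c_j:=(J_3u_j)\times u_j=(x_jz_j,\,y_jz_j,\,-|\eta_j|^2)$; consequently $d\Phi(a)\hat B_{j,l}=\big(\sum_{k=1}^m e^{ilk\zeta}g_m^k\big)b_j$ (and likewise with $c_j$), while $d\Phi(a)(\delta x_s\pm i\delta y_s)=e_1\pm ie_2$. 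The computational core is the elementary identity: decomposing $\C^3$ into the $g_m$-eigenlines $\C(e_1-ie_2)$, $\C(e_1+ie_2)$, $\C e_3$ (on which $g_m^k$ acts by $e^{ik\zeta}$, $e^{-ik\zeta}$, $1$), the matrix $\sum_{k=1}^m e^{ilk\zeta}g_m^k$ equals $mP_-$, $mP_+$, or $mP_0$ according as $l\equiv -1$, $l\equiv 1$, or $l\equiv 0\pmod m$, and vanishes otherwise, where $P_0v=v_3e_3$ and $P_+v=\tfrac12(v_1-iv_2)(e_1+ie_2)$. Throughout, the ring generators $u_j$ lie off the poles (being coordinates of a point of $M_n$), so $\eta_j\ne0$ and $|\eta_j|^2=1-z_j^2>0$.

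\emph{Parts (i)--(ii).} If $2\le l\le[m/2]$ then $l\not\equiv 0,\pm1\pmod m$ (the case $l=m-1$ would force $m\le2$), so all three indicators vanish and $d\Phi(a)$ annihilates $\hat B_{j,l}$ and $\hat C_{j,l}$, giving $V_l\subset\ker d\Phi(a)$. For $l=0$ only $P_0$ contributes: $b_j$ has zero third component and $e_3\cdot c_j=-|\eta_j|^2\ne0$, so $d\Phi(a)\hat B_{j,0}=0$ and $d\Phi(a)\hat C_{j,0}=-m|\eta_j|^2e_3$; hence $d\Phi(a)|_{V_0}$ has image $\R e_3$ and $V_0\cap\ker d\Phi(a)$ has dimension $\dim V_0-1=2n-1$. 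The $2n-1$ vectors of \eqref{eq:0basis-aux} lie in this kernel by the two formulas just derived and are linearly independent (the $\hat B_{j,0}$'s occupy disjoint blocks from the $\hat C$-combinations, and the $j$-th combination carries $\hat C_{j,0}$ with the nonzero coefficient $|\eta_1|^2$), hence form a basis.

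\emph{Part (iii), $m\ge3$.} I would pass to the complex eigenspace $W:=\ker\big(\rho(\kappa)-e^{-i\zeta}\mathrm{Id}\big)\subset V_a+iV_a$ of the complexified action, which by \eqref{eq:Bjk-Cjk-as-eigenvectors} has $\C$-basis $\{\hat B_{j,1},\hat C_{j,1}\}_{j=1}^n\cup\{\delta x_s+i\delta y_s\}_{s=1}^p$. Since $m\ge3$, $e^{i\zeta}\ne e^{-i\zeta}$, so $W$ and its conjugate $\overline W$ are eigenspaces for distinct eigenvalues and $W\cap\overline W=\{0\}$; thus $\mathbf z\mapsto\operatorname{Re}\mathbf z$ is injective on $W$, and since $\operatorname{Re}(W)\subseteq V_1$ (immediate from the definition of $V_1$) and $\dim_\R W=4n+2p=\dim V_1$ (Lemma~\ref{lemma:isotypic-dec}), it is an $\R$-linear isomorphism $W\to V_1$ carrying any $\C$-basis $\{\mathbf z_k\}$ of a complex subspace $S\subseteq W$ to the $\R$-basis $\{\operatorname{Re}\mathbf z_k,\operatorname{Im}\mathbf z_k\}$ of $\operatorname{Re}(S)$ (use $\operatorname{Im}\mathbf z=\operatorname{Re}(-i\mathbf z)$). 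The core identity now gives $d\Phi(a)\hat B_{j,1}=-\tfrac{im\eta_j}{2}w$, $d\Phi(a)\hat C_{j,1}=\tfrac{mz_j\eta_j}{2}w$, $d\Phi(a)(\delta x_s+i\delta y_s)=w$, with $w:=e_1+ie_2$; hence $d\Phi(a)$ maps $W$ onto the line $\C w$ (nonzero already on $\hat B_{1,1}$ since $\eta_1\ne0$). As $\operatorname{Re}(cw)=0\iff c=0$ and $d\Phi(a)(\operatorname{Re}\mathbf z)=\operatorname{Re}(d\Phi(a)\mathbf z)$, we get for $\mathbf z\in W$ that $\operatorname{Re}\mathbf z\in\ker d\Phi(a)\iff d\Phi(a)\mathbf z=0$; so $\operatorname{Re}$ maps $\ker(d\Phi(a)|_W)$, of complex dimension $2n+p-1$, isomorphically onto $V_1\cap\ker d\Phi(a)$, which therefore has codimension $2$ in $V_1$. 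It then remains to check that the $2n+p-1$ complex vectors $z_1\hat B_{1,1}+i\hat C_{1,1}$, $\eta_j\hat B_{1,1}-\eta_1\hat B_{j,1}$ ($j\ge2$), $z_j\eta_j\hat B_{1,1}+i\eta_1\hat C_{j,1}$ ($j\ge2$), $2\hat B_{1,1}+im\eta_1(\delta x_s+i\delta y_s)$ ($s\le p$) underlying $\mathcal B_1^{(1)},\dots,\mathcal B_1^{(4)}$ of \eqref{eq:basisU1} all lie in $\ker(d\Phi(a)|_W)$ — a one-line substitution for each using the three formulas above — and are $\C$-linearly independent, which follows by reading off successively the coefficients of $\hat B_{j,1}$ ($j\ge2$), of $\hat C_{j,1}$ ($j\ge2$), of $\delta x_s+i\delta y_s$, and of $\hat C_{1,1}$, using $\eta_1\ne0$. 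Applying $\operatorname{Re}$ and $\operatorname{Im}$ then shows that $\mathcal B_1$ is a basis.

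\emph{Part (iii), $m=2$, and the main difficulty.} For $m=2$ we have $\zeta=\pi$, so the $\hat B_{j,1},\hat C_{j,1}$ are already real and $\rho(\kappa)$ acts as $-\mathrm{Id}$ on $\delta x_s,\delta y_s$; thus $V_1=\operatorname{span}_\R\{\hat B_{j,1},\hat C_{j,1},\delta x_s,\delta y_s\}$ and the core identity degenerates to $\sum_{k=1}^2(-1)^kg_2^k=\diag(2,2,0)$, giving $d\Phi(a)\hat B_{j,1}=2J_3u_j$, $d\Phi(a)\hat C_{j,1}=2z_j(x_j,y_j,0)$, $d\Phi(a)\delta x_s=e_1$, $d\Phi(a)\delta y_s=e_2$. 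The image lies in the $e_1e_2$-plane and, since $d\Phi(a)$ is onto ($N\ge3$) while its restrictions to $V_0$ and to the $V_l$ with $l\ge2$ land in $\R e_3$, the image of $d\Phi(a)|_{V_1}$ is all of that plane, whence again $\dim(V_1\cap\ker d\Phi(a))=\dim V_1-2$. Identifying the $e_1e_2$-plane with $\C$ via $(p,q,0)\leftrightarrow p+iq$ (so $J_3u_j\leftrightarrow i\bar\eta_j$ and $z_j(x_j,y_j,0)\leftrightarrow z_j\bar\eta_j$), a short manipulation with $\operatorname{Re}$ and $\operatorname{Im}$ of the products $\eta_1\bar\eta_j$ shows each of the $2(n-1)+2p$ vectors of \eqref{eq:basisU1meq2} is annihilated by $d\Phi(a)$, and they are linearly independent by inspecting the coefficients of $\hat B_{j,1}$, $\hat C_{j,1}$ ($j\ge2$) and of $\delta x_s$, $\delta y_s$ (using $z_1|\eta_1|^2\ne0$, tacitly assumed by the form of \eqref{eq:basisU1meq2}). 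The main obstacle is not conceptual but bookkeeping: organizing the verifications in part (iii) so that kernel membership and, above all, the linear independence of the four families each reduce to reading off a single coefficient, and handling $m=2$ separately since the complexification shortcut used for $m\ge3$ is unavailable there.
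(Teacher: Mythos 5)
Your proof is correct and follows essentially the same route as the paper: both rest on computing $d\Phi(a)$ on the vectors $\hat B_{j,l}$, $\hat C_{j,l}$, $\delta x_s+i\delta y_s$ (your eigenline decomposition of $\C^3$ under $g_m$ is a repackaging of the paper's matrix identity for $\sum_{k}e^{ilk\zeta}e^{kJ_3\zeta}$ and yields the same formulas \eqref{eq:mom-map-formulas}), after which (i)--(iii) reduce to the same kernel-membership and linear-independence checks on the listed vectors. The only minor deviation is in part (iii), where you obtain the codimension-2 statement by directly computing the image of $d\Phi(a)$ restricted to $V_1$ (via the complexified eigenspace $W$ when $m\ge 3$), whereas the paper deduces it from a global dimension count combining the codimension $3$ of $\ker d\Phi(a)$ with parts (i) and (ii); both arguments are sound, and your explicit remark that $z_1\neq 0$ is tacitly needed for the independence of \eqref{eq:basisU1meq2} when $m=2$ is a point the paper glosses over.
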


The proof of Proposition \ref{prop:kerdPhi-isotypic} requires knowledge of the value of 
  $d\Phi(a)$ acting on the basis vectors of 
the subspaces $V_l$ of Lemma \ref{lemma:isotypic-dec}. This information is contained in the following proposition.

\begin{proposition}
The following identities hold 
\begin{equation}
\label{eq:mom-map-formulas}
\begin{split}
&d\Phi(a) \hat B_{j,l}=0, \qquad l=0,2,\dots \left [ \frac{m}{2} \right ],  \quad j=1,\dots, n, \\
&d\Phi(a) \hat B_{j,1}=\begin{cases} 2 (-y_j,x_j,0) \qquad &m= 2, \quad j=1,\dots, n,  \\ -i \frac{m}{2}\eta_j \left ( 1,i ,0  \right ), \qquad &m\geq 3, \quad j=1,\dots, n, 
\end{cases} \\
&d\Phi(a) \hat C_{j,0}=-m | \eta_j |^2 e_3, \qquad  j=1,\dots, n, \\
&d\Phi(a) \hat C_{j,1}=\begin{cases} 2z_j(x_j,y_j,0),   \qquad  &m= 2, \quad j=1,\dots, n, \\
  \frac{m}{2}z_j \eta_j (1,i,0), \qquad &m\geq 3, \quad  j=1,\dots, n, \end{cases} \\
&d\Phi(a) \hat C_{j,l}= 0, \qquad  j=1,\dots, n, \quad l=2, \dots, \left [ \frac{m}{2} \right ], \\
&d\Phi(a)  (\delta x_s+i \delta y_s)=(1,i,0).
\end{split}
\end{equation}
\end{proposition}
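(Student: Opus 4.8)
The plan is a direct computation that becomes transparent once $J_3$ is diagonalised over $\C$. Since $\Phi(v)=v_1+\dots+v_N$ is linear, $d\Phi(a)$ sends a block vector $\mathbf w=(w_1,\dots,w_N)$ to $\sum_i w_i$; and the vectors $\hat B_{j,l}$, $\hat C_{j,l}$ from \eqref{eq:hatBhatC} are supported only on the $m$ slots belonging to the $j$-th ring, so the whole statement reduces to evaluating the two sums $\sum_{k=1}^{m}e^{ilk\zeta}b_{j,k}$ and $\sum_{k=1}^{m}e^{ilk\zeta}c_{j,k}$. The last identity of the proposition is immediate: $\delta x_s,\delta y_s$ are supported on a single pole slot with values $e_1,e_2$, whence $d\Phi(a)(\delta x_s+i\delta y_s)=e_1+ie_2=(1,i,0)$.

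First I would introduce the $J_3$-eigenbasis of $\C^3$: setting $\epsilon_{\pm}:=(1,\pm i,0)$ one has $J_3\epsilon_{\pm}=\mp i\,\epsilon_{\pm}$ while $e_3$ spans $\ker J_3$, so $g_m^{k}\epsilon_{\pm}=e^{\mp ik\zeta}\epsilon_{\pm}$ and $g_m^{k}e_3=e_3$. Expanding the ring generator \eqref{eq:ring-generators} as $u_j=\tfrac12\eta_j\epsilon_++\tfrac12 w_j\epsilon_-+z_je_3$, with $w_j:=x_j+iy_j=\overline{\eta_j}$, this gives $a_{j,k}=g_m^{k}u_j=\tfrac12\eta_j e^{-ik\zeta}\epsilon_++\tfrac12 w_j e^{ik\zeta}\epsilon_-+z_je_3$, hence $b_{j,k}=J_3a_{j,k}=-\tfrac{i}{2}\eta_j e^{-ik\zeta}\epsilon_++\tfrac{i}{2}w_j e^{ik\zeta}\epsilon_-$. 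Using the cross products $\epsilon_+\times\epsilon_-=-2ie_3$, $\epsilon_{\pm}\times e_3=\pm i\,\epsilon_{\pm}$, $\epsilon_{\pm}\times\epsilon_{\pm}=0$ together with the identity $\eta_jw_j=|\eta_j|^2=1-z_j^2$, a one-line expansion of $c_{j,k}=b_{j,k}\times a_{j,k}$ produces $c_{j,k}=\tfrac12 z_j\eta_j e^{-ik\zeta}\epsilon_++\tfrac12 z_jw_j e^{ik\zeta}\epsilon_--(1-z_j^2)e_3$.

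With these expansions the two ring sums collapse onto geometric sums $\sum_{k=1}^{m}e^{irk\zeta}=\sum_{k=1}^{m}e^{2\pi i rk/m}$, which equal $m$ if $m\mid r$ and $0$ otherwise, yielding
\[
d\Phi(a)\hat B_{j,l}=-\tfrac{i}{2}\eta_j\Bigl(\textstyle\sum_{k}e^{i(l-1)k\zeta}\Bigr)\epsilon_++\tfrac{i}{2}w_j\Bigl(\textstyle\sum_{k}e^{i(l+1)k\zeta}\Bigr)\epsilon_-,
\]
and similarly for $\hat C_{j,l}$ with an extra term $-(1-z_j^2)\bigl(\sum_k e^{ilk\zeta}\bigr)e_3$. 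It then remains to run a case analysis over $l\in\{0,1,\dots,\left[\frac{m}{2}\right]\}$: for $l=0$ only the $e_3$-sum survives, giving $d\Phi(a)\hat C_{j,0}=-m|\eta_j|^2e_3$ and $d\Phi(a)\hat B_{j,0}=0$; for $l=1$ one has $\sum_k e^{i(l-1)k\zeta}=m$ always while $\sum_k e^{i(l+1)k\zeta}=m$ only for $m=2$, which is precisely what produces the split between the $m=2$ and $m\ge 3$ formulas (converting $\epsilon_{\pm}$ back to $e_1,e_2$ reproduces the stated expressions); and for $2\le l\le\left[\frac{m}{2}\right]$, a range that is empty unless $m\ge 4$, the integers $l-1$, $l$, $l+1$ all lie in $\{1,\dots,m-1\}$ (using $\left[\frac{m}{2}\right]+1\le m-1$ for $m\ge 4$), so every geometric sum vanishes and $d\Phi(a)\hat B_{j,l}=d\Phi(a)\hat C_{j,l}=0$.

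I expect no substantial obstacle: the argument is mechanical once the eigenbasis trick is in place. The one point that genuinely needs attention is to resist identifying the $xy$-components of a complex block vector with a single complex number — that shortcut is valid only for real vectors, and here the eigendirections $\epsilon_+$ and $\epsilon_-$ must be tracked separately, which is exactly why the stated formulas feature the single eigenvector $(1,i,0)$. The mildest source of error is the bookkeeping of which geometric sums vanish in the final case analysis, but this is routine.
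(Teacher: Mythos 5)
Your proposal is correct and follows essentially the same route as the paper: both proofs rest on the spectral decomposition of $g_m^k=e^{kJ_3\zeta}$ over $\C$ (you expand vectors in the eigenbasis $\epsilon_\pm,e_3$; the paper packages the same decomposition as the matrix identity \eqref{eq:LemmaMommapaux1} with the projectors $A$, $\overline{A}$, $e_3e_3^T$) and then reduce everything to the geometric sums $\sum_{k=1}^m e^{2\pi irk/m}$. Your explicit formulas for $b_{j,k}$ and $c_{j,k}$, the cross-product identities, and the final case analysis over $l$ all check out against the paper's computation.
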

\begin{proof}
We will make use of the following formula which can be verified using standard trigonometric identities
\begin{equation}
\label{eq:LemmaMommapaux1}
e^{ikl\zeta} e^{kJ_3 \zeta}=\frac{1}{2} \left ( A e^{k(l-1)J_3 \zeta} + \overline{A} e^{k(l+1)J_3 \zeta} 
\right ) + e^{ikl\zeta} e_3 e_3^T, \qquad A=\left( 
\begin{array}{ccc}
1 & -i & 0 \\ 
i & 1 & 0 \\ 
0 & 0 & 0%
\end{array}%
\right).
\end{equation}
 We will also use the following identities which follow from 
geometric series calculations
\begin{equation}
\label{eq:LemmaMommapaux2}
\begin{small}
\sum_{k=1}^m e^{k(l-1)J_3 \zeta} =\begin{cases} m \I_3, \quad &l=1,  \\
m e_3 e_3^T, \quad &l= 0,2, \dots,\left [ \frac{m}{2} \right ]   \end{cases}, \quad 
\sum_{k=1}^m e^{k(l+1)J_3 \zeta} =\begin{cases}2 \I_3, \quad &m=2, \, l=1, \\
 m e_3 e_3^T, \quad &\begin{array}{l} \mbox{in any other case}, \\ 
 0\leq l \leq \left [ \frac{m}{2} \right ], \end{array} 
 \end{cases}
 \end{small}
\end{equation}
where $\I_3$ denotes the $3\times 3$ identity matrix.

We begin by noticing that 
\begin{equation*}
d\Phi(a)B_{j,k}=b_{j,k}=J_3a_{j,k}=J_3 e^{k J_3 \zeta}u_j.
\end{equation*}
Therefore, using \eqref{eq:LemmaMommapaux1} we find
\begin{equation*}
\begin{split}
d\Phi(a)\hat B_{j,l}&=\sum_{k=1}^m e^{i k l} J_3 e^{k J_3 \zeta}u_j  \\
&=\frac{1}{2}J_3 A\left ( \sum_{k=1}^m e^{k(l-1)J_3 \zeta} \right )u_j + 
\frac{1}{2}J_3 \overline A\left ( \sum_{k=1}^m e^{k(l+1)J_3 \zeta} \right ) u_j+\left (\sum_{k=1}^me^{ikl \zeta} \right )J_3 e_3e_3^T u_j.
\end{split}
\end{equation*}
Using $J_3e_3=0$ it is seen that the last term on the right vanishes. On the other hand, for $l\neq 1$ the other 
two terms also vanish in view of \eqref{eq:LemmaMommapaux2} and since $Ae_3=\overline{A}e_3=0$. Finally, for 
$l=1$ we must distinguish the cases $m=2$ and $m\geq 3$. In the former case we have in view of \eqref{eq:LemmaMommapaux2},
\begin{equation*}
\begin{split}
d\Phi(a)\hat B_{j,1}=J_3(A+\overline{A})u_j=2 (-y_j,x_j,0).
\end{split}
\end{equation*}
For $m\geq 3$  using again \eqref{eq:LemmaMommapaux2} and
$\overline{A}e_3=0$, we have instead,
\begin{equation*}
\begin{split}
d\Phi(a)\hat B_{j,1}=\frac{m}{2} J_3 A u_j+\frac{m}{2} J_3 \overline{A} e_3e_3^T u_j =
\frac{m}{2}(-i \eta_j, \eta_j,0)=-i\frac{m}{2}\eta_j(1,i,0).
\end{split}
\end{equation*}
The above calculations show that all given formulas for $d\Phi(a)\hat B_{j,l}$
in \eqref{eq:mom-map-formulas} indeed hold. In order to prove those for 
$d\Phi(a)\hat C_{j,l}$ we proceed analogously. We first notice that
\begin{equation*}
\begin{split}
d\Phi(a)C_{j,k}&=c_{j,k}=\left (J_3a_{j,k} \right ) \times a_{j,k} =
\left ( J_3 e^{k J_3 \zeta}u_j \right ) \times\left (  e^{k J_3 \zeta}u_j \right )
= e^{k J_3 \zeta}\left ( J_3 u_j \right ) \times u_j =e^{k J_3 \zeta}(z_j u_j -e_3) \\
&= z_j e^{k J_3 \zeta}u_j -e_3.
\end{split}
\end{equation*}
Therefore, 
\begin{equation*}
\begin{split}
d\Phi(a)\hat C_{j,l}&=z_j \sum_{k=1}^me^{i kl\zeta} e^{k J_3 \zeta}u_j -\sum_{k=1}^me^{i kl\zeta}e_3.
\end{split}
\end{equation*}
Using  \eqref{eq:LemmaMommapaux1} we obtain
\begin{equation*}
 d\Phi(a)\hat C_{j,l}=
 \frac{z_j}{2} A\left ( \sum_{k=1}^m e^{k(l-1)J_3 \zeta} \right )u_j + 
\frac{z_j}{2} \overline A\left ( \sum_{k=1}^m e^{k(l+1)J_3 \zeta} \right ) u_j+(z_j^2-1)\left (\sum_{k=1}^me^{i kl\zeta}\right )e_3.
\end{equation*}
The last term on the right hand side equals 
$m(z_j^2-1)e_3=-m|\eta_j|^2e_3$ if $l=0$, and it instead 
vanishes if
$1\leq l \leq \left [ \frac{m}{2} \right ]$. Indeed, this follows from
\begin{equation*}
\sum_{k=1}^me^{i kl\zeta}=\begin{cases} m &\mbox{if $l=0$}, \\ 
0 &\mbox{if $1\leq l \leq \left [ \frac{m}{2} \right ]$} \end{cases}.
\end{equation*}
Thus, in view of  \eqref{eq:LemmaMommapaux2}, and since $Ae_3=\overline{A}e_3=0$,
we conclude that the given formulas
for $d\Phi(a)\hat C_{j,l}$ indeed hold for $l=0,2,\dots, \left [ \frac{m}{2} \right ]$. We now
treat the case $l=1$. Using again  \eqref{eq:LemmaMommapaux2} and 
$\overline{A}e_3=0$, we find that 
for $m\geq 3$ we have
\begin{equation*}
 d\Phi(a)\hat C_{j,1}=
 \frac{m}{2}z_j A u_j = \frac{m}{2}z_j\eta_j\left (1,i,0 \right ).
\end{equation*}
On the other hand, if $m=2$ we obtain, again in view of  \eqref{eq:LemmaMommapaux2},
\begin{equation*}
 d\Phi(a)\hat C_{j,1}=
z_j ( A + \overline{A}) u_j= 2z_j\left (x_j,y_j,0 \right ).
\end{equation*}
Finally, it is obvious from their definition, that $d\Phi(a)\delta x_s=e_1$ and $d\Phi(a)\delta y_s=e_2$ which immediately yields the last identity in \eqref{eq:mom-map-formulas}.
\end{proof}

Having established the validity of   \eqref{eq:mom-map-formulas}, we now give a proof of Proposition \ref{prop:kerdPhi-isotypic}.

\begin{proof}[Proof of Proposition \ref{prop:kerdPhi-isotypic}]
(i) Using the bases of  $V_l$ given in item (ii) of Lemma \ref{lemma:isotypic-dec}, it follows immediately from  \eqref{eq:mom-map-formulas} that $V_l\subset \ker d\Phi(a)$ for $l\geq 2$. 

(ii) It is clear that $V_0$ is not contained  in $\ker d\Phi(a)$ since $d\Phi(a)\hat C_{1,0}\neq 0$ and therefore $\dim V_0\cap \ker d\Phi(a) \leq \dim V_0 -1$. 
On the other hand, using  \eqref{eq:mom-map-formulas}, it is easily verified 
that the set $\tilde {\tilde {\mathcal{B}_0}}$ is  contained  in $V_0\cap \ker d\Phi(a)$.
But, given that $\tilde {\mathcal{B}_0}$ as given in Lemma  \ref{lemma:isotypic-dec} is a basis of $V_0$, it is easily
seen that $\tilde {\tilde {\mathcal{B}_0}}$ is linearly independent.  Considering that it   has $2n-1=\dim V_0-1$ elements, it must
be a basis of $V_0\cap \ker d\Phi(a)$ and hence the dimension of this space is $\dim V_0-1$ as asserted. 

(iii) Using that $\ker d\Phi(a)$ is a codimension 3 subspace of $V_a$ and
$V_a=\bigoplus_{l=0}^{\left [ \frac{m}{2} \right ]} V_l$, a dimension count,  which takes into account items (i) and (ii), implies that  $V_1\cap \ker d\Phi(a)$ is a codimension 2 subspace of $V_1$.
 In view of
 item (ii) of Lemma \ref{lemma:isotypic-dec}, the elements of $\mathcal{B}_1=\mathcal{B}_1^{(1)}\cup \mathcal{B}_1^{(2)}\cup \mathcal{B}_1^{(3)}\cup \mathcal{B}_1^{(4)}$ 
 (given by either \eqref{eq:basisU1} or \eqref{eq:basisU1meq2}) are linearly independent. Moreover, using again  \eqref{eq:mom-map-formulas} one checks that
 they  are contained in $V_1\cap \ker d\Phi(a)$. To finish the proof  that $\mathcal{B}_1$ is a basis it suffices to do a count of its elements and compare with Table \ref{eq:dimVl}.
 Regardless of the value of $m$, one sees that the cardinality of  $\mathcal{B}_1$ equals $\dim V_1-2$.
\end{proof}

\subsubsection{Position of $\so(3)_\mu\cdot a$ relative to the isotypic decomposition of $V_a$}
\label{ss:group-orbit-isotypic}

As above, let  $(a,\omega)\in M\times \R$ be a $\mathbb{Z}_m$-symmetric ($m\geq 2$) RE  and denote by $\mu =\Phi(a)\in \R^3$. Recall that $SO(3)_\mu$ is the
   isotropy group of $\mu\in \R^3$ with respect to the standard action of $SO(3)$ on $\R^3$,  that $\so(3)_\mu$ denotes its Lie algebra, and 
   $\so(3)_\mu\cdot a\subset T_aM=V_a$ is the tangent space to the $SO(3)_\mu$-orbit through $a$. The following proposition specifies the position 
   of $\so(3)_\mu\cdot a$ with respect to the isotypic  decomposition of $V_a$ given in  Lemma \ref{lemma:isotypic-dec}.

\begin{proposition}
\label{prop:symmetry-isot-decomp}
If $\mu\neq 0$  then $\so(3)_\mu\cdot a$ is a 1-dimensional subspace of $V_a$ which is contained in $V_0\cap \ker d\Phi(a)$. Moreover, it is  generated by 
\begin{equation}
\label{eq:def-sa}
s_a:=\sum_{j=1}^n\hat B_{j,0}.
\end{equation}
 On the other hand, if $\mu=0$ then  $\so(3)_0\cdot a$ coincides with $\so(3)\cdot a$ which is a 3-dimensional subspace of $V_a$ which satisfies
 \begin{equation*}
\dim \left ( (\so(3) \cdot a)\cap V_0\cap \ker d\Phi(a) \right ) = 1, \qquad \dim  \left ( (\so(3) \cdot a)\cap V_1\cap \ker d\Phi(a) \right )= 2.
\end{equation*}
Furthermore, also in this case,  the intersection $ (\so(3) \cdot a)\cap V_0\cap \ker d\Phi(a)$ is  generated by $s_a$ given by \eqref{eq:def-sa}.
 \end{proposition}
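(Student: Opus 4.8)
The plan is to compute the infinitesimal generators of the $SO(3)$-action at $a$ explicitly as vectors in $V_a\subset(\R^3)^N$, and then express them in terms of the isotypic basis vectors $\hat B_{j,l}$, $\hat C_{j,l}$, $\delta x_s$, $\delta y_s$. Recall that for $\xi\in\so(3)$ the generator is $\xi_M(a)=(\xi a_1,\dots,\xi a_N)$. Writing $\so(3)=\langle J_1,J_2,J_3\rangle$ in the standard way, I would first treat the generator of $J_3$: since $a_{j,k}=g_m^k u_j=e^{kJ_3\zeta}u_j$ and $g_m=e^{\frac{2\pi}{m}J_3}$ commutes with $J_3$, one gets $J_3 a_{j,k}=e^{kJ_3\zeta}(J_3u_j)=b_{j,k}$, and $J_3 p_s=0$. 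Hence the generator of $J_3$ at $a$ is exactly $\sum_{j,k}B_{j,k}=\sum_{j=1}^n\hat B_{j,0}=s_a$ (using $\hat B_{j,0}=\sum_{k=1}^m B_{j,k}$). This already identifies $s_a$ as a generator of $\so(3)\cdot a$ along the $J_3$-direction, and by \eqref{eq:mom-map-formulas} we have $d\Phi(a)\hat B_{j,0}=0$, so $s_a\in\ker d\Phi(a)$; also $s_a\in V_0$ by definition, so $s_a\in V_0\cap\ker d\Phi(a)$.

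Next I would handle the case $\mu\neq 0$. By Proposition~\ref{prop:RE-as-crit-pts}(ii) (combined with Theorem~\ref{th-main-symmetry}(iv) for $m\geq 2$), $\mu=\Phi(a)$ is parallel to $e_3$ and nonzero, so $SO(3)_\mu=SO(2)$ with Lie algebra $\so(3)_\mu=\langle J_3\rangle$, which is $1$-dimensional. Therefore $\so(3)_\mu\cdot a$ is spanned by the single generator computed above, namely $s_a$ — except one must check $s_a\neq 0$. This holds because $s_a$ is the infinitesimal generator of the free $SO(2)$-action (freeness from Theorem~\ref{th-main-symmetry}(iii) after reduction, or directly: $J_3 u_j=0$ would force $u_j=\pm e_3$, excluded); alternatively one notes $\langle s_a,s_a\rangle=\sum_{j,k}\|b_{j,k}\|^2>0$. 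This settles the first assertion.

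Finally, for $\mu=0$ we have $\so(3)_0=\so(3)$, which is $3$-dimensional, and since the $SO(3)$-action on $M$ is free for $N\geq 3$, $\so(3)\cdot a$ is a $3$-dimensional subspace of $V_a$. The generators of $J_1$ and $J_2$ must be expressed in the isotypic basis. The cleanest route is to work with the complex combination $J_1\pm iJ_2$: one computes $(J_1+iJ_2)u = (\text{something})\cdot$(a rank-one combination built from $e_1,e_2,e_3$), and applying $e^{kJ_3\zeta}$ produces, via the identity \eqref{eq:LemmaMommapaux1}-type trigonometric manipulation already used in the preceding proof, terms whose Fourier index in $k$ lies in $\{l=1\}$ together with a constant ($l=0$) piece. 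More concretely, $(J_1\pm iJ_2)a_{j,k}=(J_1\pm iJ_2)e^{kJ_3\zeta}u_j = e^{\mp ik\zeta}e^{kJ_3\zeta}(J_1\pm iJ_2)u_j$ up to the $e_3$-components, which upon summing over $k$ with the phase $e^{ilk\zeta}$ concentrates the generator in $V_0$ (for the $z_j e_3$-type part) and $V_1$ (for the $x_j,y_j$ parts). Matching against the definitions \eqref{eq:hatBhatC}, \eqref{eq:BjkCjk_delta} of $\hat B_{j,l},\hat C_{j,l},\delta x_s,\delta y_s$, one reads off that the $J_3$-generator lies in $V_0$, and the $J_1,J_2$-generators split with a one-dimensional contribution in $V_0\cap\ker d\Phi(a)$ (the $s_a$-direction) and a two-dimensional contribution in $V_1\cap\ker d\Phi(a)$. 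Since $\so(3)\cdot a\subset\ker d\Phi(a)$ automatically (because $\Phi$ is $SO(3)$-equivariant and $\mu=0$, so $d\Phi(a)$ annihilates all generators — indeed $d\Phi(a)\xi_M(a)=\xi\mu=0$), the dimension count gives $\dim((\so(3)\cdot a)\cap V_0\cap\ker d\Phi(a))=1$ and $\dim((\so(3)\cdot a)\cap V_1\cap\ker d\Phi(a))=2$, with the $V_0$-part generated by $s_a$ as before.

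The main obstacle I expect is the bookkeeping in the $\mu=0$ case: correctly decomposing the $J_1$- and $J_2$-generators into their $V_0$ and $V_1$ components requires carefully tracking the $e_3 e_3^T$ projection terms in the analogue of \eqref{eq:LemmaMommapaux1} and verifying the geometric-series cancellations in \eqref{eq:LemmaMommapaux2} so that no spurious components in $V_l$ with $l\geq 2$ appear. Once the explicit formulas $(J_1\pm iJ_2)a_{j,k}$ are in hand, the rest is a direct comparison with the basis vectors and a dimension count.
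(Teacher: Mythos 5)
Your plan follows the paper's proof essentially step for step: identify the $J_3$-generator with $s_a$, use $d\Phi(a)(\xi a_1,\dots,\xi a_N)=\xi\mu$ to place the orbit tangent space inside $\ker d\Phi(a)$, invoke freeness of the $SO(3)$-action for $\dim(\so(3)\cdot a)=3$, and detect the isotypic location of the $J_1,J_2$-generators through the phase picked up by $(e_1\pm ie_2)\times a_{j,k}$ under the $K_m$-action. One intermediate claim needs correcting, however. You assert that the $J_1,J_2$-generators ``split'' with a piece in $V_0$ (a ``$z_je_3$-type part'') and a piece in $V_1$. In fact there is no leftover $e_3$-term: one has exactly $(e_1+ie_2)\times a_{j,k}=e^{ik\zeta}\,e^{kJ_3\zeta}\bigl((e_1+ie_2)\times u_j\bigr)$, and $(e_1+ie_2)\times u_j$ is automatically perpendicular to $u_j$, hence a complex combination of $b_{j,\cdot}$ and $c_{j,\cdot}$; likewise $(e_1+ie_2)\times(\pm e_3)=\pm i(e_1+ie_2)$ lands in the span of $\delta x_s+i\delta y_s$. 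Thus $t_a^{(1)}+it_a^{(2)}$ is an exact $e^{-i\zeta}$-eigenvector of $\rho(\kappa)$ and lies entirely in $V_1\oplus iV_1$ — which is precisely the paper's computation. This is not cosmetic: if the $J_1,J_2$-generators genuinely had nonzero $V_0$-components, their span would meet $V_1$ only in $\{0\}$ and your final dimension count $\dim\bigl((\so(3)\cdot a)\cap V_1\cap\ker d\Phi(a)\bigr)=2$ would not follow. With that correction the argument closes exactly as in the paper.
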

\begin{proof}
  It is easy to see that    $\so(3)_\mu=\{\xi \in \so(3) \, : \, \xi \mu =0 \}\subseteq \so(3)$.  Denoting $a=(a_1,\dots, a_N)$, we have
\begin{equation*}
\so(3)_\mu\cdot a =\{ (\xi a_1,\dots, \xi a_N) \, : \, \xi \in   \so(3)_\mu \} \subset V_a.
\end{equation*}
Now, using \eqref{eq:dphi(a)}, for $\xi \in \so(3)_\mu$ we have
\begin{equation*}
d\Phi(a)( \xi a_1,\dots, \xi a_N)=  \xi a_1+\dots+ \xi a_N= \xi (a_1+\dots + a_N)=\xi \mu =0,
\end{equation*}
which shows that $\so(3)_\mu\cdot a \subset \ker d\Phi(a)$ regardless of the value of $\mu$.

Suppose that $\mu \neq 0$. Then it is clear that $\so(3)_\mu$ is 1-dimensional and, in virtue of  Theorem \ref{th-main-symmetry}(iv), it is generated by $J_3$. In view of the discussion above it follows that 
$\so(3)_\mu\cdot a$ is generated by $(J_3a_1,\dots, J_3a_N)\in V_a$. Tracing back the definitions of the vectors $\hat B_{j,0}$ it is seen that the  vector $(J_3a_1,\dots, J_3a_N)$ is
precisely  $s_a$.
The assertion that $s_a\in V_0$ is immediate since it is a linear combination of vectors in its basis (see Lemma \ref{lemma:isotypic-dec}).

Suppose now that $\mu=0$.  It is clear that $\so(3)_0=\so(3)$ and $\so(3)\cdot a$ is 3-dimensional since the $SO(3)$ action on $M$ is 
free. It is also clear that  in this case $s_a\in \so(3)\cdot a \cap V_0$. Thus, to finish the proof, we only need to show that the intersection $(\so(3)\cdot a) \cap V_1$ is 2-dimensional. For this 
matter,  we use that $\so(3)\cdot a =\{ (\xi a_1,\dots, \xi a_N) \, : \, \xi \in   \so(3) \} \subset V_a$, and take $\xi$ given by $J_1, J_2\in \so(3)$ where 
\begin{equation*}
J_1 :=\begin{pmatrix} 0 & 0 & 0 \\ 0 & 0 & -1 \\ 0 & 1 & 0 \end{pmatrix} \qquad \mbox{and}  \qquad J_2 :=\begin{pmatrix} 0 & 0 & 1 \\ 0 & 0 & 0 \\ -1 & 0 & 0 \end{pmatrix}.
\end{equation*}
This leads to the conclusion that the following two  independent vectors
\begin{equation*}
t_a^{(1)}:=(e_1\times a_1,\dots, e_1\times a_N), \qquad t_a^{(2)}:=(e_2\times a_1,\dots, e_2\times a_N),
\end{equation*}
belong to $\so(3)\cdot a \subset V_a$. We  will now show that both $t_a^{(1)}, t_a^{(2)}\in V_1$. For this recall the definition of the linear map $\rho(\kappa):V_a\to V_a$ 
given by \eqref{eq:rep-fmla}. Using 
 the refinement of the notation of $a$ of \eqref{eq:REfull} and the convention \eqref{eq:ring-convention}, 
a simple calculation (which uses $g_m\left ( (e_1+ie_2)\times a_{j,k} \right )= \left (g_m(e_1+ie_2)\right ) \times \left (g_ma_{j,k}\right )=e^{-i\zeta} (e_1+ie_2)\times a_{j,k+1}$, where 
$k$ is taken modulo $m$) yields
\begin{equation*}
\rho(\kappa)\left (t_a^{(1)}+i t_a^{(2)}\right ) = e^{-i\zeta} \left  (t_a^{(1)}+i t_a^{(2)} \right ).
\end{equation*}
Comparing this with \eqref{eq:Bjk-Cjk-as-eigenvectors}, it is seen that the vector $t_a^{(1)}+i t_a^{(2)}\in V_a+iV_a$ lies in the $e^{-i\zeta}$-eigenspace of $\rho(\kappa)$. Therefore,
$t_a^{(1)}$ and $t_a^{(2)}$ belong to the $V_1$ component in the isotypic decomposition of $V_a$. 
\end{proof}

\subsubsection{Definition of the symplectic slice}
\label{ss:def-symplectic-slice}

 Assume that $\mu\neq 0$  and recall from  subsection \ref{ss:energy-momentum-method} that a symplectic slice is any subspace of $\mathcal{N}_a\subset V_a$ which satisfies
$\ker d\Phi(a)= \mathcal{N}_a\oplus (\so(3)_\mu \cdot a)$. Proposition \ref{prop:symmetry-isot-decomp} implies that, under our assumption
that $\mu\neq 0$, we have   $\so(3)_\mu \cdot a=\langle s_a \rangle$ and hence 
 $\mathcal{N}_a$  should satisfy
\begin{equation}
\label{eq:symp-slice-def}
\ker d\Phi(a)=\mathcal{N}_a\oplus \langle s_a \rangle.
\end{equation}
In particular,  $\mathcal{N}_a$ is  a $2N-4$-dimensional subspace of $V_a$. 

We now claim that a symplectic slice can be defined
as the direct sum 
\begin{equation}
\label{eq:decomposition-slice}
\mathcal{N}_a:= \bigoplus_{l=0}^{\left [ \frac{m}{2} \right ]}U_l,
\end{equation}
with the following choice of subspaces $U_l$ which are contained in $V_l$:
\begin{itemize}
\item $U_l:=V_l$ for all $l\geq 2$,
\item $U_1:=V_1\cap \ker d\Phi(a)$,
\item $U_0$ is any codimension $1$ subspace of $V_0\cap \ker d\Phi(a)$ not containing $s_a$.
\end{itemize}
Indeed, it is immediate to check, using Propositions \ref{prop:kerdPhi-isotypic} and \ref{prop:symmetry-isot-decomp},  that  \eqref{eq:symp-slice-def} indeed holds under the
above conditions. For future reference we collect this information in  the following lemma that makes
a specific choice of $U_0$ by spelling out a basis. Such choice of $U_0$ is arbitrary and
not the most natural from the geometric point of view, but has the benefit of being simple to implement for our CAPs.

 \begin{lemma}
\label{lemma:symp-slice-def}
Suppose that $\mu=\Phi(a)\neq 0$. Let $U_0\subset V_a$ be the subspace with basis
\begin{equation}
\label{eq:basisU0}
\mathcal{B}_0:= \left \{ \hat B_{j,0} \,  ,\,  | \eta_1 |^2 \hat C_{j,0} -  | \eta_j |^2 \hat C_{1,0}  \right \}_{j=2}^n,
\end{equation}
$U_1=V_1\cap \ker d\Phi(a)$ and $U_l:=V_l$ for all $l\geq 2$. Then $\mathcal{N}_a := \bigoplus_{l=0}^{\left [ \frac{m}{2} \right ]}U_l$ is a symplectic slice satisfying $U_l\subset V_l$. In particular, $\dim \mathcal{N}_a=2N-4$.
\end{lemma}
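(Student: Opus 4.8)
The plan is to check directly that the subspace $\mathcal{N}_a$ built from \eqref{eq:decomposition-slice} with the prescribed $U_0$, with $U_1=V_1\cap\ker d\Phi(a)$ and with $U_l=V_l$ for $l\geq 2$, satisfies the defining condition \eqref{eq:symp-slice-def} of a symplectic slice, i.e. $\ker d\Phi(a)=\mathcal{N}_a\oplus\langle s_a\rangle$ with $s_a$ as in \eqref{eq:def-sa}; here we use that Proposition \ref{prop:symmetry-isot-decomp} identifies $\so(3)_\mu\cdot a$ with $\langle s_a\rangle$ since $\mu\neq 0$. Once this is done the dimension claim is immediate.

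First I would note that $\ker d\Phi(a)$ is adapted to the isotypic decomposition of Lemma \ref{lemma:isotypic-dec}, namely
\[
\ker d\Phi(a)=\bigl(V_0\cap\ker d\Phi(a)\bigr)\oplus\bigl(V_1\cap\ker d\Phi(a)\bigr)\oplus\bigoplus_{l\geq 2}V_l .
\]
The inclusion $\supseteq$ follows from Proposition \ref{prop:kerdPhi-isotypic}(i), and the right-hand side is a direct subspace of $\ker d\Phi(a)$ because the $V_l$ are mutually orthogonal; equality is then obtained from a dimension count, since Proposition \ref{prop:kerdPhi-isotypic}(ii)--(iii) gives that the right-hand side has dimension $(\dim V_0-1)+(\dim V_1-2)+\sum_{l\geq 2}\dim V_l=\dim V_a-3=2N-3=\dim\ker d\Phi(a)$ (using $N\geq 3$, so that $d\Phi(a)$ is surjective).

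Next I would focus on the $V_0$-slot. The set $\mathcal{B}_0$ in \eqref{eq:basisU0} consists of the $\hat B_{j,0}$ with $j\geq 2$ and the vectors $|\eta_1|^2\hat C_{j,0}-|\eta_j|^2\hat C_{1,0}$ with $j\geq 2$; expressing these in the basis $\{\hat B_{j,0},\hat C_{j,0}\}_{j=1}^n$ of $V_0$ and using $|\eta_j|^2=1-z_j^2\neq 0$ (ring generators avoid the poles when $m\geq 2$), one sees that $\mathcal{B}_0$ is linearly independent, so $U_0=\operatorname{span}\mathcal{B}_0$ has dimension $2n-2=\dim V_0-2$. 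By \eqref{eq:mom-map-formulas} one has $\mathcal{B}_0\subset V_0\cap\ker d\Phi(a)$; moreover $s_a\notin U_0$, because every element of $U_0$ has vanishing $\hat B_{1,0}$-component whereas $s_a=\sum_{j=1}^n\hat B_{j,0}$ does not. Finally $\operatorname{span}(\mathcal{B}_0\cup\{s_a\})$ contains $\hat B_{1,0}=s_a-\sum_{j=2}^n\hat B_{j,0}$, hence all $\hat B_{j,0}$, together with the vectors $|\eta_1|^2\hat C_{j,0}-|\eta_j|^2\hat C_{1,0}$; these form exactly the basis $\tilde{\tilde{\mathcal{B}_0}}$ of $V_0\cap\ker d\Phi(a)$ from Proposition \ref{prop:kerdPhi-isotypic}(ii). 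Therefore $U_0\oplus\langle s_a\rangle=V_0\cap\ker d\Phi(a)$, so $U_0$ is a codimension-$1$ subspace of $V_0\cap\ker d\Phi(a)$ not containing $s_a$, exactly as required by the recipe in \eqref{eq:decomposition-slice}.

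Assembling the pieces: since $s_a\in V_0$, $U_1=V_1\cap\ker d\Phi(a)$ and $U_l=V_l$ for $l\geq 2$, the previous two displays give
\[
\mathcal{N}_a\oplus\langle s_a\rangle=\bigl(U_0\oplus\langle s_a\rangle\bigr)\oplus U_1\oplus\bigoplus_{l\geq 2}U_l=\bigl(V_0\cap\ker d\Phi(a)\bigr)\oplus\bigl(V_1\cap\ker d\Phi(a)\bigr)\oplus\bigoplus_{l\geq 2}V_l=\ker d\Phi(a),
\]
all sums being direct by the mutual orthogonality of the $V_l$ and the directness of $U_0\oplus\langle s_a\rangle$ inside $V_0$. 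This is precisely \eqref{eq:symp-slice-def}, so $\mathcal{N}_a$ is a symplectic slice, and $U_l\subset V_l$ holds by construction. The dimension then follows at once: $\dim\mathcal{N}_a=\dim\ker d\Phi(a)-\dim\langle s_a\rangle=(2N-3)-1=2N-4$. I do not expect a genuine obstacle here: the whole argument is bookkeeping on top of Propositions \ref{prop:kerdPhi-isotypic} and \ref{prop:symmetry-isot-decomp}, the only delicate points being the linear independence of $\mathcal{B}_0$ and the fact that $s_a\notin U_0$, both of which rest on $|\eta_j|^2\neq 0$.
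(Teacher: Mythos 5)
Your proof is correct and follows essentially the same route as the paper: the paper reduces the lemma to checking that $U_0$ is a codimension-one subspace of $V_0\cap\ker d\Phi(a)$ not containing $s_a$ (which it does by observing that $\mathcal{B}_0$ is $\tilde{\tilde{\mathcal{B}_0}}$ with $\hat B_{1,0}$ removed), and delegates the assembly $\mathcal{N}_a\oplus\langle s_a\rangle=\ker d\Phi(a)$ to the discussion preceding the statement, which invokes Propositions \ref{prop:kerdPhi-isotypic} and \ref{prop:symmetry-isot-decomp} exactly as you do. You merely spell out the dimension count and the direct-sum bookkeeping that the paper calls ``immediate to check''.
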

\begin{proof}
In view of the discussion before the statement of the lemma, we only need to show that the
subspace $U_0$ defined above is a codimension $1$ subspace of $V_0\cap \ker d\Phi(a)$ not containing $s_a$.
To see this note that $\mathcal{B}_0$ is obtained by removing the vector $\hat B_{1,0}$ from the 
 basis $\tilde {\tilde{ \mathcal{B}_0}}$ of $V_0\cap \ker d\Phi(a)$ given in \eqref{eq:0basis-aux} and,
 therefore, the vector $s_a$ in \eqref{eq:def-sa} cannot be written as a linear combination of the 
 vectors in $\mathcal{B}_0$.
\end{proof}

The dimensions of the spaces $U_l$ in terms of the parameters $m$, $n$, $p$ are given in Table \ref{eq:dimUl}. 
Recalling that $N=mn+p$, one can verify  that in all cases we indeed
have $\dim \mathcal{N}_a=2N-4$. 

\begin{table}[h]
\begin{center}
\begin{tabular}[c]{|c|c|c|c|c|c|}%
\hline
  &$ \dim U_0 $& $\dim U_1$ &$ \dim U_l, \;  2\leq l < \left [ \frac{m}{2} \right ]$&$\dim U_{\left [ \frac{m}{2} \right ]} $ \\ \hline
$m=2$ & $2n-2$ & $2n+2p-2$ & - & -  \\ \hline
$m=3 $& $2n-2 $&  $4n+2p-2$ & - & -  \\ \hline
$m\geq 4 \quad \mbox{even} $&$ 2n-2$ &  $4n+2p-2$ &  $4n$& $2n$ \\ \hline
$m\geq 5 \quad \mbox{odd}$ & $2n-2$ & $4n+2p-2 $&  $4n$& $4n$  \\ \hline
\end{tabular}
\end{center}
\caption{Dimension of the subspaces in the  decomposition of the symplectic slice
 $\mathcal{N}_a=\bigoplus_{l=0}^{\left[ \frac{m}{2}\right]}U_l$ according
to the values of $m$, $n$ and $p$ (valid in the case $\mu=\Phi(a)\neq 0$).}
\label{eq:dimUl}
\end{table}

The following lemma provides the necessary modifications to the symplectic slice that are needed
when $\mu=0$.
\begin{lemma}
\label{lemma:basis-Bl-mu0}
Suppose that $\mu=\Phi(a)= 0$. Let $U_0, \dots,  U_{\left [ \frac{m}{2} \right ]}$ be 
defined as in Lemma \ref{lemma:symp-slice-def} and let $\tilde U_1\subset U_1$ be any subspace satisfying 
$U_1=\tilde U_1\oplus ((\so(3)\cdot a)\cap U_1)$. Then
\begin{equation*}
\tilde {\mathcal{N}_a}:= U_0 \oplus \tilde U_1 \bigoplus_{l=2}^{\left [ \frac{m}{2} \right ]}U_l,
\end{equation*}
 is a symplectic slice satisfying $\tilde U_1\subset V_1$ and $U_l\subset V_l$ for  all other $l\neq 1$.  In particular, $\dim (\tilde{ \mathcal{N}_a})=2N-6$.
\end{lemma}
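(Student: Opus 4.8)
The plan is to verify directly that $\tilde{\mathcal{N}_a}$ satisfies the defining property \eqref{eq:symp-slice-condition} of a symplectic slice for the value $\mu=0$, exactly in the spirit of the proof of Lemma \ref{lemma:symp-slice-def}. As there, the statement that a subspace complementing $\so(3)_\mu\cdot a$ inside $\ker d\Phi(a)$ is automatically a symplectic subspace is a standard consequence of the reduction lemma (here $N\geq 3$, so $\mu$ is a regular value), so only the algebraic splitting needs to be checked. Since $\mu=0$ we have $\so(3)_0=\so(3)$ and the identity to establish is $\ker d\Phi(a)=\tilde{\mathcal{N}_a}\oplus(\so(3)\cdot a)$.

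First I would decompose $\ker d\Phi(a)$ along the isotypic components. By Proposition \ref{prop:kerdPhi-isotypic},
\[
\ker d\Phi(a)=\big(V_0\cap\ker d\Phi(a)\big)\oplus\big(V_1\cap\ker d\Phi(a)\big)\bigoplus_{l=2}^{\left[\frac{m}{2}\right]}V_l,
\]
and I would analyse each summand. For $l=0$: by the proof of Lemma \ref{lemma:symp-slice-def}, $U_0$ is a codimension-$1$ subspace of $V_0\cap\ker d\Phi(a)$ not containing the vector $s_a$ of \eqref{eq:def-sa}, while $s_a\in V_0\cap\ker d\Phi(a)$ by Proposition \ref{prop:symmetry-isot-decomp}; hence $V_0\cap\ker d\Phi(a)=U_0\oplus\langle s_a\rangle$. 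For $l=1$: $V_1\cap\ker d\Phi(a)=U_1$, and since $\so(3)\cdot a\subset\ker d\Phi(a)$ (Proposition \ref{prop:symmetry-isot-decomp}) one has $(\so(3)\cdot a)\cap U_1=(\so(3)\cdot a)\cap V_1$; combining this with the defining property of $\tilde U_1$ gives $V_1\cap\ker d\Phi(a)=\tilde U_1\oplus\big((\so(3)\cdot a)\cap V_1\big)$. For $l\geq2$: $V_l=U_l$ by Lemma \ref{lemma:symp-slice-def}.

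Then I would reassemble and regroup. Substituting the three splittings into the isotypic decomposition gives
\[
\ker d\Phi(a)=U_0\oplus\langle s_a\rangle\oplus\tilde U_1\oplus\big((\so(3)\cdot a)\cap V_1\big)\bigoplus_{l=2}^{\left[\frac{m}{2}\right]}U_l .
\]
The key point is that $\langle s_a\rangle\oplus\big((\so(3)\cdot a)\cap V_1\big)=\so(3)\cdot a$: both summands lie in $\so(3)\cdot a$ (the first because $s_a$ generates $(\so(3)\cdot a)\cap V_0\cap\ker d\Phi(a)$ by Proposition \ref{prop:symmetry-isot-decomp}, the second trivially), their sum is direct since $V_0\cap V_1=\{0\}$, and by the dimension count in Proposition \ref{prop:symmetry-isot-decomp} it is $1+2=3$-dimensional, hence equal to the $3$-dimensional space $\so(3)\cdot a$. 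Since $U_0\oplus\tilde U_1\bigoplus_{l=2}^{\left[\frac{m}{2}\right]}U_l=\tilde{\mathcal{N}_a}$ by definition, we conclude $\ker d\Phi(a)=\tilde{\mathcal{N}_a}\oplus(\so(3)\cdot a)$, i.e.\ \eqref{eq:symp-slice-condition} for $\mu=0$. The inclusions $\tilde U_1\subset V_1$ (from $\tilde U_1\subset U_1=V_1\cap\ker d\Phi(a)$) and $U_l\subset V_l$ are immediate, and $\dim\tilde{\mathcal{N}_a}=\dim\ker d\Phi(a)-\dim(\so(3)\cdot a)=(2N-3)-3=2N-6$. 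I do not expect a genuine obstacle; the only step demanding care is the regrouping $\langle s_a\rangle\oplus\big((\so(3)\cdot a)\cap V_1\big)=\so(3)\cdot a$, which is precisely where the fact that $\so(3)\cdot a$ is contained in $V_0\oplus V_1$ (Proposition \ref{prop:symmetry-isot-decomp}) is essential.
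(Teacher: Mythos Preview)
Your proof is correct and follows essentially the same approach as the paper: the paper's proof is a one-line citation of Propositions~\ref{prop:kerdPhi-isotypic} and~\ref{prop:symmetry-isot-decomp}, stating that $\tilde U_1$ is a codimension-$2$ subspace chosen so that $\tilde{\mathcal{N}_a}\oplus(\so(3)\cdot a)=\ker d\Phi(a)$. You have simply spelled out the details of how those two propositions combine to yield the required splitting, including the regrouping $\langle s_a\rangle\oplus\big((\so(3)\cdot a)\cap V_1\big)=\so(3)\cdot a$ that the paper leaves implicit.
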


The proof of the lemma is again an immediate consequence of Propositions \ref{prop:kerdPhi-isotypic} and \ref{prop:symmetry-isot-decomp} ($\tilde U_1$ is a codimension 2 subspace chosen such that $\tilde {\mathcal{N}_a}\oplus (\so(3)\cdot a)=\ker d\Phi(a)$). 
For further reference we note that the relation between the symplectic slices $\tilde {\mathcal{N}_a}$ and $\mathcal{N}_a$ defined above is
\begin{equation}
\label{eq:meq0slice}
\mathcal{N}_a=\tilde {\mathcal{N}_a}\oplus ((\so(3)\cdot a)\cap U_1).
\end{equation}

\subsection{Block diagonalisation of $\left . d^2H_\omega(a) \right |_{\mathcal{N}_a}$ and construction of the block matrices}
\label{ss:matricesPl}

As a consequence of item (i) of Lemma \ref{lemma:isotypic-dec}
and the definition of the spaces $U_l\subset V_l$,  the symmetric bilinear 
form  $\left . d^2H_\omega(a) \right |_{\mathcal{N}_a}$
block diagonalises with respect to the symmetric decomposition \eqref{eq:decomposition-slice}.

In order to obtain a matrix representation of each of the blocks it is necessary to use Lagrange multipliers as explained
in Appendix \ref{s:geometry}. Namely, given that $a=(a_1,\dots, a_N)\in M$ is a critical point of the augmented Hamiltonian $H_\omega:M\to \R$,
there exist Lagrange multipliers $(c_1,\dots, c_N)\in \R^N$ such that $(a,c)$,
 interpreted as a point 
in $(\R^3)^N\times \R^N$, is a critical point of the function
\begin{equation*}
H^*_\omega:(\R^3)^N\times \R^N\to \R, \qquad H^*_\omega(x_1,\dots, x_N,\lambda_1,\dots, \lambda_N)=H_\omega(x_1,\dots, x_N)+\sum \lambda_jR_j(x).
\end{equation*}
 Here $R_j(x)=\frac{1}{2}(1-\|x_j\|^2)$ and on the right hand side we think of $H_\omega$ as a function whose domain is
 (an open subset of ) $(\R^3)^N$ in the natural way. We have,
 \begin{equation}
\label{eq:Bil-Form-id}
d^2H_\omega(a)({\bf w}_1, {\bf w}_2)={\bf w}_1^T \nabla_x^2 H_{\omega}^{\ast}(a,c) {\bf w}_2,
\qquad \forall {\bf w}_1, {\bf w}_2\in V_a,
\end{equation}
where $\nabla_x^2H^*_\omega(a,c)$ denotes the standard Hessian matrix of the function 
$(\R^3)^N\ni x\mapsto H_\omega^*(x,c)\in \R$ evaluated at $a$, and ${\bf w}_1, {\bf w}_2\in V_a\subset (\R^3)^N$ are column vectors
(see Appendix \ref{s:geometry}). 

Now, in order to give a matrix form for the restriction of $ d^2H_\omega(a) $ to the block $U_l$ we need a basis for $U_l$.
A basis  $\mathcal{B}_0$ for  $U_0$ is given by \eqref{eq:basisU0}. For $U_1$ it is given   by \eqref{eq:basisU1} or
\eqref{eq:basisU1meq2}  (according to whether $m$ differs  or equals  $2$). Finally, considering that 
for $2\leq l \leq \left [ \frac{m}{2}\right ]$ one has $U_l=V_l$, we can 
simply set $\mathcal{B}_l:=\tilde {\mathcal{B}_l}$, where  $\tilde {\mathcal{B}_l}$ is the basis of $V_l$ 
given in the statement of item (ii) of Lemma \ref{lemma:isotypic-dec}.

In view of the above considerations, the block matrix $\mathcal{P}_l$ representing   $\left .  d^2H_\omega(a) \right |_{U_l} $ 
in the $\mathcal{B}_l$ basis is given by 
\begin{equation} \label{eq:mathcalP}
\mathcal{P}_l=P_l^T \nabla ^2_v H^\ast_\omega(a)P_l, \qquad 0\leq l \leq \left [ \frac{m}{2}\right ],
\end{equation}
where  $P_l$ denotes the real matrix of size $3N\times \dim U_l $  whose columns are the
vectors of  $\mathcal{B}_l$,  $0\leq l \leq \left [ \frac{m}{2}\right ]$. Note that $\mathcal{P}_l$ is a square matrix of size
$\dim U_l$, which is specified in Table  \ref{eq:dimUl} according to the values of $m$, $n$, $p$.

From the block diagonalisation of $\left . d^2H_\omega(a) \right |_{\mathcal{N}_a}$, it follows that 
\begin{equation}\label{eq:signature_pl}
\mbox{signature}\left (\left . d^2H_\omega(a) \right |_{\mathcal{N}_a} \right )
=\sum_{l=0}^{\left [ \frac{m}{2} \right ]} \mbox{signature}\left ( \mathcal{P}_l \right ).
\end{equation}
In particular, $\left . d^2H_\omega(a) \right |_{\mathcal{N}_a} $ is positive definite if and only if each block
$ \mathcal{P}_l $ is positive definite for all $l=0,\dots, \left [ \frac{m}{2} \right ]$.

\subsection{Complex structure of some blocks of the diagonalisation  
of $\left . d^2H_\omega(a) \right |_{\mathcal{N}_a}$.}
\label{ss:complex-structure}

A further simplification in the calculation of the signature of 
$\left . d^2H_\omega(a) \right |_{\mathcal{N}_a}$ is gained by noticing that some of the
block matrices $\mathcal{P}_l$ in its diagonalisation have a complex structure. 
As a consequence, the signature of $\mathcal{P}_l$
 is determined by the signature  of a complex Hermitian matrix $\mathcal{Q}_l$ which
 has half the size of $\mathcal{P}_l$.
This simplification is only possible when $m\geq 3$, and for the blocks $\mathcal{P}_l$ with
\begin{equation}
\label{eq:cond-l-complex}
l=\begin{cases} 1,\dots, \left [ \frac{m}{2} \right ], \quad & \mbox{if $m$ is odd}, \\
1,\dots, \left [ \frac{m}{2} \right ]-1, \quad & \mbox{if $m$ is even}.
\end{cases} 
\end{equation}
For the rest of this subsection it is  assumed that $m\geq 3$. 

\subsubsection{Definition of the Hermitian blocks $\mathcal{Q}_l$}
\label{ss:DefQl}

Let $Q_1$ be the  
$3N\times (2n+p-1)$ complex matrix  whose columns are the vectors
\begin{equation}
\label{eq:columnsQ1}
\begin{split}
&z_1  \hat B_{1,1} + i \, \hat C_{1,1} , \\
& \eta_j \hat B_{1,1} -\eta_1 \hat B_{j,1}, \qquad j=2, \dots, n,\\
 &z_j\eta_j  \hat B_{1,1} + i \eta_1 \hat C_{j,1}, \qquad   j=2,\dots, n,\\
&2 \hat B_1^1 + im\eta_1 \left (\delta x_s+i \delta y_s) \right ), \qquad s=1,\dots, p.
\end{split}
\end{equation}
On the other hand, for  $l>1$ satisfying \eqref{eq:cond-l-complex} we define $Q_l$ as the $3N\times 2n$ 
complex matrix whose columns are the vectors
\begin{equation}
\label{eq:columnsQl}
\begin{split}
 \hat B_{j,l}, \;   \hat C_{j,l} \qquad j=1, \dots, n.
 \end{split}
\end{equation}

For all $l$ satisfying  \eqref{eq:cond-l-complex}  we define the Hermitian matrix
\begin{equation} \label{eq:mathcalQ}
\mathcal{Q}_l:=\bar{Q_l}^T\nabla ^2_v H^\ast_\omega(a)Q_l,
\end{equation}
 Note that  $\mathcal{Q}_1$ is $  (2n+p-1)\times  (2n+p-1)$ whereas
 $\mathcal{Q}_l$ is $2n\times 2n$ for all other values of $l$. Comparing with Table \ref{eq:dimUl},
 it is seen that  $\mathcal{Q}_l$ has indeed half the dimension of  $\mathcal{P}_l$.

\subsubsection{The signature of  $\mathcal{P}_l$ is twice the signature of $\mathcal{Q}_l$ }

The relation between the signature of the complex Hermitian matrices $\mathcal{Q}_l$
and the real symmetric matrices $\mathcal{P}_l$ is given by the following.
\begin{theorem} \label{th:complexblocks}
Let $l$ as in  \eqref{eq:cond-l-complex} and consider 
the block matrices $\mathcal{P}_l$ and $\mathcal{Q}_l$  defined above. Then
\begin{equation*}
i_+(\mathcal{P}_l)=2i_+(\mathcal{Q}_l), \qquad i_-(\mathcal{P}_l)=2i_-(\mathcal{Q}_l),
 \qquad \dim \ker  \mathcal{P}_l=2\dim \ker \mathcal{Q}_l,
\end{equation*}
where $i_+$ and $i_-$ denote the positive and negative indices of inertia of the corresponding
matrices. In particular,  $\mbox{signature}(\mathcal{P}_l)=2\mbox{signature}(\mathcal{Q}_l)$, and $\mathcal{P}_l$ is positive definite
if and only if $\mathcal{Q}_l$ is positive definite.
\end{theorem}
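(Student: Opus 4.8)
The plan is to exhibit an explicit real linear isomorphism between $U_l$ (equipped with the quadratic form $\left. d^2H_\omega(a)\right|_{U_l}$) and the ``realification'' of the complex vector space $\mathbb{C}^{2n}$ (or $\mathbb{C}^{2n+p-1}$ when $l=1$) equipped with the Hermitian form determined by $\mathcal{Q}_l$, and then invoke the standard fact that realifying a Hermitian form doubles each inertia index. Concretely, for $l$ as in \eqref{eq:cond-l-complex} the complex vectors spanning the columns of $Q_l$ — call them $\hat W_{l,1},\dots,\hat W_{l,d_l}$ with $d_l=2n$ (or $2n+p-1$) — have the property that $\hat W_{l,\alpha}$ and $\overline{\hat W_{l,\alpha}}=\hat W_{m-l,\alpha}$ are linearly independent (this is exactly why the condition \eqref{eq:cond-l-complex} excludes $l=0$ and, for $m$ even, $l=m/2$: in those excluded cases the vectors are real). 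Hence $\{\mathrm{Re}\,\hat W_{l,\alpha},\ \mathrm{Im}\,\hat W_{l,\alpha}\}_{\alpha=1}^{d_l}$ is precisely the real basis $\mathcal{B}_l$ of $U_l$ used to form $\mathcal{P}_l$, so $U_l\otimes_{\mathbb{R}}\mathbb{C}$ contains the complex span $W_l:=\mathrm{span}_{\mathbb{C}}\{\hat W_{l,\alpha}\}$, and $U_l = \mathrm{Re}(W_l)$ is the realification of $W_l$ under the map $z\mapsto \tfrac12(z+\bar z)$.

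First I would record the algebraic lemma: if $V=\mathrm{span}_{\mathbb{C}}(e_1,\dots,e_d)$ is a complex vector space with a Hermitian form $h$ whose Gram matrix in the basis $(e_\alpha)$ is the Hermitian matrix $\mathcal{Q}$, and $V_{\mathbb{R}}$ is the real $2d$-dimensional space spanned by $\{\mathrm{Re}\,e_\alpha,\mathrm{Im}\,e_\alpha\}$ with the symmetric real bilinear form $g(\cdot,\cdot)=\mathrm{Re}\,h(\cdot,\cdot)$, then $i_\pm(g)=2i_\pm(\mathcal{Q})$ and $\dim\ker g = 2\dim\ker\mathcal{Q}$. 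This is classical — diagonalise $\mathcal{Q}=U^*DU$ with $D$ real diagonal, and observe each real eigenvalue $\lambda_k$ contributes a real $2$-dimensional block $\lambda_k I_2$ to the Gram matrix of $g$ in a suitable real basis — but I would include a one-paragraph proof for completeness.

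Second I would verify that the real Gram matrix of $\left.d^2H_\omega(a)\right|_{U_l}$ in the basis $\mathcal{B}_l=\{\mathrm{Re}\,\hat W_{l,\alpha},\mathrm{Im}\,\hat W_{l,\alpha}\}$ equals $g$ as in the lemma, with $h$ the sesquilinear extension of $d^2H_\omega(a)$ to $V_a\otimes\mathbb{C}$ given by $h(\mathbf{w}_1,\mathbf{w}_2)=\bar{\mathbf{w}}_1^T\nabla_v^2H^*_\omega(a)\mathbf{w}_2$. Here I need two checks: (a) the Gram matrix of $h$ on the $\hat W_{l,\alpha}$ is exactly $\mathcal{Q}_l$ as defined in \eqref{eq:mathcalQ} — immediate from the definition — and (b) the restriction of $d^2H_\omega(a)$ to $U_l$ really is obtained from $h$ by taking real parts, i.e. $d^2H_\omega(a)(\mathrm{Re}\,\hat W_{l,\alpha},\mathrm{Re}\,\hat W_{l,\beta})=\mathrm{Re}\,h(\hat W_{l,\alpha},\hat W_{l,\beta})$ and similarly for the mixed $\mathrm{Re}/\mathrm{Im}$ and $\mathrm{Im}/\mathrm{Im}$ entries. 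Point (b) follows because $\nabla_v^2H^*_\omega(a)$ is a real symmetric matrix, so its $\mathbb{C}$-bilinear extension $B(\mathbf{w}_1,\mathbf{w}_2)=\mathbf{w}_1^T\nabla_v^2H^*_\omega(a)\mathbf{w}_2$ satisfies $B(\bar{\mathbf w}_1,\bar{\mathbf w}_2)=\overline{B(\mathbf w_1,\mathbf w_2)}$, hence $h(\mathbf w_1,\mathbf w_2)=B(\bar{\mathbf w}_1,\mathbf w_2)$ and a short computation expressing $\mathrm{Re}\,\hat W$, $\mathrm{Im}\,\hat W$ via $\hat W,\overline{\hat W}$ gives the claimed identities — this is where the independence of $\hat W_{l,\alpha}$ and $\overline{\hat W_{l,\alpha}}$ (equivalently the exclusion in \eqref{eq:cond-l-complex}) is essential, since otherwise $U_l$ would be strictly smaller than $\mathrm{Re}(W_l)$ and the dimension count $\dim\mathcal{P}_l=2\dim\mathcal{Q}_l$ would fail.

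Combining: $\left.d^2H_\omega(a)\right|_{U_l}$ is (in the basis $\mathcal{B}_l$) the realification $g$ of the Hermitian form $\mathcal{Q}_l$, so by the lemma $i_\pm(\mathcal{P}_l)=2i_\pm(\mathcal{Q}_l)$ and $\dim\ker\mathcal{P}_l=2\dim\ker\mathcal{Q}_l$; summing the positive and negative indices gives $\mathrm{signature}(\mathcal{P}_l)=2\,\mathrm{signature}(\mathcal{Q}_l)$, and $\mathcal{P}_l$ is positive definite iff $i_-(\mathcal{P}_l)=\dim\ker\mathcal{P}_l=0$ iff $i_-(\mathcal{Q}_l)=\dim\ker\mathcal{Q}_l=0$ iff $\mathcal{Q}_l$ is positive definite. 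The main obstacle — really the only nontrivial point — is the bookkeeping in step (b): carefully rewriting each real basis vector of $\mathcal{B}_l$ in terms of the $\hat W$'s, using the reality of $\nabla_v^2H^*_\omega(a)$ and the conjugation relations $\overline{\hat B_{j,l}}=\hat B_{j,m-l}$, $\overline{\hat C_{j,l}}=\hat C_{j,m-l}$ (and the analogous relations for the $\delta x_s\pm i\delta y_s$ combinations appearing in the $l=1$ block), to see that all ``cross terms'' with $\overline{\hat W}$ organise themselves into conjugates and the real Gram matrix assembles into the block $2\times 2$ pattern $\begin{psmallmatrix}\mathrm{Re}\,\mathcal Q_l & -\mathrm{Im}\,\mathcal Q_l\\ \mathrm{Im}\,\mathcal Q_l & \mathrm{Re}\,\mathcal Q_l\end{psmallmatrix}$, whose inertia is classically twice that of $\mathcal{Q}_l$.
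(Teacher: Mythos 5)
Your overall strategy --- realise $\mathcal{P}_l$ as the realification of the Hermitian form with Gram matrix $\mathcal{Q}_l$ and invoke the classical doubling of inertia indices --- is exactly the paper's route (its Lemma \ref{eq:lemma-complexgeneral}, together with Remark \ref{Rmk:MatQ}), and your identification of $\mathcal{Q}_l$ in \eqref{eq:mathcalQ} as the sesquilinear Gram matrix is correct. The gap is in your step (b). Writing $\hat W_\alpha = x_\alpha + i y_\alpha$ for the columns of $Q_l$, letting $f:=d^2H_\omega(a)$ and $B$ its $\C$-bilinear extension, the reality of $\nabla^2_v H^\ast_\omega(a)$ gives
\begin{equation*}
f(x_\alpha,x_\beta)=\tfrac12\,\mathrm{Re}\,h(\hat W_\alpha,\hat W_\beta)+\tfrac12\,\mathrm{Re}\,B(\hat W_\alpha,\hat W_\beta),\qquad
f(y_\alpha,y_\beta)=\tfrac12\,\mathrm{Re}\,h(\hat W_\alpha,\hat W_\beta)-\tfrac12\,\mathrm{Re}\,B(\hat W_\alpha,\hat W_\beta),
\end{equation*}
and similarly for the mixed $\mathrm{Re}/\mathrm{Im}$ entries. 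Hence $\mathcal{P}_l$ is the realification of $\mathcal{Q}_l$ (up to an irrelevant overall factor) if and only if the \emph{unconjugated} terms $B(\hat W_\alpha,\hat W_\beta)$ all vanish, i.e.\ the complex span of the columns of $Q_l$ is isotropic for $B$; this is precisely equivalent to the identities \eqref{eq:complex-structure-ids}. That vanishing does \emph{not} follow from the reality of the Hessian together with the linear independence of $\hat W_\alpha$ and $\overline{\hat W_\alpha}$: for a generic real symmetric matrix $S$ and generic complex vectors independent of their conjugates, $w^T S w'\neq 0$. So the role of the exclusion of $l=0$ and $l=m/2$ in \eqref{eq:cond-l-complex} is not merely the dimension count you describe.

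The missing ingredient is the $K_m$-invariance of $d^2H_\omega(a)$ (equation \eqref{eq:inv-d2H}) combined with the eigenvector relations \eqref{eq:Bjk-Cjk-as-eigenvectors}: the columns of $Q_l$ lie in the $e^{-il\zeta}$-eigenspace of $\rho(\kappa)$, so $B(\hat W_\alpha,\hat W_\beta)=B(\rho(\kappa)\hat W_\alpha,\rho(\kappa)\hat W_\beta)=e^{-2il\zeta}B(\hat W_\alpha,\hat W_\beta)$, which forces $B(\hat W_\alpha,\hat W_\beta)=0$ exactly because $e^{-2il\zeta}\neq 1$ (equivalently $\sin l\zeta\neq 0$) for $l$ in the range \eqref{eq:cond-l-complex}. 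This is the content of the paper's Proposition \ref{prop:complex-block}, supplemented for $l=1$ by the observation that property \eqref{eq:complex-structure-ids} is stable under taking complex linear combinations of $\hat B_{j,1}$, $\hat C_{j,1}$, $\delta x_s+i\delta y_s$. Once this equivariance argument is inserted, the rest of your proof goes through and coincides with the paper's.
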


The  proof of Theorem \ref{th:complexblocks} relies on a general linear 
algebra result,  which we state as Lemma \ref{eq:lemma-complexgeneral} below and whose proof is given in Appendix \ref{app:proof-complex-lemma}.

\begin{lemma}
\label{eq:lemma-complexgeneral}
Let $V$ be a real vector space of dimension $2d$  and $f:V\times V\to \R$ a symmetric bilinear
form. Suppose that there exists a basis $\{\alpha_1,\beta_1,\dots, \alpha_d,\beta_d\}$ of 
$V$ satisfying
\begin{equation}
\label{eq:complex-structure-ids}
f(\alpha_j,\alpha_k)=f(\beta_j,\beta_k), \qquad f(\alpha_j,\beta_k)=-f(\alpha_k,\beta_j), \qquad \forall j,k=1,\dots, d.
\end{equation}
Let $\mathcal{Q}$ be the $d\times d$ complex matrix with entries
\begin{equation*}
\mathcal{Q}_{jk}=f(\alpha_j,\alpha_k)+f(\beta_j,\beta_k)+i \left (f(\alpha_j,\beta_k)-f(\beta_j,\alpha_k) \right ),  \qquad \forall j,k=1,\dots, d.
\end{equation*}
Then $\mathcal{Q}$ is Hermitian and 
\begin{equation*}
i_+(f)=2i_+(\mathcal{Q}), \qquad i_-(f)=2i_-(\mathcal{Q}), \qquad \dim \ker  f=2\dim \ker \mathcal{Q},
\end{equation*}
where $i_+$ and $i_-$ denote the positive and negative indices of inertia of the corresponding
form/matrix. In particular,  $\mbox{signature}(f)=2\mbox{signature}(\mathcal{Q})$, and $f$ is positive definite
if and only if $\mathcal{Q}$ is positive definite.
\end{lemma}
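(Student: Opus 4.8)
The plan is to realize the hypothesis \eqref{eq:complex-structure-ids} as a genuine complex structure on $V$ and thereby identify $f$ with (the real part of) a Hermitian sesquilinear form whose matrix is $\mathcal{Q}$. First I would introduce the linear map $J:V\to V$ defined on the given basis by $J\alpha_j=\beta_j$, $J\beta_j=-\alpha_j$; then $J^2=-\mathrm{Id}$, so $(V,J)$ becomes a complex vector space of complex dimension $d$, with $\{\alpha_1,\dots,\alpha_d\}$ a complex basis (since $\alpha_j$ and $J\alpha_j=\beta_j$ span the same real $2$-plane). The two identities in \eqref{eq:complex-structure-ids} say exactly that $f(Jx,Jy)=f(x,y)$ (i.e. $f$ is $J$-invariant) and that $f(x,Jy)$ is antisymmetric in $x,y$; together these are precisely the compatibility conditions making $g(x,y):=f(x,y)-i\,f(x,Jy)$ a Hermitian sesquilinear form on $(V,J)$ with $\mathrm{Re}\,g=f$. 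One checks $g$ is conjugate-symmetric using $J$-invariance and antisymmetry of $f(\cdot,J\cdot)$, and $\C$-linearity in the first slot using $g(Jx,y)=f(Jx,y)-i f(Jx,Jy)=f(Jx,y)-i f(x,y)=i\,g(x,y)$.

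Next I would compute the matrix of $g$ in the complex basis $\{\alpha_j\}$: its $(j,k)$ entry is $g(\alpha_j,\alpha_k)=f(\alpha_j,\alpha_k)-i f(\alpha_j,J\alpha_k)=f(\alpha_j,\alpha_k)-i f(\alpha_j,\beta_k)$. To match the stated formula for $\mathcal{Q}_{jk}$ I would use the hypotheses to rewrite this: by \eqref{eq:complex-structure-ids}, $f(\alpha_j,\alpha_k)=f(\beta_j,\beta_k)$ and $-f(\alpha_j,\beta_k)=f(\alpha_k,\beta_j)=-f(\beta_j,\alpha_k)$, whence $g(\alpha_j,\alpha_k)=\tfrac12\big(f(\alpha_j,\alpha_k)+f(\beta_j,\beta_k)\big)+\tfrac{i}{2}\big(f(\alpha_j,\beta_k)-f(\beta_j,\alpha_k)\big)=\tfrac12\mathcal{Q}_{jk}$. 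So $\mathcal{Q}=2\,[g]$ is (twice) the Hermitian Gram matrix of $g$; in particular $\mathcal{Q}$ is Hermitian, and it has the same inertia as $[g]$ since $2>0$.

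The final step is the inertia comparison. I would invoke the standard fact that a Hermitian form $g$ on a complex space $W$ of dimension $d$, viewed as the real symmetric form $\mathrm{Re}\,g=f$ on the underlying real space $W_\R$ of dimension $2d$, has $i_\pm(f)=2\,i_\pm(g)$ and $\dim\ker f=2\dim\ker g$: choosing a $g$-orthogonal complex basis in which $[g]=\mathrm{diag}(d_1,\dots,d_d)$ with $d_j\in\R$, the same vectors together with their $J$-images give a real $f$-orthogonal basis in which each real eigenvalue $d_j$ appears twice. Since $\mathcal{Q}=2[g]$ has the same inertia as $[g]$, this yields $i_\pm(f)=2\,i_\pm(\mathcal{Q})$ and $\dim\ker f=2\dim\ker\mathcal{Q}$, and the claims about signature and positive-definiteness follow immediately. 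The only mildly delicate point — the place I would be most careful — is the bookkeeping that turns \eqref{eq:complex-structure-ids} into the conjugate-symmetry and $\C$-linearity of $g$ and into the displayed formula for $\mathcal{Q}_{jk}$; everything else is the textbook "restriction of scalars doubles the inertia of a Hermitian form" argument.
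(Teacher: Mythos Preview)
Your approach is essentially the same as the paper's: both recognise that the hypotheses make $f$ the real part of a Hermitian sesquilinear form, identify $\mathcal{Q}$ (up to a positive scalar) with its Gram matrix, and then diagonalise over $\C$ so that each complex eigendirection contributes a real $2$-plane on which $f$ is a scalar multiple of the identity. The paper phrases this by transporting $V$ to $\C^d$ via $\psi(\alpha_j)=e_j$, $\psi(\beta_j)=-ie_j$ and working with $\tilde f(z_1,z_2)=\bar z_1^T\mathcal{Q}z_2$, while you put the complex structure $J$ directly on $V$; these are equivalent.

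One bookkeeping slip to fix, exactly where you flagged you would be careful: with your definition $g(x,y)=f(x,y)-if(x,Jy)$ one actually gets $g(Jx,y)=-f(x,Jy)-if(x,y)=-i\,g(x,y)$, so $g$ is \emph{conjugate}-linear in the first slot (this is also the paper's convention). Consequently $g(\alpha_j,\alpha_k)=f(\alpha_j,\alpha_k)-if(\alpha_j,\beta_k)=\tfrac12\overline{\mathcal{Q}_{jk}}$, i.e.\ $[g]=\tfrac12\overline{\mathcal{Q}}$ rather than $\tfrac12\mathcal{Q}$. This is harmless for the conclusion, since a Hermitian matrix and its complex conjugate have the same (real) spectrum and hence the same inertia, but the intermediate equalities should be corrected.
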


%
%

\begin{remark}
\label{Rmk:MatQ}
For our purposes it is convenient to notice that the expression for $\mathcal{Q}_{jk}$ in the statement
of the lemma coincides with the formal expansion of $f(\alpha_j+i\beta_j,\alpha_k+i\beta_k)$ assuming that $f$ 
is a sesquilinear form with the convention that is conjugate-linear in the  first component and linear in the
second.
\end{remark}

The applicability of Lemma \ref{eq:lemma-complexgeneral} to our analysis relies
on the following observation.
\begin{proposition}
\label{prop:complex-block}
 Fix $l$ satisfying   \eqref{eq:cond-l-complex} and denote 
 \begin{equation*}
 \begin{split}
&\alpha_j:=\mbox{\em Re}(\hat B_{j,l}), \qquad \beta_j:=\mbox{\em Im}(\hat B_{j,l}), \qquad j=1,\dots, n, \\
&\alpha_{n+j}:=\mbox{\em Re}(\hat C_{j,l}), \qquad \beta_{n+j}:=\mbox{\em Im}(\hat C_{j,l}), \qquad j=1,\dots, n.
\end{split}
\end{equation*}
Then we have
\begin{equation}
\label{eq:inv-prop-d2H}
\begin{split}
&d^2H_\omega (a) \left  (\alpha_j, \alpha_k \right )=
d^2H_\omega  (a)\left  ( \beta_{j} ,\beta_{k} \right ), \\
&d^2H_\omega (a) \left  ( \alpha_j ,\beta_k\right )=
-d^2H_\omega  (a)\left (\beta_j ,\alpha_k \right ), \qquad \forall j,k=1,\dots, 2n.
\end{split} 
\end{equation}
Furthermore,  for $l=1$ denote
\begin{equation*}
\alpha_{2n+s}:=\delta x_s, \qquad \beta_{2n+s}:=\delta y_s, \qquad s=1,\dots, p,
\end{equation*}
then \eqref{eq:inv-prop-d2H} holds on the extended index range  $j,k=1,\dots, 2n+p$. 
\end{proposition}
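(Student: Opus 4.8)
The plan is to deduce everything from the $K_m$-invariance of the Hessian recorded in \eqref{eq:inv-d2H} together with the eigenvector relations \eqref{eq:Bjk-Cjk-as-eigenvectors}, after passing to the complexification. First I would extend the symmetric bilinear form $d^2H_\omega(a)$ from $V_a$ to a $\mathbb{C}$-bilinear form $\mathcal{H}$ on $V_a+iV_a$ in the obvious way. Since the linear map $\rho(\kappa)$ is real and its $\mathbb{C}$-linear extension commutes with multiplication by $i$, separating real and imaginary parts shows that the invariance persists for the extension, i.e. $\mathcal{H}(\rho(\kappa)\mathbf{w}_1,\rho(\kappa)\mathbf{w}_2)=\mathcal{H}(\mathbf{w}_1,\mathbf{w}_2)$ for all $\mathbf{w}_1,\mathbf{w}_2\in V_a+iV_a$.

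Next, fixing $l$ in the range \eqref{eq:cond-l-complex}, I would assemble the complex vectors $\gamma_j:=\alpha_j+i\beta_j$, $j=1,\dots,2n$ — concretely $\gamma_1,\dots,\gamma_n=\hat B_{1,l},\dots,\hat B_{n,l}$ and $\gamma_{n+1},\dots,\gamma_{2n}=\hat C_{1,l},\dots,\hat C_{n,l}$ — augmented, when $l=1$, by $\gamma_{2n+s}=\delta x_s+i\delta y_s$, $s=1,\dots,p$. By \eqref{eq:Bjk-Cjk-as-eigenvectors} each $\gamma_j$ satisfies $\rho(\kappa)\gamma_j=e^{-il\zeta}\gamma_j$ (for the vectors $\delta x_s+i\delta y_s$ with $l=1$ the eigenvalue is $e^{-i\zeta}$, which coincides with $e^{-il\zeta}$). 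Applying the invariance of $\mathcal{H}$ twice then gives
$$\mathcal{H}(\gamma_j,\gamma_k)=\mathcal{H}(\rho(\kappa)\gamma_j,\rho(\kappa)\gamma_k)=e^{-2il\zeta}\,\mathcal{H}(\gamma_j,\gamma_k).$$
The decisive elementary point is that for $l$ in the range \eqref{eq:cond-l-complex} one has $2l\not\equiv 0\pmod m$: for odd $m$ this is because $1\le l\le (m-1)/2$ and $\gcd(2,m)=1$, and for even $m$ because $1\le l\le m/2-1$. Hence $e^{-2il\zeta}=e^{-4\pi i l/m}\neq 1$, forcing $\mathcal{H}(\gamma_j,\gamma_k)=0$ for all admissible $j,k$ — this is exactly where the hypotheses $m\geq 3$ and \eqref{eq:cond-l-complex} are used, and the excluded value $l=m/2$ for even $m$ is precisely the one making $e^{-2il\zeta}=1$.

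Finally I would expand the vanishing into real and imaginary parts. Using $\mathbb{C}$-bilinearity of $\mathcal{H}$ and the fact that $\mathcal{H}$ restricts to $d^2H_\omega(a)$ on $V_a\times V_a$,
$$\mathcal{H}(\gamma_j,\gamma_k)=\bigl[d^2H_\omega(a)(\alpha_j,\alpha_k)-d^2H_\omega(a)(\beta_j,\beta_k)\bigr]+i\bigl[d^2H_\omega(a)(\alpha_j,\beta_k)+d^2H_\omega(a)(\beta_j,\alpha_k)\bigr],$$
so setting the real and imaginary brackets equal to zero yields precisely the two identities of \eqref{eq:inv-prop-d2H}, first on the index range $j,k=1,\dots,2n$ and, when $l=1$, on the extended range $j,k=1,\dots,2n+p$. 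I do not anticipate a genuine obstacle: the only places demanding care are the bookkeeping of the $\rho(\kappa)$-eigenvalue for the extra vectors $\delta x_s+i\delta y_s$ in the case $l=1$, and the short modular computation $2l\not\equiv 0\pmod m$ that makes $e^{-2il\zeta}\neq 1$.
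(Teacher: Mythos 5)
Your argument is correct, and it reaches the same two identities from the same two inputs the paper uses (the $K_m$-invariance \eqref{eq:inv-d2H} and the eigenvector relations \eqref{eq:Bjk-Cjk-as-eigenvectors}), but it packages the computation differently. The paper stays entirely real: it writes $\rho(\kappa)\alpha_j=\cos l\zeta\,\alpha_j+\sin l\zeta\,\beta_j$ and $\rho(\kappa)\beta_j=-\sin l\zeta\,\alpha_j+\cos l\zeta\,\beta_j$, expands $f(\alpha_j,\alpha_k)=f(\rho(\kappa)\alpha_j,\rho(\kappa)\alpha_k)$ and $f(\alpha_j,\beta_k)=f(\rho(\kappa)\alpha_j,\rho(\kappa)\beta_k)$ by bilinearity to obtain the coupled relations \eqref{eq:aux-inv-prop-1}--\eqref{eq:aux-inv-prop-2}, and then solves that $2\times 2$ system using $\sin l\zeta\neq 0$. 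You instead pass to the $\C$-bilinear extension $\mathcal{H}$ on $V_a+iV_a$, observe that each $\gamma_j=\alpha_j+i\beta_j$ is a $\rho(\kappa)$-eigenvector with eigenvalue $e^{-il\zeta}$, and conclude $\mathcal{H}(\gamma_j,\gamma_k)=e^{-2il\zeta}\mathcal{H}(\gamma_j,\gamma_k)=0$ since $2l\not\equiv 0\pmod m$; splitting into real and imaginary parts then hands you both identities at once. The nonvanishing conditions are equivalent ($\sin l\zeta\neq 0$ iff $m\nmid 2l$ iff $e^{-2il\zeta}\neq 1$), so the hypotheses $m\geq 3$ and \eqref{eq:cond-l-complex} enter in exactly the same way, and your treatment of the extra vectors $\delta x_s+i\delta y_s$ for $l=1$ (eigenvalue $e^{-i\zeta}=e^{-il\zeta}$) is the right bookkeeping. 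What your route buys is transparency: the "simple manipulation" of the two trigonometric equations becomes a one-line scalar identity, and it makes visible that the excluded value $l=m/2$ ($m$ even) is precisely the one where $e^{-2il\zeta}=1$ and the argument genuinely fails. The cost is the (routine, but worth stating carefully) verification that the invariance of the real symmetric form persists under $\C$-bilinear extension, which you correctly flag.
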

\begin{proof}
The proof follows by combining the $K_m$-invariance  of $d^2H_\omega (a)$ (see  \eqref{eq:inv-d2H}) with the 
relations  \eqref{eq:Bjk-Cjk-as-eigenvectors} which   imply
\begin{equation*}
\rho(\kappa)\alpha_j =\cos l\zeta \, \alpha_{j} + \sin l\zeta \, \beta_{j}, \qquad 
 \rho(\kappa)\beta_j=- \sin l\zeta \, \alpha_{j}+\cos l\zeta \, \beta_{j}, 
 \end{equation*}
for all $j=1,\dots,2n$ if $l\neq 1$ and all  $j=1,\dots,2n+p$ if $l=1$.
Therefore, abbreviating $f:=d^2H_\omega(a)$, and using its $K_m$-invariance, we have
\begin{equation*}
f(\alpha_j,\alpha_k)=f(\rho(\kappa)\alpha_j,\rho(\kappa)\alpha_k)=f(\cos l\zeta \, \alpha_{j} + \sin l\zeta \, \beta_{j} \, , \,
\cos l\zeta \, \alpha_{k} + \sin l\zeta \, \beta_{k}),
\end{equation*}
which by bilinearity of $f$ implies
\begin{equation}
\label{eq:aux-inv-prop-1}
\sin^2 l\zeta f(\alpha_{j},\alpha_{k})=\sin^2 l\zeta f(\beta_{j},\beta_{k})
+\sin  l\zeta \cos l\zeta \left (  f(\beta_{j},\alpha_{k})+f(\alpha_{j},\beta_{k})\right ).
\end{equation}
On the other hand, again by $K_m$-invariance  of $d^2H_\omega (a)$, we have
\begin{equation*}
f(\alpha_{j},\beta_{k})=f(\rho(\kappa)\alpha_{j},\rho(\kappa)\beta_{k})=f(\cos l\zeta \, \alpha_{j} + \sin l\zeta \, \beta_{j},
- \sin l\zeta \, \alpha_{k}+\cos l\zeta \, \beta_{k}),
\end{equation*}
which  by bilinearity of $f$ implies
\begin{equation}
\label{eq:aux-inv-prop-2}
\sin^2 l\zeta f(\alpha_{j},\beta_{k})=-\sin^2 l\zeta f(\beta_j,\alpha_{k})
+\sin  l\zeta \cos l\zeta \left (  f(\beta_{j},\beta_{k})-f(\alpha_j,\alpha_k)\right ).
\end{equation}
 
 Now recalling that we are working under the assumption that $m\geq3$,  $l$ satisfies \eqref{eq:cond-l-complex} and  that 
$\zeta=\frac{2\pi}{m}$. These conditions imply that 
that $\sin l\zeta \neq 0$ which, via a simple manipulation of \eqref{eq:aux-inv-prop-1} and \eqref{eq:aux-inv-prop-2}
proves that 
\begin{equation*}
 f(\alpha_j,\alpha_k)= f(\beta_j,\beta_k), \qquad
  f(\alpha_j,\beta_k)= -f(\alpha_k,\beta_{j}),
\end{equation*}
as required. \end{proof}

\begin{proof}[Proof of Theorem \ref{th:complexblocks}]
For $l>1$ the proof follows at once from Lemma \ref{eq:lemma-complexgeneral} and 
Proposition \ref{prop:complex-block}. Indeed, the hypothesis of the lemma are verified in 
view of \eqref{eq:inv-prop-d2H} and it is easy to check that the Hermitian matrix $\mathcal{Q}$ 
in the lemma is precisely the matrix $\mathcal{Q}_l$ defined in subsection \ref{ss:DefQl} above (this is a consequence of 
\eqref{eq:Bil-Form-id} and  Remark \ref{Rmk:MatQ}) .

The conclusion for $l=1$ follows by complementing Lemma \ref{eq:lemma-complexgeneral} and 
Proposition \ref{prop:complex-block} with the 
following observation. Suppose that  $\{\alpha_1,\beta_1,\dots, \alpha_d,\beta_d\}$ are vectors in $\R^{D}$ 
satisfying \eqref{eq:complex-structure-ids} for a certain real bilinear form $f:\R^D\times \R^D\to \R$. 
Let $z_{jk}\in \C$ with $j,k\in \{1,\dots, d\}$. Then the vectors $\tilde \alpha_j$, $\tilde \beta_j$ defined by
\begin{equation*}
\tilde \alpha_j := \mbox{Re}\left ( \sum_k z_{jk} (\alpha_k+i\beta_k) \right ), \qquad 
\tilde \beta_j := \mbox{Im}\left ( \sum_k z_{jk} (\alpha_k+i\beta_k) \right ),
\end{equation*}
also satisfy \eqref{eq:complex-structure-ids}. As a consequence of this observation and Proposition 
 \ref{prop:complex-block}, it follows that 
the real and imaginary parts of the vectors \eqref{eq:columnsQ1}, which form a basis of $U_1$,
also satisfy the hypothesis \eqref{eq:complex-structure-ids} of Lemma  \ref{eq:lemma-complexgeneral}.
Again, as a  consequence of 
\eqref{eq:Bil-Form-id} and  Remark \ref{Rmk:MatQ}, 
it is straightforward to check that the Hermitian matrix $\mathcal{Q}$ 
in the lemma coincides with $\mathcal{Q}_1$ as defined in subsection \ref{ss:DefQl} above.
\end{proof}

\subsection{Summary of the stability analysis}
\label{ss:Summary}
We present here a summary of the stability test which is useful for implementation.

Let $(a,\omega)\in M\times \R$ be a $\Z_m$-symmetric RE and assume that  $m\geq 2$. We first recall the block matrices which constitute our block 
diagonalisation of  $\left . d^2H_\omega(a) \right |_{\mathcal{N}_a}$. 
The  generic block will be denoted by $\mathcal{M}_l$ where  the index $l$ runs from $0$ to $\left [ \frac{m}{2} \right ]$ (so there are  
 $1+\left [ \frac{m}{2} \right ]$ blocks). According to the notation of the previous sections, we shall write $\mathcal{M}_l=\mathcal{P}_l$ if the block
is  real symmetric,  and  instead  $\mathcal{M}_l=\mathcal{Q}_l$ if the block is  complex Hermitian. The Table \ref{eq:QlPl-summary} 
below specifies whether
  $\mathcal{M}_l$ equals $\mathcal{P}_l$ or $\mathcal{Q}_l$ according to the number $m$ of vortices in each ring for each value of $l$. The table also indicates (in parenthesis below each matrix) the size of $\mathcal{M}_l$
 in terms of the number $n$ of rings and the number $p$ of poles (recall that $N=mn+p$). Moreover, according to   \eqref{eq:mathcalP} and \eqref{eq:mathcalQ}, 
 each such matrix $\mathcal{M}_l$ is of the form 
 \begin{equation}
 \label{def:M_l}
\mathcal{M}_l=\bar M_l^T\nabla ^2_v H^\ast_\omega(a)M_l,
\end{equation}
 where the columns of the matrix $M_l$, which corresponds to either $P_l$ or $Q_l$ in the text above, 
 are determined by the corresponding   equation at the bottom of each entry.  In all cases, 
 the columns of $M_l$  are given in terms of
 the vectors $\hat B_{j,l}$, $ \hat C_{j,l}$ and the  scalars $z_j \in \R$, $\eta_j\in \C$ defined by  \eqref{eq:hatBhatC}, \eqref{eq:ring-generators}  and \eqref{eq:def-etaj},
  which in turn are determined  in terms of the RE configuration $a$ written with the convention \eqref{eq:REfull}. 
 
 On the other hand, if the RE $(a,\omega)\in M\times \R$ has no symmetries, i.e. $m=1$,   there is no block decomposition.
In this case we denote by $\mathcal{M}_0=\mathcal{P}_0$ the
 matrix representation of $\left . d^2H_\omega(a) \right |_{\mathcal{N}_a}$  given by \eqref{def:M_l}
with $l=0$, where  the $2N-4$ columns of the  matrix $M_0$ are given by \eqref{eq:asymmetricSlice},
which are also determined in terms of the RE configuration $a$ written with the convention \eqref{eq:REfull}. 
For completeness, this information is also included in Table \ref{eq:QlPl-summary}.

\begin{table}[h]
\begin{center}
\begin{tabular}[c]{|c|c|c|c|c|}%
\hline
& $l=0$ & $l=1$ & $ l \geq 2$ \\
\hline
$m=1 $  &
$ \begin{array}{c}
\mathcal{P}_0 \\
\mbox{($2N-4$}) \\
\eqref{eq:asymmetricSlice}
\end{array}$ & 
- & -   \\ 
\hline
$m=2$   &$
 \begin{array}{c}
\mathcal{P}_0 \\
\mbox{($2n-2$}) \\
\eqref{eq:basisU0}
\end{array} $& $
\begin{array}{c}
\mathcal{P}_1 \\
\mbox{($2n+2p-2$}) \\
\eqref{eq:basisU1meq2}
\end{array} $& -   \\ \hline
$m=3 $  &$ \begin{array}{c}
\mathcal{P}_0 \\
\mbox{($2n-2$})\\
\eqref{eq:basisU0}
\end{array} $&  $ \begin{array}{c}
\mathcal{Q}_1 \\
\mbox{($2n+p-1$})\\
\eqref{eq:columnsQ1}
\end{array}$ & -   \\ \hline
$m\geq 4 \quad \mbox{even} $
 &$ \begin{array}{c}
\mathcal{P}_0 \\
\mbox{($2n-2$})\\
\eqref{eq:basisU0}
\end{array}$ &  $\begin{array}{c}
\mathcal{Q}_1 \\
\mbox{($2n+p-1$})\\
\eqref{eq:columnsQ1}
\end{array} $&  
$\begin{array}[c]{c|c}
 \begin{array}{l}
\mathcal{Q}_l \quad 2\leq l <\frac{m}{2} \\
\mbox{($2n$})\\
\eqref{eq:columnsQl}
\end{array}  & \begin{array}{c}
\mathcal{P}_{\frac{m}{2}} \quad  l =\frac{m}{2} \\
\mbox{($2n$})\\
\eqref{eq:columnsQl}^*
\end{array}
\end{array}$
  \\ \hline
$m\geq 5 $\quad \mbox{odd}  
 & $\begin{array}{c}
\mathcal{P}_0 \\
\mbox{($2n-2$})\\
\eqref{eq:basisU0}
\end{array} $&$  \begin{array}{c}
\mathcal{Q}_1 \\
\mbox{($2n+p-1$})\\
\eqref{eq:columnsQ1}
\end{array} $&  $ \begin{array}{l}
\mathcal{Q}_l \quad 2\leq l \leq \frac{m-1}{2} \\
\mbox{($2n$})\\
\eqref{eq:columnsQl}
\end{array}  $  \\ \hline
\end{tabular}\\
\end{center}
\vspace{0.2cm}
\begin{small} $^*$For $m\geq 4$, even, the seemingly complex columns of $M_{\frac{m}{2}}$ 
given by \eqref{eq:columnsQl} are in fact real (see   
Lemma \ref{lemma:isotypic-dec}). \end{small}
\caption{Summary of the block matrices that need to be computed for the stability analysis. See text for details.}
\label{eq:QlPl-summary}
\end{table}

Once  the block matrices $\mathcal{M}_l$, $l=0,\dots, \left [ \frac{m}{2} \right ]$, have been computed, our stability test proceeds as follows:
\begin{itemize}
\item[$\bullet$] If $\mu=\Phi(a)\neq 0$ (which implies  $\omega\neq 0$ by Proposition \ref{prop:zeroMomentum}), 
the RE is orbitally stable if  $\left . d^2H_\omega(a) \right |_{\mathcal{N}_a}$ is positive definite  and the stability test
proceeds by checking positive definiteness of each block $\mathcal{M}_l$.
\item[$\bullet$] If $\omega=0$ then $\mu=\Phi(a)= 0$ (by Proposition \ref{prop:zeroMomentum}) and
 the
RE is  $SO(3)$-stable if  $\left . d^2H_0(a) \right |_{\tilde {\mathcal{N}_a}}$ is positive definite (note the change 
 from $\mathcal{N}_a$ to  $\tilde{\mathcal{N}_a}$).  In this case, owing  to the
$SO(3)$-invariance of $H_0=H$ we know that $d^2H_0(a) $ vanishes along $\so(3)\cdot a$ which gives rise to a 
 2-dimensional null space of  $\left . d^2H_0(a) \right |_{ {\mathcal{N}_a}}$. If $m\geq 2$, by 
 Lemma \ref{lemma:basis-Bl-mu0} and, in particular, Eq. \eqref{eq:meq0slice},  
 this   2-dimensional null-space belongs to $U_1$ so  the  block $\mathcal{M}_1$ will have a multiplicity 2 zero eigenvalue. The stability test
proceeds by checking that all other eigenvalues of $\mathcal{M}_1$ are positive and the positive
 definiteness of all other blocks $\mathcal{M}_l$, $l\neq 1$. On the other hand, if $m=1$, i.e. the RE is asymmetric,
 the stability test consists of checking  that $\mathcal{P}_0$ is positive semi-definite with a 2-dimensional kernel.
\end{itemize}

\begin{remark}
The analysis described above does not cover the case $\omega\neq 0$ and $\mu=\Phi(a)=0$. We did not find any RE with this
property during our investigation and it is unclear to us that they exist when all vortices have equal strengths. On the other hand, existence
of such RE
can be shown explicitly if the vorticities are  allowed to be distinct   for $N= 3$.
\end{remark}


\section{CAPs of existence and stability of branches of relative equilibria}
\label{sec:CAPs}

We now explain how CAPs are implemented using the setting developed in sections \ref{s:existence} and \ref{sec:Stability} to 
establish existence and stability of branches of $\Z_m$-symmetric RE of \eqref{eq:motion}.

%
%

\subsection{Existence}
\label{ss:CAP-existence}
%
We start with a a non-degenerate $\Z_m$-symmetric  RE $(a_0,\omega_0)\in M\times \R$ approximated numerically 
by $(\overline{b}_0, \overline{\omega}_0)\in M_n\times \R$, i.e. $a_0\approx\rho(\overline{b}_0)$.
Based on the setting of  subsection \ref{ss:existenceRE-equal-strengths}, we determine a (local) continuation  branch of  $\Z_m$-symmetric RE, $m\geq 1$, parametrised
by $\omega$,   as unique zeros of  the function $F$ defined by \eqref{eq:augmap}  as follows. 
Consider a small  interval $[\omega_0, \omega_1]$ and set $\omega_s \bydef s\omega_1 + (1-s)\omega_0$ for $s \in [0,1]$. Let $\overline{x}_s = (\overline{b}_s, \overline{\lambda}_s, \overline{\alpha}_s)$ be a numerical branch segment such that $F(\overline{b}_s, \overline{\lambda}_s, \overline{\alpha}_s, \omega_s) \approx 0$ for $s \in [0,1]$.  Using Theorem~\ref{thm:Uniform Bounds},  we  obtain  existence and rigorous bounds of a unique $\tilde{x}(\omega_s) = (\tilde{b}(\omega_s), \tilde{\lambda}(\omega_s), \tilde{\alpha}(\omega_s))$ satisfying
\[
	F(\tilde{b}(\omega_s), \tilde{\lambda}(\omega_s), \tilde{\alpha}(\omega_s), \omega_s) = 0, \quad \forall s \in [0,1].
\]
Such branch is indeed unique in view of Theorem \ref{thm:continuation}. This implementation requires explicit expressions for the
gradient and hessian matrix of $F$ which are easily determined except perhaps for the terms coming from the derivatives of the reduced
Hamiltonian $h$ which are given in Appendix \ref{App:C}.

Using interval arithmetic, we then obtain existence and rigorous bounds of the RE  $(\tilde{a}(\omega_s), \omega_s)\in M\times \R$ with $\tilde{a}(\omega_s):=\rho(\tilde{b}(\omega_s))$. As one might expect, such rigorous bounds are better as the interval $[\omega_0, \omega_1]$ is shrunk.

\subsection{Stability}
\label{ss:CAP-stability}

%
%
%
%
%

Our strategy to prove that the matrix representation,  $\mathcal{M}(\omega_s)$, of $\left . d^2H_\omega(\tilde{a}(\omega_s)) \right |_{\mathcal{N}_a}$ is positive definite for all $s\in [0,1]$ is to first determine 
 positive enclosures for the eigenvalues of $\mathcal{M}(\omega_0)$. Once this is 
 done, we prove that $\mathcal{M}(\omega_s)$ is invertible for all $s\in [0,1]$, which is easily done by 
 applying interval arithmetic and the function {\tt inv} in INTLab
to all of the blocks described in section \ref{ss:Summary}. 
This proves  that $\mathcal{M}(\omega_s)$ is  positive definite for all $s\in [0,1]$ as a simple continuity argument shows.  
This  approach  avoids the computationally expensive validation of the eigenvalues of $\mathcal{M}(\omega_s)$ for values of $s\neq 0$.


The framework summarised in section \ref{ss:Summary} gives a convenient block diagonalisation of $\mathcal{M}(\omega_0)$ with blocks 
$\mathcal{M}_l(\omega_0)$, $l=0,\dots, \left [ \frac{m}{2} \right ]$, constructed in terms of $\tilde a(\omega_0)$ and given
 in Table \ref{eq:QlPl-summary}.
The calculation of rigorous enclosures of the eigenvalues of $\mathcal{M}(\omega_0)$ is then performed on each of the blocks $\mathcal{M}_l(\omega_0)$ as we explain below. We emphasise that the
block diagonalisation is useful to avoid clustering of eigenvalues. Moreover, the complexification of certain blocks is essential to deal with  eigenvalues which
would appear with double multiplicity in their real form. Such clusterings or multiplicities of eigenvalues would otherwise present significant
 computational challenges. 

\subsubsection{Validation of simple eigenvalues}

To begin, we repeat the existence procedure outlined  above to obtain a stronger 
rigorous error bound on  $\tilde{a}(\omega_0)$. More precisely, we use Theorem~\ref{thm:Uniform Bounds} letting $s = 0$ instead of $s \in [0,1]$. This makes the bound $\hat{Y}$ in such theorem disappear and provides a tighter enclosure on a point instead of a branch. Using this tighter bound, we compute an enclosure $\tilde{\mathcal{M}}_l(\omega_0)$
 of the block ${\mathcal{M}}_l(\omega_0)$ according to the prescription of section \ref{ss:Summary}.
We then compute approximate eigenvalues $(\bar{\lambda}_1, \bar{\lambda}_2, ..., \bar{\lambda}_{\D_l})$ and associated eigenvectors $(\bar{v}_1, \bar{v}_2, ..., \bar{v}_{\D_l})$ of $\tilde{\mathcal{M}}_l(\omega_0)$, where $\D_l$ is the
size of  $\tilde{\mathcal{M}}_l(\omega_0)$ (given explicitly in Table \ref{eq:QlPl-summary} in terms of $m$, $n$, $p$).

 For each   $k \in \{1,\dots,\D_l\}$, we  enclose rigorously the value of the true eigenpair $(\lambda_k,v_k)$ close to the numerical approximation $(\bar \lambda_k, \bar{v}_k)$ as follows. Let $j_k$ be such that $| \bar{v}_k \cdot e_{j_k}|= \max_{j=1,\dots,\D_l} \left( | \bar{v}_k \cdot e_j | \right)$ (note that $j_k$ need not be unique) and consider the map 
\begin{equation} \label{eq:G_eigenvalue_problem}
  \mathcal{G}_l: \R^{\D_l+1}\to \R^{\D_l+1}, \quad   \mathcal{G}_l(\lambda, v) \bydef 
    \begin{pmatrix}
        (\tilde{\mathcal{M}}_l(\omega_0) - \lambda I)v \\
        (v - \bar{v}_k) \cdot e_{j_k}
    \end{pmatrix},
\end{equation}
where $e_{j_k}$ is the $j_k^{th}$ canonical basis vector of $\R^{\D_l}$. The last component of \eqref{eq:G_eigenvalue_problem} is included to remove the scaling invariance of eigenvectors. With this definition, 
an isolated zero of $\mathcal{G}_l$ corresponds
to  a simple eigenpair $(\lambda, v)$ with the prescribed $j_k$ component of the eigenvector $v$ (which by 
construction is guaranteed to be non zero).  The requirement that the zero of $\mathcal{G}_l$ is isolated 
is only satisfied if $\lambda$ is a simple eigenvalue. Under this condition, we can apply a (non uniform) version of Theorem ~\ref{thm:Uniform Bounds} with $F=\mathcal{G}_l$  and $d=\D_l+1$, to obtain  rigorous bounds on the eigenpair of $(\lambda, v)$. The implementation of the method will fail in the presence of other eigenvalues close to $\lambda$ 
(clustering).

\subsubsection{Validation  of clustered eigenvalues}
\label{ss:CAP-clustered}

To handle the case of clustered eigenvalues, we follow the {\em argument principle and validated winding number computation} technique presented in Section 3.2 of \cite{MR4337868}. 
 Suppose that $\lambda_0$ is a clustered
eigenvalue of $\tilde {\mathcal{M}}_l(\omega_0)$ 
and consider the following small rectangle centred around $\lambda_0$ in the complex plane \[
	R_\epsilon := \{ z \in \C : | Re(z) - \lambda_0 | \leq \epsilon, \quad | Im(z) | \leq 1 \},
\]
where $\epsilon>0$ will be chosen  small enough to obtain tight bounds and 
such that no other eigenvalue of  $\tilde{\mathcal{M}}_l(\omega_0)$ lies on the boundary $\Gamma$ of $R_\epsilon$. Let ${\mathcal {G}}$ be the complex polynomial $\mathcal {G}(z) \bydef \det(\mathcal{M}_l(\omega_0) - z I)$. By Cauchy's argument principle,
\[
	\#zeros  = \frac{1}{2 \pi i} \oint_{\Gamma} \frac{{\mathcal {G}}'(z)}  {{\mathcal {G}}(z)} dz,
\]
where $\#zeros$ denotes the number of zeros of ${\mathcal {G}}$ inside the contour $\Gamma$. Considering that these
zeros are the  eigenvalues of $\mathcal{M}_l(\omega_0)$, applying  the substitution $u = {\mathcal {G}}(z)$, we obtain
\[
	\#eigenvalues = \frac{1}{2 \pi i} \oint_{{\mathcal {G}}(\Gamma)} \frac{du} {u} .
\]
Therefore, the number of eigenvalues inside $R_\epsilon$ is exactly the winding number of the curve ${\mathcal {G}}(\Gamma)$ about 0. To rigorously compute  this, we first naturally separate $\Gamma$ into its 4 linear segments $\Gamma_1, \Gamma_2, \Gamma_3, \Gamma_4$, each parameterized by a parameter $s$. Then we create a fine mesh of each $\Gamma_i$. For every segment of these meshes, denoted by $\Gamma^*$, we use interval arithmetic to compute a rigorous enclosure $ {\tilde {\mathcal {G}}(\Gamma^*)}$ of ${\mathcal {G}}(\Gamma^*)$. With all of these enclosures, we then count the number of times they intersect the positive real axis of the complex plane. To determine if the count  goes up or down by 1 at
an intersection, we use interval arithmetic to compute an enclosure of the imaginary part of 
$ \frac{d}{ds} \mathcal {G}(\gamma(s))$ where $\gamma(s)$ is the parametrisation of $\Gamma^*$.

\section{Examples of CAPs of existence and stability of RE.}
\label{s:CAPs-application}

Below we exemplify our framework by presenting proofs of existence 
and stability  of some specific RE.  In our discussion stability always means (nonlinear) $SO(3)_\mu$ stability as introduced in subsection \ref{ss:stability-prelim}.
For RE which are not equilibria ($\omega \neq 0$) this  coincides with orbital stability in 
virtue of Proposition \ref{prop:stability}. We always assume that all vortices have equal strengths. 

The RE will, in general, be $\Z_m$-symmetric and consist of $n$ rings, each consisting of a regular polygon of $m$ 
vertices, and  $p$ poles, so the total number of vortices $N=mn+p$.
%
Throughout this section we say that a RE $(a,\omega)\in M\times \R$ is of 
type  $(n_{k_1},m_{k_2},p_{k_3})$ if $a$ consists of $n=k_1$ rings, each consisting of a regular polygon of $m=k_2$ 
vertices, and $p=k_3$ poles. Nonsymmetric RE correspond to having $m=1$. 
We also simplify the notation and denote  $\mu=\Phi_3(a)\in \R$ (instead of $\mu=\Phi(a)=(0,0,\Phi_3(a))\in \R^3$ as before).
It is easy to see from the expression of $\Phi$ in \eqref{eq:Phi} that $|\mu|<N$ and that  $\mu\to N$ implies that the vortices are approaching a total 
collision at the North pole. 

We begin with subsection \ref{ss:CAP-minimisers}  which presents a precise description 
of the equilibrium configurations in Table \ref{ground-states} and shows that they are stable for $N=8,9,10,11$
(the case $N=7$ is degenerate and the stability in other cases was already known).
We continue our discussion in subsection \ref{ss:CAP-RE-groundstates} which
focuses on the determination of stable branches of RE emanating
from the equilibrium  configurations of Table \ref{ground-states}.
Finally, subsection \ref{ss:RE-totalcollision} focuses on stable RE in which the vortices are close to  total collision. 
Note that our discussion considers  opposite 
realms of the dynamics in terms of the center of vorticity:  all equilibria in  Table \ref{ground-states} have vanishing
momentum and instead, as mentioned above, the momentum is  maximised at total collision.

\subsection{Minimising and stability properties of the equilibria in Table \ref{ground-states}}
\label{ss:CAP-minimisers} 

Here we discuss the stability, non-degeneracy and global minimising properties of the equilibria in 
Table  \ref{ground-states} for the different values of $N$.

For $N=2,3,4,5,6,12$, it is known that these are ground states (see e.g.  section 11 of \cite{Beltran20} and the references therein).
These are $SO(3)$-stable equilibria as was proved by Kurakin \cite{K} for $N=4,6,12$. Moreover, with the 
help of a computer algebraic program, one can easily check the non-degeneracy, and hence also the $SO(3)$-stability
in the case $N=2,3,5$.

The case $N=7$ is special. On the one hand it is widely conjectured to be the ground state (see e.g. Conjecture 11.1 of
 \cite{Beltran20}). On the other hand, according to the computation in section \ref{ss:CAP-RE-groundstates},
  it is a degenerate critical point of $H$ so we cannot even prove 
that  it is a local minimiser or conclude stability by the study of the Hessian, see Remark \ref{rmk:N7deg} for details. 

Before proving the properties for $N=8,9,10,11$ we give a precise description of these configurations. We emphasise that the ground state for these values of $N$ is unknown and we only 
present a conjecture of their
form  which was inspired by elementary numerical exploration. For $N=8,9,10$,  symmetry considerations allowed
us to determine analytical expressions for these positions in terms of zeros of polynomials  as explained below.

For $N=8$ our conjectured ground state is an antiprism whose bases are squares and whose height is $2\sqrt{\frac{1}{7}(2\sqrt{58}-13)}$. This is consistent with the equilibrium given in  \cite[Table (4.3)]{Luis}.

Our conjectured ground state for $N=9$ is a polyhedron whose  vertices are located on three horizontal equilateral triangles. The middle triangle lies on the equator whereas the heights of the other 
two are $\pm z_0\approx \pm 0.703111$ where  $z_0^2\approx 0.494365$ is the smallest positive root of the polynomial $p(x)=64x^4+105x^3-87x^2-45x+27$. The top and bottom
triangles are aligned and the middle one is staggered with respect to them (i.e. the vertical projection of the top and bottom triangles coincide in an equilateral triangle that
makes an angle of $\frac{2\pi}{6}$ with the vertices of the middle triangle).

For $N=10$ our conjectured ground state is an antiprism whose bases are squares parallel to the equator, together with the North and South poles, and whose height equals $\frac{2}{3}\sqrt{2\sqrt{106} -19}$. This 
matches the equilibrium given in  \cite[Table (5.2)]{Luis}.

We were unable to determine analytic coordinates for our conjectured ground state for $N=11$. Instead, we obtained a CAP of the existence of a  $\mathbb{Z}_2$-symmetric
equilibrium with $5$ rings, each made up of $2$ vortices, together with  the North pole. The coordinates of the ring generators are within $10^{-13}$ of
\begin{equation*}
\begin{split}
 u_1&=  (0.414622789752781,   0.748445554893721,   0.517607180763029), \\
u_2 & =  (-0.984889687565531,  -0.009599383086507,   0.172916613347094), \\
u_3&=   (0.514196162925374,  -0.840060801883643,   0.172916613347109), \\
u_4&=   (0.402032242619801,   0.725718055903693, -0.558304020431036), \\
u_5&=   (0.518800630696144,  -0.287404425636917,  -0.805136387026196).
   \end{split}
\end{equation*}
The following theorem states the properties of these equilibria.
%
%
%

%
%
\begin{theorem}
\label{th:gs}
Let $N\in \{8,9,10,11\}$. The equilibrium configurations in Table \ref{ground-states} with positions given 
above are non-degenerate local minima of $H$. In particular,
these configurations are $SO(3)$-stable equilibria of the equations of motion \eqref{eq:motion}.
\end{theorem}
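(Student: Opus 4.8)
The plan is to verify, for each $N \in \{8,9,10,11\}$, that the configuration described above is indeed a critical point of $H$ and then that its Hessian, restricted to an appropriate complement of the symmetry orbit, is positive definite. The natural tool is the framework developed in Sections~\ref{s:existence} and~\ref{sec:Stability}: each configuration is $\Z_m$-symmetric (with $m=4,3,4,2$ respectively, plus poles where indicated), so it arises as $a = \rho(b)$ for a ring-generator configuration $b \in M_n$, and by Theorem~\ref{th-main-symmetry} it is an equilibrium of the reduced system \eqref{eq:reduced-system} if and only if $b$ is a critical point of the reduced Hamiltonian $h$. Stability then follows, via the energy-momentum method of subsection~\ref{ss:energy-momentum-method} together with Proposition~\ref{prop:zeroMomentum} (so $\mu = \Phi(a) = 0$ and the relevant slice is $\tilde{\mathcal N}_a$), from positive definiteness of $\left.d^2H_0(a)\right|_{\tilde{\mathcal N}_a}$, equivalently (Lemma~\ref{lemma:basis-Bl-mu0} and the summary in subsection~\ref{ss:Summary}) from checking that the block matrices $\mathcal M_l$ are positive definite for $l \neq 1$ and that $\mathcal M_1$ is positive semidefinite with exactly a $2$-dimensional kernel coming from $\so(3)\cdot a$.

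First I would treat $N=8$ and $N=10$, where analytic coordinates are available from \cite[Tables (4.3) and (5.2)]{Luis}: the square antiprism of height $2\sqrt{\tfrac{1}{7}(2\sqrt{58}-13)}$ plus possibly the poles. For these I would substitute the explicit coordinates into the reduced gradient $\nabla h$ and verify the critical-point condition symbolically (using a computer algebra system to handle the algebraic numbers), then assemble the block matrices $\mathcal M_l$ from subsection~\ref{ss:Summary} and check their definiteness — this last step being the one where a computer-assisted proof with interval arithmetic using Theorem~\ref{thm:Uniform Bounds} (applied at a single parameter value, i.e. $s=0$, following subsection~\ref{ss:CAP-stability}) is invoked to rigorously enclose the eigenvalues and certify the sign conditions. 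For $N=9$, the three-triangle configuration with $z_0^2$ a root of $p(x)=64x^4+105x^3-87x^2-45x+27$ is handled identically: one first checks that the polynomial condition on $z_0$ is exactly the criticality condition coming from $\nabla h = 0$, then runs the interval-arithmetic eigenvalue validation on the blocks.

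The case $N=11$ is the genuinely computer-assisted one and I expect it to be the main obstacle, since no closed-form coordinates are known — only the $\Z_2$-symmetric ring generators $u_1,\dots,u_5$ above, accurate to about $10^{-13}$. Here I would apply the existence part of the CAP machinery of subsection~\ref{ss:CAP-existence}: feed the numerical approximation into the Newton--Kantorovich-like Theorem~\ref{thm:Uniform Bounds} applied to the map $F$ of \eqref{eq:augmap} at the fixed parameter value $\omega_0 = 0$, obtaining a rigorous enclosure of a true nearby zero, hence (via Proposition~\ref{prop:zerosFareRE}) a genuine $\Z_2$-symmetric equilibrium $a_0 = \rho(b_0)$. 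Then, using the tighter enclosure of $b_0$, I would compute interval enclosures of the blocks $\mathcal M_0 = \mathcal P_0$ and $\mathcal M_1 = \mathcal P_1$ (for $m=2$ there are only these two, per Table~\ref{eq:QlPl-summary}), rigorously enclose their eigenvalues following subsection~\ref{ss:CAP-stability} (including the validated winding-number procedure of subsection~\ref{ss:CAP-clustered} if any eigenvalue clusters), and certify that $\mathcal P_0$ is positive definite and that $\mathcal P_1$ is positive definite apart from the forced double zero eigenvalue. Non-degeneracy as a critical submanifold in the sense of Definition~\ref{def:Bott} follows from the kernel of $\left.d^2H_0(a)\right|_{\tilde{\mathcal N}_a}$ being trivial, and the $SO(3)$-stability conclusion is then immediate from the energy-momentum method. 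The delicate points throughout are ensuring the eigenvalue enclosures stay strictly away from zero (so that the definiteness is genuinely certified) and correctly accounting for the guaranteed $2$-dimensional $\so(3)\cdot a$ kernel inside the $\mathcal M_1$ block so it is not mistaken for a degeneracy.
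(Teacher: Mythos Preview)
Your proposal is correct and follows essentially the same approach as the paper's proof: compute rigorous enclosures of the equilibrium (analytically for $N=8,9,10$, via CAP for $N=11$), then validate the eigenvalues of the block matrices $\mathcal M_l$ from subsection~\ref{ss:Summary} using exactly the $(m,n,p)$ values you identify. The one implementation detail you could not have anticipated is that the eigenvalue clustering requiring the winding-number technique of subsection~\ref{ss:CAP-clustered} actually arises for $N=8,10$ in the $4\times4$ block $\mathcal P_2$ (two pairs of clustered eigenvalues there), not for $N=11$.
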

\begin{proof}
Denote by $\tilde a$ an enclosure of the equilibrium configuration.
We did a CAP 
following the prescription described in section \ref{ss:CAP-stability} to  obtain
 enclosures of the eigenvalues of the
blocks $\mathcal{M}_l$ of $\left . d^2H(\tilde{a}) \right |_{\mathcal{N}_a}$  using the following values of $m$, $n$ and $p$
according to the value of $N$:
\begin{equation*}
\begin{array}
[c]{|c|c|c|c|c|c|}%
\hline
N   & 8 &  9 &10 & 11  \\ \hline
m   &  4 &  3 & 4 & 2 \\ \hline
n   &  2 &  3 & 2 & 5 \\ \hline
p   &  0 &  0 & 2 & 1 \\ \hline
\end{array}.
\end{equation*}

For $l\neq 1$ all eigenvalues of $\mathcal{M}_l$  are shown to be  positive. On the other hand,
since for these configurations $\mu=0$, in accordance with the discussion in section 
\ref{ss:Summary}, we know that 
the block $ \mathcal{M}_1$ has a two-dimensional null space, and we validated that 
all other eigenvalues are positive. We mention that for $N=8,10$ the block $\mathcal{M}_2$, 
which is $4\times 4$, has two pairs of clustered eigenvalues that require  
the application of the technique described in section \ref{ss:CAP-clustered} for their validation.
\end{proof}

\subsection{Stable RE arising from the equilibria in Table \ref{ground-states}}
\label{ss:CAP-RE-groundstates}

In this section  we focus on finding (enclosures  of)  stable branches of RE 
emanating from the equilibrium configurations of Table \ref{ground-states}. 
This can be done analytically for the values of $N=4,7$ since such stable branches consist of 
one ring (see treatment below). For all other values of $N\in  \{5,\dots, 12\}$ we rely on CAPs as we explain below.  

Fix $N\in \{5,\dots, 12\}$, $N\neq 7$. The local 
existence of symmetric branches of RE emanating from the equilibrium in  Table \ref{ground-states} is guaranteed by Corollary \ref{cor:existenceREsymmetric} for the $\mathbb{Z}_m$-symmetries indicated in Table \ref{ground-states} and illustrated in Figures \ref{Fig:GS-Known} and \ref{Fig:GS-Conjecture}
(the non-degeneracy 
condition in the corollary holds in view of the discussion in section  \ref{ss:CAP-minimisers}).
However, nothing can be said a priori about their stability. In order to prove existence 
of stable branches, that we conjecture to
be minimisers of $H$ for fixed $\mu$,   we proceeded as follows:
\begin{enumerate}
\item[1.] We obtained numerical   approximations of the $\Z_m$-symmetric branches of RE  for all symmetries 
indicated in Table  \ref{ground-states}. For each of them we ran  numerical stability tests aiming
to identify stable candidates. We found exactly one stable branch for all values of $N$ except $N=10$ for
which we found none. 
\item[2.] For the stable candidate identified above, we implemented the CAP of existence
and stability described in Section \ref{sec:CAPs}. The main difficulty that we encountered  is that two of the eigenvalues of the  $\mathcal{M}_1$ block approach zero
as $\omega\to 0$  which is due to the change of dimension of the symplectic slice at zero momentum (compare Lemma \ref{lemma:symp-slice-def} with Lemma 
\ref{lemma:basis-Bl-mu0}). For this reason,  the stability of the branch could not
be determined using CAPs for values of $\omega$ arbitrarily close to $0$.
\end{enumerate}

%
%
%
The results of the CAPs are illustrated with figures that indicate the position of the vortices of the (branch of) RE. We also present the energy-momentum bifurcation
diagram giving a plot of the values of  $H$ and $\mu$ along these RE. 
The figures employ the following colour code: 
\begin{enumerate}
\item[$\bullet$] green: both existence and stability  have been proved;
\item[$\bullet$] yellow: existence is proved but stability is not (due to eigenvalues that are either too close to zero or clustered);
\item[$\bullet$] black: we were unable to prove existence due to the proximity to a bifurcation point at which the Jacobian $D_{x}F$
appearing in the  Newton-Kantorovich theorem \ref{thm:Uniform Bounds} is non-invertible.
\end{enumerate}
%
%


\subsubsection*{$N=4$}

In this case the computations can be done analytically with the help of a symbolic software. Therefore we do not present a computed assisted proof. 
A comparison of the branches of RE emanating from the tetrahedron  (obtained numerically) was given before in \cite{MR1811389} (see Figure 7)
showing numerical evidence that the branch $(n_1,m_3,p_1)$ (corresponding to the $\mathbb{Z}_3$-symmetry and illustrated at equilibrium in Figure \ref{Fig:GSN4Z3}) 
is the global minimiser arising from the tetrahedron. More precisely,  if three vortices are placed in a ring forming 
an equilateral triangle at height $z\in (-1,1)$ and another vortex is placed at the North pole, then this is a RE provided that $\omega =\frac{1+3z}{2(1-z^2)}$.
The tetrahedron configuration is attained at $z=-\frac{1}{3}$. As follows from \cite[Theorem 6.2]{Mo11} these RE are  stable if $z>-\frac{1}{3}$.

On the other hand, the branch $(n_2,m_2,p_0)$ corresponding to the $\mathbb{Z}_2$-symmetry is unstable near the tetrahedron. This branch gains stability for larger
values of $\mu$ and in fact appears to minimise the Hamiltonian for fixed momenta for a certain range of positive $\mu$ as predicted by Figure 7 in \cite{MR1811389}.

\subsubsection*{$N=5$}

Our CAPs establish that the branch $(n_2,m_2,p_1)$ is stable for    $0.1\leq \omega \leq 0.49$ which corresponds
to $0.18\leq \mu \leq 0.98$. As may be appreciated from  the leftmost panel in Figure \ref{Fig:CAP5}, the rings in this branch are ``staggered". We conjecture that this branch
(illustrated at equilibrium in Figure \ref{Fig:GSN5Z2}) is born
stable close to the triangular bipyramid and is the global minimiser of $H$ 
for fixed values of $\mu>0$ close to zero. 
 
The two rings of the branch $(n_2,m_2,p_1)$ are positioned at the equator when $\mu=1$ where it meets the  branch $(n_1,m_4,p_1)$ which,
as shown in  \cite[Theorem 6.2]{Mo11}, is stable as long as  $z>0$ where $z$ is the height of the ring of 4 vortices and the other vortex is at the North pole. Note however 
that the branch $(n_1,m_4,p_1)$
cannot  converge to the triangular bipyramid since this figure does not have a $\mathbb{Z}_4$-symmetry.
This bifurcation is also illustrated  in Figure \ref{Fig:CAP5}.

\begin{figure}[hh]
\centering
\includegraphics[width=15cm]{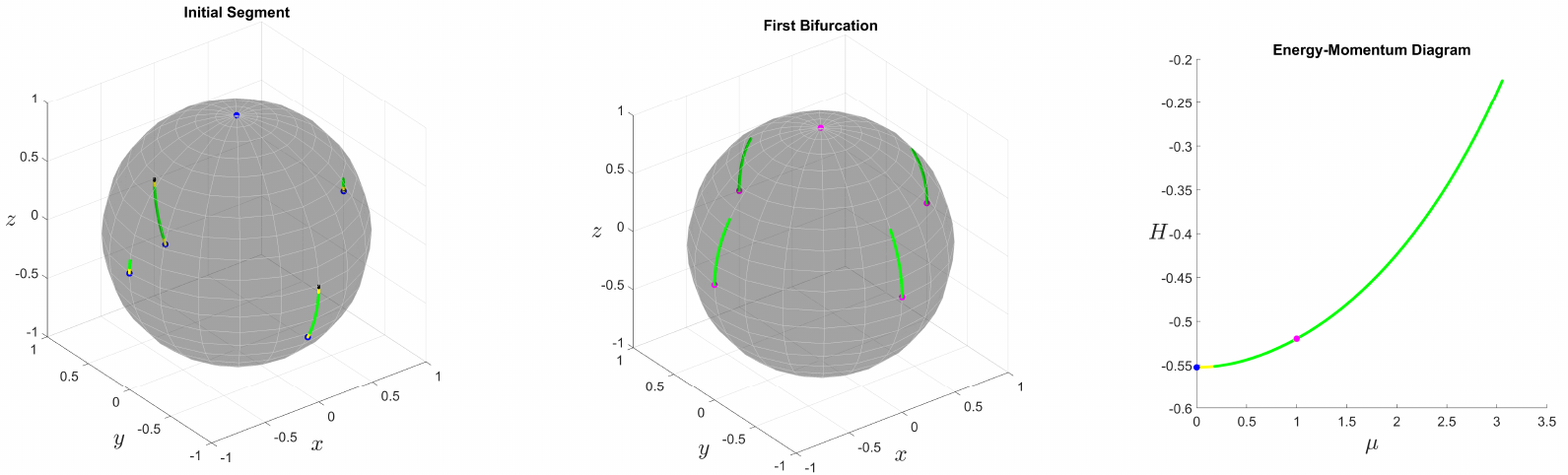}
\caption{\small{CAP of existence and stability of RE for $N=5$ near the ground state. The branch on the leftmost panel is of type $(n_2,m_2,p_1)$ with staggered
rings and the branch in the middle is of type $(n_1,m_4,p_0)$. }\label{Fig:CAP5} }
\end{figure}

\subsubsection*{$N=6$}

We establish stability via CAPs of  the branch $(n_2,m_3,p_0)$ where the two rings of three vortices are staggered for the values of   $0.2\leq \omega \leq 1.4$ which correspond
to $0.32 \leq \mu \leq 2.36$. We conjecture that this branch (illustrated at equilibrium in Figure \ref{Fig:GSN6Z3}) is born
stable close to the octahedron and is the global minimiser of $H$ 
for fixed values of $\mu>0$ close to zero. Our analysis shows that this branch loses stability around $\omega=1.4150$ and $\mu=2.38$. We claim that an asymmetric
RE gains stability at this point since we do not observe a bifurcation in the map $F$ \eqref{eq:augmap} with $m=3$ and no other apparent symmetries are visible.
The results are illustrated in Figure  \ref{Fig:CAP6}.
\begin{figure}[hh]
\centering
\includegraphics[width=12cm]{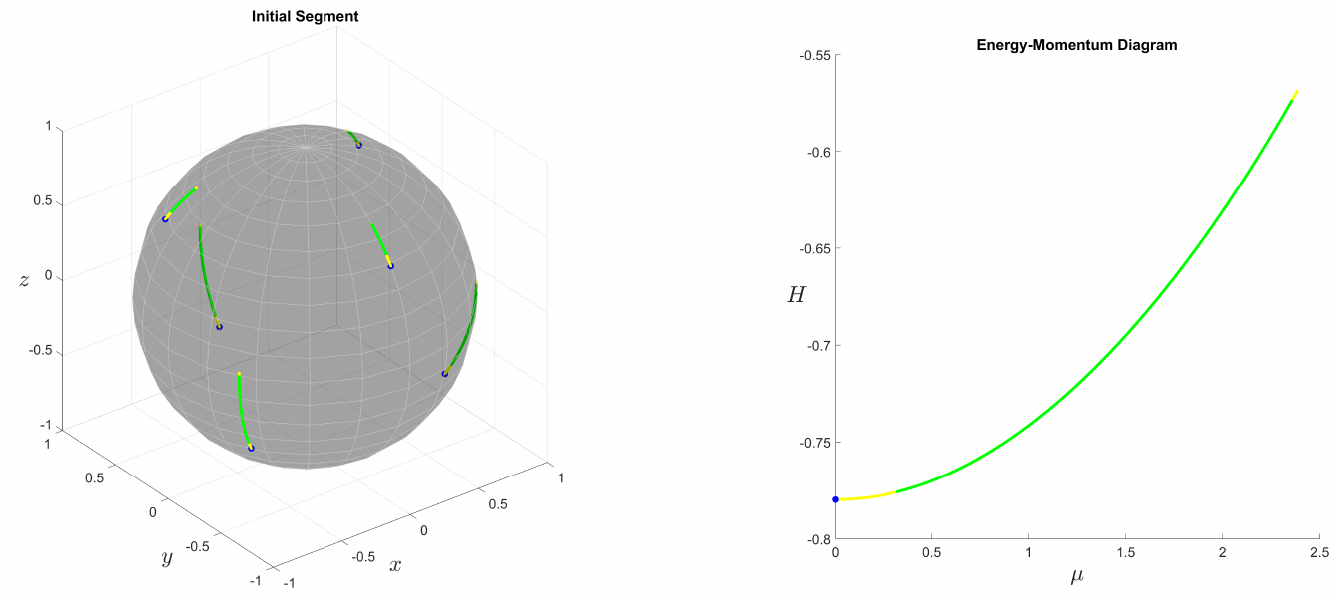}
\caption{\small{CAP of existence and stability of the RE of type $(n_2,m_3,p_0)$ for $N=6$ near the ground state. }\label{Fig:CAP6} }
\end{figure}

\subsubsection*{$N=7$}

In this case  Corollary \ref{cor:existenceREsymmetric} may not be used to prove local existence of branches
of RE since the non-degeneracy condition is not satisfied. However, 
a direct calculation shows that the configuration $(n_1,m_5,p_2)$ 
consisting of 1 ring of 5 vortices arranged at the vertices of a horizontal pentagon at height $z\in (-1,1)$ together
with the North and South poles is a RE provided that $\omega=\frac{3z}{1-z^2}$. The corresponding value
of the momentum is $\mu=5z$ since the contribution from the vortices at the North and South pole vanishes.
Note that both the angular velocity $\omega$ and the momentum $\mu$ vanish at the pentagonal bipyramid
which is attained when  $z=0$.

The stability of these RE can be examined analytically with the approach of 
section \ref{sec:Stability}. As seen from Table \ref{eq:QlPl-summary}, the stability is guaranteed by the
positive definiteness of the Hermitian 
matrices $\mathcal{Q}_1$ and $\mathcal{Q}_2$ which are respectively $3\times 3$
and $2\times 2$. With the help of a symbolic software one finds that they are both real and given by
\begin{equation}
\label{eq:QforN7}
\mathcal{Q}_1=\left(
\begin{array}{ccc}
 25 z^2+5 & -\frac{5}{2} (z (5 z+2)+5) & \frac{5}{2} (z (5 z-2)+5) \\
 -\frac{5}{2} (z (5 z+2)+5) & 25 (4-3 z) z+65 & \frac{5}{2} \left(1-5 z^2\right) \\
 \frac{5}{2} (z (5 z-2)+5) & \frac{5}{2} \left(1-5 z^2\right) & 65-25 z (3 z+4) \\
\end{array}
\right), \qquad \mathcal{Q}_2=\left(
\begin{array}{cc}
 15 & 0 \\
 0 & 15 z^2 \\
\end{array}
\right).
\end{equation}

It is clear that $\mathcal{Q}_2$ is positive definite for any value of $z$ different from $0$. On the other
hand, we have the following.
\begin{proposition}
The matrix $\mathcal{Q}_1$ in \eqref{eq:QforN7} is positive definite for $0<|z|<\sqrt{\frac{1}{35} \left(43-4 \sqrt{109}\right)}\approx 0.188132$.
\end{proposition}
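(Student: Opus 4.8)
Write $z_*:=\sqrt{\tfrac1{35}\bigl(43-4\sqrt{109}\bigr)}$ for the claimed bound. The plan is to prove positive definiteness of $\mathcal{Q}_1(z)$ on the whole punctured interval $(-z_*,z_*)\setminus\{0\}$ by combining three elementary facts: a reflection symmetry $z\mapsto-z$, an explicit diagonalisation of $\mathcal{Q}_1$ at $z=0$, and an explicit factorisation of $\det\mathcal{Q}_1(z)$, followed by a continuity/connectedness argument. I would not apply Sylvester's criterion directly at a generic $z$, since the $2\times 2$ leading minor of $\mathcal{Q}_1$ is only conditionally positive and its sign analysis is no simpler than what follows. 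As a first reduction, write $a_{ij}(z)$ for the entries of $\mathcal{Q}_1(z)$ in \eqref{eq:QforN7}; one checks directly that $a_{11}(-z)=a_{11}(z)$, $a_{22}(-z)=a_{33}(z)$, $a_{33}(-z)=a_{22}(z)$, $a_{23}(-z)=a_{23}(z)$, $a_{12}(-z)=-a_{13}(z)$ and $a_{13}(-z)=-a_{12}(z)$, which is exactly the statement $\mathcal{Q}_1(-z)=S^{T}\mathcal{Q}_1(z)S$ with the orthogonal involution $S=\mathrm{diag}(-1,1,1)\cdot P$, $P$ the transposition of indices $2$ and $3$. Hence $\mathcal{Q}_1(-z)$ and $\mathcal{Q}_1(z)$ have the same spectrum, and it suffices to treat $z\in(0,z_*)$.

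Next I would analyse the endpoint $z=0$. Evaluating \eqref{eq:QforN7}, $\mathcal{Q}_1(0)$ has trace $135$, sum of its $2\times2$ principal minors equal to $\tfrac{18225}{4}$ and determinant $0$, so its characteristic polynomial is $\lambda\bigl(\lambda-\tfrac{135}{2}\bigr)^{2}$; thus $\mathcal{Q}_1(0)$ is positive semidefinite with eigenvalues $0,\tfrac{135}{2},\tfrac{135}{2}$ (this degeneracy is expected, $z=0$ being the pentagonal bipyramid). The key computation is the symbolic evaluation of the $3\times 3$ determinant, which gives
\[
\det\mathcal{Q}_1(z)=\frac{9375}{2}\,z^{2}\bigl(35z^{4}-86z^{2}+3\bigr)
\]
(consistency is easy to spot-check: the $z^{2}$ prefactor reproduces the $z=0$ degeneracy, and at $z=1$ both sides equal $-225000$). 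Setting $w=z^{2}$, the quadratic $35w^{2}-86w+3$ has roots $w_{\pm}=\tfrac1{35}\bigl(43\pm4\sqrt{109}\bigr)$, so $z_*^{2}=w_-$ is the smaller one; since $35w^{2}-86w+3$ is positive at $w=0$ and opens upward, it is strictly positive on $[0,w_-)$. Consequently $\det\mathcal{Q}_1(z)>0$ for all $z$ with $0<|z|<z_*$, while $\det\mathcal{Q}_1(\pm z_*)=0$.

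Finally I would run the continuity argument on $(0,z_*)$. The eigenvalues $\lambda_1(z)\ge\lambda_2(z)\ge\lambda_3(z)$ of $\mathcal{Q}_1(z)$ depend continuously on $z$, and at $z=0$ they are $\tfrac{135}{2},\tfrac{135}{2},0$. For $z>0$ small, $\lambda_1(z),\lambda_2(z)>0$ by continuity, and since $\lambda_1(z)\lambda_2(z)\lambda_3(z)=\det\mathcal{Q}_1(z)>0$ we get $\lambda_3(z)>0$; thus $\mathcal{Q}_1(z)$ is positive definite for $z\in(0,\varepsilon)$, some $\varepsilon>0$. On the connected interval $(0,z_*)$ the determinant never vanishes, hence no eigenvalue of $\mathcal{Q}_1(z)$ changes sign there, so the inertia is constant along the interval and therefore equals $(3,0,0)$ throughout $(0,z_*)$, i.e. $\mathcal{Q}_1(z)$ is positive definite for all $z\in(0,z_*)$. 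Invoking the reflection symmetry $\mathcal{Q}_1(-z)=S^{T}\mathcal{Q}_1(z)S$ gives the same conclusion on $(-z_*,0)$, which proves the proposition; the factorisation also shows the bound is sharp, as $\det\mathcal{Q}_1$ changes sign at $\pm z_*$. The only genuinely computational step is the determinant factorisation, and the only point requiring care is that positivity of the two "large" eigenvalues must be \emph{propagated} across the whole interval rather than merely asserted near $z=0$; both are disposed of by the non-vanishing of $\det\mathcal{Q}_1$ on $(0,z_*)$.
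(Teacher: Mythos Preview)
Your proof is correct and follows essentially the same approach as the paper: compute the determinant factorisation $\det\mathcal{Q}_1=\tfrac{9375}{2}z^{2}(35z^{4}-86z^{2}+3)$, observe that no eigenvalue can vanish on $0<|z|<z_*$, and then verify positive definiteness at one point in the interval. The only minor difference is that the paper simply checks positivity of all three eigenvalues at an arbitrary interior point, whereas you instead analyse the degenerate endpoint $z=0$ (eigenvalues $0,\tfrac{135}{2},\tfrac{135}{2}$) and use continuity plus positivity of the determinant to force $\lambda_3(z)>0$ for small $z>0$; this is a pleasant refinement since it avoids any auxiliary numerical evaluation, but the underlying argument is the same.
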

\begin{proof}
A direct calculation gives $\det \mathcal{Q}_1=\frac{9375}{2} z^2 \left(35 z^4-86 z^2+3\right)$, which shows that
$\det \mathcal{Q}_1$ can only change sign when $z^2=0$ or $z^2=\frac{1}{35} \left(43\pm 4 \sqrt{109}\right)$.
In particular, none of its eigenvalues can vanish for $0<|z|<\sqrt{\frac{1}{35} \left(43-4 \sqrt{109}\right)}$.
It is then elementary to verify that all 3 eigenvalues are positive for a particular value of $z$ in this interval. 
\end{proof}

Therefore, we have proved that the branch $(n_1,m_5,p_2)$ (illustrated at equilibrium in Figure \ref{Fig:GSN7Z5}) arises a stable branch
of RE from the pentagonal dypiramid and loses stability at $|z|=\sqrt{\frac{1}{35} \left(43-4 \sqrt{109}\right)}$ which corresponds to $|\omega|=\frac{1}{4} \sqrt{89-8 \sqrt{109}}$. We conjecture that this branch minimises
$H$ for fixed and small values of the momentum. (We mention that these analytic results are in perfect correspondence with
the ones predicted by our numerics and CAPs).

\begin{remark}
\label{rmk:N7deg}
Denote by $a$  the equilibrium configuration  attained when $z=0$ (the pentagonal 
bipyramid of Table \ref{ground-states}).  The zero eigenvalue of the matrix $\mathcal{Q}_2$ at this point corresponds to a 2-dimensional null
space of $d^2H(a)$ spanned by the real and imaginary parts of the vector $\hat C_{1,2}$. These vectors
lie on the space $U_2$ of the symplectic slice defined in Lemma \ref{lemma:symp-slice-def} and are therefore transversal to  $\so(3).a$. This leads to the surprising conclusion  that, according to Definition \ref{def:Bott}, the $SO(3)$-orbit of the pentagonal 
bipyramid is a degenerate critical manifold of $H$ when $N=7$. In particular, it is not possible 
to determine that this configuration is a local minimum of $H$, nor that it is a stable equilibrium of \eqref{eq:motion}, using only 
information from  the second derivatives of $H$. This degenerate situation is reminiscent of the well-known {\em Thomson heptagon problem} of
stability of a ring of $7$ vortices on the plane (see e.g. \cite{Ku2,Schmidt}). 
\end{remark}

\subsubsection*{$N=8$}

Using CAPs we establish stability of the branch $(n_2,m_4,p_0)$ emerging from the cubic antiprism for 
$0.2\leq \omega \leq 1.6$ which corresponds to $0.31\leq \mu \leq 2.6$. We conjecture that this branch (illustrated at equilibrium in Figure \ref{Fig:GSN8Z4})
is born stable and minimises $H$ for small fixed values of $\mu$. The branch is illustrated in the leftmost panel
of Figure \ref{Fig:CAP8} where one can appreciate that the two rings are staggered. 

Our investigations showed that this branch bifurcates into $(n_4,m_2,p_0)$ around  $\omega = 1.61$ and $\mu = 2.61$. We give a CAP of 
the stability of this new branch on the interval  $1.62 \leq \omega \leq 1.9$ which corresponds to $2.62 \leq \mu \leq 3.05$, and is illustrated in 
the middle panel of Figure \ref{Fig:CAP8}. This branch loses stability at around  $\omega = 1.94$ and $\mu = 3.12$ and we conjecture 
that an asymmetric
RE gains stability around these values since  we do not observe a bifurcation in the corresponding map $F$ \eqref{eq:augmap} with $m=2$ 
and no other apparent symmetries are visible.

\begin{figure}[hh]
\centering
\includegraphics[width=15cm]{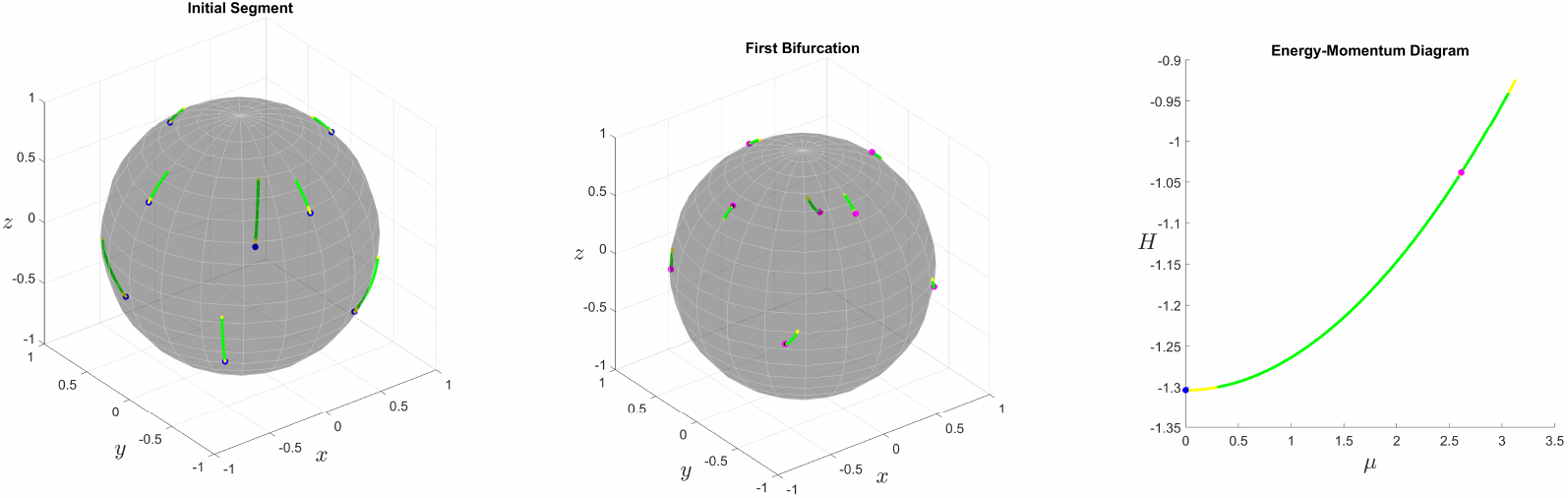}
\caption{\small{CAP of existence and stability of RE for $N=8$ near the ground state. The branch on the leftmost panel is of type $(n_2,m_4,p_0)$ with staggered
rings and the branch in the middle is of type $(n_4,m_2,p_0)$. }\label{Fig:CAP8} }
\end{figure}

\subsubsection*{$N=9$}

Our CAP establishes stability of the branch  $(n_3,m_3, p_0)$ on the interval $0.3 \leq \omega \leq 4.98$ corresponding
to $0.45 \leq \mu \leq 5.93$ and we conjecture that this branch (illustrated at equilibrium in Figure \ref{Fig:GSN9Z3}) is born stable and 
minimises $H$ for fixed small positive values of $\mu$. We found
2 bifurcations that maintain the $(n_3,m_3, p_0)$ symmetry around $\omega = 4.99$ and $\omega = 5.22$ 
(which correspond to $\mu = 5.94$ and $\mu = 6.05$). For these new branches we give CAPs of stability on the intervals
$5.03 \leq \omega \leq 4.98$, and $5.225 \leq \omega \leq 7.53$, which correspond to 
 $5.96 \leq \mu \leq 6.03$, and $6.06 \leq \mu \leq 6.85$. These results are illustrated in Figure  \ref{Fig:CAP9}.

\begin{figure}[hh]
\centering
\includegraphics[width=15cm]{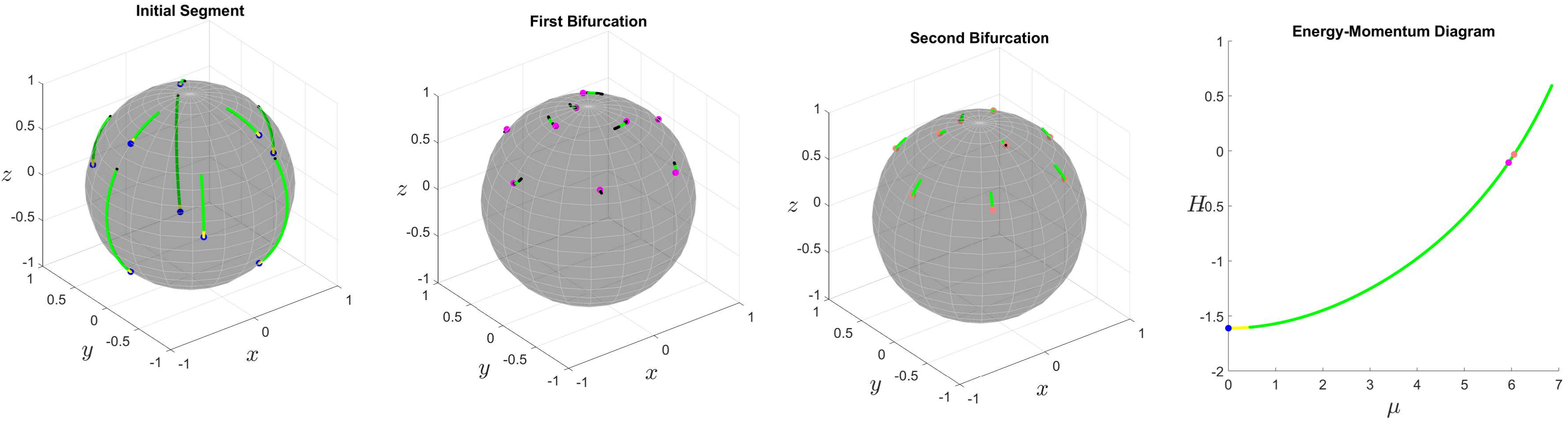}
\caption{\small{CAP of existence and stability of RE for $N=9$ near the ground state. All branches are of type $(n_3,m_3, p_0)$. }\label{Fig:CAP9} }
\end{figure}

\subsubsection*{$N=10$}

Our investigations indicated that the symmetric branches emerging from the equilibrium  (i.e. $(n_2,m_4, p_0)$ and
$(n_5,m_2, p_0)$ illustrated in Figures \ref{Fig:GSN10Z4} and \ref{Fig:GSN10Z2}) fail the stability test of Section \ref{sec:Stability}. Therefore, we conjecture that the stable branch that
minimises $H$ for small fixed values of $\mu$ is asymmetric. 

We  looked for this asymmetric minimising branch 
numerically and, in view of Remark \ref{rmk:continuation},  were forced to implement our continuations starting from 
 $\omega$ away from zero. We were
only able to produce a CAP of existence of this branch for $1\leq \omega \leq 2.27$ which corresponds to $1.5\leq \mu \leq 3.5$. On the other
hand, we give a CAP of stability for the interval $1.56 \leq \omega \leq 2.2$, which corresponds to $2.39 \leq \mu \leq 3.40$. 
The branch remains stable until a bifurcation around $\omega = 2.29$ and $\mu = 3.53$, after which we could not find a stable branch.
These results are illustrated in Figure \ref{Fig:CAP10}.

\begin{figure}[hh]
\centering
\includegraphics[width=15cm]{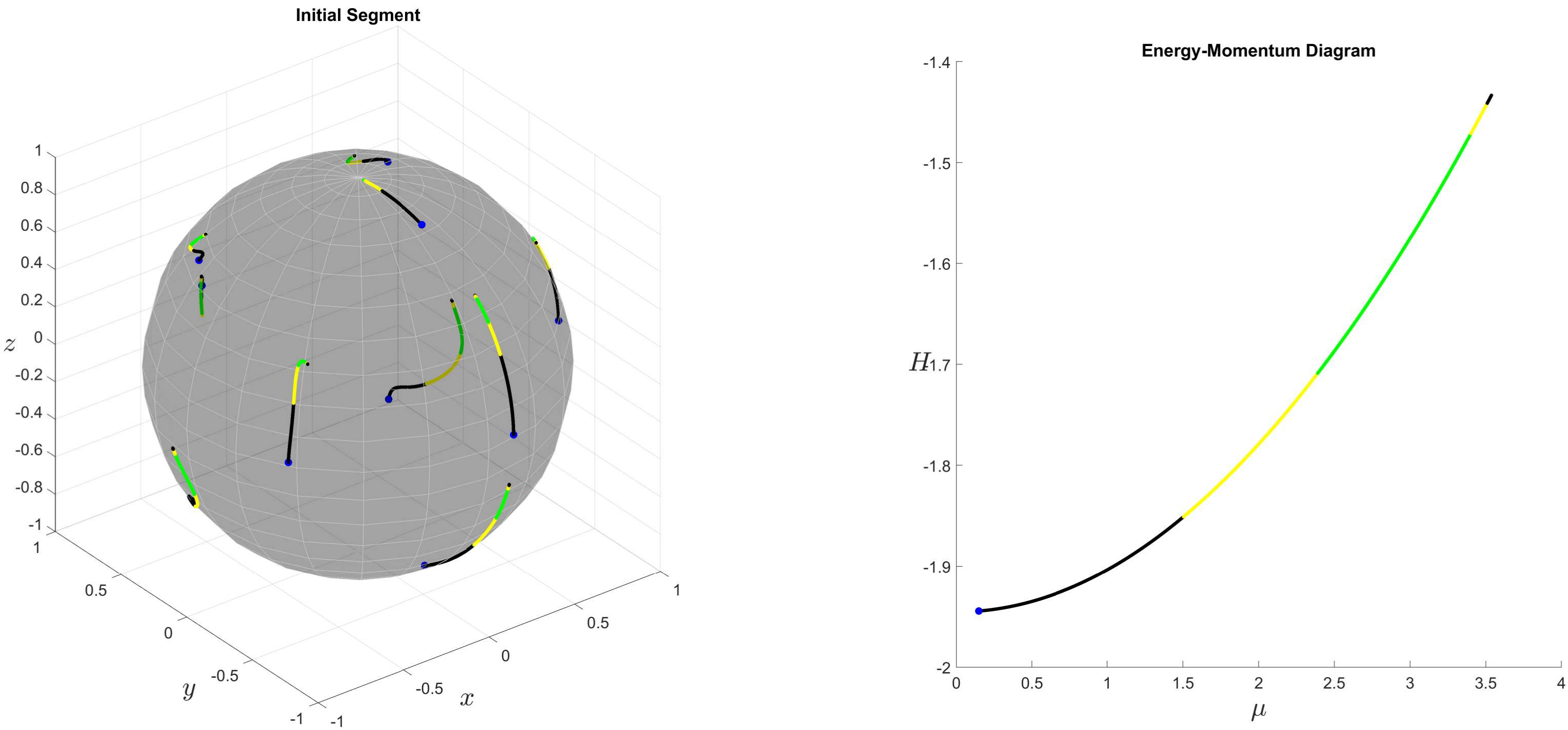}
\caption{\small{CAP of existence and stability of RE for $N=10$ near the ground state. The branch has no symmetry. }\label{Fig:CAP10} }
\end{figure}

\subsubsection*{$N=11$}

We give a CAP of stability of the branch $(n_5,m_2, p_1)$ on the interval $0.5 \leq \omega \leq 1.26$ which corresponds to $0.74 \leq \mu \leq 1.88$, as illustrated in the
left panel of Figure  \ref{Fig:CAP11}. We conjecture that this branch (illustrated in Figure \ref{Fig:GSN11Z2}) is born stable and minimises $H$ for small fixed values of $\mu$.
This branch bifurcates and loses stability around $\omega = 1.28$ and $\mu = 1.90$. We were able to give a CAP of stability 
of an asymmetric branch in the interval $1.35 \leq \omega \leq 1.46$ which corresponds to $2.02 \leq \mu \leq 2.19$ (illustrated in the middle panel of  Figure  \ref{Fig:CAP11}). 
 This branch undergoes another bifurcation around $\omega = 1.47$ and $\mu = 2.21$ but we were not able to find a stable branch afterwards.

\begin{figure}[hh]
\centering
\includegraphics[width=15cm]{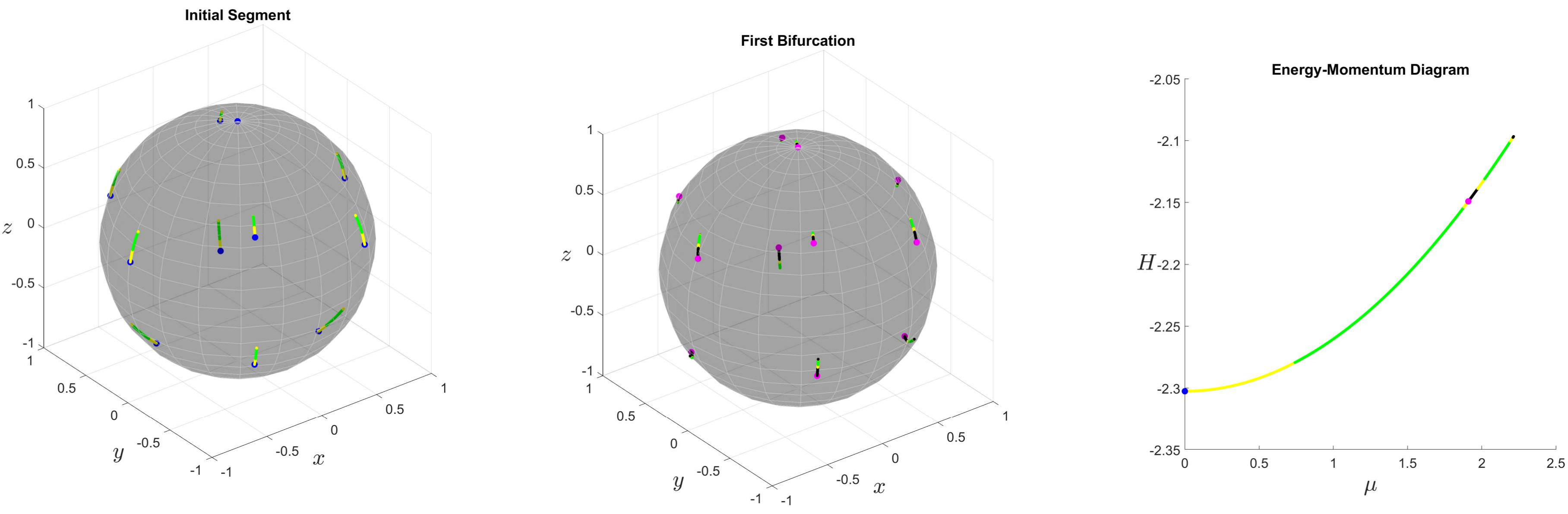}
\caption{\small{CAP of existence and stability of RE for $N=11$ near the ground state. The first branch is $(n_5,m_2, p_1)$ and the second has no
symmetries. }\label{Fig:CAP11} }
\end{figure}

\subsubsection*{$N=12$}

We established stability of the branch  $(n_4,m_3, p_0)$ with the CAP illustrated in Figure  \ref{Fig:CAP12} on the interval  $0.9 \leq \omega \leq 2.84$ which corresponds to $1.31 \leq \mu \leq 4.26$. We conjecture that this branch (illustrated at equilibrium in Figure \ref{Fig:GSN12Z3}) is born stable and minimises $H$ for fixed small positive values of $\mu$. This branch loses stability around 
 $\omega = 2.85$ and $\mu = 4.28$. We claim 
that an asymmetric
RE gains stability around these values since we do not observe a bifurcation in the corresponding map $F$ \eqref{eq:augmap} with $m=3$ 
and no other apparent symmetries are visible.

\begin{figure}[hh]
\centering
\includegraphics[width=15cm]{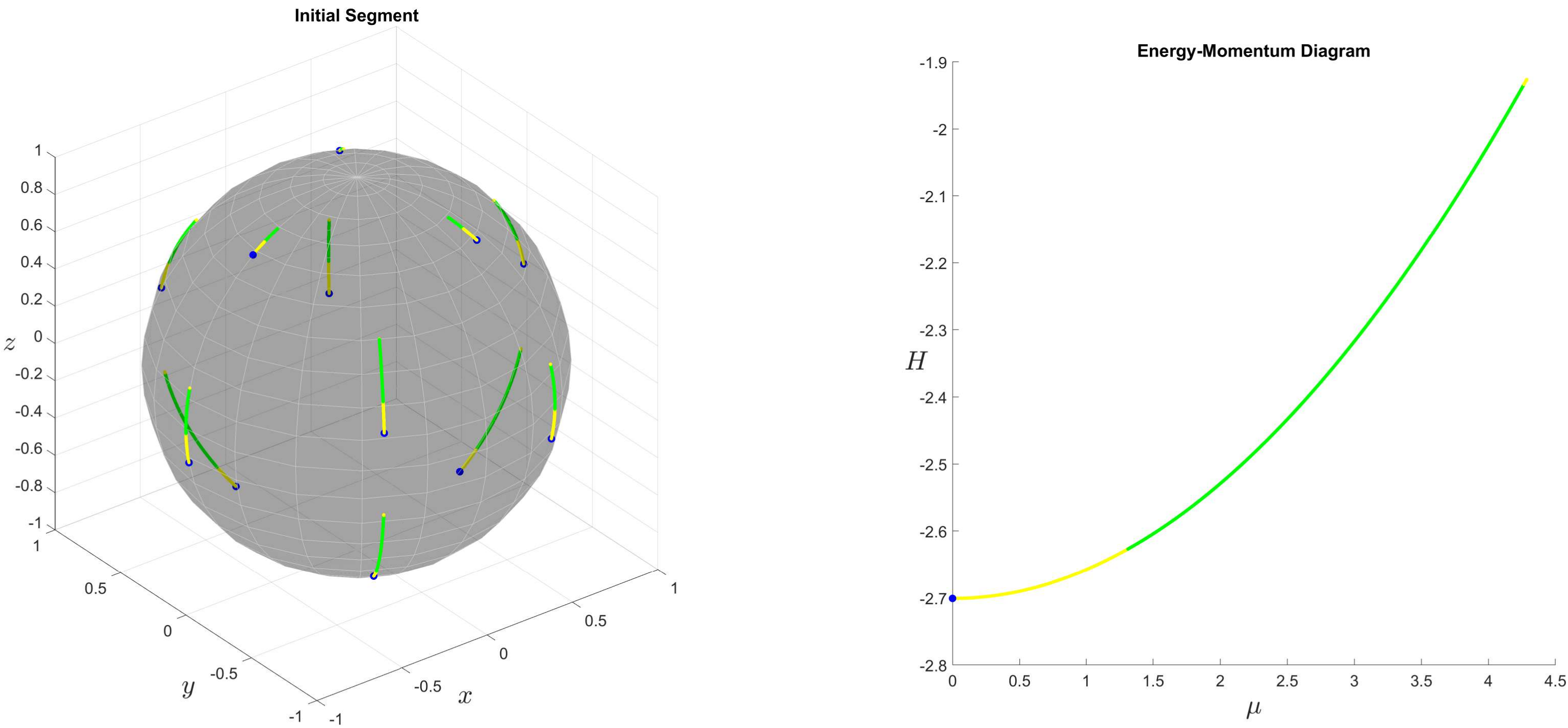}
\caption{\small{CAP of existence and stability of RE for $N=12$ near the ground state. The  branch is $(n_4,m_3, p_0)$. }\label{Fig:CAP12} }
\end{figure}

\subsection{Stable RE near  total collision}
\label{ss:RE-totalcollision}

We now focus on investigating stable RE near total collision at the North pole. It is easy to see, from the expression of $\Phi$ in \eqref{eq:Phi}, that this corresponds to  $\mu$ approaching the 
upper bound $N$ of $\Phi$.

The simplest RE of the equations of motion \eqref{eq:motion} is $(n_1,m_N,p_0)$ (consisting of a single horizontal ring where  the 
 vortices are located at the  vertices of a regular $N$-gon). Such RE is known to be (linearly) unstable if $N\geq 7$ and instead (nonlinearly) stable if $N\leq 6$ and the height $z$ of the ring is sufficiently
close to one of the poles (the threshold value of $z$ is given in Table \ref{eq:table-threshold1ring} below). These 
conclusions follow from \cite[Theorem 5.2]{Mo11} and  appear in earlier references \cite{PolvDritschel,BoattoCabral}.
 It is reasonable to conjecture that for $N\leq 6$ these RE minimise $H$
for fixed values of $\mu$ sufficiently close to $N$ (i.e. near total collision).

The  presence of an additional vortex at the North pole has a stabilising effect. Indeed, as follows from   \cite[Theorem 6.2]{Mo11} (see also 
\cite{CabralMeyerSchmidt}) the RE  $(n_1,m_{N-1},p_1)$ (consisting of 
 one vortex at the North pole and   a  regular horizontal $(N-1)$-gon) is stable for $N\leq 9$ 
 provided again that the height  $z$ of the ring containing the polygon is sufficiently
close to the North pole (the threshold value of $z$ is given in Table \ref{eq:table-threshold1ring} below). Therefore, it is reasonable to conjecture that for $N=7,8,9$ these RE minimise $H$
for fixed values of $\mu$ sufficiently close to $N$ and this conjecture is supported by numerical experiments.

\begin{table}[h]
\begin{center}
\begin{tabular}[[c]{|c|c|c|}%
\hline
$N$  &$ \begin{array}{cc} \mbox{Stability region of} \\  \mbox{the RE $(n_1,m_N,p_0)$} \end{array} $ & $ \begin{array}{cc} \mbox{Stability region of} \\  \mbox{the RE $(n_1,m_{N-1},p_1)$} \end{array} $  \\ \hline
$4 $ & $|z|> \frac{1}{\sqrt{3}}  $&$z>-\frac{1}{3}$ \\ \hline
$5$  &$ |z|> \frac{1}{\sqrt{2}}$ &$z>0$  \\ \hline
$6$ &$ |z|> \frac{2}{\sqrt{5}} $&$z> \frac{1}{5} (\sqrt{6}-1)$ \\ \hline
$7$ &   - &$z>\frac{1}{6} (\sqrt{19}-1)   $\\ \hline
   $8$ & -  & $z>\frac{5}{7}$\\ \hline
 $  9$  & - & z>$\frac{1}{8} (\sqrt{65}-1)$ \\ \hline
\end{tabular}
\end{center}
\caption{Stability regions for RE with 1 ring with (right column) and without (left column) a vortex at the North pole. As usual, $N$ is the total
number of vortices and all of them have identical strengths. }
\label{eq:table-threshold1ring}
\end{table}

%
%
%

For $N\geq 10$ both branches $(n_1,m_N,p_0)$ and $(n_1,m_{N-1},p_1)$ are always unstable and 
there is no obvious candidate for the branch of (stable) RE minimising the Hamiltonian $H$ as the momentum
$\mu\to N$ (i.e. as the vortices approach total collision). Below we attempt to give a partial answer to this question for $N=10,11,12$. Starting from a numerical approximation
of such minimiser for a specific $\mu$ close to $N$, we applied the procedure
of section \ref{sec:CAPs} to prove existence and stability of a RE. However,
%
%
in contrast with the approach followed in section \ref{ss:CAP-RE-groundstates}, 
due to the large fluctuations of the energy and angular velocity 
obtained for small configuration variations near total collision, we did not attempt to validate 
branches of RE but had to settle with the analysis of an isolated one (that we chose to have $\omega=50$). The RE that we found
are illustrated in Figure \ref{Fig:LargeRE} and approximated numerically  in Appendix \ref{App:NumCollision}. They all appear to have a central triangle with the remaining vortices organised more or less uniformly 
in an outer ring. We were only able to make precise symmetry statements for $N=12$.

\subsubsection*{$N=10$}

We proved existence and stability of the RE given in \eqref{eq:RELarge10} for  $\omega =50$. The 
tolerance bound on  each coordinate is $4 \times 10^{-13}$. This RE is illustrated in Figure \ref{Fig:CAP10Large}. Although
it appears to have some symmetry, we are unable to make any precise statements about it. Note however that there  are
 four pairs of  vortices  whose heights differ by at most $8 \times 10^{-13}$ (i.e. the pairs $(a_1, a_6), (a_2, a_4), (a_5, a_{10})$ and $(a_8, a_9)$ in \eqref{eq:RELarge10}).

\subsubsection*{$N=11$}

For  $\omega =50$, we proved the existence of the RE illustrated in Figure \ref{Fig:CAP11Large} and whose coordinates are given by
\eqref{eq:RELarge11} (with  
tolerance bound on  each coordinate of $6 \times 10^{-11}$). As above, we are unable to make any precise statements about the symmetries of this RE. We can only establish that the height of 
 4  pairs of  vortices   differs by at most $2\times 10^{-10}$ (i.e. the pairs $(a_1,a_{10}), (a_2,a_8), (a_5, a_7)$ and $(a_6,a_{11})$ in \eqref{eq:RELarge11}) while the heights of the others are shown to be distinct.  For
  this RE we can only report numerical evidence of stability since our CAP for stability failed due to 
 the clustering of eigenvalues.

\subsubsection*{$N=12$}

This time, for $\omega=50$, we obtained CAPs of the existence and stability  of a $\Z_3$-symmetric 
RE of type $(n_4,m_3,p_0)$ whose generators are given by 
\eqref{eq:RELarge12} (with  
tolerance bound on each coordinate of $3 \times 10^{-13}$). Our proof also shows that the height of two of the four rings (those corresponding to $u_3$ and
$u_4$ in \eqref{eq:RELarge12}) differ by at most $5\times 10^{-13}$. This RE is illustrated in Figure \ref{Fig:CAP12Large}.

\begin{figure}[ht]
\begin{subfigure}{.31\textwidth}
  \centering
  \includegraphics[width=.9\linewidth]{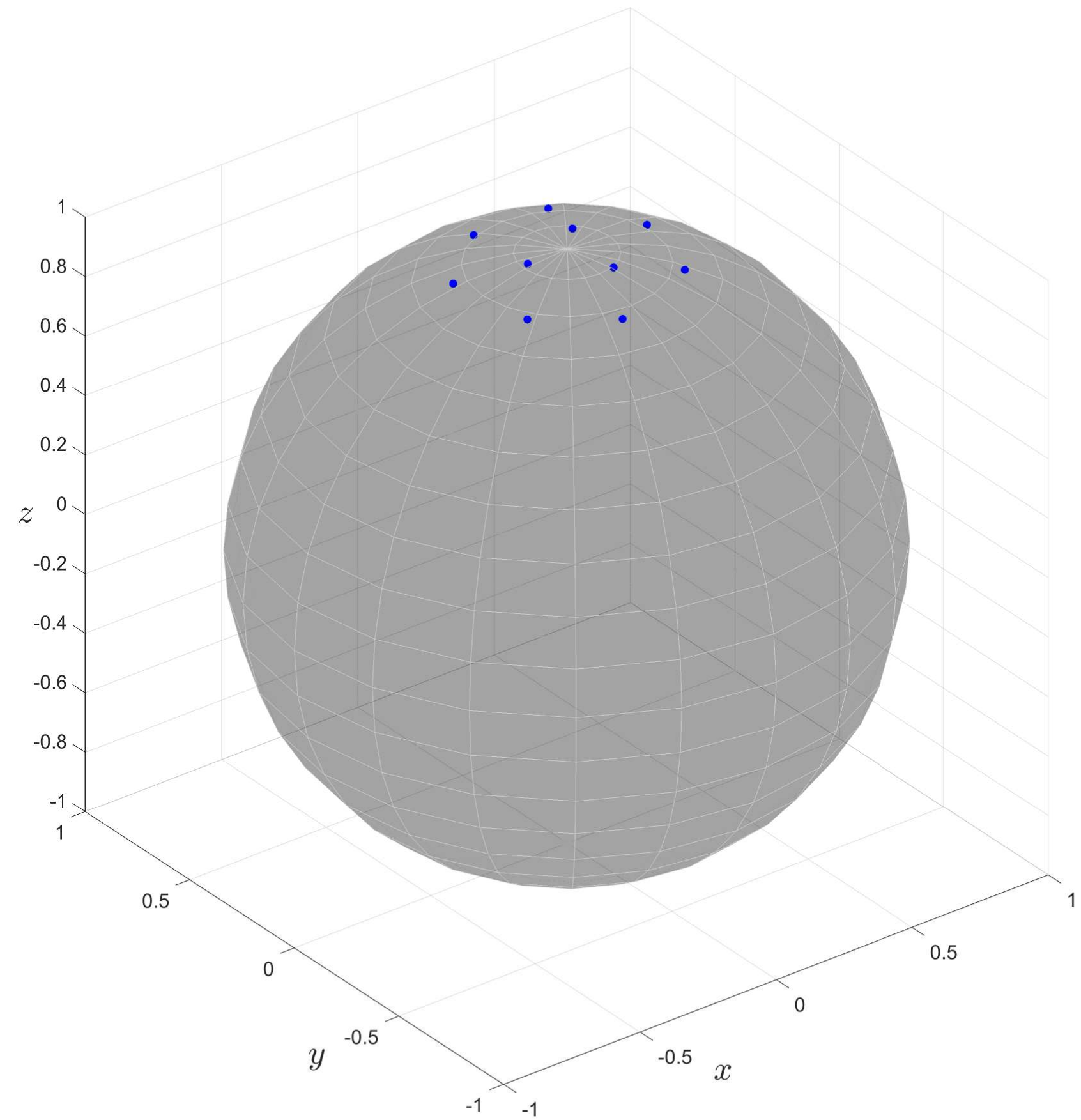}  
  \caption{$N=10$}
  \label{Fig:CAP10Large}
\end{subfigure}
\quad
\begin{subfigure}{.31\textwidth}
  \centering
  \includegraphics[width=.9\linewidth]{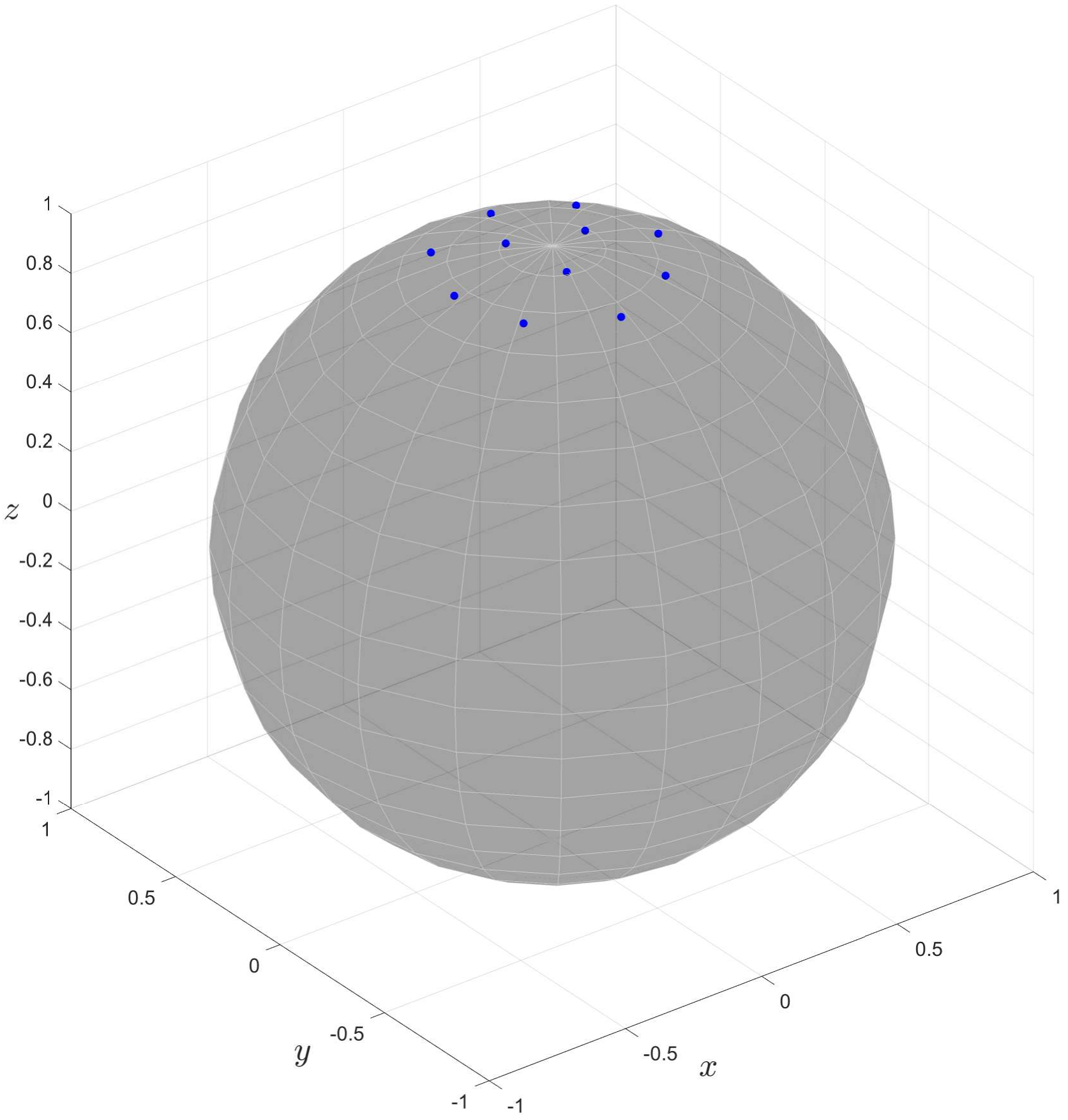}  
  \caption{$N=11$}
  \label{Fig:CAP11Large}
\end{subfigure}
\quad
\begin{subfigure}{.31\textwidth}
  \centering
  \includegraphics[width=.9\linewidth]{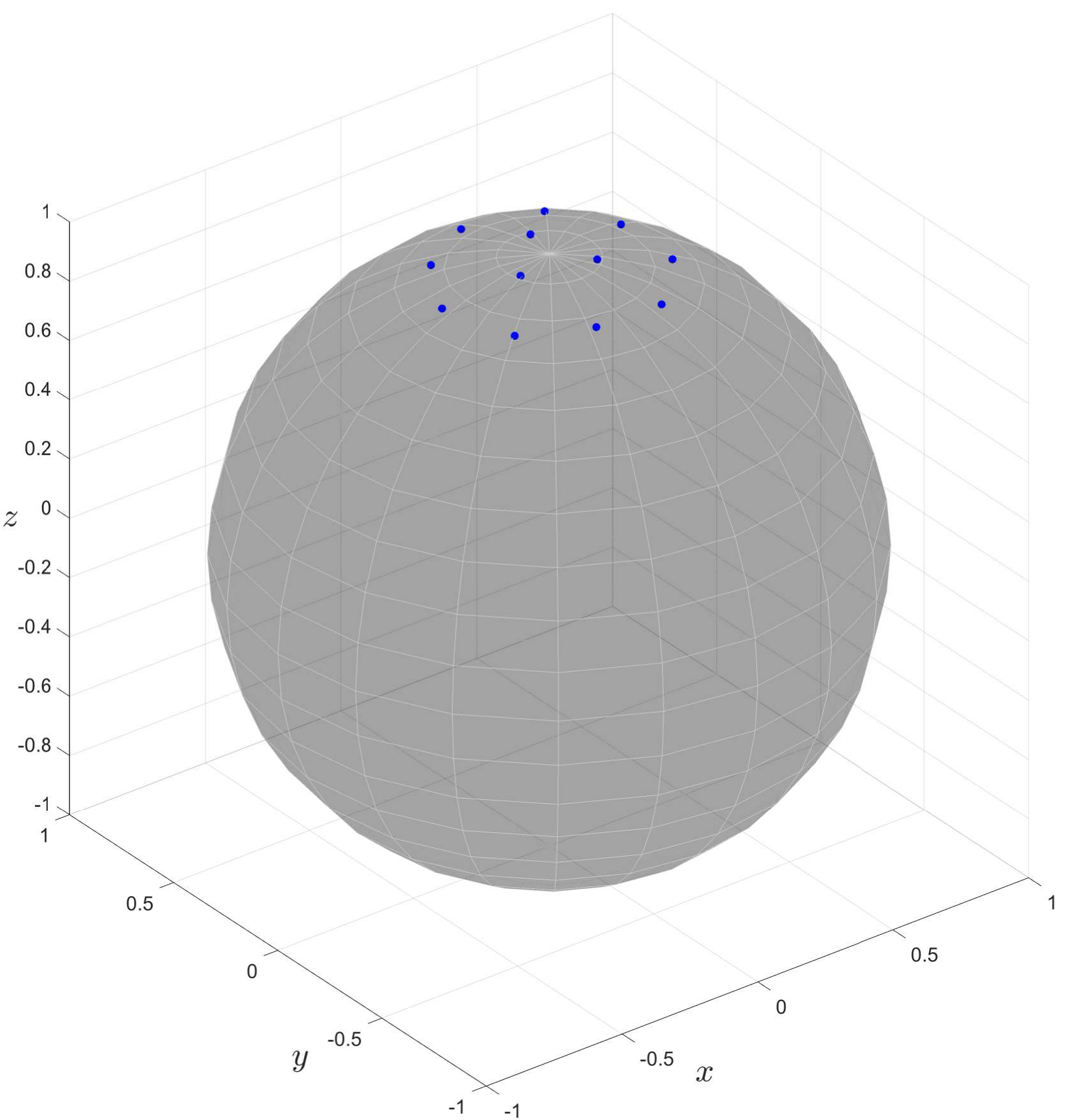}  
  \caption{$N=12$}
  \label{Fig:CAP12Large}
\end{subfigure}
\caption{Relative equilibria near total 
collision having  $\omega=50$. Their existence is established via CAPs (with tolerance bounds given in the text). On the other hand, their stability has also been demonstrated for  $N=10$ and $N=12$ with a CAP whereas for $N=11$ we only have numerical evidence of it.}
\label{Fig:LargeRE}
\end{figure}

\section*{Acknowledgements}
We are grateful to C. Beltr\'an for indicating some references to us on Smale's problem \#7. KC an JPL acknowledge support from NSERC. CGA 
 acknowledges support for his research from the Program UNAM-PAPIIT-IA100423.
LGN acknowledges support from the projects MIUR-PRIN 20178CJA2B {\em New Frontiers of Celestial Mechanics: theory and applications} and
MIUR-PRIN 2022FPZEES {\em Stability in Hamiltonian dynamics and beyond}.

\section*{Statements and declarations} 

The authors have no competing interests to declare that are relevant to the content of this article.

\section*{Data availability statement} 

The code to reproduce the CAPs presented in our paper is available in \cite{KevinCode}.

\appendix

\section{Geometric preliminaries}
\label{s:geometry}

The phase space $M$ of the system is the open subset of  $(S^2)^N=S^2\times \dots \times S^2$ ($N$ copies) obtained by
removing the collision configurations.
The component $v_j$ of the element $v=(v_1,\dots, v_N)\in M$ specifies the position of the 
$j^{th}$ vortex.  Throughout the paper we represent $(S^2)^N$ as the embedded submanifold
in $(S^2)^N$ specified by  $(S^2)^N:=R^{-1}(0)$ where $R:=(R_1,\dots, R_N):(\R^3)^N\to \R^N$,
\begin{equation*}
R_j: (\R^{3})^N \to \R, \qquad (x_1,\dots, x_N)\mapsto \frac{1}{2}(\|x_j\|^2-1).
\end{equation*}
This embedding induces a convenient  identification of tangent vectors to $M$ with vectors in $\R^{3N}$.
Specifically, if  $v=(v_1,\dots, v_N)\in M$ we identify
\begin{equation*}
T_vM=V_v:=\{ \, {\bf w}=(w_1,\dots, w_N) \in (\R^{3})^N \, : \, v_j\cdot w_j=0 \, \},
\end{equation*}
where $\cdot$ is the euclidean scalar product in $\R^3$.

Let  $f:M \to \R$. The differential of $f$ at $v\in M$ is the linear map $df(v):V_v\to \R$ 
defined intrinsically by 
\begin{equation*}
df(v)({\bf w})= L_Xf  (v), \qquad {\bf w}\in V_v,
\end{equation*}
where $X$ is any locally defined vector field on $M$ satisfying 
$X(v)={\bf w}$ and $L_X$ denotes the Lie derivative along $X$. Note that $df(v)\in V_v^*$. 
If $\tilde f: (\R^{3})^N \to \R$ is any smooth extension of $f$, then
\begin{equation*}
df(v)({\bf w})= \langle \nabla_x \tilde f (v) ,   {\bf w} \rangle,
\end{equation*}
where $\langle\cdot  ,  \cdot \rangle$ is the euclidean inner product in $(\R^3)^N$ and $\nabla_x \tilde f (v)$ is 
the standard gradient of $\tilde f$ evaluated at $v\in (\R^3)^N$.

A point $a\in M$ is a critical point of $f$ if $df(a)=0$. Hence, 
 $a$ is a critical point of 
$f$ if and only if for any smooth extension $\tilde f: (\R^{3})^N \to \R$  of $f$ one has
\begin{equation*}
\langle \nabla_x \tilde f (a) ,   {\bf w} \rangle =0  \qquad \forall  {\bf w} \in V_a.
\end{equation*}
In other words, $ \nabla_x \tilde f (a)\in V_a^\perp$.
Considering that $V_a^\perp=\mathrm{span}\{ \nabla_xR_i(a) \, :\, i=1,\dots, N\}$, we conclude that 
if $a\in M$ is a critical point of $f$ and $\tilde f$ is an extension then:
\begin{enumerate}
\item[(a)]  There  
exist Lagrange multipliers $c_1,\dots, c_N\in \R$ such that
\begin{equation*}
 \nabla_x \tilde f (a)+ \sum_{i=1}^N c_i \nabla_x R_i(a)=0.
\end{equation*}
\item[(b)]  Define the function $\tilde f^*: (\R^{3})^N\times \R^N\to \R$  by
\begin{equation*}
\tilde f^*  \left ( x, \lambda \right ) := \tilde f(x)+
\sum_{i=1}^N\lambda_i R_i(x), \qquad x=(x_1,\dots, x_N)\in  (\R^{3})^N, \; \lambda=(\lambda_1,\dots,
\lambda_N)\in \R^N.
\end{equation*}
Then $\tilde f^*$ has a critical point at $(a,c)$, with $c=(c_1,\dots, c_N)$.
\end{enumerate}

Continue to assume that $a\in M$ is a critical point of $f:M\to \R$. The second derivative of
$f$ at $a$ is the symmetric bilinear form $d^2f(a):V_a\times V_a \to \R$ defined intrinsically by 
\begin{equation*}
d^2f(a)({\bf w}_1, {\bf w}_2)=L_{X_1} L_{X_2} f (a)=L_{X_2} L_{X_1} f (a),
\end{equation*}
where $X_1$ and $X_2$ are any locally defined vector fields on $M$ satisfying 
$X_1(a)={\bf w}_1$, $X_2(a)={\bf w}_2$.
With the definitions introduced above we have 
\begin{equation*}
 d^2f(a)({\bf w}_1, {\bf w}_2)= {\bf w}_1^T   \nabla^2_x \tilde f^* (a,c)
 {\bf w}_2 , \qquad \forall {\bf w}_1, {\bf w}_2 \in V_a,
\end{equation*}
where $ \nabla^2_x \tilde f^* (a,c)$  denotes the standard Hessian matrix
of $x\mapsto \tilde f^*(x,c)$ evaluated at $x=a$.

\section{Computation of the gradient and Hessian matrix of the reduced
Hamiltonian}

\label{App:C}

Below we give explicit expressions for the gradient and Hessian matrix of
the reduced Hamiltonian $h$ given by \eqref{eq:h(u)} interpreted as function 
$h:\R^{3n}\to \R$. Throughout this appendix we simplify the notation and
write  $g:=g_{m}\in SO(3)$ and denote by $I$ the $3\times 3$ identity matrix.

\subsection{Gradient $\protect\nabla_u h(u)$.}

The gradient of the reduced Hamiltonian $h$ has components 
\begin{equation}  \label{eq:gradh}
\nabla _{u_{j}}h(u)=-m\sum_{i=1}^{m-1}\frac{u_{j}-g^{i}u_{j}}{\left\|
u_{j}-g^{i}u_{j}\right\| ^{2}}-m\sum_{j^{\prime }=1(j^{\prime }\neq
j)}^{n}\sum_{i=1}^{m}\frac{u_{j}-g^{i}u_{j^{\prime }}}{\left\|
u_{j}-g^{i}u_{j^{\prime }}\right\| ^{2}}-m\sum_{f\in F}\frac{u_{j}-f}{%
\left\| u_{j}-f\right\| ^{2}}.
\end{equation}

To see this, write $M:=I-g^{i}$ and compute: 
\[
\nabla _{u}\ln \left\| Mu\right\| ^{2}=\frac{1}{\left\| Mu\right\| ^{2}}%
\nabla _{u}\left\langle Mu,Mu\right\rangle =\frac{1}{\left\| Mu\right\| ^{2}}%
\nabla _{u}\left\langle M^{T}Mu,u\right\rangle =\frac{1}{\left\| Mu\right\|
^{2}}\left( 2M^{T}M\right) u. 
\]%
Since $M^{T}M=(I-g^{-i})+(I-g^{i})$, we conclude%
\[
\nabla _{u_{j}}\sum_{i=1}^{m-1}\ln \left\| u_{j}-g^{i}u_{j}\right\|
^{2}=2\sum_{i=1}^{m-1}\frac{(I-g^{-i})u+(I-g^{i})u}{\left\| \left(
I-g^{i}\right) u\right\| ^{2}}=4\sum_{i=1}^{m-1}\frac{(I-g^{i})u}{\left\|
\left( I-g^{i}\right) u\right\| ^{2}}. 
\]%
Combining these identities with \eqref{eq:h(u)} proves \eqref{eq:gradh}.

\subsection{Hessian matrix $\protect\nabla^2h(u)$.}

We now describe the Hessian matrix $\nabla _{u}^{2}h(u)$ in terms of the $%
3\times 3$ blocks $D_{u_{j^{\prime }}}\nabla _{u_{j}}h(u)$, for the indices $%
j,j^{\prime }\in \{1,\dots, n\}$.

\subsubsection*{Case $j^{\prime }\neq j$.}

In this case we have:

\begin{equation}  \label{eq:hessh1}
D_{u_{j^{\prime }}}\nabla _{u_{j}}h(u)=m\sum_{i=1}^{m}\frac{1}{\left\|
u_{j}-g^{i}u_{j^{\prime }}\right\| ^{4}}\left( g^{i}\left\|
u_{j}-g^{i}u_{j^{\prime }}\right\| ^{2}-2\left( u_{j}-g^{i}u_{j^{\prime
}}\right) \left( u_{j}-g^{i}u_{j^{\prime }}\right) ^{T}g^{i}\right).
\end{equation}

To prove \eqref{eq:hessh1}, start with \eqref{eq:gradh} and use the
condition $j^{\prime }\neq j$ to obtain 
\[
\begin{split}
D_{u_{j^{\prime }}}\nabla _{u_{j}}h(u)& =m\sum_{i=1}^{m}g^{i}D_{u_{j^{\prime
}}}\frac{u_{j^{\prime }}-g^{-i}u_{j}}{\left\Vert u_{j^{\prime
}}-g^{-i}u_{j}\right\Vert ^{2}}=m\sum_{i=1}^{m}g^{-i}D_{u_{j^{\prime }}}%
\frac{u_{j^{\prime }}-g^{i}u_{j}}{\left\Vert u_{j^{\prime
}}-g^{i}u_{j}\right\Vert ^{2}} \\
& =m\sum_{i=1}^{m}g^{-i}\left( \frac{I}{\left\Vert u_{j^{\prime
}}-g^{i}u_{j}\right\Vert ^{2}}-\left\Vert u_{j^{\prime
}}-g^{i}u_{j}\right\Vert ^{-4}\left( u_{j^{\prime }}-g^{i}u_{j}\right)
\left( D_{u_{j^{\prime }}}\left\Vert u_{j^{\prime }}-g^{i}u_{j}\right\Vert
^{2}\right) \right) .
\end{split}%
\]%
Since $D_{u_{j^{\prime }}}\left\Vert u_{j^{\prime }}-g^{i}u_{j}\right\Vert
^{2}=2\left( u_{j^{\prime }}-g^{i}u_{j}\right) ^{T}$, we may rewrite the
above expression as%
\begin{eqnarray*}
D_{u_{j^{\prime }}}\nabla _{u_{j}}h(u) &=&m\sum_{i=1}^{m}g^{-i}\frac{1}{%
\left\Vert u_{j^{\prime }}-g^{i}u_{j}\right\Vert ^{4}}\left( I\left\Vert
u_{j^{\prime }}-g^{i}u_{j}\right\Vert ^{2}-2\left( u_{j^{\prime
}}-g^{i}u_{j}\right) \left( u_{j^{\prime }}-g^{i}u_{j}\right) ^{T}\right) ,
\\
&=&m\sum_{i=1}^{m}g^{i}\frac{1}{\left\Vert u_{j}-g^{i}u_{j^{\prime
}}\right\Vert ^{4}}\left( I\left\Vert u_{j}-g^{i}u_{j^{\prime }}\right\Vert
^{2}-2\left( u_{j^{\prime }}-g^{-i}u_{j}\right) \left( u_{j^{\prime
}}-g^{-i}u_{j}\right) ^{T}\right) , \\
&=&m\sum_{i=1}^{m}g^{i}\frac{1}{\left\Vert u_{j}-g^{i}u_{j^{\prime
}}\right\Vert ^{4}}\left( I\left\Vert u_{j}-g^{i}u_{j^{\prime }}\right\Vert
^{2}-2g^{-i}\left( g^{i}u_{j^{\prime }}-u_{j}\right) \left(
g^{i}u_{j^{\prime }}-u_{j}\right) ^{T}g^{i}\right) ,
\end{eqnarray*}%
which simplifies to \eqref{eq:hessh1}.

\subsubsection*{Case $j^{\prime }= j$.}

In this case we have:

\[
D_{u_{j}}\nabla _{u_{j}}h(u)=\mathcal{A}_{1}(u)+\mathcal{A}_{2}(u)+\mathcal{A%
}_{3}(u),
\]%
where the $3\times 3$ matrices $\mathcal{A}_{j}(u)$ arise from the
differentiation of the three terms in \eqref{eq:gradh}. We prove below that
these are given by 
\begin{equation}
\mathcal{A}_{1}(u)=-m\sum_{i=1}^{m-1}\left( \frac{(I-g^{i})}{\left\Vert
u_{j}-g^{i}u_{j}\right\Vert ^{2}}+\frac{2}{\left\Vert
u_{j}-g^{i}u_{j}\right\Vert ^{4}}\left( u_{j}-g^{i}u_{j}\right) \left(
u_{j}-g^{i}u_{j}\right) ^{T}(I-g^{i})\right) ,  \label{eq:A1}
\end{equation}%
\begin{equation}
\mathcal{A}_{2}(u)=-m\sum_{j^{\prime }=1(j^{\prime }\neq
j)}^{n}\sum_{i=1}^{m}\left( \frac{I}{\left\Vert u_{j}-g^{i}u_{j^{\prime
}}\right\Vert ^{2}}-\frac{2}{\left\Vert u_{j}-g^{i}u_{j^{\prime
}}\right\Vert ^{4}}\left( u_{j}-g^{i}u_{j^{\prime }}\right) \left(
u_{j}-g^{i}u_{j^{\prime }}\right) ^{T}\right) ,  \label{eq:A2}
\end{equation}%
\begin{equation}
\mathcal{A}_{3}(u)=-m\sum_{f\in F}\left( \frac{I}{\left\Vert
u_{j}-f\right\Vert ^{2}}-\frac{2}{\left\Vert u_{j}-f\right\Vert ^{4}}\left(
u_{j}-f\right) \left( u_{j}-f\right) ^{T}\right) .  \label{eq:A3}
\end{equation}

The validity of \eqref{eq:A2} and \eqref{eq:A3} follows from the identities 
\[
\begin{split}
D_{u_{j}}\frac{u_{j}-g^{i}u_{j^{\prime }}}{\left\Vert
u_{j}-g^{i}u_{j^{\prime }}\right\Vert ^{2}}& =\frac{I}{\left\Vert
u_{j}-g^{i}u_{j^{\prime }}\right\Vert ^{2}}-\frac{2}{\left\Vert
u_{j}-g^{i}u_{j^{\prime }}\right\Vert ^{4}}\left( u_{j}-g^{i}u_{j^{\prime
}}\right) \left( u_{j}-g^{i}u_{j^{\prime }}\right) ^{T}, \\
D_{u_{j}}\frac{u_{j}-f}{\left\Vert u_{j}-f\right\Vert ^{2}}& =\frac{I}{%
\left\Vert u_{j}-f\right\Vert ^{2}}-\frac{2}{\left\Vert u_{j}-f\right\Vert
^{4}}\left( u_{j}-f\right) \left( u_{j}-f\right) ^{T},
\end{split}%
\]%
which are deduced proceeding as above. For \eqref{eq:A1}, we write $M=I-g^{i}
$ as before and write 
\[
D_{u_{j}}\frac{u_{j}-g^{i}u_{j}}{\left\Vert u_{j}-g^{i}u_{j}\right\Vert ^{2}}%
=D_{u_{j}}\frac{Mu_{j}}{\left\Vert Mu_{j}\right\Vert ^{2}}=\frac{M}{%
\left\Vert Mu_{j}\right\Vert ^{2}}+Mu_{j}D_{u_{j}}\left( \left\Vert
Mu_{j}\right\Vert ^{2}\right) ^{-1}.
\]%
We have that $D_{u_{j}}\left( \left\Vert Mu_{j}\right\Vert ^{2}\right)
^{-1}=\left( \left\Vert Mu_{j}\right\Vert ^{2}\right)
^{-2}D_{u_{j}}\left\Vert Mu_{j}\right\Vert ^{2}$ and using the computation
of $D_{u_{j}}\left\Vert Mu_{j}\right\Vert ^{2}$ as before we obtain 
\[
D_{u_{j}}\frac{u_{j}-g^{i}u_{j}}{\left\Vert u_{j}-g^{i}u_{j}\right\Vert ^{2}}%
=\frac{M}{\left\Vert Mu_{j}\right\Vert ^{2}}+\frac{2}{\left\Vert
Mu_{j}\right\Vert ^{4}}Mu_{j}\left( Mu_{j}\right) ^{T}M.
\]%
Therefore, 
\[
D_{u_{j}}\frac{u_{j}-g^{i}u_{j}}{\left\Vert u_{j}-g^{i}u_{j}\right\Vert ^{2}}%
=\frac{I-g^{i}}{\left\Vert u_{j}-g^{i}u_{j}\right\Vert ^{2}}+\frac{2}{%
\left\Vert u_{j}-g^{i}u_{j}\right\Vert ^{4}}\left( u_{j}-g^{i}u_{j}\right)
\left( u_{j}-g^{i}u_{j}\right) ^{T}(I-g^{i}),
\]%
proving the validity of \eqref{eq:A1}.

\section{General non-symmetric symplectic slice}
\label{ss:NonSymmSympSlice}

Here we  construct a symplectic slice for a RE $(a,\omega)\in M\times \R$ which does not assume any symmetry.
We assume that both $\mu=\Phi(a)$ and $\omega$ are non-zero and that all vortices have equal strengths and $N\geq 3$.
Write  $a=(a_1,\dots, a_N)\in M$ and suppose that the arrangement of the vortices
 is such that $a_1$ and $a_2$ do not have equal nor opposite
latitudes, and are not poles (otherwise do a suitable permutation). We work with 
the identification $T_aM=V_a$ given by  \eqref{eq:tangent-space}.

Define the vectors $b_j, c_j\in \R^3$, $j=1,\dots, N$, as follows:
\begin{equation*}
\begin{split}
b_j:=\begin{cases} e_3\times a_j \quad \mbox{if $a_j$ is not a pole,} \\ e_1 \qquad \mbox{if $a_j$ is a pole,}\end{cases}\qquad 
c_j:=a_j\times b_j.
\end{split}
\end{equation*}

\begin{proposition}
\label{prop:slice-no-symmetry}
Under the condition that $a_1$ and $a_2$ do not have equal nor opposite
latitudes, and are not poles, the set $\{{\bf u}^{(j)}, {\bf v}^{(j)}\}_{j=1}^{N-1}$, whose elements are defined below, forms
a basis of a symplectic slice.
\begin{equation}
\label{eq:asymmetricSlice}
\begin{split}
{\bf u}^{(1)}&=\left ( a_1\times (b_2\times c_2), - a_1\times (b_2\times c_2), 0, \dots, 0\right ),\\
{\bf u}^{(j)}&=\left ( a_1\times (b_2\times b_{j+2}), - ( a_1\cdot  b_{j+2})  b_2,  0, \dots, 0, 
\stackrel{(j+2)^{th}}{ ( a_1\cdot b_2 ) b_{j+2}}, 0, \dots, 0 \right ), \quad j=2, \dots, N-2,\\
{\bf v}^{(j)}&=\left ( a_1\times (b_2\times c_{j+2}), - ( a_1\cdot  c_{j+2})  b_2,  0, \dots, 0,  \stackrel{(j+2)^{th}}{( a_1\cdot b_2)  c_{j+2}}, 0, \dots, 0 \right ), \quad j=1, \dots, N-2.
\end{split}
\end{equation}
\end{proposition}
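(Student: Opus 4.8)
The plan is to verify directly that the $2(N-1)$ vectors in \eqref{eq:asymmetricSlice} lie in $V_a$, that they span a space which is a complement of $\so(3)_\mu\cdot a$ inside $\ker d\Phi(a)$, and finally that this complement is a symplectic subspace (so that it is a genuine symplectic slice in the sense of \eqref{eq:symp-slice-condition}). First I would record the elementary geometry: since $a_1,a_2$ are not poles, $b_1=e_3\times a_1$ and $b_2=e_3\times a_2$ are nonzero, and $\{a_j,b_j,c_j\}$ is an orthogonal frame of $\R^3$ at each index where $a_j$ is not a pole (and likewise $\{a_j,e_1,c_j\}$ at a pole). The key scalar facts are $a_1\cdot b_2\neq 0$ and $a_1\cdot c_2\neq 0$: indeed $a_1\cdot b_2 = a_1\cdot(e_3\times a_2) = \det(e_3,a_2,a_1)$ vanishes exactly when $a_1,a_2,e_3$ are coplanar, i.e. when $a_1,a_2$ lie on a common meridian, which combined with $\|a_1\|=\|a_2\|=1$ and the hypothesis that they have neither equal nor opposite latitude cannot happen; a similar determinant computation handles $a_1\cdot c_2$. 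These non-vanishing statements are what make the listed vectors nonzero and, ultimately, independent.

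Next I would check membership in $V_a$ and in $\ker d\Phi(a)$. For $V_a$ one needs $a_j\cdot w_j=0$ in each slot: in the first slot every vector has the form $a_1\times(\text{something})$, which is automatically $\perp a_1$; in slot $2$ the entries are multiples of $b_2$, which is $\perp a_2$; in slot $j+2$ the entries are multiples of $b_{j+2}$ or $c_{j+2}$, both $\perp a_{j+2}$. For $\ker d\Phi(a)$, by \eqref{eq:kerdPhi} one must check that the sum of all the $\R^3$-components vanishes; by construction each ${\bf u}^{(j)},{\bf v}^{(j)}$ has only three (or two, for ${\bf u}^{(1)}$) nonzero slots whose sum telescopes to $0$ — this is the point of the sign pattern and of the coefficients $a_1\cdot b_2$, $a_1\cdot b_{j+2}$, etc., chosen via the vector identity $x\times(y\times z)=(x\cdot z)y-(x\cdot y)z$ so that $a_1\times(b_2\times b_{j+2})-(a_1\cdot b_{j+2})b_2+(a_1\cdot b_2)b_{j+2}=0$. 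So all $2(N-1)$ vectors lie in $\ker d\Phi(a)$, a space of dimension $2N-3$.

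Then comes the dimension/complement count. I would argue that $\{{\bf u}^{(j)},{\bf v}^{(j)}\}$ is linearly independent: a dependence relation, read slot by slot from $j=3$ downward (each slot $j+2\geq 5$ isolates the pair $({\bf u}^{(j-2)},{\bf v}^{(j-2)})$ up to the nonzero factor $a_1\cdot b_2$ and the independence of $b_{j+2},c_{j+2}$), forces all coefficients of ${\bf u}^{(2)},\dots,{\bf u}^{(N-2)},{\bf v}^{(1)},\dots,{\bf v}^{(N-2)}$ to vanish, and then slot $1$ plus independence of $b_2,c_2$ kills the coefficient of ${\bf u}^{(1)}$. Hence the span $\mathcal N_a$ has dimension $2N-2$. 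Since $\mu\neq 0$ and $\omega\neq 0$, Proposition \ref{prop:RE-as-crit-pts}(ii) gives $\mu\parallel e_3$, so $\so(3)_\mu\cdot a$ is the line spanned by $(J_3a_1,\dots,J_3a_N)=(b_1,\dots,b_N)$ (using $b_j=e_3\times a_j$ at non-poles; at a pole $J_3a_j=0$). A direct check shows this vector is \emph{not} in $\mathcal N_a$ — e.g. its slot $3$ is $b_3$ with no $c_3$ component and all coefficients forced as above would then be zero, contradiction — so $\mathcal N_a\cap(\so(3)_\mu\cdot a)=\{0\}$, and a dimension count $(2N-2)+1=2N-1=\dim\ker d\Phi(a)$ yields $\ker d\Phi(a)=\mathcal N_a\oplus(\so(3)_\mu\cdot a)$.

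Finally I would verify that $\mathcal N_a$ is symplectic, i.e. the restriction of $\Omega$ to it is nondegenerate; equivalently, since $\mathcal N_a$ is a complement of $\so(3)_\mu\cdot a$ in $\ker d\Phi(a)$ and $\ker d\Phi(a)=(\so(3)_\mu\cdot a)^{\Omega}$ by the standard momentum-map identity, one has $\mathcal N_a\cap\mathcal N_a^\Omega=\{0\}$ automatically from the reduction lemma (see \cite[Chapter 4]{lom}); alternatively one checks $\Omega$-nondegeneracy on the explicit basis by pairing ${\bf u}^{(j)}$ with ${\bf v}^{(j)}$ and using $\Omega=\sum\pi_i^*\omega_{S^2}$, where $\omega_{S^2}(w_1,w_2)=a_j\cdot(w_1\times w_2)$ in slot $j$. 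I expect the main obstacle to be the bookkeeping in the independence/complement argument — keeping track of which slot isolates which basis vector and confirming the nonvanishing of the relevant triple products $\det(e_3,a_2,a_1)$ and of $a_1\cdot c_2$ under the stated genericity hypothesis; the symplecticity itself should follow formally from the reduction framework already cited and requires essentially no computation.
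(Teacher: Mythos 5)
Your overall strategy is the same as the paper's: check that the listed vectors lie in $V_a$ and, via the identity $a_1\times(b_2\times w)=(a_1\cdot w)\,b_2-(a_1\cdot b_2)\,w$, that the slots of each vector sum to zero so that they lie in $\ker d\Phi(a)$; establish linear independence slot by slot using $a_1\cdot b_2\neq 0$; show transversality to $\so(3)_\mu\cdot a=\langle s_a\rangle$; and close with a dimension count. The one step where you genuinely diverge is the transversality argument. The paper first shows that the larger family $\gamma,\alpha^{(1)},\dots,\alpha^{(N-2)},\beta^{(1)},\dots,\beta^{(N-2)}$ is a basis of $\ker d\Phi(a)$ and then computes $s_a=\frac{1}{a_1\cdot b_2}\sum_j\alpha^{(j)}$ explicitly, using $\sum_j J_3a_j=e_3\times\Phi(a)=0$ (Proposition \ref{prop:RE-as-crit-pts}(ii)); the slice is obtained by deleting $\alpha^{(1)}$, whose coefficient in $s_a$ is nonzero. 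Your alternative — observing that the third slot of every element of the span is a multiple of $c_3$ while the third slot of $s_a$ is $b_3\perp c_3$ — is more direct and avoids the momentum identity altogether, but it silently assumes $a_3$ is not a pole (with the paper's convention that poles sit in the last slots this only matters for small $N$, but it should be said).

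Two concrete errors need fixing. First, the arithmetic: the displayed formulas define $1+(N-3)+(N-2)=2N-4$ vectors, not $2(N-1)$ (the index range $j=1,\dots,N-1$ in the proposition's statement is a typo), and $\ker d\Phi(a)$ has dimension $2N-3$, not $2N-1$ ($d\Phi(a):V_a\to\R^3$ is onto and $\dim V_a=2N$). Your two errors cancel so the equation $(\text{span})+1=\dim\ker d\Phi(a)$ still balances, but as written both premises are false; the correct count $(2N-4)+1=2N-3$ is exactly what makes the complement argument close, and it matches $\dim\mathcal{N}_a=2N-4$ from Lemma \ref{lemma:symp-slice-def}. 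Second, the claim $a_1\cdot c_2\neq 0$ is neither needed anywhere in your argument nor true in general: since $c_2=a_2\times b_2=e_3-z_2a_2$, one has $a_1\cdot c_2=z_1-z_2(a_1\cdot a_2)$, which can vanish for perfectly admissible configurations. (Relatedly, $a_1\cdot b_2=(a_2\times a_1)\cdot e_3$ vanishes precisely when $a_1,a_2$ have equal or opposite \emph{longitudes}, so your justification of $a_1\cdot b_2\neq0$ from the latitude hypothesis does not work as stated — though the paper's own formulation suffers from the same imprecision, and $a_1\cdot b_2\neq 0$ is the only nonvanishing actually used.) Your closing remark that the complement is automatically symplectic by the reduction lemma is correct but not required: the paper's definition \eqref{eq:symp-slice-condition} of a symplectic slice asks only for the direct-sum decomposition of $\ker d\Phi(a)$.
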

\begin{proof}
First note that the condition that $a_1$ and $a_2$ do not have opposite
latitudes, and are not poles guarantees that the scalar product $a_1\cdot b_2\neq 0$.
Consider the following $2(N-2)+1$ vectors in $(\R^3)^N$ 
\begin{equation*}
\begin{split}
\boldsymbol{ \gamma}&=\left ( a_1\times (b_2\times c_2), - a_1\times (b_2\times c_2), 0, \dots, 0\right ),\\
\alpha^{(j)}&=\left ( a_1\times (b_2\times b_{j+2}), - ( a_1\cdot  b_{j+2} ) b_2,  0, \dots, 0, 
\stackrel{(j+2)^{th}}{ ( a_1\cdot b_2 ) b_{j+2}}, 0, \dots, 0 \right ), \quad j=1, \dots, N-2,\\
\beta^{(j)}&=\left ( a_1\times (b_2\times c_{j+2}), - ( a_1\cdot   c_{j+2} ) b_2,  0, \dots, 0,  \stackrel{(j+2)^{th}}{( a_1\cdot b_2 ) c_{j+2}}, 0, \dots, 0 \right ), \quad j=1, \dots, N-2.
\end{split}
\end{equation*}
Using that $\{ a_j, b_j, c_j\}$ is an orthogonal basis of $\R^3$ for all $j=1,\dots, N$, one proves that the
above vectors belong to $V_a$ defined by  \eqref{eq:tangent-space}. Moreover, 
  a straightforward calculation using \eqref{eq:kerdPhi}, shows that these vectors actually belong to
$\ker d\Phi (a)$.  We wish to show that they  form a basis for $\ker d\Phi (a)$. Considering that the dimension of $\ker d\Phi (a)$ is $2N-3$
then it suffices to prove their linear independence.
Take a 
linear combination of them equal to zero:
\begin{equation*}
\lambda_0\gamma+ \sum_{k=1}^{N-2}( \lambda_k \alpha^{(k)}+\mu_k \beta^{(k)})=0.
\end{equation*}
For  $1\leq j \leq N-2$ the $(j+2)^{th}$ entry of the above vector is 
$( a_1\cdot b_2)(\lambda_j b_{j+2}+\mu_j c_{j+2})$. Given that  $a_1\cdot b_2\neq 0$ and considering that $b_j$ and $c_j$ are perpendicular,
this vector can only equal zero if $\lambda_j=\mu_j=0$. Therefore we are left with $\lambda_0\gamma=0$
which leads to $\lambda_0=0$, proving our claim.

Now we claim that  the infinitesimal generator of the orbit, $s_a$, is a linear combination
of the vectors $ \alpha^{(k)}$, $k=1, \dots, N-2$. To see this, first assume that there are no poles so that 
\begin{equation*}
b_j=J_3a_j=e_3\times a_j, \quad \mbox{for all} \quad 1\leq j\leq N, \qquad s_a=(b_1, \dots, b_N). 
\end{equation*}
On the 
other hand we have
  \begin{equation*}
\frac{1}{ a_1\cdot b_2 }\sum_{j=1}^N\alpha^{(j)}=\left ( \frac{1}{a_1\cdot b_2} 
a_1\times \left (b_2 \times\sum_{j=3}^Nb_j \right  ),
-\frac{a_1\cdot \left (\sum_{j=3}^Nb_j \right )}{a_1\cdot b_2}\, b_2, b_3, \dots, b_N \right ).
\end{equation*}
Now note that by Proposition \ref{prop:RE-as-crit-pts}(ii) we have  $\sum_{j=1}^Nb_j=e_3\times \sum_{j=1}^Na_j=e_3\times \Phi(a)=0$, so 
we may write 
\begin{equation*}
\begin{split}
 \frac{1}{a_1\cdot b_2} 
a_1\times \left (b_2 \times\sum_{j=3}^Nb_j \right  )&=  \frac{1}{a_1\cdot b_2}  a_1\times \left (b_2 \times(-b_1-b_2) \right  )\\
&= \frac{-1}{a_1\cdot b_2} a_1\times \left (b_2 \times b_1 \right  )=b_1,
\end{split}
\end{equation*}
and also
\begin{equation*}
-\frac{a_1\cdot \left (\sum_{j=3}^Nb_j \right )}{a_1\cdot b_2}=
\frac{ a_1\cdot b_1+ a_1\cdot b_2 }{a_1\cdot b_2}=1.
\end{equation*}
Therefore 
 \begin{equation*}
\frac{1}{a_1\cdot b_2}\sum_{j=1}^N\alpha^{(j)}=s_a,
\end{equation*}
and $s_a$ is indeed a linear combination of  the vectors $ \alpha^{(k)}$, $k=1, \dots, N-2$.

In the presence of $p$ poles ($p=1,2$), the above calculation gets modified
and one instead shows that 
 \begin{equation*}
\frac{1}{ a_1\cdot b_2 }\sum_{j=1}^{N-p}\alpha^{(j)}=s_a,
\end{equation*}
using that  $\sum_{j=1}^{N-p}b_j=0$. (We use the convention that the poles are at the
end of the array $(a_1, \dots, a_N)$).

Considering that $s_a$ is a linear combination of $\alpha^{(j)}$, $j=1, \dots, N-2$
and that the vectors $\{\gamma, \alpha^{(j)}, \beta^{(j)}\}$ are a basis of $\ker d\Phi (a)$ we conclude that the
subspace $\mathcal{N}_a$ generated by:
\begin{equation*}
\gamma, \quad \alpha^{(j)}, \, j=2, \dots, N-2, \quad \beta^{(j)}, \, j=1, \dots, N-2,
\end{equation*}
is a $2(N-2)$-dimensional subspace of $V_a$ which contains $\ker d\Phi (a)$ and is transversal to $s_a$ and may therefore be
taken as a symplectic slice. The vectors ${\bf u}^{(j)}$, ${\bf v}^{(j)}$ in the statement of the proposition are precisely taken as:
\begin{equation*}
\begin{split}
{\bf u}^{(1)}=\gamma, \qquad  {\bf u}^{(j)}=\alpha^{(j)}, \, j=2, \dots, N-2, \qquad {\bf v}^{(j)}=\beta^{(j)}, \, 
j=1, \dots, N-2.
\end{split}
\end{equation*}

\end{proof}

\section{Proof of Lemma \ref{eq:lemma-complexgeneral}}
\label{app:proof-complex-lemma}

\begin{proof} It is obvious that $\mathcal{Q}$ is Hermitian from its definition and in view of
\eqref{eq:complex-structure-ids}. To prove 
the statement about the indices of inertia and the dimension of the null spaces
let $\{e_1,\dots, e_d\}$ be the canonical basis of $\C^d$ and consider $\psi:V\to \C^d$ 
defined by $\psi(\alpha_j):=e_j$, $\psi(\beta_j):=-ie_j$, $j=1\dots, d$. Then $\psi$ is a
vector space isomorphism between $V$ and $ \C^d_\R$, where $ \C^d_\R$ denotes
the set  $ \C^d$ endowed with the  vector space structure over  $\R$. Let  $\tilde f:\C^d\times \C^d\to \C$
be the Hermitian form whose matrix wrt the canonical basis of $\C^d$ is $\mathcal{Q}$, namely
\begin{equation*}
\tilde f(z_1, z_2)=\bar z_1^T \mathcal{Q}z_2, \qquad z_1,z_2 \in \C^d.
\end{equation*}
The
key point of the proof is to notice that 
  \begin{equation}
\label{eq:aux-complex-lemma1}
f(w_1,w_2)=\frac{1}{2}\mbox{Re}\left ( \tilde f (\psi(w_1), \psi(w_2))\right ), \quad \mbox{for all $w_1,w_2\in V$.}
\end{equation}
This  is a consequence of linearity of $f$, sesquilinearity\footnote{note that by definition of $\tilde f$ we
have  
$\tilde f( \lambda_1  z_1, \lambda_2 z_2)=\bar \lambda_1 \lambda_2 \tilde f(  z_1,  z_2)$ for
$\lambda_1,\lambda_2\in \C$.} of $\tilde f$, and  the identities
\begin{equation*}
f(\alpha_j,\alpha_k)=f(\beta_j,\beta_k)=\frac{1}{2}\mbox{Re}\left (\mathcal{Q}_{jk} \right ), \qquad 
f(\alpha_j,\beta_k)=-f(\alpha_k,\beta_j)=\frac{1}{2}\mbox{Im}\left (\mathcal{Q}_{jk} \right ), 
\end{equation*}
which hold $\forall j,k=1,\dots, d$, because of the hypothesis \eqref{eq:complex-structure-ids}. In particular
we have
\begin{equation}
\label{eq:aux-complex-lemma0}
f(w,w)= \frac{1}{2}\tilde f (\psi(w), \psi(w)), \quad \mbox{for all $w\in V$.}
\end{equation}
%
%

Now let $\{s_1,\dots, s_d \}$   be a basis of $\C^d$ which diagonalises $\tilde f$, i.e. $\tilde f(s_i,s_j)=0$ if $i\neq j$, 
and suppose that
\begin{equation}
\label{eq:aux-complex-lemma2}
\begin{split}
&\tilde f(s_j,s_j)>0, \quad 1\leq j\leq d_+, \qquad \tilde f(s_j,s_j)<0,\quad d_++1\leq j\leq d_++d_-, \\
& \tilde f(s_j,s_j)=0,\quad d_++d_-+1\leq j\leq d_++d_-+d_0,
\end{split}
\end{equation}
 so that $i_+(\tilde f)=d_+$,  $i_-(\tilde f)=d_-$, $\dim \ker \tilde f=d_0$, and $d=d_++d_-+d_0$.
 Then $\{s_1,,\dots, s_d,is_1,\dots, is_d \}$ is a basis of $ \C^d_\R$ and, since $\psi$ is an 
 isomorphism, $\{y_1,\dots, y_{2d}\}$ defined by 
 \begin{equation*}
y_j:=\psi^{-1} (s_j),  \quad y_{m+j}:=\psi^{-1} (is_j),\qquad 1\leq j\leq d,
\end{equation*}
is a basis of $V$.  Moreover, since 
$\{s_1,\dots, s_m \}$ diagonalises $\tilde f$, it is easy to show, using  
 \eqref{eq:aux-complex-lemma1}, that  $\{y_1,\dots, y_{2m}\}$ diagonalises $f$, i.e. $f(y_i,y_j)=0$
 for all $i\neq j$. Using \eqref{eq:aux-complex-lemma0} and sesquilinearity of $\tilde f$ it follows
 that 
 \begin{equation*}
 f(y_j,y_j)= f(y_{m+j},y_{m+j})= \frac{1}{2}\tilde f(s_j,s_j), \qquad j=1,\dots, m,
\end{equation*}
which by \eqref{eq:aux-complex-lemma2} imply  $i_+( f)=2d_+$,  $i_-( f)=2d_-$
and  $\dim \ker  f=2d_0$.
\end{proof}

\section{Approximate coordinates of  RE near total collision}
\label{App:NumCollision}

An approximation of the coordinates of the RE near total collision reported in section \ref{ss:RE-totalcollision} 
is as follows. For all of them $\omega=50$. 

For $N=10$ one has
\begin{equation}
\label{eq:RELarge10}
\begin{split}
a_1&=(-0.321250364476975,   0.125503906002515,   0.938641024514443), \\
a_2&=(-0.281614324121647,  -0.177060196674640,   0.943049871005264), \\
a_3&=(-0.110832315744048,  0.301948550025117,   0.946859689143297), \\
a_4&=(0.329289157466230,   0.047176175895631,   0.943049871005264), \\
a_5&=(-0.056029765308738,  -0.338564275556233,   0.939273600564037), \\
a_6&=(0.163769131776852,   0.303533686063892,   0.938641024514443), \\
a_7&=(0.055171327848398,  -0.150307266747506,   0.987098703345485), \\
a_8&=(0.093915265535320,   0.096705006907256,   0.990872375504786), \\
a_9&=(-0.134176736522985,   0.012982250865822,   0.990872375504786), \\
a_{10}&=(0.261758623547594,  -0.221917836781855,   0.939273600564037).
\end{split}
\end{equation}
For $N=11$,
\begin{equation}
\label{eq:RELarge11}
\begin{split}
a_1&=(0.139326894549961,   0.025868279023347,   0.989908505163702), \\
a_2&=(  -0.023823734155396,  -0.359048230308640,   0.933014896988857), \\
a_3&=(  -0.233002228449082,  0.278282639870440,   0.931809387098295), \\
a_4&=(   0.216459904805164,  -0.258525569202590,   0.941440194425656), \\
a_5&=(   0.034791923532369,   0.340414859454908,   0.939631441321124), \\
a_6&=(  -0.275029387518199,  -0.219648813034752,   0.936009206650121), \\
a_7&=(  -0.341231039929540,   0.025576001826382,   0.939631441321107), \\
a_8&=(   0.357646782971700,  -0.039648210890842,   0.933014896988869), \\
a_9&=(  -0.089752690493977,   0.107194750077486,   0.990178640501257), \\
a_{10}&=(  -0.049951742661942,  -0.132612121654087,   0.989908505163703), \\
a_{11}&=(   0.264565317348943,   0.232146414838348,   0.936009206650103).
\end{split}
\end{equation}
Finally, for $N=12$, the generators of the $\Z_3$-symmetric RE of type $(n_4,m_3,p_0)$ are
\begin{equation}
\label{eq:RELarge12}
\begin{split}
u_1&=(0.034887632581048,   0.136341626351998,   0.990047379682701),  \\
u_2&=(-0.249324115175042,   0.243756694911171,   0.937240715759919),  \\
u_3&=(0.214399606524508,   0.302490526779084,   0.928726165202127),  \\
u_4&=(-0.042756936922558,   0.368292756396255,   0.928726165202127).
\end{split}
\end{equation}

\end{document}